\newtheorem{thm}{Theorem}[section]
\newtheorem*{*thm}{Theorem}
\newtheorem{lemma}[thm]{Lemma}
\newtheorem*{lemma*}{Lemma}
\newtheorem{prop}[thm]{Proposition}
\newtheorem*{prop*}{Proposition}
\newtheorem{corr}[thm]{Corollary}
\newtheorem*{corr*}{Corollary}
\theoremstyle{definition}
\newtheorem{dfn}[thm]{Definition}
\newtheorem{exmple}[thm]{Example}
\newtheorem{exmples}[thm]{Examples}
\theoremstyle{remark}
\newtheorem*{rmq}{\textit{Remark}}
\newtheorem{rmk}[thm]{\textit{Remark}}
\renewcommand{\proof}{\noindent\textit{Proof}\/: \,\,}
\newcommand{\C}{{\mathbb{C}}}
\newcommand{\R}{{\mathbb{R}}}
\newcommand{\Z}{{\mathbb{Z}}}
\newcommand{\bP}{{\mathbb{P}}}
\newcommand\germ[1]{{\mathfrak{#1}}}
\def\eun{\germ n}
\def\eq{\germ q}
\newcommand{\eh}{\germ h}%
\newcommand{\eg}{\germ g}%
\def\el{\germ l}
\newcommand{\ez}{\mathfrak{z}}%
\newcommand\FF{{\mathcal  F}}
\newcommand\MM{{\mathcal M}}
\newcommand\II{{\mathcal I}}
\newcommand\EE{{\mathcal E}}
\newcommand\UU{{\mathcal U}}
\newcommand\VV{{\mathcal V}}
\newcommand{\comp}{\raise1pt\hbox{{$\scriptscriptstyle\circ$}}}
\def\lset{\left\{ }  
\def\rset{\right\} }  
\def\set#1{\lset#1\rset}
\def\st{\mid}   
\def\sett#1#2{\lset #1 \st #2 \rset}
\def\del{\partial}
\def\tr{\mathop{\rm Tr}\nolimits}
\newcommand\Tr{{}^{\mathsf{T}}\kern-0.9pt} 
\def\mapright#1{\mathop{\vbox{\ialign{
                                ##\crcr
    ${\scriptstyle\hfil\;\;#1\;\;\hfil}$\crcr
 \noalign{\kern2pt\nointerlineskip}
    \rightarrowfill\crcr}}\;}}
\def\mapleft#1{\mathop{\vbox{\ialign{
                                ##\crcr
    ${\scriptstyle\hfil\;\;#1\;\;\hfil}$\crcr
 \noalign{\kern2pt\nointerlineskip}
    \leftarrowfill\crcr}}\;}}
\newcommand\rarrow[3]{\smash{\mathop{\hbox to#3{\rightarrowfill}}\limits
^{\scriptstyle#1}_{\scriptstyle#2}}}
\newcommand\larrow[3]{\smash{\mathop{\hbox to#3{\leftarrowfill}}\limits
^{\scriptstyle#1}_{\scriptstyle#2}}}
\def\into{\hookrightarrow}
\renewcommand\setminus{-}
\def\ii{{\rm i}}
\newcommand\gl[1]{\operatorname{GL}({#1})}
\newcommand\ogr[1]{\operatorname{O}({#1})}
\newcommand\ugr[1]{\operatorname{U}({#1})}
\newcommand\slgr[1]{\operatorname{SL}({#1})}
\newcommand\smpl[1]{\operatorname{Sp}({#1})}
\newcommand\im{\operatorname{Im}}
\newcommand\re{\operatorname{Re}}
\newcommand\gr{\operatorname{Gr}}
\newcommand\Hom{\mathop{\rm Hom}\nolimits}
\newcommand\End{\mathop{\rm End}\nolimits}
\def\Ext{\mathop{\rm Ext}\nolimits}
\def\VMHS{\text{VMHS}}
\def\vrulecup{\kern1.30pt\cup\kern-4.15pt\vrule height 6.0pt depth 0pt}
\newcommand\Ad[1]{\operatorname{Ad}{#1}\,}
\newcommand\ad[1]{\operatorname{ad}{#1}\,}
\newcommand\Lie[1]{\operatorname{Lie}(#1)}
\def\half{\frac 12}
\def\changed#1#2{%
\textcolor{black}{#1}\textcolor{black}{#2}%
}
\def\pd{\partial}
\def\piq{\pi_{\eq}}
\def\pip{\pi_+}
\def\deriv#1#2{\frac{\pd #1}{\pd #2}}
\begin{document}
\title{Differential Geometry of the Mixed Hodge Metric}
\author{Gregory Pearlstein}
\address{Department of Mathematics\\ Mailstop 3368, Texas A \&M University \\
College Station, TX 77843-3368, USA.}
\email{gpearl@math.tamu.edu}
\author{Chris  Peters}
\address{Faculteit Wiskunde en Informatica \\
Section DM 6th floor MF \\
Postbus 513, MetaForum \\
5600 MB Eindhoven, Netherlands.}
\email{c.a.m.peters@tue.nl}
\subjclass[2000]{32G20, 14D07,14D05}
\maketitle

{\small
\noindent \textbf{Summary:} We investigate properties of the mixed Hodge metric of a mixed period domain. In particular, we calculate its curvature and the curvature of the Hodge bundles. We also consider when the pull back metric via a period map is K\"ahler. Several applications in cases of geometric interest are given, such as for normal functions and biextension bundles.
}

\section{Introduction}  \label{sec:intro}
\subsection{Overview} Let $f:X\to S$ be a smooth, proper morphism between
complex algebraic varieties.  Then, by the work of Griffiths~\cite{periods},
the associated local system $\mathcal H_{\mathbb Q} = R^k {f^*}\mathbb Q_X$
underlies a variation of pure Hodge structure of weight $k$, which
can be described by a \emph{period map}
\begin{equation}
     \varphi:S\to\Gamma\backslash D,                \label{eqn:gr-1}
\end{equation}
where $\Gamma$ is the \emph{monodromy group} of the family.
In the case where the morphism $X\to S$ is no longer smooth and proper the
resulting local system underlies a variation of (graded-polarized) mixed Hodge
structure over a Zariski open subset of $S$~\cite{sz}. As in the pure case
considered by Griffiths, a variation of mixed Hodge structure can be described
in terms of a period map which is formally analogous to \eqref{eqn:gr-1} except
that $D$ is now a classifying space of graded polarized mixed Hodge
structure~\cite{higgs,usui}.

\par As we shall explain below, there is a natural metric
on such $D$,  induced by the \emph{mixed Hodge metric}  \eqref{eqn:grhmetric}. Deligne's second order calculations
involving this metric in the pure case \cite{deligne} can be extended to the
mixed setting, as we show in this article.
For instance, we find criteria as to when the induced
Hodge metric on $S$ is K\"ahler. We also compute the curvature tensor of
this metric, with special emphasis on cases of  interest in the study of
algebraic cycles, archimedean heights and iterated integrals.
The alternative approach   \cite[Chap. 12]{periodbook} 
in the pure case based on the Maurer-Cartan form
does not seem to generalize as we encounter incompatibilities between
the metric and the complex structure as demonstrated in \S~\ref{sec:reductive}.


\subsection{The Pure Case}  Returning to the pure case, we recall that $D$ parametrizes Hodge
structures of weight $k$ on a reference fiber $H_{\mathbb Q}$ of $\mathcal H_{\mathbb Q}$
with given Hodge numbers $\{h^{p,q}\}$ and  polarized  by a non-degenerate
bilinear form $Q$ of parity $(-1)^k$.  The monodromy group $\Gamma$ is
contained in the real Lie group $G_\R\subset GL(H_\R)$ of automorphisms of
the polarization $Q$.

\par In terms of differential geometry, the first key fact is that
$G_\R$ acts transitively on $D$ with compact isotropy, and hence
$D$ carries a $G_\R$ invariant metric.   It is induced
by the polarizing  form $Q$ as follows.  Any   Hodge
filtration $F$ on $H_\C$  induces
\begin{equation}   \label{eqn:hmetric}
h_F(x,y) := Q(C_Fx,\bar y), x,y\in H_\C,
\end{equation}
where $C_F|H^{p,q}= \ii^{p-q}$ is the Weil-operator. This
is a metric as a consequence of  the two Riemann bilinear relations: the first, $
        Q(F^p,F^{k-p+1}) = 0                        $ states that the Hodge decomposition is $h_F$-orthogonal and
        the second states that $h_F$ is a metric on each Hodge-component.

Next, by describing the Hodge
structures parameterized by $D$ in terms of the corresponding flags
$$
       F^p H_{\C} = \bigoplus_{a\geq p}\, H^{a,k-a}
$$
we obtain an open embedding of $D$ into the flag manifold $\check{D}$
consisting of decreasing filtrations $F^* H_\C$ such that
$\dim F^p = \sum_{a\geq p}\, h^{a,k-a}$ which satisfy only   the first Riemann
bilinear relation.
In particular, via this embedding, the set $D$ inherits the structure of a
complex manifold upon which the group $G_\R$ acts via biholomorphisms.

\par As a flag manifold, the tangent space at $F$ to $\check{D}$ can be
identified with a subspace of
\begin{equation}
       \bigoplus_p\, \text{Hom}(F^p,H_\C/F^p).         \label{eqn:flag-tangnet}
\end{equation}
Via this identification, we say that a tangent vector is
\emph{(strictly) horizontal} if it is contained in the subspace
$$
       \bigoplus_p\, \text{Hom}(F^p, F^{p-1}/F^p).
$$
One of the basic results of~\cite{periods} is that the period map
associated to a smooth proper morphism $X\to S$ as above is holomorphic,
horizontal and locally liftable.

\par Combining the previous two paragraphs, the metric \eqref{eqn:hmetric} on
$V$ induces a functorial metric on \eqref{eqn:flag-tangnet} and hence induces
a hermitian metric $h$ on the analytic open subset $D$ of the smooth variety
$\check D$.  In particular, since $G_{\mathbb R}$ acts transitively on
$D$ via biholomorphisms and
$$
        h_{g.F}(x,y) = h(g^{-1}x,g^{-1}y)
$$
for all $g\in G_{\mathbb R}$ and $F\in D$, it follows that $h$ is a
$G_{\mathbb R}$-invariant metric on $D$.

\par By \cite[Theorem 9.1]{curv} the holomorphic sectional curvature of $D$
along horizontal tangents is negative and bounded away from zero.  In
particular, as a consequence of this curvature estimate, if $S\subset\bar S$
is a smooth normal crossing compactification with unipotent monodromy near
$p\in\bar S -S$, then by~\cite{schmid} the period map $\varphi$ has at worst logarithmic
singularities near $p$.


\subsection{Mixed Domains}  In the mixed case, period maps of geometric origin are holomorphic and
satisfy the analogous horizontality condition (\cite{usui,sz}).  However,
although there is a \emph{natural Lie group $G$} (see \S~\ref{ssec:homstr}) which acts transitively on the
classifying spaces of graded-polarized mixed Hodge structure, the isotropy
group is no longer compact, and hence there is no $G$-invariant hermitian
structure.  In spite of this, A. Kaplan observed in~\cite{kaplan} that one
could construct a natural hermitian metric on $D$ in the mixed case which was
invariant under a pair of subgroups $G_\R$ and $\exp(\Lambda)$ of $G$ which
taken together act transitively on $D$.  The subgroup $\exp(\Lambda)$ (see \S~\ref{ssec:tang}) depends
upon a choice of base point in $D$ and intersects the group $G_\R$
non-trivially.  Nonetheless, as we said before, by emulating the computations of Deligne
in~\cite{deligne}, we are able to compute the curvature tensor of $D$
in the mixed case (cf. \S \ref{sec:curv}).

\par Let us elaborate on this by defining  the natural metric.
A mixed Hodge structure $(F,W)$ on $V$ induces a
unique functorial bigrading~\cite{tdh}, the \emph{Deligne splitting}
\begin{equation}
      V_\C = \bigoplus_{p,q}\, I^{p,q} \label{eqn:DelSplit}
\end{equation}
such that $F^p = \bigoplus_{a\geq p}\, I^{a,b}$,
$W_k = \bigoplus_{a+b\leq k}\, I^{a,b}$  and
\[
\bar I^{p,q} = I^{q,p} \mod \bigoplus_{a<q,b<p}\, I^{a,b}.
\]
In the pure case a polarization induces a hermitian inner product
for which the Hodge decomposition is orthogonal. In the mixed case
we first declare the splitting \eqref{eqn:DelSplit} to be  $h_{(F,W)}$--orthogonal and then define the metric
on $I^{p,q}$ making use of the graded polarization $(\gr h)_F$ as follows.
The summand $I^{p,q}$ maps isomorphically onto the subspace
$H^{p,q}$ of $\gr^W_{p+q}$. So  on classes $[z]$   of elements    $z\in I^{p,q}\subset W^{p+q}$
modulo $W^{p+q-1}$ the metric  $h_{F,W}$ can be defined by setting:
\begin{equation} \label{eqn:grhmetric}
h_{(F,W)}(x,y) = (\gr h)_F ([x],[y]) ,\quad x,y \in I^{p,q}.
\end{equation}
This is the \emph{mixed Hodge metric} alluded to previously.
By functoriality it induces
Hodge metrics on $\End(V)$ (see \eqref{eqn:mhm})  and hence also on the Lie algebra of $G$. As in the pure case this induces a natural metric on the mixed period domain (see Definition~\ref{HMonD}).  It is
these metrics that form our principal  subject of investigation of this paper.

\begin{rmk} A Mumford--Tate domain $D_M$ classifies pure Hodge structures with extra Hodge tensors~\cite{ggk}.
In analogy with the classifying spaces of pure Hodge structures, $D_M$ is the orbit of a generic point $F\in D_M$
under the real points Mumford--Tate group of $F$.  The analog for mixed Hodge structures are mixed Mumford--Tate
domains, e.g. the mixed Shimura varieties of Pink and Milne.  See Remark~\ref{mMTD}. All of the Lie theoretic calculations done in
section~\ref{sec:class}, and hence all of the applications in the subsequent sections remain true for mixed Mumford--Tate
domains. \end{rmk}

\subsection{Examples} \label{ssec:exmples}
To get an idea of the nature of these metrics in the mixed situation we
give a few examples.
\begin{enumerate}
\item  Consider
the mixed Hodge structure on the cohomology of quasi-projective curves.
So, let $X  $ be a compact Riemann surface of genus $g$ and $S$ be
a finite set of points on $X$.  Then, $F^1 H^1(X \setminus S,\C)$
consists of holomorphic 1-forms $\Omega$ on $X\setminus S$ with at worst
simple poles along $S$, and the mixed Hodge metric is given by
\begin{equation}
        ||\Omega||^2 = 4\pi^2\sum_{p\in S}\, |\text{Res}_p(\Omega)|^2
          + \sum_{j=1}^g\, \left|\int_X\, \Omega\wedge\bar\varphi_j\right|^2,
        \label{eqn:rs-mhm}
\end{equation}
where $\{\varphi_j\}$ is unitary frame for $H^{1,0}(X)$ with respect to the
standard Hodge metric on $H^1(X,\C)$.

\par To verify this, we recall that in terms of Green's functions, the
subspace $I^{1,1}$ can be described as follows: If $H$ is the space of
real-valued harmonic functions on  $X\setminus S$ with at worst
logarithmic singularities near the points of $S$, then
\begin{equation}
      I^{1,1}\cap H^1(X\setminus S,\R)
       = \sett{\sqrt{-1}\cdot \del (f)}{  f\in H } .    \label{eqn:i11}
\end{equation}
Indeed, the elements of $I^{1,1}$ will be meromorphic 1-forms with simple
poles along $S$.  The elements $\sqrt{-1}\cdot \del (f)$ are also real cohomology
classes since the imaginary part is exact.

Direct calculation using\eqref{eqn:i11}  and Stokes'  theorem shows that $I^{1,1}$ consists
of the elements in $F^1$ which pair to zero against $H^{0,1}$.
Therefore, the terms $\int_X\,\Omega\wedge\bar\varphi_j$ appearing in
\eqref{eqn:rs-mhm} only compute the Hodge inner product for the component
of $\Omega$ in $I^{1,0}$.

\item Recall that the dilogarithm \cite[\S 1]{hain}  is  the \changed{}{iterated integral}
\[
\changed{}{\text{Li}}_2 (x)=\int_0^x  w_1\cdot  w_2,\quad  w_1= \frac{1}{2\pi \ii} \cdot \frac {dz}{1-z},\quad
w_2=\frac{1}{2\pi \ii}\cdot  \frac {dz}{z}.
\]
For the corresponding   variation of mixed Hodge structure arising
from the mixed Hodge structure on $\pi_1(\mathbb P^1-\{0,1,\infty\} ,\changed{}{z})$, the pull back
metric is given by
\begin{equation}
         \|\nabla_{d/dz}\|^2 =   \left[\frac{1}{|z|^2} + \frac{1}{|z-1|^2}  \right].
         \label{eqn:dilog-ex}
\end{equation}
For a proof, we refer to \S~\ref{defoinAb}.

\item Consider mixed Hodge structures  whose Hodge numbers are $h^{0,0} = h^{-1,-1} =1$.
The corresponding classifying space is isomorphic to $\C$ with the Euclidean
metric. In particular, the curvature is identically zero.  Note that the corresponding extensions are
parametrized by $\Ext^1_{\rm MHS} (\Z(0),\Z(1))=\C^*$: these are equivalence classes of mixed
Hodge structures, but we are not considering these.

\item  Let $(X,\omega)$ be a compact K\"ahler manifold of dimension $n$, and
$(F,W)$ denote the mixed Hodge structure on $V = \bigoplus_p\, H^p(X,\mathbb C)$
defined by setting $I^{p,q} = H^{n-p,n-q}(X)$.  For any $u\in H^{1,1}(X)$
let $N(u)$ denote the linear map on $V$ defined by
\begin{equation}
           N(u)v = u\wedge v                            \label{eqn:pmhs}
\end{equation}
Then, $N(u)$ is of type  $(-1,-1)$ with respect to $(F,W)$. By the Hard Lefschetz
theorem, if $u$ is a K\"ahler class the intersection pairing on $X$ can be used
to construct a graded-polarization of $(F,W)$.  In the language of~\cite{schmid,degeneration}
$(F,W)$ is an example of a mixed Hodge structure polarized by $N$.
\changed{}{ See also Section~\ref{ssec:results}, item 5.}


\item The period domain quotients $\Gamma\backslash D$ and their Mumford--Tate
domain analogs can be partially compactified by adjoining boundary components
consisting of nilpotent orbits~\cite{ku}.  Via the theory of polarized mixed Hodge
structures, such boundary components acquire mixed Hodge metrics.



\end{enumerate}

\par  Some properties the Hodge metric has in the pure case are no
longer valid in the mixed situation. This is already clear from Example~3:
we can not expect $D$ to have holomorphic sectional curvature which is
negative and bounded away from zero along horizontal directions.
Nonetheless, period maps of variations of mixed Hodge structure of geometric
origin satisfy a system of admissibility conditions which ensure that they
have good asymptotic behavior.  At the level of $D$-modules, this is
exemplified by Saito's theory of mixed Hodge modules.  At the level of
classifying spaces, one has the analogs of  Schmid's nilpotent orbit
theorem~\cite{dmj,nilp} and the SL$_2$-orbit theorem~\cite{knu,sl2anddeg}.
\changed{}{
\begin{rmq}  The metric in \cite{nilp}  is obtained by twisting the
metric considered in this paper by a factor which measures
how far a point in $D$ is from being an $\mathbb R$-split
mixed Hodge structure.
\end{rmq}
}

\subsection{Results} \label{ssec:results}
\begin{enumerate}
\item A mixed period domain $D$ is an open subset of a homogeneous space for a
complex Lie group $G_{\C}$, and hence we can identify $T_F(D)$ with a
choice~\eqref{eqn:tang-alg} of complement $\eq$ to the stabilizer of $F$ in
$\Lie{G_{\C}}$.  In analogy with Th\'eor\`eme $(5.16)$ of \cite{deligne},
the holomorphic sectional curvature in the direction
$u\in\eq\simeq T^{1,0}_F(D)$ is given by (cf. Theorem~\eqref{thm:hol-curv}):
$$
\aligned
   R_\nabla (u,\bar u) =
   &- [(\ad{{\bar u }_+^*})_\eq ,(\ad{{\bar u }_+})_\eq]
   -  \ad{[ u, \bar u]_0} \\
   &- \left(\ad{([u,\bar u]_+ + [u,\bar u]_+^*)} \right)_\eq
\endaligned
$$
where the subscripts ${}_{\eq}$, ${}_0$, ${}_+$  denote projections onto various
subalgebras of $\Lie{G_{\C}}$, and $*$ is adjoint with respect to the
mixed Hodge metric;  {the adjoint operation is meant to be preceded by the
projection operator  ${}_+$.}

\item 
In the pure case it is well known~\cite[Prop. 7.7]{periods2} that the
``top'' Hodge bundle\footnote{In standard  notation; it  differs from the
notation employed in \cite{periods2}. } $\FF^n$ is positive in the differential
geometric sense  while the ``dual'' bundle  $\FF^0/\FF^1$ is
negative. In the mixed setting, the Chern form of the top Hodge bundle is
non-negative, and positive wherever the $(-1,1)$-component of the derivative
of the period map acts non-trivially on the top Hodge bundle.
See Corollary \ref{corr:top-hodge}.


\item \changed{}{If we pull back the Hodge metric
via the  period map, we get a pseudo-metric in the sense that
at each tangent space it induces  a semi-positive scalar product: non-trivial directions
may have zero ``length''.
By~\cite{Zhiqin}, in the pure setting,
 the pseudo-metric is K\"ahler, that is, it is  hermitian and its associated
 $(1,1)$-form is closed.  So} it is a natural question to ask when there are more instances
where the  pullback of the mixed Hodge metric
along a mixed period map is K\"ahler.  In \S \ref{sec:kaehler}, we answer
this question in terms of a system of partial differential equations;
in particular we prove:
\begin{*thm}[c.f. Theorem~\ref{thm:kahler2}] Let $\VV$ be a variation of mixed
Hodge structure with only two non-trivial weight graded-quotients
$\gr^W_a$ and $\gr^W_b$ which are adjacent, i.e. $|a-b|=1$.  Then,
the pullback of the mixed Hodge metric along the period map of
$\VV$ is \changed{}{a K\"ahler pseudo-metric}.
\end{*thm}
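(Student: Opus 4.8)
The plan is to read off the Kähler condition from the bigrading of $\eg=\Lie{G_\C}$ induced by the Deligne splitting. Write $\eg^{r,s}$ for the operators carrying $I^{p,q}$ into $I^{p+r,q+s}$. Since $G$ preserves the weight filtration $W$, every nonzero component has $r+s\le 0$; since the mixed Hodge metric \eqref{eqn:grhmetric} is by construction orthogonal for the Deligne splitting, the $h$-adjoint is clean, $*\colon\eg^{r,s}\to\eg^{-r,-s}$, and $h$ pairs $\eg^{r,s}$ nontrivially only against itself. The holomorphic tangent space $\eq$ is the sum of the components with $r<0$, so the horizontal derivative of the period map lies in $\bigoplus_{s\le 1}\eg^{-1,s}$. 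Now specialize, without loss of generality, to $b=a+1$, so that $\VV$ is an extension of $\gr^W_{a+1}$ by $\gr^W_a$ and all Deligne indices satisfy $p+q\in\{a,a+1\}$. A component $\eg^{-1,s}$ shifts weight by $-1+s$, which is available only for $s\in\{0,1\}$. Hence the period derivative splits as $u_\alpha=u_\alpha^{-1,0}+u_\alpha^{-1,1}$, where $u^{-1,1}$ is weight-preserving and $u^{-1,0}$ is the weight-lowering, genuinely mixed \emph{extension} part.

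Because $h$ pairs only equal bidegrees, the pullback pseudo-metric splits orthogonally,
\[
 g_{\alpha\bar\beta}=h(u_\alpha,u_\beta)=g^{(1)}_{\alpha\bar\beta}+g^{(0)}_{\alpha\bar\beta},\qquad g^{(i)}_{\alpha\bar\beta}=h(u^{-1,i}_\alpha,u^{-1,i}_\beta),
\]
with fundamental form $\omega=\omega^{(1)}+\omega^{(0)}$; since $\omega$ is real, $d\omega=0$ is equivalent to $\partial\omega=0$. The weight-preserving part $u^{-1,1}$ is block-diagonal on $\gr^W_a\oplus\gr^W_{a+1}$ and reproduces exactly the Higgs fields of the two induced pure variations; correspondingly $g^{(1)}$ is the sum of the pullbacks of the pure Hodge metrics of $\gr^W_a$ and $\gr^W_{a+1}$. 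By \cite{Zhiqin} each of these is Kähler, so $\partial\omega^{(1)}=0$. It therefore remains to prove $\partial\omega^{(0)}=0$, for which I would invoke the Kähler criterion of Theorem~\ref{thm:kahler2}: closedness is equivalent to the vanishing of the $(\gamma,\alpha)$-antisymmetrization of $\partial_\gamma g^{(0)}_{\alpha\bar\beta}$.

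The decisive structural fact is that the extension directions commute: $[\eg^{-1,0},\eg^{-1,0}]\subseteq\eg^{-2,0}$, and $\eg^{-2,0}$ shifts weight by $-2$, hence vanishes for two adjacent weights. Rewriting $\partial_\gamma g^{(0)}_{\alpha\bar\beta}$ with metric rather than ordinary derivatives via the structure equations of the variation, the antisymmetrized expression is built from $h\big(\partial_{[\gamma}u^{-1,0}_{\alpha]},u_\beta\big)$ together with a connection contribution $h\big([\theta_{[\gamma},u^{-1,0}_{\alpha]}],u_\beta\big)$, where $\theta$ is the metric connection form, valued in the $F$-degree-zero part $\eg^{0,\bullet}$. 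The holomorphicity and horizontality of the period map identify $\partial_{[\gamma}u_{\alpha]}$ with the $\eq$-projection of $-\tfrac12[u_\gamma,u_\alpha]$, whose relevant piece lives in $\eg^{-2}$ and so cannot pair against $u_\beta\in\eg^{-1}$; in particular the self-bracket of the extension part, $[u^{-1,0}_\gamma,u^{-1,0}_\alpha]\in\eg^{-2,0}=0$, drops out on both counts.

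The main obstacle is the connection contribution, which is the only term that can reconnect to a nonzero $h$-pairing in $F$-degree $-1$ and is precisely what makes the mixed case fail to be Kähler in general. Here I would determine $\theta$ explicitly from the Hodge metric and $u$, and organize $h\big([\theta_{[\gamma},u^{-1,0}_{\alpha]}],u_\beta\big)$ by bidegree: with only the components $u^{-1,0},u^{-1,1}$ and their adjoints available, $\theta$ has very few pieces, the cross-brackets $[\eg^{-1,0},\eg^{-1,1}]\subseteq\eg^{-2,1}$ and $[\eg^{-1,1},\eg^{-1,1}]\subseteq\eg^{-2,2}$ occur only paired against components of mismatched $F$-degree, and the surviving monomials are symmetric in $(\gamma,\alpha)$, so their antisymmetrization vanishes. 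Assembling these vanishings gives $\partial\omega^{(0)}=0$, hence $\partial\omega=0$ and $d\omega=0$. The points I expect to require the most care are the precise derivation of $\theta$ and of the antisymmetrized bracket obstruction from the structure equations, and checking that the non-split (lower-weight) correction to the Deligne splitting on the weight-$(a+1)$ summand does not reintroduce an uncontrolled $F$-degree-$(-1)$ cross term when complex conjugation is applied.
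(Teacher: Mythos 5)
Your reduction follows the same road as the paper: you invoke the K\"ahler criterion of Theorem~\ref{thm:kahler} (equation \eqref{eqn:1.3bis}), observe that for two adjacent weights all brackets landing in $F$-degree $\le -2$ vanish, and isolate the one bidegree configuration that survives the projection $\pi_\eq$, namely $h\bigl(\xi_j^{-1,0},[(\overline{\xi_\ell})^{0,-1},\xi_k^{-1,1}]\bigr)$. Up to that point the proposal is sound (and your side remark that $\Lambda=0$ here, so that the domain is split and conjugation is clean, would in fact dispose of your own worry about the non-split correction). But the proof then stops exactly where the paper's proof begins: you assert that ``the surviving monomials are symmetric in $(\gamma,\alpha)$, so their antisymmetrization vanishes.'' That symmetry is the entire content of the theorem and is not a bidegree bookkeeping fact. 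In the paper it is proved by combining Lemma~\ref{lemma:adjoint}, which in this split two-step situation gives $Z^*=-\bar Z$ for $Z$ of type $(-1,1)$, with the metric-conversion identity \eqref{eqn:MetConv} and hermitian symmetry of $h$, yielding $h(X,[\bar Y,Z])=h([X,\bar Z],\bar Y)=h(Y,[\bar X,Z])$ for $X,Y$ of type $(-1,0)$. Without some version of this computation the argument is incomplete; nothing in the grading alone forces the $(j,\ell)$-symmetry, and indeed for non-adjacent weights the analogous term is genuinely asymmetric and the metric fails to be K\"ahler.

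A secondary caution about your decomposition $\omega=\omega^{(1)}+\omega^{(0)}$: the splitting $u_\alpha=u_\alpha^{-1,0}+u_\alpha^{-1,1}$ is taken in the Deligne bigrading at $F(s)$, which moves with $s$, so $\partial_\gamma g^{(1)}_{\alpha\bar\beta}$ is not the ``pure part'' of $\partial_\gamma g_{\alpha\bar\beta}$; differentiating the projection produces cross terms that must be tracked. The paper sidesteps this by never splitting the metric, working instead with the single closed-form criterion \eqref{eqn:1.3bis} in which the pure-type contribution $[\pi_+(\overline{\xi^{-1,1}}),\xi^{-1,1}]\in\eg^{0,0}$ is simply annihilated by $\pi_\eq$, so Lu's theorem is not needed. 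Your appeal to Lu is a legitimate alternative for that piece, but only after you justify that the cross terms coming from the $s$-dependence of the bigrading cancel or are absorbed into $\omega^{(0)}$.
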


  An example (cf. \S \ref{defoinAb}) of a variation of mixed Hodge
structure of the type described at the end of the previous paragraph arises in
\emph{homotopy theory} as follows:  Let $X$ be a smooth complex projective
variety and $J_x$ be the kernel of the natural ring homomorphism
$\Z\pi_1(X,x)\to\Z$.  Then, the stalks $J_x/J_x^3$ underlie a variation of
mixed Hodge structure with weights 1 and 2 and constant graded Hodge
structure~\cite{hain}.
We show:
 \begin{prop*}[c.f. Corollaries~\ref{corr:fund-group},  \ref{corr:kahler}]  If
the differential of the period map of $J_x/J_x^3$ is injective for a smooth
complex projective variety $X$ then the pull back metric is K\"ahler and its
holomorphic sectional curvature of  is non-positive.
 \end{prop*}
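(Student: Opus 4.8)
The plan is to exploit the very rigid shape of this variation. By Hain, the weight filtration of $J_x/J_x^3$ involves only the two adjacent weights $1$ and $2$, and the graded Hodge structure $\gr^W$ is \emph{constant}. Hence the period map $\varphi$ factors through the slice $D_{\mathrm{ext}}\subset D$ of those mixed Hodge structures with the given fixed $\gr^W\!F$. Its holomorphic tangent space at every point is the weight $(-1)$ horizontal piece of $\eg=\End(V)$: imposing Hodge-degree $-1$ (horizontality) together with weight $-1$ (constancy of $\gr^W$) forces this to be exactly $\eg^{-1,0}$, i.e. the maps $\gr^W_2\to\gr^W_1$, since any $\eg^{-1,s}$ with $s<0$ would land in $\gr^W_0=0$. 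The first observation I would record is that $\eg^{-1,0}$ is abelian and squares to zero: each factor lowers weight by one, so for $u,v\in\eg^{-1,0}$ the composites $uv$ and $vu$ lower weight by two, landing in $\gr^W_0=0$; thus $[u,v]=0$ and $u^2=0$.

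Next I would analyse $D_{\mathrm{ext}}$ intrinsically. Applying the mechanism of Theorem~\ref{thm:kahler2} to the tautological variation over $D_{\mathrm{ext}}$ (its two graded weights are adjacent) shows that the fundamental form of the Hodge metric on $D_{\mathrm{ext}}$ is closed; since the Hodge metric on $\eg^{-1,0}$ is the graded Hodge inner product on $\Hom(\gr^W_2,\gr^W_1)$, it is positive definite, so $D_{\mathrm{ext}}$ is genuinely K\"ahler. I would then feed $u\in\eg^{-1,0}$ into the curvature formula of Theorem~\ref{thm:hol-curv}. The key point—and the essential difference from the negatively curved pure case—is that conjugation preserves weight, so $\bar u$ is again of weight $-1$ (leading bidegree $\eg^{0,-1}$) and has no positive part; consequently $\bar u_+=0$, while $[u,\bar u]$ has weight $-2$, so $[u,\bar u]_0=0$ and $[u,\bar u]_+=0$. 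All three terms of the formula vanish, so the holomorphic sectional curvature of $D_{\mathrm{ext}}$ is identically $0$. A K\"ahler metric with vanishing holomorphic sectional curvature is flat, so $D_{\mathrm{ext}}$ is locally isometric to $\C^N$ with its flat metric (recovering Example~3 as the rank-one case).

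With the ambient geometry understood, both assertions follow from the injectivity of $d\varphi$. Injectivity makes $\varphi\colon X\to D_{\mathrm{ext}}$ a holomorphic immersion, so the pulled-back metric is the induced metric on a complex submanifold of the flat K\"ahler manifold $D_{\mathrm{ext}}$; in particular it is positive definite and K\"ahler, giving the first claim. For the second, I would invoke the Gauss equation for a complex submanifold of a K\"ahler manifold,
\[
  H_X(u)=H_{D_{\mathrm{ext}}}(u)-2\,\bigl\|\,\mathrm{II}(u,u)\,\bigr\|^{2},
\]
where $\mathrm{II}$ is the second fundamental form of $\varphi$. Since $H_{D_{\mathrm{ext}}}\equiv 0$, this yields $H_X(u)=-2\|\mathrm{II}(u,u)\|^{2}\le 0$, so the pulled-back metric has non-positive holomorphic sectional curvature, with equality precisely along the totally geodesic directions.

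The main obstacle is the bookkeeping around the adjoint $*$ and the projections ${}_0,{}_+,{}_\eq$ in the non-$\R$-split case. Although the leading bidegree of $\bar u$ is $\eg^{0,-1}$, the Deligne splitting contributes lower-order corrections, and one must verify that none of these produce spurious ${}_0$ or ${}_+$ components—either in $\bar u$ or in $[u,\bar u]$—that would survive in the curvature formula. The weight grading settles this cleanly, since conjugation preserves weight and every correction still has weight $-1$ (resp.\ $-2$ for the bracket); but making it airtight requires checking that the adjoint, which a priori raises weight, is applied after the projection ${}_+$ exactly as prescribed in the note following Theorem~\ref{thm:hol-curv}. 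Equivalently, one may bypass the Gauss equation and substitute $u=d\varphi(\xi)$ directly into Theorem~\ref{thm:hol-curv}, in which case the same vanishing of the three ambient terms exhibits the pullback curvature as a single second-fundamental-form contribution; confirming that this contribution is exactly the non-negative $\|\mathrm{II}\|^2$ above is where the real work lies.
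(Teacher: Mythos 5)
Your treatment of the K\"ahler assertion is essentially the paper's: with only two adjacent weights and constant graded Hodge structure the obstruction bracket $\pi_{\eq}[\pi_+(\bar\xi_\ell),\xi_k]$ has type $(-1,-1)$ and dies for weight reasons (Theorem~\ref{thm:kahler}, Corollary~\ref{corr:kahler}(c); equivalently Theorem~\ref{thm:kahler2}). That half is fine.

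The curvature half contains a genuine gap. For $u=\beta\in\eg^{-1,0}$ the conjugate $\bar u$ has leading bidegree $(0,-1)$, and by the definition \eqref{eqn:SplitEnd} the subspace $\eun_+=\bigoplus_{a\ge 0,\,b<0}\eg^{a,b}$ \emph{contains} $\eg^{0,-1}$: the subscript ${}_+$ is not a positive-weight projection. Hence $\bar u_+\neq 0$ in general and the first term of Theorem~\ref{thm:hol-curv} survives. Evaluated on $u$ it contributes $-\|[\bar u_+,u]_{\eq}\|^2+\|[\bar u_+^*,u]_{\eq}\|^2$; the first summand vanishes because $W_{-1}\eg_\C$ is abelian, but $\bar u_+^*$ has type $(0,1)$, so $[\bar u_+^*,u]$ has type $(-1,1)$ and its $\eq$-projection is generically nonzero. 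This is exactly Corollary~\ref{curvapll2}: in these directions the \emph{ambient} holomorphic sectional curvature is $\ge 0$, and typically $>0$, not $0$; the domain is not flat (Example~3 of the introduction is flat only because there the tangent space is $\Lambda$, cf.\ Lemma~\ref{lemma:tate-case}). Consequently your Gauss-equation step $H_X=H_{D}-2\|\mathrm{II}(u,u)\|^2$ with $H_D\equiv 0$ collapses: with $H_D\ge 0$ the sign of $H_X$ is undetermined by curvature-decrease alone.

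The paper's Proposition~\ref{negcurv} closes this differently. Writing $F(s)=e^{\Gamma(s)}\cdot F(0)$ with $\Gamma$ valued in the abelian algebra $W_{-1}\eg_\C$, Kaplan's decomposition degenerates to $e^{\Gamma}=e^{\Gamma+\bar\Gamma}\cdot e^{-\bar\Gamma}$, and since $\Ad{e^{-\bar\Gamma}}$ acts trivially on $\Gamma'$ the pullback metric is literally $h(s)=\|\Gamma'(s)\|^2_{F(0)}$, the squared norm of a holomorphic function in a \emph{fixed} inner product; its curvature is then $\le 0$ by Cauchy--Schwarz. In other words, one must show that the second-fundamental-form negativity dominates the positive ambient term, and this is seen only by computing the pullback metric in closed form, not by term-by-term vanishing in the ambient curvature formula. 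Your proposal as written does not supply that comparison.
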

Concerning the injectivity hypothesis, which   is directly related to mixed
Torelli theorems we note that these hold for compact curves \cite{hain} as
well as once punctured curves \cite{kaenders}.

\item The curvature of a \emph{Hodge--Tate domain} is identically zero:
\begin{prop*}[c.f.~Lemma~\ref{lemma:tate-case} and Corollary~\ref{corr:kahler}]
Suppose  $h^{p,q}=0$
unless $p=q$. Then the curvature of the mixed Hodge metric
is identically zero, and pulls back to a K\"ahler pseudo-metric along
any period map $\varphi:S\to\Gamma\backslash D$.
\end{prop*}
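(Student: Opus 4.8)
The plan is to read the vanishing directly off the curvature formula of Result~(1) (Theorem~\ref{thm:hol-curv}), after recording what the Hodge--Tate hypothesis does to the bigrading of $\eg:=\Lie{G_\C}$. First I would note that if $h^{p,q}=0$ for $p\neq q$ then at every point of $D$ the Deligne splitting is concentrated on the diagonal, $V_\C=\bigoplus_p I^{p,p}$, so that the induced bigrading of the Lie algebra has
\[
\eg=\bigoplus_{r}\eg^{r,r},\qquad \eg^{r,r}=\{\phi:\phi(I^{p,p})\subseteq I^{p+r,p+r}\},
\]
since a map between diagonal summands $I^{p,p}\to I^{q,q}$ necessarily has bidegree $(q-p,q-p)$; in particular $\eg^{r,r}$ is exactly the weight-$2r$ part of $\eg$ for the grading attached to $W$. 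Two structural facts follow and drive the whole argument: (i) the tangent space $\eq\cong T^{1,0}_F(D)$, being the complement $\bigoplus_{r<0}\eg^{r,s}$ of the stabilizer of $F$, reduces to $\eq=\bigoplus_{r<0}\eg^{r,r}$, a subalgebra lying entirely in strictly negative weight; and (ii) complex conjugation preserves each $\eg^{r,r}$ \emph{exactly}. For (ii) the point is that conjugation commutes with the real filtration $W$ and hence preserves the weight-shift $2r$; since $\eg^{r,r}$ is the unique bidegree component of total weight $2r$, the ``mod lower'' correction in Deligne's relation $\overline{I^{p,q}}=I^{q,p}\bmod\bigoplus_{a<q,b<p}I^{a,b}$ has no room to appear. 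This is what lets me avoid assuming the reference structure is split over $\R$.

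Granting this, the curvature computation is short. Fix $u\in\eq$. By (i) and (ii) both $\bar u$ and the bracket $[u,\bar u]$ lie in $\bigoplus_{r<0}\eg^{r,r}$: for $[u,\bar u]$ this is because $[\eg^{r,r},\eg^{r',r'}]\subseteq\eg^{r+r',r+r'}$ with $r+r'<0$. Consequently every projection appearing in the formula annihilates the relevant elements: the positive projection gives $\bar u_+=0$ and $[u,\bar u]_+=0$ (there is no non-negative-weight component), hence their $*$-adjoints vanish as well, and the weight-zero projection gives $[u,\bar u]_0=0$. Inserting these into
\[
R_\nabla(u,\bar u)=-\big[(\ad{\bar u_+^{*}})_\eq,(\ad{\bar u_+})_\eq\big]-\ad{[u,\bar u]_0}-\big(\ad{([u,\bar u]_+ +[u,\bar u]_+^{*})}\big)_\eq
\]
kills all three terms, so $R_\nabla(u,\bar u)=0$ for every $u$. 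Since the curvature operator is determined by its values on such diagonal arguments (by its tensorial, Hermitian symmetry), this yields $R_\nabla\equiv0$ on $D$, i.e.\ Lemma~\ref{lemma:tate-case}. The argument is in fact insensitive to the precise meaning of the subscripts ${}_+$ and ${}_0$: whatever subalgebra ${}_+$ projects onto contains no strictly-negative-weight vectors, and likewise for ${}_0$, so the conclusion is robust.

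For the second assertion I would separate the geometry from the computation. Pulling back a closed form keeps it closed, so it suffices to show the fundamental $(1,1)$-form $\omega$ of the mixed Hodge metric is already closed on $D$; then for any holomorphic period map $\varphi$ one has $d(\varphi^{*}\omega)=\varphi^{*}(d\omega)=0$, and $\varphi^{*}h$ is a (possibly degenerate) pseudo-Kähler metric. To see $d\omega=0$ I would apply the criterion of Corollary~\ref{corr:kahler}: the obstruction to closedness is a tensor assembled from exactly the same data as the curvature, namely the ${}_0$- and ${}_+$-projections of brackets $[u,\bar v]$ of tangent vectors $u,v\in\eq$. By the computation above these brackets again lie in $\bigoplus_{r<0}\eg^{r,r}$ with vanishing ${}_0$- and ${}_+$-components, so the obstruction vanishes identically and $\omega$ is closed.

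I expect the genuine obstacle to be not the curvature vanishing --- which, once the bigrading is understood, is a one-line substitution --- but rather pinning down the exact shape of the Kähler obstruction in Corollary~\ref{corr:kahler} and confirming it is governed precisely by the ${}_0$/${}_+$ bracket-projections, so that the same structural vanishing applies. A secondary point needing care is the passage from ``computed at the reference fibre'' to ``valid on all of $D$ and on $S$'': since $D$ is homogeneous under $G_\R\exp(\Lambda)$ and every point parametrizes a Hodge--Tate structure, the diagonal bigrading, and hence the entire argument, is available at each point, so the invariant tensors $R_\nabla$ and $d\omega$ vanish everywhere.
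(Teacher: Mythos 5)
Your argument is correct in substance and arrives at the same two vanishing statements the paper needs, but by a slightly different route. For the flatness, the paper's Lemma~\ref{lemma:tate-case} does not substitute into the curvature formula: it observes that in the Hodge--Tate case $\eq=\Lambda$, so $D$ is the $\exp(\Lambda)$-orbit of $F$, and by Lemma~\ref{higgs} left translation of a unitary Hodge frame at $F$ yields a \emph{global holomorphic unitary frame} of the tangent bundle; hence the Chern connection itself (not merely its curvature) vanishes. Your substitution into Theorem~\ref{thm:hol-curv} is precisely the alternative the paper mentions in passing (``this also follows immediately from the formula for the connection form''), and it does work: all three terms die because $\bar u$ and $[u,\bar u]$ remain inside $\Lambda$, and sesquilinearity of $R_\nabla(u,\bar v)$ lets you polarize. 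For the K\"ahler statement your reduction is essentially the paper's: Corollary~\ref{corr:kahler}(b) applies the sufficient condition \eqref{eqn:1.4} of Theorem~\ref{thm:kahler} directly to the period map, using $\pip\bigl(\overline{\partial\Gamma/\partial s_\ell(0)}\bigr)=0$; you apply the same vanishing on $D$ itself and then pull back the closed fundamental form, which is equivalent and correctly accounts for non-immersive $\varphi$ (whence ``pseudo'').

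One intermediate claim is false as stated, although your proof survives without it. You assert in (ii) that conjugation preserves each $\eg^{r,r}$ \emph{exactly}, because ``the mod-lower correction has no room to appear.'' It does have room: for a Hodge--Tate structure the conjugation rule accompanying \eqref{eqn:DelSplit} gives $\overline{I^{p,p}}=I^{p,p}\bmod\bigoplus_{a<p}I^{a,a}$, and this lower diagonal part is nonzero unless the structure is split over $\R$; correspondingly one only has $\bar\eg^{r,r}\subseteq\bigoplus_{r'\le r}\eg^{r',r'}$ (cf.\ \eqref{eqn:UnderConju}). Your weight argument shows that conjugation preserves the filtration by total weight, not the grading. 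What the computation actually uses is only that conjugation preserves the subspace $\bigoplus_{r<0}\eg^{r,r}=\Lambda=W_{-1}\eg_\C$, which holds because $W$ is defined over $\R$ (this is the content of \eqref{eqn:LambdaRealForm}); that already gives $\bar u_+=0$, $[u,\bar u]_0=[u,\bar u]_+=0$ and $\pip(\bar\xi)=0$, so every step of your argument goes through once claim (ii) is weakened to this.
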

Consequently, a necessary condition for a period map
$\varphi:S\to\Gamma\backslash D$ of Hodge-Tate type to have injective
differential is that $S$ support a K\"ahler metric of holomorphic sectional
curvature $\leq 0$.  Important examples of such variations arise in the study
of mixed Tate motives and polylogarithms~\cite{3pts} and mirror
symmetry~\cite{d2m}.


\item  Let $X\to \Delta^r$ be a holomorphic family of compact K\"ahler manifolds
of dimension $n$ equipped with a choice of K\"ahler class common to every member
of the family.   Let $(F(s),W)$ be the corresponding variation of mixed Hodge
structure defined by setting $I^{p,q} = H^{n-p,n-q}(X_s)$ as in
Subsection~\ref{ssec:exmples}.4.
Suppose that $\lambda_1,\dots,\lambda_k\in H^{1,1}(X_s,\mathbb R)$ for all $s$ (e.g. a
set of K\"ahler classes common to all members of the family).  Let $L_{\mathbb C}$ be the complex linear
span of $\lambda_1,\dots,\lambda_r$ and let   $u:\Delta^r\to L_{\mathbb C}$ be a
holomorphic function.  Then, with $N$ as in \eqref{eqn:pmhs}
\begin{equation}
      (e^{\ii N(u(s))} \cdot F(s),W),           \label{eqn:kahler-def}
\end{equation}
is a variation of mixed Hodge structure.  The curvature  of the corresponding
classifying space is  semi-negative  along directions tangent to
\eqref{eqn:kahler-def}, and strictly negative wherever the period map of
$F(s)$ has non-zero derivative.  See Example \eqref{exmple:kahler-def}.
The resulting \changed{}{pseudo-metric is also  K\"ahler}, cf. Corollary \eqref{corr:kahler}.

\item
Turning now to \emph{algebraic cycles}, recall that
by~\cite{ANF}, a normal function is equivalent to an
extension in the category of variations of mixed Hodge structure\footnote{Note:
We have performed a Tate twist to make $\mathcal H$ have weight -1
here.}
\begin{equation}
      0 \to\mathcal H \to \VV \to \Z(0) \to 0. \label{eqn:nf-2}
\end{equation}
The classical example  comes from the Abel-Jacobi
map for degree zero divisors on a compact Riemann surface
and its  natural extension
\begin{equation}
          \text{AJ}: \text{CH}^k_{\rm hom}(Y) \to J^k(Y)             \label{eqn:AJ-map}
\end{equation}
to homologically trivial algebraic cycles on a smooth complex projective
variety $Y$~\cite{periods}.  Application of this construction pointwise
to a family of algebraic cycles $Z_s\subset Y_s$ yields the prototypical
example of a \emph{normal function}
\begin{equation}
          \nu:S\to J(\mathcal H)                   \label{eqn:nf-1}
\end{equation}
where $\mathcal H$ is the variation of pure Hodge structure attached to
the family $Y_s$.

\begin{prop*}  1. The pullback of the mixed
Hodge metric along a normal function is a pseudo-K\"ahler (c.f. Example \ref{kaehlerexmples}).\\
2. In the case where the underlying variation of pure Hodge structure is
constant (e.g. a family of cycles on a fixed smooth projective variety $Y$),
the holomorphic sectional curvature is semi-negative
(Corollary~\ref{curvneg:2}).
\end{prop*}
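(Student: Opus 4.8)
The first assertion is immediate from the pseudo-Kähler criterion already in hand. A normal function is by definition an extension \eqref{eqn:nf-2} of $\Z(0)$ by a weight $-1$ variation $\mathcal H$, so the underlying variation $\VV$ has exactly two non-trivial weight-graded quotients, $\gr^W_{-1}=\mathcal H$ and $\gr^W_0=\Z(0)$, whose weights differ by $1$ and are therefore adjacent. Thus Theorem~\ref{thm:kahler2} applies verbatim and the pullback of the mixed Hodge metric is pseudo-Kähler. The plan for part~(1) is simply to record this specialization, noting that no constancy hypothesis on $\mathcal H$ is needed.

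For part~(2) I would start from the curvature formula of Theorem~\ref{thm:hol-curv} and first locate the horizontal tangent vectors produced by the period map. Under the extra hypothesis that $\mathcal H$ is constant, both graded quotients $\gr^W_0$ and $\gr^W_{-1}$ are constant, so $\gr^W\VV$ is constant and the derivative $u$ of the period map acts trivially on $\gr^W$; equivalently $u$ lowers the weight strictly. Since $\VV$ carries only the weights $0$ and $-1$, this forces $u\in\Hom(\gr^W_0,\gr^W_{-1})$, and combining this with Griffiths horizontality (Hodge degree $-1$) pins $u$ down to the single Deligne component $\eg^{-1,0}$ of the bigrading \eqref{eqn:DelSplit}, with conjugate $\bar u\in\eg^{0,-1}$. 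In particular $\bar u$ is homogeneous, so its projection $\bar u_+$ is either $0$ or all of $\bar u$, which disposes of any delicacy in the projection operators.

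The crucial computation is then $[u,\bar u]=0$: both $u$ and $\bar u$ send $\gr^W_0$ into $\gr^W_{-1}$ and annihilate $\gr^W_{-1}$, so the composites $u\bar u$ and $\bar u u$ vanish (equivalently, the bracket would lie in the absent weight $-2$ part of $\End(V)$). This single identity collapses the formula of Theorem~\ref{thm:hol-curv}: the terms $\ad{[u,\bar u]_0}$ and $\left(\ad{([u,\bar u]_+ + [u,\bar u]_+^*)}\right)_\eq$ both vanish, leaving only
\[
R_\nabla(u,\bar u) = -[(\ad{\bar u_+^*})_\eq,(\ad{\bar u_+})_\eq].
\]
To extract the sign I would pair this operator against $u$ in the Hodge metric and use the self-adjointness $(\ad{\bar u_+^*})_\eq=((\ad{\bar u_+})_\eq)^{*}$ built into the formula. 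The holomorphic sectional curvature then becomes a difference of two squared norms, and the identity $[u,\bar u]=0$ forces $(\ad{\bar u_+})_\eq\,u=[\bar u_+,u]_\eq=0$, so exactly one of the two contributions survives. With the curvature sign convention of Theorem~\ref{thm:hol-curv}, the result is $-\|(\ad{\bar u_+^*})_\eq\,u\|^2\le 0$, the asserted semi-negativity of Corollary~\ref{curvneg:2} (strict wherever $(\ad{\bar u_+^*})_\eq\,u\ne 0$).

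The main obstacle, and the step I would treat most carefully, is the bookkeeping in this last paragraph: correctly interpreting the projections ${}_+$, ${}_0$ and the metric adjoint ${}^{*}$ in the mixed setting, checking that the unstarred norm $\|(\ad{\bar u_+})_\eq u\|^2$ is indeed the one annihilated by $[u,\bar u]=0$, and fixing the overall sign convention for $R_\nabla$ (the same one that yields negativity in the pure case) so that the surviving squared norm appears with a minus sign. Once $[u,\bar u]=0$ is established, the two obstruction terms vanish cleanly, so the entire content of the proof is that the abelian, strictly weight-lowering nature of the normal-function directions is precisely what makes those terms disappear and leaves a single manifestly non-positive norm.
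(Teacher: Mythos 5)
Your part (1) is correct and is exactly the paper's route: a normal function has the two adjacent graded quotients $\gr^W_0=\Z(0)$ and $\gr^W_{-1}=\mathcal H$, so Theorem~\ref{thm:kahler2} applies directly (this is how Example~\ref{kaehlerexmples} records it), and no constancy of $\mathcal H$ is needed.

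Part (2), however, has a genuine gap, and it is twofold. First, a sign/bookkeeping error: with $u=\beta\in\eg^{-1,0}$ you correctly get $[u,\bar u]=0$ and $\bar u_+=\bar u$, so $A_3=A_4=0$ and the \emph{unstarred} term $A_1=-\|[\bar u_+,u]_\eq\|^2$ vanishes. But expanding $-h([A^*,A]u,u)=-\|Au\|^2+\|A^*u\|^2$ with $A=(\ad{\bar u_+})_\eq$, the term that survives is $A_2=+\|[\bar u_+^*,u]_\eq\|^2\ge 0$, not $-\|[\bar u_+^*,u]_\eq\|^2$. So the holomorphic sectional curvature of $D$ in these directions is \emph{non-negative} — this is precisely Corollary~\ref{curvapll2} with $\alpha=0$ and $W_{-1}\eg_\C$ abelian, and the paper stresses this point at the start of the proof of Proposition~\ref{negcurv}. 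Second, and more fundamentally, the quantity asserted to be semi-negative is the curvature of the \emph{pullback} metric on $S$, which is not the ambient holomorphic sectional curvature of $D$ restricted to the image directions: pulling back along a holomorphic immersion adds a second-fundamental-form contribution which is $\le 0$, and the whole content of Proposition~\ref{negcurv} is that this negative contribution dominates the ambient positivity. The paper achieves this by using that $W_{-1}\eg_\C$ is abelian to collapse Kaplan's decomposition to $e^{\Gamma(s)}=e^{\Gamma(s)+\bar\Gamma(s)}e^{-\bar\Gamma(s)}$, whence the pullback metric is literally $h(s)=\|\partial\Gamma/\partial s\|^2_{F(0)}$ for the holomorphic $\eq$-valued function $\Gamma$; the Gaussian curvature of such a metric is $\le 0$ by the Cauchy--Schwarz inequality. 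Your argument, as written, cannot reach the conclusion because the ambient curvature it computes has the wrong sign and is in any case the wrong object.
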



\par Using the polarization of $\mathcal H$, one  can construct a natural
\emph{biextension} line bundle $B\to S$ whose fibers parametrize mixed Hodge
structures with graded quotients
$$
     \gr^W_0\cong\Z(0),\qquad \gr^W_{-1}\cong\mathcal H_s,\qquad
     \gr^W_{-2}\cong\Z(1)
$$
and such that the extension between $\gr^W_0$ and $\gr^W_{-1}$ is determined by
$\nu(s)$ and the extension from $\gr^W_{-1}$ and $\gr^W_{-2}$ is determined
by the dual of $\nu(s)$.

\par As noted by Richard Hain, the biextension line bundle $B$ carries a
natural hermitian metric $h$ which is based on measuring how far the mixed
Hodge structure defined by $b\in B_s$ is from being split over $\mathbb R$.
In~\cite{nilp}, the first author and T. Hayama prove that for
$B\to\Delta^{*r}$ arising from
an admissible normal function with unipotent monodromy, the resulting
biextension metric is of the form
\begin{equation}\label{eqn:bm}
      h = e^{-\varphi}
\end{equation}
with $\varphi\in L^1_{\rm loc}(\Delta^r)$, i.e. it defines a singular hermitian
metric in the sense of~\cite{dem} and hence can be used to compute the
Chern current of the extension of $\bar B$ obtained by declaring the
admissible variations of mixed Hodge structure to define the extending
sections (cf.~\cite{nilp,bp}). For this situation we show (\S \ref{sec:biexts}):
\begin{prop*} Let $S$ be a curve and let  $\mathcal B $ be a variation of
biextension type over $S$. Then the Chern form of the biextension metric
\eqref{eqn:bm}  is the $(1,1)$--form
\begin{equation*}
           -\frac{1}{2\pi\ii} \del\bar\del h (s)
      = \half [\gamma^{-1,0},\bar\gamma^{-1,0}] \,ds \wedge \overline{ds},
\end{equation*}
where $\gamma^{-1,0}$ is the Hodge component of type $(-1,0)$ of
$\varphi_*(d/ds)$ viewed as an element of $\mathfrak g_\C$.  For self-dual
variations this  form is semi-negative.
\end{prop*}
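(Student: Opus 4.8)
The plan is to compute $\del\bar\del$ of the potential $\varphi = -\log h$ directly from the definition of the biextension metric as a measure of the failure of splitting over $\R$.  First I would set up coordinates on the variation: over the curve $S$ with local coordinate $s$, the period map is governed by its derivative $\varphi_*(d/ds)$, which lives in $\eq\subset\germ g_\C$ and decomposes into Hodge components $\gamma^{p,q}$ according to the grading induced by the Deligne splitting $\{I^{p,q}\}$.  For a biextension the weights are $0,-1,-2$, so the relevant negative-degree pieces of the connection are $\gamma^{-1,0}$, $\gamma^{0,-1}$, $\gamma^{-1,-1}$ and $\gamma^{-2,0}$ and so on; the key players for the $(1,1)$-form will be $\gamma^{-1,0}$ and its conjugate.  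The essential preliminary is therefore to write the biextension height $\varphi$ explicitly in terms of the Deligne splitting data, using Hain's description that $h$ records how far the mixed Hodge structure is from being split over $\R$, i.e.\ in terms of the $\germ g_{-1,-1}$-component $\delta$ of the grading element relating $(F,W)$ to its $\R$-split Deligne form $(\hat F,W)$.

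The main computation is to differentiate twice.  I would use that $\varphi$ is (up to the universal constant) the real-analytic function measuring the $(-2,0)+(0,-2)+(-1,-1)$ obstruction to $\R$-splitness, and that holomorphicity of the period map forces $\del\varphi$ to be expressible through the holomorphic connection datum, namely $\gamma^{-1,0}$ and $\gamma^{0,-1}=\overline{\gamma^{-1,0}}$ in self-dual situations.  Differentiating $\varphi$ in $s$ brings down $\gamma$'s, and differentiating in $\bar s$ brings down their conjugates; the cross term $\del\bar\del\varphi$ then assembles into the commutator bracket $[\gamma^{-1,0},\bar\gamma^{-1,0}]$ acting on the rank-one top piece $\gr^W_0\cong\Z(0)$, where the bracket lands in the one-dimensional space $\Hom(\gr^W_0,\gr^W_{-2})\oplus\Hom(\gr^W_{-2},\gr^W_0)$ and can be read as a scalar.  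The factor $\half$ and the sign should emerge from the normalization of the Weil operator in \eqref{eqn:hmetric} and the standard $-\tfrac{1}{2\pi\ii}\del\bar\del$ convention for Chern forms; I would keep careful track of these constants against Example (2), equation \eqref{eqn:dilog-ex}, which is a special case and serves as a sanity check.

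For the final semi-negativity assertion in the self-dual case, the plan is to invoke the adjoint structure with respect to the mixed Hodge metric developed earlier.  Self-duality identifies $\gr^W_{-2}\cong\Z(1)$ with the dual of $\gr^W_0\cong\Z(0)$ and forces $\gamma^{0,-1}=\pm(\gamma^{-1,0})^*$, the adjoint being taken with respect to $h_{(F,W)}$.  Then $\half[\gamma^{-1,0},\bar\gamma^{-1,0}]$ becomes, up to sign, a trace of the form $\gamma^{-1,0}(\gamma^{-1,0})^*$, which is manifestly of a fixed sign; matching that sign to the $ds\wedge\overline{ds}$ orientation yields semi-negativity.  This mirrors the appearance of the bracket term $-[(\ad{\bar u_+^*})_\eq,(\ad{\bar u_+})_\eq]$ in the holomorphic sectional curvature formula of Result (1), and I would borrow the same positivity-of-$TT^*$ argument used there.

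I expect the main obstacle to be the first step, namely writing the height potential $\varphi$ precisely in terms of the Deligne splitting so that its $\del\bar\del$ is computable — in particular pinning down that only $\gamma^{-1,0}$ survives in the $(1,1)$-form while the contributions of $\gamma^{-1,-1}$, $\gamma^{-2,0}$, and the grading correction $\delta$ either cancel or contribute to lower-type forms.  The subtlety is that $\varphi$ is \emph{not} pluriharmonic, and its non-holomorphic dependence is exactly what produces the curvature; isolating which Hodge components of $\del\varphi$ are $\bar\del$-closed (hence drop out) versus which survive is the delicate part, and I would handle it by expanding $h$ to second order around the $\R$-split point and using the admissibility/horizontality constraints to kill the unwanted terms.
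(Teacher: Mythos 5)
Your overall strategy coincides with the paper's: write the biextension metric as $h=e^{-2\pi\delta/\lambda}$ with $\delta$ the scalar extracted from the Deligne $\delta$-splitting, compute $\del\bar\del\,\delta$ by a second-order expansion about the base point, identify the surviving term as $\half[\gamma^{-1,0},\bar\gamma^{-1,0}]$, and convert self-duality into a sign statement through the polarization. Your semi-negativity sketch (self-duality turns the bracket, evaluated on a generator $e(0)$ of $I^{0,0}$, into $\pm h(t,t)$ with $t=\gamma^{-1,0}(e(0))$) is essentially the argument of Theorem~\ref{plurisubharmonicity}.

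There is, however, a genuine gap at exactly the step you flag as the obstacle, and the mechanism you propose for closing it is not the one that works. The computation of $\del\bar\del\,\delta$ requires the second-order Taylor coefficients of the Kaplan decomposition $e^{\Gamma(s)}=g_\R(s)e^{\lambda(s)}e^{\varphi(s)}$, namely $\varphi_{10}=0$, $\varphi_{01}=-(\overline{\Gamma'(0)})_+$ and $\varphi_{11}=[\gamma,\bar\gamma]_0+[\gamma,\bar\gamma]_+$ (Proposition~\ref{secondorder}); without these formulas the mixed second derivative of $\delta(s)=\half\im Y(s)$ cannot be evaluated, and your proposal does not supply a substitute. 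Moreover, the unwanted terms are not killed by horizontality or admissibility. After the expansion one finds that $\del_s\del_{\bar s}$ of the relevant quantity at $0$ equals $[\gamma^{-1,1},\bar\gamma^{-1,0}]+[\gamma^{-1,0},\bar\gamma^{-1,0}]+[\bar\gamma^{-1,1},\gamma^{-1,0}]$; the first and third terms drop out only because their sum is conjugation-invariant and hence contributes nothing to the imaginary part, while other contributions vanish because $\ad{Y}$ annihilates components of type $(a,-a)$. The factor $\half$ likewise comes from $\delta=\half\im Y_{F,W}$ and this imaginary-part cancellation, not from the Weil-operator normalization. Two smaller inaccuracies: horizontality gives $\gamma=\gamma^{-1,1}+\gamma^{-1,0}+\gamma^{-1,-1}$, so the derivative has no component $\gamma^{0,-1}$ (only the conjugate $\bar\gamma^{-1,0}$ has that type); and the bracket, being of type $(-1,-1)$, lands in $\Hom(\gr^W_0,\gr^W_{-2})$ only --- no element of $\eg$ raises the weight filtration, so the summand $\Hom(\gr^W_{-2},\gr^W_0)$ you mention does not occur.
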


\begin{rmq} This result was also obtained Richard Hain (\S 13, \cite{hain-2})
by a different method.
\end{rmq}

We then deduce (see Cor.~\ref{biextcoroll} for a precise statement):

\begin{corr*} Let $\mathcal B$ be a self-dual biextension over $S$ with
associated normal function $\nu$.  Then, the Chern form of the biextension
metric vanishes along every curve in the zero locus of $\nu$.
\end{corr*}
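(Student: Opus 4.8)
The plan is to combine the preceding Proposition on the Chern form of the biextension metric with the semi-negativity statement for self-dual variations, and then show that the vanishing of $\nu$ forces the relevant Hodge component $\gamma^{-1,0}$ to vanish along the zero locus. By the Proposition, the Chern form equals $\tfrac12[\gamma^{-1,0},\bar\gamma^{-1,0}]\,ds\wedge\overline{ds}$, where $\gamma^{-1,0}$ is the $(-1,0)$-Hodge component of $\varphi_*(d/ds)$. Since a normal function is by \eqref{eqn:nf-2} an extension of $\Z(0)$ by $\mathcal H$, its derivative lives in $\Hom(\gr^W_0,\gr^W_{-1})$, and the extension class of $\nu(s)$ records precisely how far the mixed Hodge structure departs from being split. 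The key reduction is therefore: \emph{where $\nu$ vanishes, the off-diagonal block of $\varphi_*(d/ds)$ governing the $\gr^W_0$-to-$\gr^W_{-1}$ extension is zero}, so its $(-1,0)$-part $\gamma^{-1,0}$ vanishes and the bracket collapses.

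First I would recall the structure of the classifying space for the biextension: with graded quotients $\gr^W_0\cong\Z(0)$, $\gr^W_{-1}\cong\mathcal H_s$, $\gr^W_{-2}\cong\Z(1)$, the tangent element $\varphi_*(d/ds)\in\eg_\C$ decomposes into blocks according to the weight filtration, and $\gamma^{-1,0}$ sits in the Hodge-graded piece of the block $\Hom(\gr^W_0,\gr^W_{-1})$ of type $(-1,0)$. Next I would make precise the statement that along the zero locus $Z=\{\nu=0\}$ the normal function is the trivial (split) extension, so the corresponding component of the connection that measures the $\gr^W_0/\gr^W_{-1}$ extension — equivalently, the derivative of the Abel--Jacobi class \eqref{eqn:AJ-map} — vanishes on $Z$. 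This is where one uses that $\nu$ is holomorphic and that its derivative along $Z$ is tangent to $Z$; the self-duality then ensures the $\gr^W_{-1}/\gr^W_{-2}$ block is controlled by the dual of the same data, so it too drops out.

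Concretely, I would argue that $\nu|_Z\equiv 0$ implies $\varphi_*(d/ds)$ has no component in the $(-1,0)$ part of $\Hom(\gr^W_0,\gr^W_{-1})$ when restricted to directions tangent to $Z$, because that component is exactly the derivative of the extension data defining $\nu$. Hence $\gamma^{-1,0}=0$ on $Z$, and substituting into the formula from the Proposition gives $-\tfrac{1}{2\pi\ii}\del\bar\del h=\tfrac12[0,\bar 0]\,ds\wedge\overline{ds}=0$ along every curve in $Z$. I would phrase the final step as evaluating the $(1,1)$-form on tangent vectors to such a curve, using that the restriction of a holomorphic normal function to its zero locus is identically zero to finitely many orders, so that the derivative $\varphi_*(d/ds)$ in the direction of the curve has vanishing $(-1,0)$-extension block.

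The main obstacle I anticipate is making rigorous the identification of $\gamma^{-1,0}$ with the derivative of the extension class of $\nu$, and showing that this derivative genuinely vanishes \emph{tangentially} along $Z$ rather than merely that $\nu$ itself vanishes — one must handle the possibility that $\gamma^{-1,0}$ picks up contributions from the variation of $\mathcal H$ or from the dual extension. The self-duality hypothesis is what neutralizes the dual contribution, forcing the biextension's two extension blocks to be negatives of one another under the pairing, so that the bracket $[\gamma^{-1,0},\bar\gamma^{-1,0}]$ is controlled entirely by the single class $\nu$; checking that this cancellation is exact, and that it survives restriction to an arbitrary curve in $Z$, is the delicate point.
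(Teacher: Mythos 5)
Your proposal is correct and follows essentially the same route as the paper: Theorem~\ref{plurisubharmonicity} identifies $t=\gamma^{-1,0}(e(0))$ with the infinitesimal invariant $\del\nu$ and uses self-duality to express the bracket $[\gamma^{-1,0},\bar\gamma^{-1,0}]e(0)$ as $2\ii\,h(t,t)$, so that along a curve in the zero locus of $\nu$ the tangential derivative of $\nu$, hence $t$, hence the Chern form, vanishes. Your observation that self-duality is what disposes of the $\gr^W_{-1}\to\gr^W_{-2}$ block is exactly the point the paper makes in the proof of that theorem.
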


\par
\changed{}{
\noindent As a final remark, we want to mention that  the
 asymptotic behavior of the biextension metric
is  related to} the Hodge conjecture:  Let $L$ be a very ample line
bundle on a smooth complex projective variety $X$ of dimension $2n$
and $\bar P$ be the space of hyperplane sections of $X$.  Then, over
the locus of smooth hyperplane sections $P\subset\bar P$, we have
a natural variation of pure Hodge structure $\mathcal H$ of weight
$2n-1$.  Starting from a primitive integral, non-torsion Hodge class
$\zeta$ of type $(n,n)$ on $X$, we can then construct an associated
normal function $\nu_{\zeta}$ by taking a lift of $\zeta$ to Deligne
cohomology. The Hodge conjecture is then equivalent~\cite{gg,bfnp}
to the existence of singularities of the normal function $\nu_{\zeta}$
(after passage to sufficiently ample $L$).  \changed{}{In~\cite{bp}, it is
  shown} that the existence of singularities of $\nu_{\zeta}$ is
detected by the failure of the biextension metric to have a
smooth extension to $\bar P$.

\end{enumerate}

%

\subsection{Structure}  We start properly in \S \ref{sec:class}  and summarize the basic
properties of the classifying spaces of graded-polarized mixed Hodge
structures following~\cite{higgs}
and compute the dependence of the
bigrading \eqref{eqn:DelSplit} on $F\in D$ up to second order.  Using
these results, we then compute the curvature tensor and the holomorphic sectional curvature of $D$ in \S \ref{sec:curv}--\ref{sec:holseccurv}.

\par

 \par In \S\ref{sec:hb} and  \S\ref{sec:biexts} we compute the curvature of the Hodge bundles and the
biextension metrics using similar techniques. Likewise, in \S \ref{sec:kaehler} we
use the computations of \S \ref{sec:holseccurv}  to determine when the pull back of the
mixed Hodge metric along a period map is K\"ahler.     In \S\ref{defoinAb} we show how these calculations apply to particular
situations of geometric interest.

\par In \S \ref{sec:reductive}, we construct a classifying space $D$ which is a reductive
domain such that its natural complex structure is not compatible with the
usual complex structure making the Hodge metric a hermitian equivariant
metric.   So  the Chern connection for the Hodge metric is not the same as
the one coming from the Maurer-Cartan form on $G_\C$.  This makes the
calculations in the mixed setting intrinsically more involved than in the
pure case, even in the case of a split mixed domain.

\par
In Appendix~\ref{Appendix A} we compute the Levi-Civita connection for the Hodge metric.
In general it does not conserve the splitting of the complex tangent bundle into the holomorphic and anti-holomorphic
parts which makes the formulas more complicated than the one for the Chern connection. Nevertheless  in certain cases it simplifies which has in favorable cases consequences for the curvature and for geodesics (Cor.~\ref{LC4}).

\medskip
\noindent\textbf{Acknowledgements.}  Clearly, we should  first and foremost
 thank A. Kaplan  for his  ideas  concerning mixed domains and their metrics.

Next, we want to thank   Ph.~Eyssidieux , P.~Griffiths, S.~Grushevsky, R.~Hain,
C.~Hertling,  J. M.~Landsberg and  C.~Robles for their interest and pertinent
remarks.

The cooperation resulting in this paper started during a visit of the first
author to  the University of Grenoble; he expresses his thanks for its
hospitality.  The work of the first author was also partially supported
by NSF grant DMS 1361120.

\section{Classifying Spaces} \label{sec:class}

\subsection{Homogeneous Structure}\label{ssec:homstr}
We begin this section by reviewing some material on classifying
spaces of graded-polarized mixed Hodge structure \cite{usui} which appears in
\cite{higgs,dmj,sl2anddeg}.  Namely, in analogy with the pure case,
given a graded-polarized mixed Hodge structure $(F,W)$ with underlying real
vector space $V_\R$, the associated classifying space $D$ consists of all
decreasing filtrations of $V_{\C}$ which pair with $W$ to define a
graded-polarized mixed Hodge structure with the same graded Hodge numbers
as $(F,W)$.  The data {for} $D$ is therefore
$$
      (V_{\R},W_{\bullet},\{Q_{\bullet}\},h^{\bullet,\bullet})
$$
where $W_{\bullet}$ is the weight
filtration, $\{Q_{\bullet}\}$ are the graded-polarizations and
$h^{\bullet,\bullet}$ are the graded Hodge numbers.

\par To continue, we recall that given a point $F\in D$ the associated
bigrading \eqref{eqn:DelSplit} gives a functorial isomorphism
$
      V_{\C}\cong \gr^W
$
which sends $I^{p,q}$ to $H^{p,q}\subseteq \gr^W_{p+q}$ via the quotient map.
The pullback of the standard Hodge metrics on $\gr^W$ via this isomorphism
then defines a mixed Hodge metric on $V_{\C}$ which makes the bigrading
\eqref{eqn:DelSplit} orthogonal and satisfies
$$
       h_F(u,v) = \ii^{p-q}Q_{p+q}([u],[\bar v])
$$
if $u$, $v\in I^{p,q}$.  By functoriality, the point $F\in D$ induces a
mixed Hodge structure on $\text{End}(V)$ with bigrading
\begin{equation}
        \text{End}(V_{\mathbb C})
         = \bigoplus_{r,s}\, \text{End}(V)^{r,s}
         \label{eqn:induced-bigrading}
\end{equation}
which is orthogonal with respect the associated metric
\begin{equation}
         h_F(\alpha,\beta) = \tr(\alpha \beta^*)   \label{eqn:mhm}
\end{equation}
where $\beta^*$ is the adjoint of $\beta$ with respect to $h$.

\par Let $\text{GL}(V_{\C})^W\subset\text{GL}(V_{\C})$ denote
the Lie group of complex linear automorphisms of $V_{\C}$ which preserve the
weight filtration $W$.  For $g\in\text{GL}(V_{\C})^W$ we let
$Gr(g)$ denote the induced linear map on $\gr^W$.  Let $G_{\mathbb C}$ be
the subgroup consisting of elements which induce complex automorphisms of the
graded-polarizations of $W$, and $G_{\R} = G_{\mathbb C}\cap GL(V_{\mathbb R})$.

\par In the pure case, $G_{\R}$ acts transitively on the classifying space
and $G_{\C}$ acts transitively on the compact dual.  The mixed case is
slightly more intricate:  Let $G$ denote the subgroup of elements of $G_{\C}$
which act by real transformations on $\gr^W$.   Then,
$$
       G_{\R}\subset G \subset G_{\C}
$$
and we have the following result:

\begin{thm}[\protect{\cite[\S 3]{higgs}}]  The classifying space  $D$ is a
complex manifold upon which $G$ acts transitively by biholomorphisms.
\end{thm}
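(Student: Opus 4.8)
The plan is to realize $D$ as an open subset of a homogeneous complex manifold, the \emph{compact dual} $\check D$, exactly as in the pure case recalled above, and then to transfer the action. First I would define $\check D$ as the set of all decreasing filtrations $F$ of $V_\C$ with the prescribed graded dimensions whose induced filtration $F_k$ on each $\gr^W_k$ satisfies the first Riemann bilinear relation relative to $Q_k$. The assignment $\pi(F) = (F_k)_k$ gives a projection $\pi:\check D\to\prod_k \check D_k$ onto the product of the pure (projective) compact duals. Using the surjection $Gr:G_\C\twoheadrightarrow\prod_k G_{\C,k}$, whose kernel $N$ is the unipotent subgroup of automorphisms inducing the identity on $\gr^W$, together with the classical transitivity of $\prod_k G_{\C,k}$ on $\prod_k\check D_k$, one sees that $G_\C$ acts transitively on $\check D$ and that $N$ acts transitively on each fiber of $\pi$; hence $\check D$ is a homogeneous complex manifold on which $G_\C$ acts by biholomorphisms. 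The subset $D\subset\check D$ is then cut out by the open condition that each $F_k$ be a genuine $Q_k$-polarized Hodge structure (the second bilinear relation, a positivity condition), so $D$ is open in $\check D$ and inherits a complex structure.

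Next I would verify that $G$ preserves $D$ and acts by biholomorphisms. Since $G\subset G_\C$ and $G_\C$ acts biholomorphically on $\check D$, it suffices to check $G\cdot D\subseteq D$. For $g\in G$ the induced map $Gr(g)$ lies in $\prod_k G_{\R,k}$, i.e. it is a real automorphism preserving each $Q_k$, and therefore carries any $Q_k$-polarized Hodge structure to another one with the same Hodge numbers. Applying this on every graded quotient shows $g\cdot F\in D$ whenever $F\in D$, and restricting the biholomorphic $G_\C$-action to the $G$-stable open set $D$ yields the desired biholomorphic $G$-action.

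The heart of the argument is transitivity of $G$ on $D$, for which I would restrict the fibration to $\pi:D\to\prod_k D_k$, whose base is the product of the genuine pure domains $D_k\subset\check D_k$, and split the problem into a graded and an extension part. For the graded part, I would note that $Gr:G\twoheadrightarrow\prod_k G_{\R,k}$ is surjective: any tuple of real polarized automorphisms of the graded pieces lifts, via a choice of splitting of $W$, to a block-diagonal element of $G_\C$ acting by exactly those real transformations on $\gr^W$, which therefore lies in $G$; combined with the transitivity of each $G_{\R,k}$ on $D_k$ (the pure case stated above), this gives transitivity of $G$ on the base. For the extension part, the fibers of $\pi$ are the $N$-orbits of the previous paragraph, with $N=\exp(W_{-1}\Lie{G_\C})\subset G$ acting transitively on each. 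Given $F,F'\in D$, I would first use graded transitivity to find $g\in G$ with $g\cdot F$ and $F'$ in the same fiber of $\pi$, and then use fiber transitivity to find $n\in N$ with $ng\cdot F=F'$; since $ng\in G$, transitivity follows.

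I expect the main obstacle to be the careful setup of the fibration and of the extension part: showing that the fibers of $\pi$ are precisely the orbits of the unipotent group $N$ (equivalently, that any two lifts to $V_\C$ of a fixed graded filtration compatible with $W$ differ by an automorphism inducing the identity on $\gr^W$), and identifying this kernel with $\exp(W_{-1}\Lie{G_\C})$ so that $N\subset G$. The dimension bookkeeping making the fibers affine of the expected dimension, and the check that the complex structure on $D$ inherited from $\check D$ is the one for which geometric period maps are holomorphic, are the delicate points; by contrast the graded transitivity and the biholomorphy both reduce cleanly to the pure case.
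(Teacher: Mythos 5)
Your proposal is correct and follows essentially the same route as the source the paper cites for this theorem (the paper gives no proof of its own, deferring to \cite[\S 3]{higgs}): realize $D$ as an open subset of the compact dual $\check D\subset\mathrm{Flag}(D)$, and obtain transitivity of $G$ by combining surjectivity of $Gr:G\to\prod_k G_{\R,k}$ with transitivity of the pure groups on the $D_k$ and of the unipotent kernel $N=\exp(W_{-1}\mathfrak{gl}(V_\C))\subset G$ on the fibers of $\pi$. The one step you flag but do not carry out --- that the fibers of $\pi$ are single $N$-orbits --- is indeed the standard lifting lemma for filtrations compatible with $W$ and holds as you expect.
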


\begin{rmq}   Hertling~\cite{pmhs} defines a period domain   of polarized mixed Hodge structures
on a fixed real vector space $V$ equipped with a polarization $Q$ and  weight filtration induced by a nilpotent  infinitesimal isometry
$N$ of $(V,Q)$.  The difference with our approach  is that the latter domain is homogeneous under the subgroup of $G$ consisting of elements commuting with $N$. So  in a natural way it is a submanifold of our domain.
\end{rmq}

\subsection{Hodge Metric on the Lie Algebra} \label{ssec:tang}
Let $\eg_{\R} = \text{Lie}(G_{\R})$ and $\eg_{\C} = \text{Lie}(G_{\C})$.
By functoriality, any point $F\in D$ induces a mixed Hodge structure
on $\eg_{\C}=\eg_{\R}\otimes\C$ with bigrading inherited from the one on
$\End(V_\C)$, i.e. $\eg ^{r,s} = \eg_\C\cap \End(V)^{r,s}$. For future
reference, we note that:
\begin{itemize}
\item $\eg_\C\cap \End(V)^{r,s}=0$ if $r+s>0$;
\item $W_{-1}\End(V) \subset \eg_{\C}$.
\item The orthogonal decomposition
\begin{equation}
     \End(V_{\C}) = \eg_{\C}\oplus\eg_{\C}^{\perp}      \label{eqn:orth-1}
\end{equation}
induces a decomposition
\begin{equation}
       \End(V)^{p,-p} = \eg_{\C}^{p,-p}\oplus (\eg_{\C}^{\perp})^{p,-p}
       \label{eqn:orth-2}
\end{equation}
\item Let $*$ denote ajoint with respect to the metric $h_F$.  Then,
\begin{equation}
           *:\End(V)^{p,q}\to \End(V)^{-p,-q} ;     \label{eqn:adjoint-pq}
\end{equation}
\item By Lemma \eqref{lemma:adjoint} below $\alpha\in\eg^{p,-p}\implies\alpha^*\in\eg^{-p,p}$.
\end{itemize}


\begin{rmk} In general, for a mixed Hodge structure which is not split over
$\mathbb R$, the operations of taking adjoint with respect to the mixed
Hodge metric and complex conjugate do not commute.
\end{rmk}

\par Let $\text{Flag}(D)$ denote the flag variety containing $D$, i.e.
the set of all complex flags of $V_{\C}$ with the same rank sequence
as the flags parametrized by $D$.  Then, since $G\subset G_{\C}$ acts
transitively on $D$, it follows that the orbit of any point $F\in D$
under $G_{\C}$ gives a well defined \lq\lq compact dual\rq\rq{}
$\check D\subset\text{Flag}(D)$ upon which $G_{\C}$ acts transitively
by homeomorphisms:
\begin{equation}
\check D = G_\C/G_\C^F.\label{eqn:CompactDual}
\end{equation}

\begin{rmk} As in the pure case, $D$ is an open subset of
$\check D$ with respect to the analytic topology.  In the mixed case
however, $\check D$ is usually not compact: in Example~\ref{ssec:exmples}.3
one has $G=G_\C$ and hence $D=\check D= \C\subset \text{Flag}(D)=\bP^1$.
One could consider the
closure of $\check D$ in the ambient flag variety to obtain a
compact object, but  as the example shows, this need not be a homogeneous space for
$G_{\C}$.
\end{rmk}

\begin{rmk} \label{mMTD} In analogy with the above, one defines the \emph{mixed Mumford--Tate
domains} as follows:  Let $(F,W)$ be a graded-polarized mixed Hodge structure
with MT group $M$ and $M_{\text { split}}$ be the direct sum of the Mumford--Tate
groups of the associated pure Hodge structures on $Gr^W$.  Then, $M$ is an
extension of $M_{\text{split}}$ by a unipotent group $U$.   Let $\mathfrak u$ denote
the Lie algebra of $U(\mathbb C)$ viewed as a real Lie algebra and
$\mathfrak m_{\mathbb R}$ denote the Lie algebra of $M(\mathbb R)$.  Let $G_M$
denote the real Lie group with Lie algebra $\mathfrak u + \mathfrak m_{\mathbb R}$
viewed as a real subalgebra of $\Lie{M({\C})}$.  Then, the associated
mixed Mumford--Tate domain $D_M$ is the orbit of $F$ under $G_M$. 

The proof that $D_M$ is a complex manifold is parallel to the
proof for $D$: The compact dual $\check D_M$ is the complex homogeneous
space defined by the orbit of $F$ under $M(\mathbb C)$, and hence it is
sufficient to check that there exists a neighborhood $O$ of $1\in M(\mathbb C)$
such that $O\cdot F\subset D_M$.

It follows that in subsequent calculations we may replace  $\eg_\C$ by $\Lie{M(\C)}$.
\end{rmk}

By the defining properties of the bigrading~\eqref{eqn:DelSplit},
it follows that
\begin{equation}
     \eg_{\C}^F = \bigoplus_{r\geq 0}\, \eg^{r,s}    \label{eqn:isotopy-alg}
\end{equation}
is the Lie algebra of the stabilizer of $F\in D$ with respect to the
action of $G_{\C}$ on $\check D$.  Accordingly,
\begin{equation}
      \eq_F = \bigoplus_{r<0}\, \eg^{r,s}       \label{eqn:tang-alg}
\end{equation}
is a vector space complement to $\eg_{\C}^F$ in $\eq_{\C}$ and hence:

\begin{lemma}\label{TangentIdent}  The map
$$
        u\in \eg_{\C} \mapsto \gamma_*(d/dt)_0,\qquad \gamma(t) = e^{tu}\cdot F
$$
determines an isomorphism between $\eq_F$ and $T^{\rm{hol}}_F(D)$.
\end{lemma}

The preceding Lemma gives a way to induce a hermitian metric on the tangent bundle $T(D)$:
\begin{dfn} \label{HMonD} The isomorphism \eqref{eqn:tang-alg} provides  $D$ with a  metric, the \emph{Hodge metric}.
\end{dfn}

\par For $F\in D$ let $\pi_{\eq}$ denote orthogonal projection $\End(V_{\C})\to\eg_{\C}$.  We note
that the restriction of $\pi_{\eq}$ to $\eg_{\C}$ is just projection with respect to the decompostion
\begin{equation}
            \eg_{\C} = \eg_{\C}^F\oplus\eq_F .         \label{eqn:gc-decomp}
\end{equation}

\begin{lemma} \label{compatibilities} Let $f\in \eg_\C^F$. Then,
\begin{equation}
\pi_{\eq}\comp (\ad{f})^n
    = \left(\pi_{\eq}\comp  \ad f\right)^n \label{eqn:condense-1}
\end{equation}
as linear operators on $\eg_\C$.
\end{lemma}
\proof
Induct on $n$, with the base case $n=1$ a tautology. Observe that
\begin{equation}
         (\ad{f})^{n} u = v + w .              \label{eqn:induct-1}
\end{equation}
with $v\in\eq_F$ and $w\in \eg^F_{\C}$.  Therefore,
$
       (\ad{f})^{n+1} u = [f,v] + [f,w]
$
and hence
\begin{equation}
        \pi_{\eq}((\ad{f})^{n+1} u) = \pi_{\eq}[f,v].  \label{eqn:induct-2}
\end{equation}
By equation \eqref{eqn:induct-1}, $v=\pi_{\eq}((\ad{f})^{n} u)$ which is
equal to $(\pi_{\eq}\comp \ad{f})^n u$ by induction.  Substituting this
identity into \eqref{eqn:induct-2} gives
$$
       \pi_{\eq}((\ad{f})^{n+1} u) = (\pi_{\eq}\comp \ad{f})^{n+1} u. \hfill\qed
$$

\endproof


\par Before stating the next result, we emphasize that unlike the pure case, the operation of taking
adjoint with respect to the mixed Hodge metric does not preserve $\eg_{\C}$.  Therefore, the statement
and proof of the next result all occur in the Lie algebra $\End(V)$.

\begin{corr} Let $f\in \eg_\C^F$ and $v$, $w\in\eq$.  Then,
\begin{equation}
   h_F(v,\exp(\pi_{\eq}\comp \ad f)w) = h_F(\exp(\pi_{\eq} \comp \ad f^*)v,w)
   \label{eqn:condense-2}
\end{equation}
\end{corr}
\begin{proof} It is sufficient to prove
$$
        h_F(v, (\pi_{\eq}\comp \ad f)^m\, w) = h_F((\pi_{\eq} \comp \ad f^*)^m\, v, w)
$$
We induct on $m$.  For $m=1$ we have
$$
       h_F(v,\pi_{\eq}[f,w]) = h_F(v,[f,w]) = h_F([f^*,v],w) = h_F(\pi_{\eq}[f^*,v],w)
$$
since $[f,w] = w' + w''$ with $w'\in\eq$ and $w''\in \eq^{\perp}$, which justifies
$$
       h_F(v,\pi_{\eq}[f,w]) = h_F(v,[f,w]) = h_F([f^*,v],w)
$$
Likewise, $[f^*,v] = v' + v''$ with $v'\in\eq$ and $v''\in\eq^{\perp}$ and so
$$
         h_F([f^*,v],w) = h_F(\pi_{\eq}[f^*,v],w)
$$
Since at each stage we project onto $\eq$, passage from $m$ to $m+1$ follows from the
formula for $m=1$.
\end{proof}

\par Define
\begin{equation}\label{eqn:lambda}
 \Lambda = \bigoplus_{r,s<0}\, \eg^{r,s}
\end{equation}
and note that since the conjugation condition appearing in
\eqref{eqn:DelSplit} can be recast as
\begin{equation} \label{eqn:UnderConju}
\bar \eg^{p,q}\subset \eg^{q,p}+[\Lambda, \eg^{q,p}],
\end{equation}
it follows that $\Lambda$ has a real form
\begin{equation}\label{eqn:LambdaRealForm}
\Lambda_{\R} = \Lambda\cap\eg_{\R}.
\end{equation}
\begin{lemma}[\protect{\cite[Lemma 4.11]{higgs}}] \label{higgs} If
$g\in G_\R\cup\exp(\Lambda)$ then
$$
      g (I_F^{p,q})= I_{g\cdot F}^{p,q}.
$$
\end{lemma}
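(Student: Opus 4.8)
The plan is to exploit the uniqueness of the Deligne bigrading recalled around \eqref{eqn:DelSplit}: the subspaces $I^{p,q}_{(F,W)}$ are the \emph{unique} bigrading of $V_\C$ satisfying $F^p = \bigoplus_{a\ge p} I^{a,b}$, $W_k = \bigoplus_{a+b\le k} I^{a,b}$, and the conjugation relation $\overline{I^{p,q}}\equiv I^{q,p}\pmod{\bigoplus_{a<q,b<p} I^{a,b}}$. Accordingly I set $\tilde I^{p,q} := g\cdot I_F^{p,q}$ and aim to show that this family is precisely the Deligne bigrading of $(g\cdot F, W)$; the lemma then follows at once. Since $g$ is a linear isomorphism, $V_\C = \bigoplus_{p,q}\tilde I^{p,q}$ and $g\bigl(\bigoplus_{(a,b)\in S} I_F^{a,b}\bigr) = \bigoplus_{(a,b)\in S}\tilde I^{a,b}$ for any index set $S$, a fact I use repeatedly.

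The two filtration conditions are immediate. Applying $g$ to $F^p = \bigoplus_{a\ge p} I_F^{a,b}$ gives $(g\cdot F)^p = \bigoplus_{a\ge p}\tilde I^{a,b}$ by the definition of the action of $g$ on filtrations. For the weight filtration I use that $g\in G_\R\cup\exp(\Lambda)$ preserves $W$: for $g = e^\lambda\in\exp(\Lambda)$ the generator $\lambda\in\bigoplus_{r,s<0}\eg^{r,s}$ strictly lowers weights, so $g(W_k)=W_k$, whence $\bigoplus_{a+b\le k}\tilde I^{a,b} = g(W_k) = W_k$.

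The conjugation condition is the only substantive point. When $g\in G_\R$ the argument is short: $g$ commutes with complex conjugation, so $\overline{\tilde I^{p,q}} = g(\overline{I_F^{p,q}})$, and applying $g$ to $\overline{I_F^{p,q}}\equiv I_F^{q,p}\pmod{\bigoplus_{a<q,b<p}I_F^{a,b}}$ yields the desired relation for $\tilde I$. The real case to handle is $g = e^\lambda\in\exp(\Lambda)$, where $\lambda$ is not real and $\overline{\tilde I^{p,q}} = e^{\bar\lambda}(\overline{I_F^{p,q}})$ involves $\bar\lambda$ rather than $\lambda$. Here the observation I expect to be the crux is that complex conjugation preserves $\Lambda$, i.e. $\overline\Lambda\subseteq\Lambda$: this follows from \eqref{eqn:UnderConju}, since for $r,s<0$ one has $\overline{\eg^{r,s}}\subseteq \eg^{s,r} + [\Lambda,\eg^{s,r}]$ and both $\eg^{s,r}$ and $[\Lambda,\eg^{s,r}]$ lie in $\Lambda$; hence $\bar\lambda\in\Lambda$.

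Granting $\bar\lambda\in\Lambda$, both $\lambda$ and $\bar\lambda$ strictly decrease each index, so both $e^\lambda$ and $e^{\bar\lambda}$ preserve every subspace $\bigoplus_{a<q,b<p}I_F^{a,b}$ and induce the identity on $I_F^{q,p}$ modulo such a subspace. Writing a generic $v\in\overline{I_F^{p,q}}$ as $v = w + x$ with $w\in I_F^{q,p}$ and $x\in\bigoplus_{a<q,b<p}I_F^{a,b}$, I obtain $e^{\bar\lambda}(v)\equiv w\equiv e^\lambda(w)\pmod{\bigoplus_{a<q,b<p}I_F^{a,b}}$, while $e^\lambda(w)$ ranges over $\tilde I^{q,p}$ as $w$ ranges over $I_F^{q,p}$. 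Since $\bigoplus_{a<q,b<p}I_F^{a,b} = \bigoplus_{a<q,b<p}\tilde I^{a,b}$, this is exactly $\overline{\tilde I^{p,q}}\equiv \tilde I^{q,p}\pmod{\bigoplus_{a<q,b<p}\tilde I^{a,b}}$, completing the verification. The discrepancy between $\lambda$ and $\bar\lambda$ thus washes out modulo the lower-index terms; this is the step to watch, and it is precisely where the definition of $\Lambda$ by \emph{strictly} negative bidegrees, rather than merely negative weight, is essential.
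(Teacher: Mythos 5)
Your proof is correct. Note that the paper itself gives no proof of this lemma --- it is quoted directly from \cite[Lemma 4.11]{higgs} --- and your argument (transport the bigrading by $g$, verify the three defining properties of the Deligne splitting of $(g\cdot F,W)$, including the key points that $\bar\Lambda\subseteq\Lambda$ and that elements of $\Lambda$ strictly lower both indices so the $\lambda$ versus $\bar\lambda$ discrepancy disappears modulo $\bigoplus_{a<q,b<p}I^{a,b}$, then invoke uniqueness) is precisely the standard proof given in that reference.
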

%
Recall that a mixed Hodge structure $(F,W)$ is said to be \emph{split
over $\R$} if
$$
      \overline{I^{p,q}} = I^{q,p}.
$$
Those mixed Hodge structures make up a real  analytic subvariety $D_\R\subset D$.
To any given mixed Hodge structure $(F,W)$, one associates a  special split real mixed Hodge structure  $\hat F=e_F\cdot F$
as follows.
\begin{prop}[\protect{\cite[Prop. 2.20]{degeneration}}] \label{SplitMHS} Given
a mixed Hodge structure   there is a unique $\delta \in \Lambda_\R $ such that
the spaces $\hat I^{p,q} =\exp{(-\ii \delta)}I^{p,q}$ give  the   splitting of a
split real mixed Hodge structure $\hat F= e_F\cdot F$, the \emph{Deligne splitting}.
\end{prop}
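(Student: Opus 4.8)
The plan is to realize the split real mixed Hodge structure as the image of $(F,W)$ under $\exp(-\ii\delta)$ for a suitable real $\delta\in\Lambda$, and to pin down $\delta$ by comparing the Deligne bigrading $\{I^{p,q}\}$ with its complex conjugate. First I would record the reduction: for $\delta\in\Lambda_\R$ one has $-\ii\delta\in\Lambda$, so $\exp(-\ii\delta)\in\exp(\Lambda)$, and Lemma~\ref{higgs} gives $\hat I^{p,q}:=\exp(-\ii\delta)(I^{p,q})=I^{p,q}_{\hat F}$ with $\hat F=\exp(-\ii\delta)\cdot F$. Thus $\hat I^{p,q}$ is automatically the Deligne bigrading of $\hat F$, and the statement reduces to producing a \emph{unique} real $\delta\in\Lambda$ for which $\{\hat I^{p,q}\}$ is conjugation symmetric, i.e. $\overline{\hat I^{p,q}}=\hat I^{q,p}$. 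That the correction should lie in $\Lambda=\bigoplus_{r,s<0}\eg^{r,s}$ (see \eqref{eqn:lambda}) is natural: the pure Hodge structures on $\gr^W$ are already split, so the only obstruction to splitting $(F,W)$ is concentrated in the strictly negative bidegrees governed by $\Lambda$, exactly as encoded infinitesimally by \eqref{eqn:UnderConju}.

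The heart of the argument is to extract $\delta$ from a single comparison element. The congruence defining the Deligne bigrading, $\overline{I^{p,q}}\equiv I^{q,p}$ modulo $\bigoplus_{a<q,\,b<p}I^{a,b}$, says precisely that $\{\overline{I^{p,q}}\}$ is obtained from $\{I^{q,p}\}$ by a transformation that is the identity plus strictly lower bidegree terms. Since $\exp(\Lambda)$ acts faithfully on the bigradings compatible with $W$ and with the fixed Hodge structure on $\gr^W$, there is a unique $\lambda\in\Lambda$ with
\[
\overline{I^{p,q}}=\exp(\lambda)(I^{q,p})\qquad\text{for all }p,q.
\]
Applying complex conjugation to this identity and using it again with $p,q$ interchanged shows that $\exp(\bar\lambda)\exp(\lambda)$ fixes every $I^{p,q}$; by the same faithfulness this forces $\bar\lambda=-\lambda$. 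Here only ordinary conjugation enters, so the failure of conjugation to commute with the adjoint, noted in the Remark above, does not intervene.

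I would then set $\delta=\tfrac{\ii}{2}\lambda$. Reality is immediate, since $\bar\delta=-\tfrac{\ii}{2}\bar\lambda=\tfrac{\ii}{2}\lambda=\delta$, so $\delta\in\Lambda_\R$ by \eqref{eqn:LambdaRealForm}. To verify the split condition one computes $\overline{\hat I^{p,q}}=\exp(\ii\delta)\,\overline{I^{p,q}}=\exp(\ii\delta)\exp(\lambda)(I^{q,p})$, so the requirement $\overline{\hat I^{p,q}}=\hat I^{q,p}=\exp(-\ii\delta)(I^{q,p})$ is equivalent to $\exp(2\ii\delta)\exp(\lambda)$ fixing each $I^{q,p}$, i.e. to $\exp(2\ii\delta)=\exp(-\lambda)$, which holds by the choice of $\delta$. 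Uniqueness runs the same computation in reverse: any real $\delta'$ yielding a split structure satisfies $\exp(2\ii\delta')=\exp(-\lambda)$ with the \emph{same} $\lambda$, and uniqueness of logarithms in the simply connected nilpotent group $\exp(\Lambda)$ gives $\delta'=\delta$. The main obstacle is the structural input invoked twice above, namely that two bigradings of $V_\C$ inducing the same $W$ and the same Hodge structure on $\gr^W$ differ by a unique element of $\exp(\Lambda)$; I would establish this by induction along the finite descending filtration of $\Lambda$ by total bidegree, solving for the comparison element one bidegree at a time, with the nilpotency of $\Lambda$ guaranteeing both solvability and uniqueness.
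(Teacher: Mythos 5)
The paper does not prove this statement at all: it is quoted verbatim from Cattani--Kaplan--Schmid \cite[Prop.\ 2.20]{degeneration}, so there is no internal proof to compare against. Your reconstruction is essentially the standard argument and it is correct in all its main steps: reducing via Lemma~\ref{higgs} to making the bigrading conjugation-symmetric, extracting a unique comparison element $\lambda\in\Lambda$ from $\overline{I^{p,q}}=\exp(\lambda)(I^{q,p})$, deducing $\bar\lambda=-\lambda$, setting $\delta=\tfrac{\ii}{2}\lambda$, and running the computation backwards for uniqueness. The checks I would add are routine: $\bar\lambda\in\Lambda$ (which follows from \eqref{eqn:UnderConju} since all bidegrees stay strictly negative), and $\Lambda=\bigoplus_{r,s<0}\End(V)^{r,s}$ because everything of negative total weight already lies in $\eg_\C$.

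One caveat on the "structural input" as you phrase it in the closing paragraph: the claim that \emph{any} two bigradings inducing the same $W$ and the same Hodge structure on $\gr^W$ differ by a unique element of $\exp(\Lambda)$ is false. Two such bigradings need only differ by $1+n$ with $n\in\bigoplus_{r+s<0}\End(V)^{r,s}$; the components with, say, $r<0$ but $s=0$ cannot be absorbed into $\exp(\Lambda)$ (e.g.\ perturb $I^{1,0}$ by a map into $I^{0,0}$ in a two-weight example: no element of $\Lambda$ can reproduce this). What makes your argument work is the \emph{stronger} congruence $\overline{I^{p,q}}\equiv I^{q,p}$ modulo $\bigoplus_{a<q,\,b<p}I^{a,b}$ --- both indices strictly dropping --- which you do invoke at the point where $\lambda$ is constructed; with that hypothesis the bidegree-by-bidegree induction produces the unique $\lambda\in\Lambda$, and uniqueness follows because an element of $\exp(\Lambda)$ fixing every $I^{p,q}$ is the identity. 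So the proof is sound as applied, but the auxiliary lemma should be stated with the two-index congruence rather than merely "same $W$ and same $\gr^W$".
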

\par A \emph{splitting operation}  is  a particular type of fibration $D\to D_\R$
of $D$ over the locus of split mixed Hodge structures (cf.
\cite[Theorem (2.15)]{sl2anddeg}).   Our calculations below use the following result due to Kaplan:

\begin{thm}[\protect{\cite{kaplan}}]\label{group-decomp} Given a choice of
splitting operation and choice of base point $F\in D$, for each element
$g\in G$ exists a distinguished decomposition
\[
    g = g_{\R}\exp{(\lambda)} f ,\qquad
    \lambda\in   \Lambda ,\quad
    g_{\R}\in G_{\R},
    \quad f\in \exp( W_{-1} \eg\el(V_{\C}))\cap G^F.
\]
Moreover, if the splitting operation is an analytic or $C^\infty$ map, the map
$(F,g)\mapsto (g_\R,e^\lambda, f)$ is analytic, respectively $C^\infty$.
\end{thm}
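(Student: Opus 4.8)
The plan is to deduce the factorization from the transitivity of the combined action of $G_\R$ and $\exp(\Lambda)$ on $D$, together with a precise analysis of the stabilizer $G^F$. Infinitesimally, the content of the theorem is a splitting of $\text{Lie}(G)$, as a real vector space, into the three summands $\eg_\R$, $\Lambda$ and $\eg_\C^F\cap W_{-1}\eg\el(V_\C)$ --- up to the overlap $G_\R\cap\exp(\Lambda)\supseteq\exp(\Lambda_\R)$, which the splitting operation is designed to remove. I would therefore first establish this infinitesimal statement, then promote it to the group level to get existence of the decomposition, and finally extract the regularity claim from an inverse function theorem argument.

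First I would record the infinitesimal transitivity. Writing $\pi_{\eq}$ for the projection onto $\eq_F$ of Lemma~\ref{TangentIdent}, Kaplan's observation is that $\pi_{\eq}(\eg_\R)+\pi_{\eq}(\Lambda)=\eq_F$, which one checks degree by degree using the bigrading $\eg_\C=\bigoplus_{r,s}\eg^{r,s}$ and the conjugation relation \eqref{eqn:UnderConju}: the components of $\eq_F$ with $r,s<0$ are reached by $\Lambda$, while those with $r<0\le s$ are reached by the real subalgebra $\eg_\R$ (their conjugates lie in the stabilizer range $r\ge 0$). Consequently, given $g\in G$, the point $g\cdot F$ can be written as $g_\R\exp(\lambda)\cdot F$; setting $f=\exp(-\lambda)\,g_\R^{-1}g$ then lands $f$ in $G^F$, whose Lie algebra is $\eg_\C^F=\bigoplus_{r\ge 0}\eg^{r,s}$ by \eqref{eqn:isotopy-alg}.

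The crux is the identification of $f$. Here one splits the stabilizer algebra as $\eg_\C^F=\bigl(\bigoplus_{p\ge 0}\eg^{p,-p}\bigr)\oplus\bigl(\eg_\C^F\cap W_{-1}\eg\el(V_\C)\bigr)$ and shows that the weight-zero summand $\bigoplus_{p\ge 0}\eg^{p,-p}$ is already absorbed by the $G_\R$-factor, being the part of the isotropy that sits in the image of the real structure on $\gr^W$. Adjusting $g_\R$ within $G_\R$ by this weight-zero stabilizer then forces the residual $f$ to have trivial weight-zero graded part, i.e.\ $f\in\exp(W_{-1}\eg\el(V_\C))\cap G^F$. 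A dimension count over $\R$ confirms the picture: the sum of the real dimensions of the three factors exceeds $\dim_\R\text{Lie}(G)$ by exactly $\dim_\R\Lambda_\R=\dim_\C\Lambda$, reflecting the non-trivial intersection $G_\R\cap\exp(\Lambda)\supseteq\exp(\Lambda_\R)$. This redundancy is precisely what a choice of splitting operation removes: through the associated section $D\to D_\R$ and the canonical $\delta\in\Lambda_\R$ of Proposition~\ref{SplitMHS}, it normalizes $\lambda$ (equivalently $g_\R$) canonically, making the decomposition distinguished and hence well defined.

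For the regularity statement I would apply the inverse function theorem to the multiplication map $G_\R\times\Lambda\times\bigl(\exp(W_{-1}\eg\el(V_\C))\cap G^F\bigr)\to G$, once the $\Lambda_\R$-redundancy has been killed by the splitting operation: the Lie-algebra decomposition above shows that its differential is an isomorphism at the relevant points, so it is a local analytic (resp.\ $C^\infty$) diffeomorphism, and composing with the assumed analytic/$C^\infty$ splitting operation yields the asserted regularity of $(F,g)\mapsto(g_\R,e^\lambda,f)$. The hard part will be exactly the identification-and-rigidification step of the previous paragraph: in the mixed setting the isotropy is non-compact and $G_\R\cap\exp(\Lambda)$ is non-trivial, so --- unlike the Cartan-type decomposition available in the pure case --- the three subgroups do not form a genuine direct product, and one is forced to use the splitting operation to pin the factorization down. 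The degree-by-degree bookkeeping is further complicated by the failure of complex conjugation to commute with the mixed Hodge adjoint, so the transversality of the three summands must be verified with care.
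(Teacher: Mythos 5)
First, a caveat: the paper does not actually prove this statement --- it is quoted verbatim from Kaplan's unpublished notes \cite{kaplan} --- so there is no in-house proof to compare against; I can only measure your argument against the ingredients the paper does supply (the decomposition $\eg_\C=\eg_\R\oplus\Im(\eg^{0,0})\oplus\eun_+\oplus\ii\Lambda_\R$ of \cite[Thm.~4.6]{higgs} and its Corollary~4.7). Your skeleton --- infinitesimal three-fold splitting, excess of dimension exactly $\dim_\R\Lambda_\R$ coming from $G_\R\cap\exp(\Lambda)$, rigidification by the splitting operation, regularity via the inverse function theorem --- is the right shape, and your dimension count is correct.

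The genuine gap is in the step you yourself flag as the crux. You claim the weight-zero summand $\bigoplus_{p\ge 0}\eg^{p,-p}$ of $\eg_\C^F$ is ``already absorbed by the $G_\R$-factor''. It is not: that space is not contained in $\eg_\R$ (its real points reduce, modulo lower-order conjugation corrections, to $\eg^{0,0}\cap\eg_\R$), and Corollary~4.7 shows that for a general element of $G_\C$ near $1$ the residual stabilizer factor $\varphi$ \emph{necessarily} carries a purely imaginary $\eg^{0,0}$-component and weight-zero $\eun_+$-components that cannot be pushed into $G_\R\exp(\Lambda)$. What actually kills the weight-zero part of $f$ is the hypothesis $g\in G$ rather than $g\in G_\C$: only then is $Gr(f)=Gr(g_\R)^{-1}Gr(g)$ a \emph{real} automorphism of $\gr^W$ preserving $F\gr^W$, hence preserving the Hodge decomposition, and only then can it be split off and recombined with $g_\R$. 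Your argument never invokes this hypothesis, yet without it the statement is false. Even granting it, the absorption is not a one-line adjustment: the natural lift of $Gr(f)$ to $G^F$ (acting through the $I^{p,q}$) need not be real when $(F,W)$ is not $\R$-split, the natural real lift (through a real splitting of $W$) need not preserve $F$, and commuting the correction past $e^{\lambda}$ changes $\lambda$; one has to organize this as an induction down the weight filtration of $\eg\el(V_\C)$. Relatedly, the inverse function theorem only yields the factorization near $1\in G$; to get it for every $g\in G$ you need the global inputs (surjectivity of $Gr:G_\R\to$ the real graded automorphism group, and the fact that the residual decomposition takes place inside the unipotent group $\exp(W_{-1}\eg\el(V_\C))$, where exponential coordinates are global), which your ``promote to the group level'' does not supply.
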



\par Using the  identification of $T_FD$ with $\eq_F$ as given by Lemma~\ref{TangentIdent}, the mixed Hodge metric
\eqref{eqn:mhm} induces a hermitian structure on $D$.
In analogy with Lemma \eqref{higgs} and the fact that $G$ acts
by isometry on $\gr^W$ it follows that

\begin{lemma}[\cite{kaplan,sl2anddeg}] \label{KaplanChange} For any
$g=g_\R e^\lambda$, $g_\R\in G_\R$, $\lambda\in \Lambda$, the mixed Hodge
metric on $\eg_\C$ changes equivariantly:
\[
  h_{g\cdot F }( \Ad{g}\alpha ,\Ad {g}\beta )= h_F(\alpha ,\beta ) ,
    \,\forall \alpha ,\beta \in \eg.
\]
and hence $g:T_F(D)\to T_{g\cdot F}(D)$ is an isometry.
\end{lemma}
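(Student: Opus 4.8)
The plan is to reduce everything to the pure Hodge structures carried by the weight--graded quotients $\gr^W$, where the classical equivariance of the Hodge metric is available, and then to transport the result upward to $V_\C$, to $\End(V)$, and finally to $T_F(D)$. First I would use that, by \eqref{eqn:grhmetric}, the metric $h_F$ on $V_\C$ is the pullback of the Hodge metric on $\gr^W$ along the Deligne--splitting isomorphism $\Phi_F\colon V_\C\xrightarrow{\sim}\gr^W$ carrying $I_F^{p,q}$ onto $H^{p,q}\subset\gr^W_{p+q}$ via the canonical projection $\pi$. Two compatibilities are then needed: since $W$ is defined over $\R$, the projection $\pi$ commutes with complex conjugation, so the bar in $h_F(u,v)=\ii^{p-q}Q_{p+q}([u],[\bar v])$ is the conjugation of $\gr^W$; and since $g=g_\R e^{\lambda}$ preserves $W$ it induces $\gr(g)$ on $\gr^W$ with $\gr(g)\comp\pi=\pi\comp g$, where in fact $\gr(g)=\gr(g_\R)$ because $\lambda\in\Lambda\subset W_{-1}\eg$ acts trivially on $\gr^W$. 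Thus $\gr(g)$ is a \emph{real} automorphism of $(\gr^W,\{Q_k\})$.

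Applying Lemma~\ref{higgs} first to $e^{\lambda}$ and then to $g_\R$ gives $g(I_F^{p,q})=I_{gF}^{p,q}$, so the square $\Phi_{gF}\comp g=\gr(g)\comp\Phi_F$ commutes. Writing $h^{F}_{\gr^W}$ for the Hodge metric on $\gr^W$ determined by the filtration that $F$ induces on each $\gr^W_k$, this yields $h_{gF}(gu,gv)=h^{gF}_{\gr^W}(\gr(g)\Phi_F u,\gr(g)\Phi_F v)$; and since the filtration induced by $gF$ on each $\gr^W_k$ is $\gr(g_\R)$ applied to that induced by $F$, the assertion reduces to the equivariance $h^{gF}_{\gr^W}(\gr(g)x,\gr(g)y)=h^{F}_{\gr^W}(x,y)$. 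On each weight quotient this is exactly the classical fact that a real automorphism of a polarized pure Hodge structure carries the Hodge metric at one point to the Hodge metric at the image point, i.e.\ the statement that ``$G$ acts by isometry on $\gr^W$'' invoked just before the Lemma. This proves that $g\colon(V_\C,h_F)\to(V_\C,h_{gF})$ is an isometry.

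To pass to the Lie algebra I would use that \eqref{eqn:mhm} is the Hilbert--Schmidt metric attached to $h_F$, so the isometry on $V_\C$ propagates functorially. An isometry $g$ has $g^{-1}$ as its adjoint, whence $(\Ad{(g)}\beta)^{*_{gF}}=\Ad{(g)}(\beta^{*_{F}})$, and cyclicity of the trace gives $h_{gF}(\Ad{(g)}\alpha,\Ad{(g)}\beta)=\tr(g\,\alpha\beta^{*_{F}}g^{-1})=\tr(\alpha\beta^{*_{F}})=h_F(\alpha,\beta)$; restricting to the $\Ad{(g)}$--stable subspace $\eg_\C$ gives the displayed identity. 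For the final assertion, under the identification $T_F(D)\cong\eq_F$ of Lemma~\ref{TangentIdent} the differential of the $G$--action is $u\mapsto\pi_{\eq}(\Ad{(g)}u)$; but $g(I_F^{p,q})=I_{gF}^{p,q}$ forces $\Ad{(g)}$ to send $\End(V)^{r,s}$ relative to $F$ onto $\End(V)^{r,s}$ relative to $gF$, so $\Ad{(g)}$ already carries $\eq_F$ isomorphically onto $\eq_{gF}$ and the projection is superfluous. Hence $g_*=\Ad{(g)}|_{\eq_F}$, which is an isometry by the previous step.

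The hard part will be the reduction to $\gr^W$: one must check carefully that the conjugation in the defining formula for $h_F$ descends to $\gr^W$ and, crucially, that the $\exp(\Lambda)$--factor acts as the identity there. This is precisely what prevents the genuinely mixed obstruction --- the failure of $*$ to commute with conjugation recorded in the Remark above --- from intervening, and it is what makes the entire computation collapse, weight quotient by weight quotient, to the pure case.
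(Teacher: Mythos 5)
Your proof is correct and takes essentially the same route the paper intends: the paper gives only the one-line justification ``in analogy with Lemma~\ref{higgs} and the fact that $G$ acts by isometry on $\gr^W$,'' and your argument is precisely the fleshed-out version of that sentence --- transport to $\gr^W$ via the Deligne splitting, triviality of $e^{\lambda}$ on $\gr^W$, the classical pure-case equivariance, and then propagation to $\End(V_\C)$ by the trace formula and to $T_F(D)\cong\eq_F$ via $\Ad{(g)}\eq_F=\eq_{g\cdot F}$. No gaps.
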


\begin{rmk} (1) In~\cite{knu,knu2}, the authors consider a different metric
on $D$ which is obtained by replacing the bigrading \eqref{eqn:DelSplit}
attached to $(F,W)$ by the bigrading attached to the \emph{canonical} or
\emph{ $\slgr 2$-splitting} of $(F,W)$.  They then twist this metric by a
distance to the boundary function \cite[\S 4]{knu2}.  
The metric of~\cite{knu,knu2} is not quasi-isometric to the metric
considered in this paper except when $D$ is pure.  See \cite{nilp} for
details on the geometry of this metric.\\
(2)   The previous Lemma implies that, understanding how the decomposition
appearing in Theorem~\ref{group-decomp} depends on $F\in D$ up to second order
is sufficient to compute the curvature of $D$ (cf.\cite{deligne}).
\end{rmk}

\par  For future
use, we introduce the subalgebras
\begin{equation}
     \eun_+  :=  \bigoplus_{ a\ge 0,b<0  } \eg^{a,b},\qquad
     \eun_-  :=  \bigoplus_{ a< 0,b\ge 0  } \eg^{a,b}.   \label{eqn:SplitEnd}
\end{equation}
 Then, recalling the definition \eqref{eqn:lambda} of $\Lambda$, we have a splitting
$$
      \eg_{\C} = \eun_+\oplus\eg^{0,0}\oplus\eun_-\oplus\Lambda
$$
and we let
\begin{equation}
\label{eqn:projections}
\aligned
\End(V_\C) & \to & \eun_+,\, \eg^{0,0},  \eun_- ,\, \Lambda\\
u  & \mapsto &  \, u_+, \,u_0,\, u_-, \,u_\Lambda
\endaligned
\end{equation}
denote orthogonal projection from $\End(V_{\C})$ to $\eg_{\C}$ followed by projection onto the
corresponding factor above.

\par 
We conclude this section with a formula for the adjoint operator
$\alpha\mapsto\alpha^*$ with respect to the mixed Hodge metric.
\begin{lemma}\label{lemma:adjoint}
Let $\ez= \bigoplus_p \eg^{-p,p}$ and denote
\begin{equation}\label{eqn:piz}
\pi_\ez: \End(V_\C) \to \ez
\end{equation}
the corresponding orthogonal projection. Then (with $C_F$ the Weil operator of $\gr^WV$) we have
$$
       \alpha \in\ez \implies\alpha ^*=-\Ad{(C_F)} \pi_\ez(\bar \alpha ).
$$
\end{lemma}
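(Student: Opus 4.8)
The plan is to test the claimed formula against the defining property of the adjoint, $h_F(\alpha u,v)=h_F(u,\alpha^* v)$, exploiting the $h_F$-orthogonality of the Deligne bigrading. Both sides of the asserted identity are linear in $\alpha$, and since $*$ sends $\End(V)^{r,s}$ to $\End(V)^{-r,-s}$ by \eqref{eqn:adjoint-pq} while $\Ad{(C_F)}$ preserves bidegree, both sides carry the same bidegree; hence it suffices to treat a single summand $\alpha\in\eg^{-p,p}$ and evaluate on the pieces $I^{a,b}$. Such an $\alpha$ maps $I^{a,b}$ into $I^{a-p,b+p}$, so $\alpha^*$ is of type $(p,-p)$, and by orthogonality the only pairing to track is $h_F(\alpha u,v)$ with $u\in I^{a,b}$ and $v\in I^{a-p,b+p}$.

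First I would substitute the explicit metric $h_F(x,y)=\ii^{r-s}Q_{r+s}([x],[\bar y])$ on $I^{r,s}$, writing $[\,\cdot\,]$ for the class in $\gr^W$. Cancelling the common factor $\ii^{a-b}$ and using $\ii^{-2p}=(-1)^p$, the adjoint relation reduces to $(-1)^pQ_{a+b}([\alpha u],[\bar v])=Q_{a+b}([u],[\overline{\alpha^* v}])$. Since $\alpha\in\eg_\C$, its induced map $\gr(\alpha)$ on $\gr^W_{a+b}$ is $Q_{a+b}$-skew, so the left side equals $(-1)^{p+1}Q_{a+b}([u],\gr(\alpha)[\bar v])$. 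The pairing $H^{a,b}\times H^{b,a}\to\C$ induced by $Q_{a+b}$ is nondegenerate (first Riemann relation), so letting $[u]$ range over $H^{a,b}$ gives $[\overline{\alpha^* v}]=(-1)^{p+1}\gr(\alpha)[\bar v]$; conjugating, and using that conjugation intertwines $[\,\cdot\,]$ with $\overline{[\,\cdot\,]}$ and $\gr(\alpha)$ with $\gr(\bar\alpha)$, yields $[\alpha^* v]=(-1)^{p+1}\gr(\bar\alpha)[v]$.

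Next I would evaluate $-\Ad{(C_F)}\pi_\ez(\bar\alpha)$ on the same $v\in I^{a-p,b+p}$. The crucial point is that $\pi_\ez(\bar\alpha)$ is exactly the $\eg^{p,-p}$-component of $\bar\alpha$: by the conjugation rule \eqref{eqn:UnderConju}, $\bar\alpha\in\eg^{p,-p}+[\Lambda,\eg^{p,-p}]$, and the correction lies in $W_{-1}\End(V)$, of strictly negative weight, hence is killed by the weight-$0$ projection $\pi_\ez$ and induces the zero map on $\gr^W$. Since $C_F$ acts on $I^{r,s}$ by $\ii^{r-s}$, conjugation by $C_F$ contributes the scalar $\ii^{2p}=(-1)^p$ on the component $I^{a-p,b+p}\to I^{a,b}$, so $-\Ad{(C_F)}\pi_\ez(\bar\alpha)\,v=(-1)^{p+1}(\bar\alpha)^{p,-p}v\in I^{a,b}$, with class $(-1)^{p+1}\gr(\bar\alpha)[v]$. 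This is the same element of $I^{a,b}$ as $\alpha^* v$ under the isomorphism $I^{a,b}\cong H^{a,b}$, which proves the identity.

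The one genuinely new difficulty compared with the pure case (Remark \ref{mtremark}) is that $\bar\alpha$ is no longer bigrading-homogeneous; the entire content is the bookkeeping that its defect $[\Lambda,\eg^{p,-p}]$ sits in strictly negative weight and therefore vanishes both under $\pi_\ez$ and upon passage to $\gr^W$. Once this is isolated, the argument is the familiar weight-by-weight Weil-operator computation, with the nondegeneracy of $Q_{a+b}$ on $H^{a,b}\times H^{b,a}$ supplying the final cancellation of $u$.
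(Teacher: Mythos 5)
Your proof is correct and follows essentially the same route as the paper's: both arguments reduce the identity to the induced maps on $\gr^W$, where only the $(p,-p)$-component of $\bar\alpha$ survives, and then carry out the standard pure-case Weil-operator computation. The paper simply cites the pure case as well known, whereas you write that computation out in full and make explicit the one point the paper leaves implicit, namely that the defect of $\bar\alpha$ from bihomogeneity lies in $W_{-1}\End(V)$ and is therefore invisible both to $\pi_\ez$ and on $\gr^W$.
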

\proof  In the pure case, the statement is well known. Since both sides belong
to $\ez$, we only have to check that we get the correct formula on
$\gr^W_0(\eg_{\mathbb C})$.
\qed\endproof

\subsection{Second Order Calculations}  \label{ssec:secondorder} In this
subsection, we compute the second order behavior of the decomposition of
$g = \exp(u)$ given in Theorem~\ref{group-decomp}.  The analogous results
to first order appear in \cite{higgs}.

\par Employing the notation\footnote{We simplify notation by writing $\eq$
instead of $\eq_F$.} from \eqref{eqn:tang-alg}  and \eqref{eqn:SplitEnd}
consider the following splitting
\begin{equation}\label{eqn:Double}
\eg_\C=\underbrace{\eg^{0,0}\oplus \eun_+}_{\eg_\C^F} \oplus
       \underbrace{\eun_-\oplus\Lambda}_{\eq}.
\end{equation}
Since $\eq$ is a complement to $\eg_\C^F$, the map
\begin{equation}
      u\in\eq \mapsto e^u\cdot F     \label{eqn:coord}
\end{equation}
restricts to biholomorphism of a neighborhood $U$ of $0$ in $\eq$ onto
a neighborhood of $F$ in $D$.  Relative to this choice of coordinates,
the identification of $\eq$ with $T_F(D)$ coincides with the one
considered above (cf. \eqref{eqn:tang-alg}).

\par We need to compare this with the real structure on
$\eg_\C = \eg_{\R}\otimes\C$.  As usual, we write
\[
 \alpha= \re (\alpha) + \ii \cdot  \im (\alpha) ,\quad
 \re (\alpha) =\half (\alpha+\bar\alpha),\quad
 \ii \cdot  \im (\alpha)= \half( \alpha- \bar \alpha).
\]

  \begin{lemma*} [\protect{\cite[Theorem 4.6]{higgs}}]
Set
\[
\Im  (\eg^{0,0}):= \sett{\varphi\in\eg^{0,0}}{\bar\varphi^{(0,0)}=-\varphi} .
\]
Then
\begin{equation}\label{eqn:decomposition}
\eg_\C= \eg_\R\oplus \Im  (\eg^{0,0})  \oplus  \eun_+\oplus  \ii \Lambda_\R.
\end{equation}
\end{lemma*}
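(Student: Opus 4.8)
The plan is to combine an explicit spanning argument with a dimension count, the key input being the way complex conjugation interacts with the bigrading through the Deligne rule \eqref{eqn:UnderConju}. Starting from the splitting $\eg_\C=\eg^{0,0}\oplus\eun_+\oplus\eun_-\oplus\Lambda$ (recall $\eg^{r,s}=0$ for $r+s>0$, so these four regions exhaust the bigrading), I would first read off from \eqref{eqn:UnderConju} the conjugation behaviour of each summand: $\overline{\Lambda}=\Lambda$; $\overline{\eun_+}\subseteq\eun_-\oplus\Lambda$ and $\overline{\eun_-}\subseteq\eun_+\oplus\Lambda$, with the leading antilinear isomorphisms $\eg^{a,b}\xrightarrow{\sim}\eg^{b,a}$; and $\overline{\eg^{0,0}}\subseteq\eg^{0,0}\oplus\Lambda$. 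Two consequences are immediate: $\Lambda$ carries the real form $\Lambda_\R$ of \eqref{eqn:LambdaRealForm} with $\dim_\R\Lambda_\R=\dim_\C\Lambda$, and the leading isomorphisms force $\dim_\C\eun_+=\dim_\C\eun_-$.

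Next I would analyse the $\eg^{0,0}$ summand, the only delicate piece. Let $\sigma\colon\eg^{0,0}\to\eg^{0,0}$ be the antilinear map obtained by composing conjugation with the projection onto $\eg^{0,0}$; by the displayed description, $\Im(\eg^{0,0})=\set{\varphi\in\eg^{0,0}\st\sigma(\varphi)=-\varphi}$. The point is that $\sigma^2=\id$: conjugating twice returns $\varphi$, and since all the correction terms produced by \eqref{eqn:UnderConju} land in $\Lambda$ and never back in $\eg^{0,0}$, the $(0,0)$-component is computed by $\sigma$ alone. Thus $\sigma$ is an antilinear involution, so $\eg^{0,0}=R\oplus\ii R$ over $\R$ with $R=\set{\varphi\st\sigma(\varphi)=\varphi}$; hence $\Im(\eg^{0,0})=\ii R$ and $\dim_\R\Im(\eg^{0,0})=\dim_\C\eg^{0,0}$. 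Moreover, for $\varphi\in R$ one has $\overline{\varphi}=\varphi+\lambda$ with $\lambda\in\Lambda$, so that $\varphi=\re(\varphi)-\half\lambda\in\eg_\R+\Lambda$.

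Finally I would show that the four subspaces span $\eg_\C$, bootstrapping through the graded pieces. Since $\Lambda=\Lambda_\R\oplus\ii\Lambda_\R$ with $\Lambda_\R\subseteq\eg_\R$, all of $\Lambda$ lies in the span. For $c\in\eun_+$ both $c$ and $\re(c)=\half(c+\overline{c})\in\eg_\R$ lie in the span, hence so does $\overline{c}$; as $\overline{\eun_+}$ surjects onto $\eun_-$ modulo $\Lambda$, this puts $\eun_-$ in the span. The previous paragraph then contributes $R$, and therefore all of $\eg^{0,0}=R+\ii R$. Thus $\eg_\R+\Im(\eg^{0,0})+\eun_++\ii\Lambda_\R=\eg_\C$, and a dimension count closes the argument: using $\dim_\C\eun_+=\dim_\C\eun_-$ and $\dim_\R\Im(\eg^{0,0})=\dim_\C\eg^{0,0}$, the real dimensions of the four summands add up to $2\dim_\C\eg_\C$, so the spanning sum must be direct. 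The main obstacle is the bookkeeping in \eqref{eqn:UnderConju}: one must verify carefully that the lower-order corrections to conjugation always remain inside $\Lambda$, since this is exactly what yields $\sigma^2=\id$, $\overline{\Lambda}=\Lambda$, and the spanning bootstrap, and it is the feature that separates the general mixed case from the split one.
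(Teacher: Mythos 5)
Your argument is correct, and it is worth noting that the paper itself offers no proof of this lemma: it is imported verbatim as Theorem~4.6 of \cite{higgs}, so there is no in-text argument to compare against. Your proof is a legitimate, self-contained derivation from the data the present paper does supply, namely the four-block splitting $\eg_\C=\eg^{0,0}\oplus\eun_+\oplus\eun_-\oplus\Lambda$ and the conjugation rule \eqref{eqn:UnderConju}. The three pillars all check out: (i) the correction terms $[\Lambda,\eg^{q,p}]$ land in $\bigoplus_{a<q,b<p}\eg^{a,b}$, which for $(p,q)=(0,0)$ is contained in $\Lambda$ and for $(p,q)$ with $p,q<0$ stays in $\Lambda$, so $\sigma^2=\id$ on $\eg^{0,0}$ and $\overline{\Lambda}=\Lambda$; (ii) the identity $\varphi=\re(\varphi)-\half\lambda$ places the $+1$-eigenspace $R$ inside $\eg_\R+\Lambda$; (iii) the dimension count closes once $\dim_\C\eun_+=\dim_\C\eun_-$ is known. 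The one step you should flesh out is the assertion that $\overline{\eun_+}$ surjects onto $\eun_-$ modulo $\Lambda$: for $c\in\eg^{a,b}\subseteq\eun_+$ the correction terms of $\bar c$ need not lie in $\Lambda$ (when $a\geq 2$ one can have components $\eg^{a',b'}$ with $a'<b<0\leq b'<a$, which sit in $\eun_-$), so the map $c\mapsto\bar c\bmod\Lambda$ is only \emph{upper triangular} with respect to the bigrading, with the antilinear isomorphisms $\eg^{a,b}\xrightarrow{\sim}\eg^{b,a}$ on the diagonal; one then concludes bijectivity by a finite descending induction on the weight $a+b$ (or by the standard fact that a filtered map with invertible associated graded is invertible). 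With that induction made explicit the proof is complete.
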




\begin{corr}[\protect{\cite[Corollary 4.7]{higgs}}] There exists a neighborhood of
$1\in G_\C$ such that every element $g$ in this neighborhood can be written
uniquely as
\[
    g=g_\R \exp{( \lambda)} \exp(\varphi), \quad
    g_\R\in\eg_\R,\, \lambda\in \ii \Lambda_\R, \quad
    \varphi\in    \eg^{0,0}      \oplus  \eun_+ \subset \eg^F_\C,
\]
where $\varphi^{0,0}$ is purely imaginary.
\end{corr}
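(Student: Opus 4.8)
The plan is to realize the asserted factorization as the statement that a product-of-exponentials map is a local diffeomorphism near the identity, and to deduce this from the Lie-algebra decomposition \eqref{eqn:decomposition} via the inverse function theorem. Concretely, I would consider the map
\[
\Phi:\eg_\R\oplus \ii\Lambda_\R\oplus\bigl(\Im(\eg^{0,0})\oplus\eun_+\bigr)\longrightarrow G_\C,
\qquad
(\xi,\lambda,\varphi)\mapsto \exp(\xi)\exp(\lambda)\exp(\varphi).
\]
Here the third slot records the constraint that $\varphi^{0,0}$ be purely imaginary, since by definition $\Im(\eg^{0,0})=\{\psi\in\eg^{0,0}:\bar\psi^{(0,0)}=-\psi\}$ is exactly the purely imaginary part of $\eg^{0,0}$, while the $\eun_+$-component is unconstrained. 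The target of $\Phi$ is a neighborhood of $1\in G_\C$, and the claimed decomposition $g=g_\R\exp(\lambda)\exp(\varphi)$ with $g_\R=\exp(\xi)\in G_\R$ is precisely the assertion that $\Phi$ is bijective onto such a neighborhood.

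Next I would compute the differential $d\Phi_0$ at the origin. Differentiating $t\mapsto \exp(t\xi)\exp(t\lambda)\exp(t\varphi)$ at $t=0$ yields simply $\xi+\lambda+\varphi$, so $d\Phi_0$ is the sum (inclusion) map from the direct sum $\eg_\R\oplus\ii\Lambda_\R\oplus\Im(\eg^{0,0})\oplus\eun_+$ into $\eg_\C$. By the preceding Lemma, that is, by \eqref{eqn:decomposition}, this sum is a direct-sum decomposition of $\eg_\C$, hence $d\Phi_0$ is a linear isomorphism. The inverse function theorem then shows that $\Phi$ restricts to a diffeomorphism from a neighborhood of $0$ in its domain onto a neighborhood of $1\in G_\C$. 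Existence and uniqueness of the factorization for $g$ in that neighborhood follow immediately: surjectivity supplies $(\xi,\lambda,\varphi)$, and injectivity forces uniqueness of $(g_\R,\lambda,\varphi)$, noting that $g_\R=\exp(\xi)$ determines $\xi$ uniquely near $0$. Unwinding the constraint $\varphi\in\Im(\eg^{0,0})\oplus\eun_+$ then recovers the stated condition that $\varphi^{0,0}$ is purely imaginary.

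The only point requiring care, and the place I would expect the main (mild) obstacle, is the bookkeeping needed to see that the three exponential factors sweep out genuine submanifolds whose tangent spaces at the identity are the three summands above, so that $\Phi$ is a smooth map between manifolds of the correct dimension and the naive derivative computation is legitimate. This uses that $G_\R\subset G_\C$ is a real Lie subgroup with Lie algebra $\eg_\R$, that $\exp$ is a local diffeomorphism from a neighborhood of $0$ in $\eg_\C$ onto a neighborhood of $1$, and that $\ii\Lambda_\R$ and $\Im(\eg^{0,0})\oplus\eun_+$ are real linear subspaces of $\eg_\C$; none of this is deep, but it is exactly what makes the real dimensions add up to $\dim_\R G_\C$ and justifies the inverse function theorem. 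Once the decomposition \eqref{eqn:decomposition} is granted, the remainder is a formal consequence.
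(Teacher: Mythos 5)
Your proof is correct and is exactly the intended derivation: the paper states this corollary as an immediate consequence of the preceding Lemma (the decomposition \eqref{eqn:decomposition}), citing \cite[Corollary 4.7]{higgs}, where the argument is precisely the one you give — the product-of-exponentials map has differential $(\xi,\lambda,\varphi)\mapsto\xi+\lambda+\varphi$ at the origin, which is an isomorphism by \eqref{eqn:decomposition}, so the inverse function theorem yields existence and uniqueness of the factorization near $1\in G_\C$. Your identification of the constraint ``$\varphi^{0,0}$ purely imaginary'' with the summand $\Im(\eg^{0,0})$ is also the right reading of the statement.
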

This implies that, possibly after shrinking $U$  there are unique functions
$\gamma,\lambda, \varphi: U \to \eg_\R, \ii \Lambda_\R, \eg_\C^F$ respectively
such that
\begin{equation}\label{eqn:expu}
\exp(u) = \underbrace{\exp{(\gamma(u) )}}_{\text{\rm in }G_\R}\cdot
          \exp{( \lambda(u))} \cdot
          \underbrace{\exp{(\varphi(u))}}_{\text{\rm in  }G^F_\C}.
\end{equation}
Now we  introduce  $g(u)=\exp(u) =g_\R(u)\cdot \exp(\lambda(u) ) \cdot
\exp{(\varphi(u))}$ as functions on $U\cap \eq$.
\medskip

\par As a prelude to the next result, we recall that by the
Campbell--Baker--Hausdorff formula we have
$$
     e^x e^y = e^{x+y+\frac{1}{2}[x,y] + \cdots}.
$$
Alternatively, making the change of variables $u=-y$, $v=x+y$ this can
be written as
$$
       e^{u+v}e^{-u} =  e^{\psi(t_0,t_1,\dots)},
$$
where $t_m = (\ad u)^mv$ and $\psi$ is a universal Lie polynomial.
In a later computation (see the proof of Lemma~\ref{pushforwardtangents}) we need more information,
namely on the shape of the part linear in  $v$:
\begin{equation}
        \psi_1(u,v) = \sum_m\, \frac{1}{(m+1)!}\,t_m = \frac{e^{\ad u} - 1}{\ad u}v. 
                                                          \label{eqn:2.10}
\end{equation}

\begin{prop}\label{secondorder} Let $F\in D$ and
$u= u_-+u_\Lambda \in  \eun_-\oplus\Lambda = \eq$.
Then,
\[
     \varphi(u)= -\bar u_+ + \half[u, \bar u]_0 + [u, \bar u   ]_+
     + \half[\bar u,    { \bar{u}_\Lambda}]_+ + O^3(u,\bar u)
\]
where the subscripts mean the orthogonal projections onto $\eg^{0,0}$,
$\Lambda$, $\eun_+$ respectively.
\end{prop}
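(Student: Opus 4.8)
The plan is to eliminate the unwanted factors and reduce everything to a single equation involving only $\lambda$ and $\varphi$, then extract $\varphi$ by projecting onto the two summands of $\eg_\C^F$. Starting from $\exp(u)=\exp(\gamma)\exp(\lambda)\exp(\varphi)$ with $\gamma\in\eg_\R$ real and $\lambda\in\ii\Lambda_\R$ (so $\bar\gamma=\gamma$ and $\bar\lambda=-\lambda$), complex conjugation gives $\exp(\bar u)=\exp(\gamma)\exp(-\lambda)\exp(\bar\varphi)$. Forming $\exp(\bar u)^{-1}\exp(u)$ cancels the common real factor $\exp(\gamma)$ and yields the master identity
\[
   \exp(-\bar u)\exp(u)=\exp(-\bar\varphi)\exp(2\lambda)\exp(\varphi),
\]
in which $\gamma$ no longer appears. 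I would then expand both sides by Campbell--Baker--Hausdorff to second order in $(u,\bar u)$: the left side is $u-\bar u+\half[u,\bar u]+O^3(u,\bar u)$, and writing $\varphi=\varphi_1+\varphi_2+\cdots$, $\lambda=\lambda_1+\lambda_2+\cdots$ and matching orders produces a first-order relation $u-\bar u=\varphi_1-\bar\varphi_1+2\lambda_1$ together with a second-order relation carrying $\varphi_2,\lambda_2$ and the first-order cross-brackets $[\bar\varphi_1,\lambda_1]$, $[\bar\varphi_1,\varphi_1]$, $[\lambda_1,\varphi_1]$.

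The next step is to record the bigrading facts needed to run the projections: since $u\in\eun_-\oplus\Lambda$ one checks $\bar u\in\eun_+\oplus\Lambda$, and more generally $\overline{\eun_+}\subset\eun_-\oplus\Lambda$, $\overline{\eg^{0,0}}\subset\eg^{0,0}\oplus\Lambda$, while $\Lambda$ is conjugation-stable. Projecting the first-order relation onto $\eun_+$, $\eg^{0,0}$ and $\Lambda$, and using that $(\bar\varphi_1)_+=0$, $(\lambda_1)_+=0$, together with the defining condition $(\bar\varphi_1)_0=-(\varphi_1)_0$ of $\Im(\eg^{0,0})$, recovers the first-order statement of \cite{higgs}, namely $\varphi_1=-\bar u_+$ and $\lambda_1=\half\bigl(\overline{\bar u_\Lambda}-\bar u_\Lambda\bigr)$.

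For the second order I would project the master identity onto $\eun_+$ and $\eg^{0,0}$, the two summands where $\varphi$ takes values. Because $\lambda_2\in\Lambda$ projects to zero there, $\bar\varphi_2$ has no $\eun_+$-component, and $(\bar\varphi_2)_0=-(\varphi_2)_0$, these projections isolate $(\varphi_2)_+$ and $(\varphi_2)_0$ in terms of $\half[u,\bar u]$ and the first-order cross-brackets. Substituting $\varphi_1=-\bar u_+$ and $\lambda_1=\half(\overline{\bar u_\Lambda}-\bar u_\Lambda)$ and collecting terms reduces the $\eun_+$-identity to the claim that $\bigl([\bar u_\Lambda,\overline{\bar u_\Lambda}]\bigr)_+$ and $\bigl([u,\overline{\bar u_\Lambda}]\bigr)_+$ vanish, and the $\eg^{0,0}$-identity to the analogous $(0,0)$-projections; all of these vanish because $[\Lambda,\Lambda]\subset\Lambda$ and $[\eun_-\oplus\Lambda,\Lambda]$ has neither an $\eun_+$- nor a $(0,0)$-component, by the bigrading and \eqref{eqn:orth-3}. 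What survives is exactly $(\varphi_2)_+=[u,\bar u]_++\half[\bar u,\bar u_\Lambda]_+$ and $(\varphi_2)_0=\half[u,\bar u]_0$.

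The main obstacle is the bookkeeping of the non-split conjugation corrections, which all live in $\Lambda$. Concretely, one must determine $\lambda_1$ (it is not simply a real or imaginary part of $u$) and verify that its bracket with $\varphi_1$ contributes precisely the anomalous term $\half[\bar u,\bar u_\Lambda]_+$, while every spurious $\Lambda$-valued bracket is annihilated by the $\eun_+$- and $\eg^{0,0}$-projections. Keeping the two cases $r+s<0$ and $r=s=0$ of the bigrading straight through these cancellations is where the computation is most error-prone; once the type analysis is set up, the matching is a finite check.
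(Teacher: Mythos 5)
Your proof is correct and follows essentially the same route as the paper: both rest on the uniqueness of the decomposition $\exp(u)=\exp(\gamma)\exp(\lambda)\exp(\varphi)$, a Campbell--Baker--Hausdorff expansion to second order, the first-order solution $\varphi_1=-\bar u_+$, $2\lambda_1=\overline{\pi_{\Lambda}\bar u}-\pi_{\Lambda}\bar u$, and the same type bookkeeping (that $[\eun_-\oplus\Lambda,\Lambda]$ has no $\eun_+$- or $(0,0)$-component) to kill the spurious brackets. The only difference is organizational: the paper expands $\exp(\lambda)\exp(\varphi)\exp(-u)=\exp(-\gamma)$ and imposes that the result be real, whereas you eliminate $\gamma$ outright via the multiplicative form of that same reality constraint, $\exp(-\bar u)\exp(u)=\exp(-\bar\varphi)\exp(2\lambda)\exp(\varphi)$, which makes the subsequent projections onto $\eun_+$ and $\eg^{0,0}$ marginally cleaner but changes nothing of substance.
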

\proof
For the linear approximation note that
\[
     u= \re[2  (u_-) - \bar{u}_\Lambda ]
     - \ii \im  (\bar{u}_\Lambda) -\bar u_+  \in \eg_\R\oplus \ii\Lambda_\R\oplus \eg^F_\C
\]
and that equation \eqref{eqn:expu} yields the first degree approximation
$u=\gamma_1(u)+\lambda_1(u)+\varphi_1(u)$ so that the result follows by
uniqueness.

\par The computation proceeds by expanding the left hand side of
$$
     \exp{(\lambda)}\exp{(\varphi)} \exp{(-u)} =\exp{(-\gamma)} \in G_\R
$$
using the Campbell--Baker--Hausdorff formula, and then using the fact that
the right hand side is real.  To first order the decomposition is
$$
      u = \gamma_1(u) + \lambda_1(u) + \varphi_1(u)
$$
where
$$
\aligned
     \gamma_1(u)  &= u + \bar u - \frac{1}{2}\pi_{\Lambda}(\bar u)
                              - \frac{1}{2}\overline{\pi_{\Lambda}(\bar u)} \\
     \lambda_1(u) &= -\half\pi_{\Lambda}(\bar u)
                     + \frac{1}{2}\overline{\pi_{\Lambda}(\bar u)}          \\
     \varphi_1(u) &= -\bar u + \pi_{\Lambda}(\bar u)
\endaligned
$$
where we have used $\pi_{\Lambda}$ to denote projection to $\Lambda$ for
clarity regarding the order of complex conjugation, since these two
operations do not commute.

The second degree approximation then yields that
\[
     \lambda_2 +  \varphi_2
      + \half\left(  [\lambda_1,\varphi_1-u] -[\varphi_1,u]   \right)
      \text { is real.}
\]
The projection to $\eun_+$ equals $[\varphi_2]_+
+ \half\left(  [\lambda_1, \varphi_1 -u] _+ -[ \varphi_1,u ] _+\right)$.
Since $\bar\lambda_1=-\lambda_1$, the reality constraint implies that
\[
     \aligned
        (\varphi_2)_+ &=
         - \half  \left\{   [\lambda_1,\varphi_1+\bar\varphi_1-u-\bar u]_+
                            +[\bar\varphi_1,\bar u]_+-[\varphi_1,u]_+ \right \}\\
                            &=
                - \half  \left\{ [\bar\varphi_1,\bar u]_+-[\varphi_1,u]_+ +[\lambda_1,\varphi_1+\bar\varphi_1-u-\bar u]_+  \right\}            .
\endaligned
\]
By the conjugation rules $\bar \eun^\pm \subset \bar \eun_\mp +\Lambda$, the fact that $\Lambda$, $\eun_+,\eun_-$ are subalgebras,  and using $[\eun_{\pm},\Lambda] \subset \eun_{\pm}+\Lambda$ this simplifies to
\[
(\varphi_2)_+  = -\half \left\{ [\lambda_1,\varphi_1-\bar u]_+ + [\bar\varphi_1,\bar u]_+-  [\varphi_1,  u]_+ \right\} .
\]
Now set  $\varphi_1= -\bar u + \pi_\Lambda(\bar u)$  so that $\varphi_1-\bar u= -2 \bar u \mod \Lambda$.   The first term thus reads
$\half [2\lambda_1, \bar u]_+$, and since  $ \overline{\varphi_1} = -\overline{\pi_+\bar u }$,
 the second term becomes $\half[\overline{\bar u_+}, \bar u]_+$ while the last simplifies to $-\half[\bar u, u]_+$ ; in total we get
 \[
 (\varphi_2)_+  = \half [2\lambda_1+  \overline{\pi_+\bar u } , \bar u]_+ +\half[\lambda_1,\bar u]_+.
 \]
Putting
$ 2\lambda_1  =   \overline{ \pi_\Lambda \bar u} -  \pi_\Lambda(\bar u)$ so that $2\lambda_1+\overline{\pi_+\bar u}= u-   \pi_\Lambda(\bar u)$
  shows
\[
(\varphi_2)_+= \half\left\{  [u,\bar u]_+ -[\bar u_\Lambda,\bar u]_+ - [\bar u,u]_+\right\},
\]
which is indeed equal to the stated expression for $(\varphi_2)_+$.
Similarly we find for the $\eg^{0,0}$-component
\[
       (\varphi_2)_0= \half [u,\bar  u]_0.\quad\quad \qed
\]
 \endproof

\begin{corr}\label{corr:H-form}  Let $F\in D$.  Let
$$
      h_{e^u\cdot F}(L_{e^u *}\alpha ,L_{e^u *}\beta )
       = h_F(\exp{H(u)}\alpha ,\beta ),\quad \alpha,\beta\in\eq
$$
denote the local form of the mixed Hodge metric on $T(D)$ relative to the
choice of {coordinates} \eqref{eqn:coord}.
Then, up to second order in\footnote{We write $x_\eq$ instead of $\pi_\eq x$ for clarity  and if no confusion is likely.}
$(u,\bar u)$
\begin{eqnarray*}
H(u) &=&
\underbrace{- (\ad{(\bar u)_+^*)}_\eq}_{(1,0)\text {-\rm term} }  +\underbrace{ - (\ad{ (\bar u)_+})_\eq}_{(0,1)\text {-\rm term}}\\
&& +  \underbrace{ \frac 12   \left(
\ad{ [\bar u ,    \bar  u_\Lambda  ]_+
       + [\bar u ,   \bar u_\Lambda ]_+^*} \right)_\eq}_{(2,0)+(0,2) \text{-\rm term}}  \\
&& + \underbrace{
 \left( \half [ (\ad{ (\bar u)_+^*})_\eq  ,(\ad{(\bar u)_+})_\eq ]  +(\ad{     [ u, \bar u]_0})_\eq %
  + \ad{[u,\bar u]_+}  +\ad {[u,\bar u]_+^*} \right)_\eq }_{(1,1) \text{-\rm term}}.
  \end{eqnarray*}
  Here, by "$A(x,y)$ is a $(p,q)$-term" we mean $A(tx,ty)= t^p\bar t^q A(x,y)$.
\end{corr}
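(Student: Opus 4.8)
The plan is to compute the local form of the mixed Hodge metric by combining three ingredients: the equivariance of the metric under $G_\R\exp(\Lambda)$ (Lemma~\ref{KaplanChange}), the decomposition of $\exp(u)$ supplied by Proposition~\ref{secondorder}, and the adjoint formula from Lemma~\ref{lemma:adjoint}. The key observation is that by Theorem~\ref{group-decomp} and equation \eqref{eqn:expu}, we have $e^u = g_\R(u)\exp(\lambda(u))\exp(\varphi(u))$ with $g_\R(u)\in G_\R$, $\lambda(u)\in\ii\Lambda_\R$, and $\varphi(u)\in\eg^F_\C$. Writing $g = g_\R\exp(\lambda)$, the factor $g$ acts by isometry on the tangent space by Lemma~\ref{KaplanChange}, so when we express $h_{e^u\cdot F}$ we may strip off $g$ and reduce the whole computation to understanding the action of $\exp(\varphi(u))$, which stabilizes $F$. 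Concretely, first I would write $L_{e^u *}\alpha = \Ad(g)\Ad(\exp\varphi)\,\pi_\eq(\alpha)$-type expressions, use the isometry property to pull $\Ad(g)$ across, and thereby express $h_{e^u\cdot F}(L_{e^u *}\alpha, L_{e^u *}\beta)$ in terms of $h_F$ applied to $\Ad(\exp\varphi)$-transported vectors, projected back to $\eq$ via $\pi_\eq$.

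The second step is to expand $\Ad(\exp\varphi) = \exp(\ad\varphi)$ to second order in $(u,\bar u)$, substituting the explicit formula for $\varphi(u)$ from Proposition~\ref{secondorder}, namely
\[
\varphi(u) = -\bar u_+ + \tfrac12[u,\bar u]_0 + [u,\bar u]_+ + \tfrac12[\bar u, \bar u_\Lambda]_+ + O^3(u,\bar u).
\]
Since $h_F$ is $h$-orthogonal for the bigrading and $\varphi\in\eg^F_\C$, I would move the operator $\ad\varphi$ to the other side of the metric using the adjoint: $h_F(\ad\varphi\,\alpha,\beta) = h_F(\alpha, (\ad\varphi)^*\beta)$. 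The definition $H(u)$ is then read off from matching $h_F(\exp H(u)\,\alpha,\beta)$ against the expansion of $h_F(\Ad(\exp\varphi)\alpha, \Ad(\exp\varphi)\beta)$ after projecting to $\eq$. The crucial bookkeeping device is Lemma~\ref{compatibilities}: because $\varphi\in\eg^F_\C$, the identities \eqref{eqn:condense-1} and \eqref{eqn:condense-2} let me replace $\pi_\eq\comp(\ad\varphi)^n$ by $(\pi_\eq\comp\ad\varphi)^n$ and similarly for $\ad\varphi^*$, so that the second-order term $\tfrac12[(\ad\bar u_+^*)_\eq,(\ad\bar u_+)_\eq]$ emerges cleanly from the quadratic part of the BCH/exponential expansion rather than getting entangled with stabilizer directions.

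The organizing principle for the final answer is the bigrading by type: I would sort every contribution to $H(u)$ according to its bihomogeneity $A(tx,ty)=t^p\bar t^q A(x,y)$. The linear part of $\varphi$, namely $-\bar u_+$, is purely of type $(0,1)$, so it contributes the $(0,1)$-term $-(\ad\bar u_+)_\eq$ directly and its adjoint $-(\ad\bar u_+^*)_\eq$ as the $(1,0)$-term. The quadratic piece $\tfrac12[\bar u,\bar u_\Lambda]_+$ is of type $(0,2)$, so together with its adjoint it produces the $(2,0)+(0,2)$-term. The remaining $(1,1)$-term assembles from three sources: the commutator $\tfrac12[(\ad\bar u_+^*)_\eq,(\ad\bar u_+)_\eq]$ coming from the second-order correction in expanding $\exp(\ad\varphi)$ against its adjoint, the term $(\ad[u,\bar u]_0)_\eq$ from the $\eg^{0,0}$-component of $\varphi_2$, and $\ad[u,\bar u]_+ + \ad[u,\bar u]_+^*$ from the $\eun_+$-component $[u,\bar u]_+$ together with its adjoint. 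The main obstacle I anticipate is precisely the careful tracking of the adjoint operation in the mixed (non-split) setting, where by the Remark preceding \eqref{eqn:isotopy-alg} complex conjugation and adjoint do not commute; the convention (noted in the statement of result~(1) in the introduction) that $*$ is to be \emph{preceded by the projection} $\pi_+$ must be applied consistently, and verifying that cross-terms which mix stabilizer and $\eq$ directions genuinely vanish after $\pi_\eq$ — exactly the content of Lemma~\ref{compatibilities} — is where the bulk of the delicate verification lies.
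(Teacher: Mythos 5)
Your proposal follows essentially the same route as the paper's proof: strip off the $G_\R\exp(\Lambda)$ factor via Lemma~\ref{KaplanChange}, reduce to the action of $\exp(\varphi(u))$, move the adjoint across $h_F$, use Lemma~\ref{compatibilities} to commute $\pi_\eq$ past powers of $\ad\varphi$ and $\ad\varphi^*$, combine the exponentials by Campbell--Baker--Hausdorff, and substitute the second-order expansion of $\varphi(u)$ from Proposition~\ref{secondorder}, sorting terms by bitype (including the self-adjointness of $[u,\bar u]_0$ from Lemma~\ref{lemma:adjoint}). This matches the paper's argument in both structure and detail.
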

\proof  Let us first check the assertion about types. This follows directly from the the facts that $\ad{}$  and $\pi_\eq$ are $\C$-linear, while for any $\C$-linear operator $A$, one has $(tA)^*=\bar t A^*$ and $\overline{tA}=\bar t \bar A$.

Let us now start the calculations. By \eqref{eqn:expu}, we have
\begin{equation}\label{eqn:intermediatestep}
\aligned
      h_{e^u\cdot F}(L_{e^u *}\alpha ,L_{e^u *}\beta )
      &= h_F(L_{\exp(\varphi(u)*}\alpha ,L_{\exp(\varphi(u)*}\beta ))         \\
      &= h_F(\pi_{\eq}\Ad{\exp(\varphi(u))}\alpha ,
                      \pi_{\eq}\Ad{\exp(\varphi(u))}\beta )) \\
      &= h_F(\pi_{\eq}\Ad{\exp(\varphi(u))}\alpha ,
                      \Ad{\exp(\varphi(u))}\beta ))
\endaligned
\end{equation}
since $\eg_{\C}^F$ and $\eq$ are orthogonal with respect to the mixed
Hodge metric at $F$.  Therefore,
$$
\aligned
      h_{e^u\cdot F}(L_{e^u *}\alpha ,L_{e^u *}\beta )
      &= h_F( \Ad{\exp(\varphi(u))^*}
              \pi_{\eq}\Ad{\exp(\varphi(u))}\alpha ,\beta ))  \\
      &= h_F( \exp(\ad{\varphi(u)^*})
              \pi_{\eq}\exp(\ad{\varphi(u)})\alpha ,\beta ))  \\
      &= h_F( \exp(\ad{\varphi(u)^*})
               \exp(\pi_{\eq}\ad{\varphi(u)})\alpha ,\beta ))  \\
\endaligned
$$
by equation \eqref{eqn:condense-1}.  Likewise, although
$$
     \exp(\ad{\varphi(u)^*})
               \exp(\pi_{\eq}\ad{\varphi(u)})\alpha
$$
is in general only an element of  $\End(V_{\C})$, since we are pairing
it against an element $\beta \in\eq$, it follows that
$$
\aligned
      h_{e^u\cdot F}(L_{e^u *}\alpha ,L_{e^u *}\beta )
      &=  h_F(\pi_{\eq}\exp(\ad{\varphi(u)^*})
               \exp(\pi_{\eq}\ad{\varphi(u)})\alpha ,\beta ))        \\
      &=  h_F(\exp(\pi_{\eq}\ad{\varphi(u)^*})
               \exp(\pi_{\eq}\ad{\varphi(u)})\alpha ,\beta )),
\endaligned
$$
where the last equality  follows from \eqref{eqn:condense-2}.
By the Baker--Campbell--Hausdorff formula, up to third order in $(u,\bar u)$ the product of the exponents in the previous formula can be replaced by
\[
 \exp\left(\pi_{\eq}\ad{\varphi(u)^*} + \pi_{\eq}\ad{\varphi(u)} + \frac{1}{2}[\pi_{\eq}\ad{\varphi(u)}^*,
                                 \pi_{\eq}\ad{\varphi(u)}]\right).
\]
So, we may assume that
\[
H(u)= \pi_{\eq}\ad{\varphi(u)^*} + \pi_{\eq}\ad{\varphi(u)} + \frac{1}{2}[\pi_{\eq}\ad{\varphi(u)}^*,
                                 \pi_{\eq}\ad{\varphi(u)}].
\]
To obtain the stated formula for $H(u)$, insert  the formulas from Proposition~\ref{secondorder}
into the above equations and compute up to order 2 in $u$ and $\bar u$.   Use is made of the equality $[u,\bar u]_0^*= [u,\bar u]_0$  guaranteed by Lemma~\ref{lemma:adjoint}.
\qed\endproof


\section{Curvature of the Chern Connection} \label{sec:curv}

\par We begin this section by recalling that given a holomorphic vector
bundle $E$ equipped with a hermitian metric $h$, there exists a unique
{\it Chern connection} $\nabla$ on $E$ which is compatible with both $h$ and
the complex structure $\bar\partial$.  With respect to any local holomorphic
framing of $E$, the connection form of $\nabla$ is given by
\begin{equation}
    \theta = \text{\bf h}^{-1}\partial\text{\bf h} ,  \label{eqn:conn-1}
\end{equation}
where $\text{\bf h}$ is the transpose of the Gram--matrix of $h$ with
respect to the given frame.  The curvature tensor is then
\begin{equation}
    R_{\nabla} = \bar\partial\,\theta.                         \label{eqn:conn-2}
\end{equation}

\begin{thm}\label{thm:connection} The connection one-form of the mixed
Hodge metric with respect to the trivialization of the tangent bundle
given in 
Lemma~\ref{TangentIdent}
is
$$
           \theta(\alpha) = -  
           \left(  \ad(\bar \alpha)_+^*\right)_\eq
$$
for $\alpha\in\eq\cong T_F(D)$.
\end{thm}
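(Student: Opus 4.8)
The plan is to read the connection form directly off the local expression for the metric provided by Corollary~\ref{corr:H-form}, evaluated at the center of the coordinate chart. Fix $F\in D$ and use the holomorphic coordinates $u\in\eq\mapsto e^u\cdot F$ of \eqref{eqn:coord}. The left translates $e_\alpha(u):=L_{e^u *}\alpha$, for $\alpha\in\eq$, form a holomorphic frame of $T(D)$ over this chart: the $G_\C$-action is holomorphic and $u\mapsto e^u$ depends holomorphically on the coordinate, so $L_{e^u *}$ carries $T^{1,0}_F(D)$ to $T^{1,0}_{e^u\cdot F}(D)$ holomorphically in $u$. With respect to this frame, Corollary~\ref{corr:H-form} says the Gram matrix of the mixed Hodge metric is
\[
\text{\bf h}(u)(\alpha,\beta) = h_F(\exp(H(u))\alpha,\beta),\qquad \alpha,\beta\in\eq ,
\]
where $H(u)$ is the operator on $\eq$ described there, already incorporating the projection $\pi_\eq$ via \eqref{eqn:condense-1}--\eqref{eqn:condense-2}. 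Thus I may apply the Chern connection formula \eqref{eqn:conn-1}.

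By \eqref{eqn:conn-1} we have $\theta=\text{\bf h}^{-1}\partial\text{\bf h}$, which I evaluate at the center $u=0$. Since $H(0)=0$ we get $\exp(H(0))=\id$, so $\text{\bf h}(0)$ is the constant Gram matrix of $h_F$ restricted to $\eq$ and $\text{\bf h}(0)^{-1}$ contributes no derivatives. Consequently $\theta|_{u=0}$ is, as an operator on $\eq$, the $\partial$-component of $dH|_{0}$, i.e.\ the holomorphic part of the first order term of $u\mapsto\exp(H(u))$. Here the transpose convention built into the definition of $\text{\bf h}$ is exactly what places the indices so that $\theta|_{u=0}$ equals this operator without spurious conjugation or transposition.

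It remains to extract that $(1,0)$-part. From Corollary~\ref{corr:H-form} the degree-one portion of $H(u)$ is $-\left(\ad{(\bar u)_+^*}\right)_\eq-\left(\ad{(\bar u)_+}\right)_\eq$, the first summand being the $(1,0)$-term and the second the $(0,1)$-term. As recorded in the proof of that corollary, $u\mapsto \left(\ad{(\bar u)_+^*}\right)_\eq$ scales as $t^1\bar t^0$ and is therefore $\C$-linear in $u$; its $\partial$-derivative at $0$ in the direction $\alpha$ is the operator itself evaluated at $\alpha$, namely $-\left(\ad{(\bar\alpha)_+^*}\right)_\eq$. The $(0,1)$-term is antiholomorphic in $u$ and so contributes only to $\bar\partial$. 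This gives $\theta(\alpha)=-\left(\ad{(\bar\alpha)_+^*}\right)_\eq$ at $F$, and since $F\in D$ was arbitrary --- consistency across points being guaranteed by the $G_\R\exp(\Lambda)$-equivariance of Lemma~\ref{KaplanChange} --- the formula holds throughout $D$.

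The main obstacle I anticipate is the bookkeeping in the second paragraph: verifying that $\text{\bf h}^{-1}\partial\text{\bf h}$ at $u=0$ isolates precisely the $(1,0)$-term of $H$ as an operator on $\eq$, keeping straight the transpose/adjoint conventions in \eqref{eqn:conn-1}. Everything of higher order in $H$ drops out at the base point, so none of the second-order information from Proposition~\ref{secondorder} is needed for this particular statement; it will enter only when one computes the curvature $R_\nabla=\bar\partial\,\theta$.
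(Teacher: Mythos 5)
Your proposal is correct and follows exactly the paper's route: the paper's proof consists of the single observation that the stated formula is the first-order holomorphic term of $H(u)$ from Corollary~\ref{corr:H-form}, which is precisely the computation you carry out (with the additional, welcome, bookkeeping that $\text{\bf h}(0)^{-1}$ contributes no derivatives and that the $(0,1)$-term feeds only into $\bar\partial$).
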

\proof By Corollary \eqref{corr:H-form}, this is the first order
holomorphic term of $H(u)$.\qed
\endproof

\begin{lemma}\label{lemma:curvature} Let $(D,h)$ be a complex hermitian
manifold and let $U\subset D$ be a coordinate neighborhood centered at
$F\in D$ and let $\alpha,\beta  \in T_F (U) \otimes\C$ be of type  $(1,0)$.  In a
local holomorphic frame, write the transpose Gram-matrix
$\mathbf{h}_U=(h(e^j,e^i) )= \exp H$ for some function $H$ with  with values
in the hermitian matrices and with $H(0)=0$.  Then at the origin one has
\[
    R_\nabla (\alpha ,\bar \beta )
     =   -\del_\alpha  \del_{\bar \beta  }  H
         + \frac 12  \left[  \del_{\bar\beta   }   H,  \del_{\alpha}     H \right].
\]
\end{lemma}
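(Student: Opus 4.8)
The plan is to compute directly from the two defining formulas \eqref{eqn:conn-1} and \eqref{eqn:conn-2}. By \eqref{eqn:conn-1} the Chern connection form is $\theta = \mathbf{h}^{-1}\partial\mathbf{h}$, and by hypothesis $\mathbf{h}=\exp H$ with $H$ hermitian-matrix-valued and $H(0)=0$. First I would substitute this and use the standard formula for the logarithmic derivative of a matrix exponential,
\[
\mathbf{h}^{-1}\partial\mathbf{h} = e^{-H}\,\partial(e^{H}) = \frac{1-e^{-\ad H}}{\ad H}(\partial H) = \partial H - \tfrac12[H,\partial H] + \cdots ,
\]
which follows from the Duhamel identity $\partial(e^{H}) = \int_0^1 e^{sH}(\partial H)\,e^{(1-s)H}\,ds$. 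This exhibits $\theta$ as a series whose terms, apart from the leading $\partial H$, are iterated Lie monomials each carrying at least one undifferentiated factor $H$.

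Next I would apply $\bar\partial$ term by term and evaluate at the origin $F$, where $H(0)=0$. Since $\bar\partial$ obeys the graded Leibniz rule for the bracket of matrix-valued forms, differentiating any monomial that contains two or more undifferentiated factors $H$ leaves at least one of them undifferentiated in every resulting summand, so each such monomial vanishes at $F$; the same reasoning kills the term $-\tfrac12[H,\bar\partial\partial H]$ produced from $-\tfrac12[H,\partial H]$. Hence, by \eqref{eqn:conn-2}, only two contributions survive, and
\[
R_\nabla\big|_{F} = \bar\partial\partial H - \tfrac12\,[\bar\partial H,\partial H],
\]
an equality of matrix-valued $(1,1)$-forms at the center.

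Finally I would pair both terms against $(\alpha,\bar\beta)$, with $\alpha$ of type $(1,0)$ and $\bar\beta$ of type $(0,1)$. Writing $\partial H = \sum_i(\partial_{z_i}H)\,dz_i$ and $\bar\partial H = \sum_j(\partial_{\bar z_j}H)\,d\bar z_j$ in holomorphic coordinates, one has $(d\bar z_j\wedge dz_i)(\alpha,\bar\beta) = -\,dz_i(\alpha)\,d\bar z_j(\bar\beta)$, because $\alpha$ annihilates the $d\bar z$'s and $\bar\beta$ the $dz$'s. This single antisymmetry turns $\bar\partial\partial H$ into $-\partial_\alpha\partial_{\bar\beta}H$ and turns $-\tfrac12[\bar\partial H,\partial H]$ into $+\tfrac12[\partial_{\bar\beta}H,\partial_\alpha H]$, and adding the two yields the asserted identity. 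The only step that genuinely needs care is precisely this last bookkeeping — the lone minus sign coming out of the wedge and the resulting order $[\partial_{\bar\beta}H,\partial_\alpha H]$ of the bracket; everything else is a formal consequence of $H(F)=0$ annihilating all the higher-order contributions.
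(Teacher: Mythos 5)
Your proof is correct and follows essentially the same route as the paper's: both expand $\mathbf h=\exp H$ to second order in the defining formulas $\theta=\mathbf h^{-1}\partial\mathbf h$, $R_\nabla=\bar\partial\theta$, and use $H(0)=0$ to kill all higher contributions --- the paper organizes this by restricting to a two-dimensional slice and reading off the $z\bar w$ Taylor coefficient, while you use the Duhamel series for $e^{-H}\partial(e^{H})$ together with the graded Leibniz rule. Your final sign bookkeeping, including the resulting order $\frac12[\partial_{\bar\beta}H,\partial_\alpha H]$ of the bracket, checks out.
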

\proof Since the curvature is a tensor, its value on vector fields at a
given point only depends on the fields at that point.  Choose a complex
surface $u:V\hookrightarrow U$, $V\subset\C^2$ a neighborhood of $0$ (with coordinates
$(z,w)$) and
$u_*(d/dz)_0 = (\partial_\alpha  )_0$, $u_*(d/dw)_0 = (\partial_\beta  )_0$.  Replace $\text{\bf h}$ by
$\text{\bf h}\comp  u$ and write it as
$$
       h = \exp(H) = I + H + \frac{1}{2}H^2
                      + O^3(z,\bar z).
$$
Formulas \eqref{eqn:conn-1},\eqref{eqn:conn-2} tell us that the curvature at the origin equals
$$(\bar\del  h \wedge \del h +   \del \bar\del h)_0.$$
This $2$-form evaluates on the pair of tangent vectors $(\del_z,\del_{\bar w})$ as
\begin{equation}
\label{eqn:curve3}
R_\nabla(\alpha,\bar \beta)=  \del_{\bar w}h \comp \del_z h \ -\del_{z}\del_{\bar w}h.
\end{equation}
Now use the Taylor expansion of $h$ up to order 2 of which we give some  relevant terms\footnote{Remember $H$ is a matrix so that $\del_zH$ and $\del_{\bar w}H$ do not necessarily commute.}:
$$
\aligned
      h(z,\bar z,w,\bar w)_2 &= I + (\partial_z H)_0 z
                         +  (\partial_{\bar w} H)_0 {\bar w} + \text{ linear terms involving } \bar z, w  \\
                    &+\text{terms involving } z^2,w^2,\bar z^2,\bar w^2+ \\
                    &+\left(\partial_z\partial_{\bar w}\, H
                             +\frac{1}{2}(\partial_z\, H)(\partial_{\bar w}\, H)
                      +\frac{1}{2}(\partial_{\bar w}\, H)(\partial_z\, H)
                      \right)_0\, z\bar w  \\
                      & +\text{terms involving } z\bar z,w\bar z, w\bar w.
\endaligned
$$
Now substitute in  \eqref{eqn:curve3}.\qed\endproof

\par As a first consequence, we have:

\begin{lemma}\label{lemma:tate-case} The submanifold $\exp(\Lambda) \cdot F$
of $D$ is a flat submanifold with respect to the Hodge metric.  In particular,
the holomorphic sectional curvature in directions tangent to this submanifold
is identically zero.
\end{lemma}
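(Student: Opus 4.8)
The plan is to read off the induced metric on $N := \exp(\Lambda)\cdot F$ from the local expansion $H(u)$ of Corollary~\ref{corr:H-form}, show it is Euclidean to second order along the relevant slice, obtain flatness at $F$ from Lemma~\ref{lemma:curvature}, and then propagate the vanishing over all of $N$ by homogeneity. In the coordinates \eqref{eqn:coord} $u\in\eq\mapsto e^u\cdot F$, the submanifold $N$ is exactly the complex-linear slice $\{u\in\Lambda\}$, since $\Lambda=\bigoplus_{r,s<0}\eg^{r,s}$ is a nilpotent subalgebra (so $\exp(\Lambda)$ is a subgroup) and a $\C$-subspace of $\eq$ (so $N$ is a complex submanifold).

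First I would collect the algebra of $\Lambda$ that drives everything. By \eqref{eqn:LambdaRealForm}, $\Lambda=\Lambda_{\R}\otimes_{\R}\C$ is closed under conjugation, so $u\in\Lambda$ forces $\bar u\in\Lambda$; comparing bidegrees gives $\Lambda\cap\eun_+=0$ and $\Lambda\cap\eg^{0,0}=0$. Hence for $u\in\Lambda$ I get the three vanishings $(\bar u)_+=0$, $\bar u_\Lambda=\bar u$ (so $[\bar u,\bar u_\Lambda]=0$), and $[u,\bar u]\in\Lambda$ (so $[u,\bar u]_0=[u,\bar u]_+=0$).

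Next I would substitute these into the formula for $H(u)$ in Corollary~\ref{corr:H-form}. The $(1,0)$ and $(0,1)$ terms and the first summand of the $(1,1)$ term all carry the factor $(\bar u)_+$ and so vanish; the $(2,0)+(0,2)$ term carries $[\bar u,\bar u_\Lambda]=0$; and the remaining summands of the $(1,1)$ term carry $[u,\bar u]_0$ or $[u,\bar u]_+$ (or its adjoint), all zero on $\Lambda$. Thus $H(u)=O^3(u,\bar u)$ along $N$, so in a unitary frame the induced metric is Euclidean to second order at $F$ and every relevant first and second derivative of $H$ at $F$ in $\Lambda$-directions vanishes. Feeding $\del_\alpha H=\del_{\bar\beta}H=\del_\alpha\del_{\bar\beta}H=0$ into Lemma~\ref{lemma:curvature} gives $R_\nabla(\alpha,\bar\beta)=0$ at $F$ for all $\alpha,\beta\in\Lambda$; hence $N$ is flat at $F$ and the holomorphic sectional curvature along $N$ vanishes there.

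Finally I would globalize: each $e^{u_0}$ with $u_0\in\Lambda$ is, by Lemma~\ref{KaplanChange} with $g_\R=\id$, an isometry of $D$ preserving $N$, and these act transitively on $N$, so the curvature is constant along $N$ and therefore identically zero. I expect the only real content to be the term-by-term check in the third step, which rests entirely on $\Lambda$ being a conjugation-closed subalgebra disjoint from $\eun_+$ and $\eg^{0,0}$, together with the globalization step. A clean alternative to homogeneity is to note that $\exp(u)\in G_\R\cdot\exp(\ii\Lambda_{\R})$ for $u\in\Lambda$, whence the $\eg^F_\C$-factor $\varphi(u)$ in \eqref{eqn:expu} vanishes identically; then $H(u)\equiv 0$ exactly, the metric is constant in the slice coordinates, and flatness of $N$ follows outright.
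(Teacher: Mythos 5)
Your argument is correct, but your main route is more computational than the paper's. The paper's proof is essentially your closing ``clean alternative'': since $\exp(\Lambda)=\exp(\Lambda_{\R})\exp(\ii\Lambda_{\R})$ (so the factor $\varphi(u)$ in \eqref{eqn:expu} vanishes for $u\in\Lambda$), left translation by $e^u$ carries a unitary Hodge frame at $F$ to a unitary Hodge frame at $e^u\cdot F$ (Lemmas~\ref{higgs} and~\ref{KaplanChange}), producing a \emph{global holomorphic unitary frame} along the orbit; the Gram matrix is then constant, the Chern connection vanishes identically on $N=\exp(\Lambda)\cdot F$, and flatness follows in one stroke with no pointwise curvature computation and no separate homogeneity step. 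Your main route instead verifies term by term that $H(u)=O^3(u,\bar u)$ on the slice $u\in\Lambda$ (using that $\Lambda$ is a conjugation-stable subalgebra meeting $\eun_+$ and $\eg^{0,0}$ trivially), feeds this into Lemma~\ref{lemma:curvature} to kill the curvature at $F$ in $\Lambda$-directions, and then propagates by transitivity of the isometries $e^{u_0}$, $u_0\in\Lambda$. That is a valid and slightly stronger statement at the base point (it gives vanishing of the ambient curvature tensor $R_\nabla(\alpha,\bar\beta)$ for all $\alpha,\beta\in\Lambda$, not only of the induced metric's curvature), at the cost of an extra globalization step; the frame argument buys exact, not merely second-order, constancy of the metric along the whole orbit at once. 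Both hinge on the same two facts: $\Lambda$ is a complex nilpotent subalgebra with real form, and $(\bar u)_+=0$, $[u,\bar u]\in\Lambda$ for $u\in\Lambda$.
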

\proof If $\mathbf{f}$ is a unitary  Hodge-frame for the mixed Hodge
structure on $V$  corresponding to $F$, then for all
$g\in\exp(\Lambda)$, $(L_g)_* \mathbf f$ is a unitary Hodge frame at $g\cdot F$
and this gives a holomorphic unitary frame on the entire orbit. Hence the
Chern connection is identically zero. This also follows immediately from
the formula for the connection form given above.
\qed\endproof
\medskip


\begin{thm}\label{thm:hol-curv} Let $D$ be a period domain for mixed Hodge
graded-polarized structures.
Let $\nabla$ be the Chern connection for the Hodge metric on the holomorphic
tangent bundle $T(D)$ at $F$. Then for all tangent vectors
$u  \in T^{1,0}_F(D)\simeq \eq$ we have
\begin{equation*}
\label{eqn:EndCalc}
   R_\nabla (u,\bar u) =
   - [(\ad{{\bar u }_+^*})_\eq ,(\ad{{\bar u }_+})_\eq]
  { -  \ad{[ u, \bar u]_0}
   - \left(\ad{([u,\bar u]_+ + [u,\bar u]_+^*)} \right)_\eq}.
\end{equation*}
We use the following convention: for all $u  \in\eg$ we write
$u  ^*_0,u  _+^*,u  _-^*$ to mean: first project onto
$\eg^{0,0}$, respectively $\eun_+$, $\eun_-$ and \emph{then} take the adjoint.
\end{thm}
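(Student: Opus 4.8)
The plan is to obtain the holomorphic sectional curvature formula as a direct corollary of the two preceding results, namely Corollary~\ref{corr:H-form} (the second-order expansion of the local metric $H(u)$) and Lemma~\ref{lemma:curvature} (the intrinsic expression for the curvature of a Chern connection in terms of $H$). The key point is that Lemma~\ref{lemma:curvature} already reduces everything to computing $\del_\alpha\del_{\bar\beta}H$ and the commutator $\half[\del_{\bar\beta}H,\del_\alpha H]$ at the origin, and the full Taylor expansion of $H$ is handed to us by Corollary~\ref{corr:H-form}. First I would specialize to the diagonal case $\alpha=\beta=u$ needed for holomorphic sectional curvature, and read off the relevant derivatives from the graded pieces of $H(u)$, using the bookkeeping convention stated in the corollary that ``$A$ is a $(p,q)$-term'' means $A(tx,ty)=t^p\bar t^q A(x,y)$.

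The main computational step is extracting $\del_u\del_{\bar u}H$, which by the type decomposition is exactly the coefficient of the $(1,1)$-term of $H(u)$. From Corollary~\ref{corr:H-form} this $(1,1)$-term is
\[
\Bigl(\tfrac12[(\ad{(\bar u)_+^*})_\eq,(\ad{(\bar u)_+})_\eq]
 + (\ad{[u,\bar u]_0})_\eq + \ad{[u,\bar u]_+} + \ad{[u,\bar u]_+^*}\Bigr)_\eq,
\]
so $-\del_u\del_{\bar u}H$ contributes the negatives of all of these, with the first commutator appearing with coefficient $\tfrac12$. The remaining ingredient is the correction term $\tfrac12[\del_{\bar u}H,\del_u H]$ from Lemma~\ref{lemma:curvature}. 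Here I would use that $\del_u H$ is the $(1,0)$-term $-(\ad{(\bar u)_+^*})_\eq$ and $\del_{\bar u}H$ is the $(0,1)$-term $-(\ad{(\bar u)_+})_\eq$, so that this commutator equals $\tfrac12[(\ad{(\bar u)_+})_\eq,(\ad{(\bar u)_+^*})_\eq]= -\tfrac12[(\ad{(\bar u)_+^*})_\eq,(\ad{(\bar u)_+})_\eq]$.

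The crucial cancellation is that the $+\tfrac12$ commutator coming from $-\del_u\del_{\bar u}H$ and the $-\tfrac12$ commutator coming from the Lemma's correction term combine into a single full commutator $-[(\ad{(\bar u)_+^*})_\eq,(\ad{(\bar u)_+})_\eq]$. I expect this sign-and-coefficient bookkeeping to be the only real obstacle: one must track carefully that the adjoint $*$ scales antiholomorphically ($(tA)^*=\bar t A^*$) so that the various pieces genuinely carry the bidegrees claimed, and that the projections $(\;)_\eq$ are applied consistently so that $[u,\bar u]_+$ and $[u,\bar u]_+^*$ enter through the full $(\ad{\cdots})_\eq$ as written. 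Collecting the surviving terms then yields
\[
R_\nabla(u,\bar u)
 = -[(\ad{\bar u_+^*})_\eq,(\ad{\bar u_+})_\eq]
   - \ad{[u,\bar u]_0}
   - \bigl(\ad{([u,\bar u]_+ + [u,\bar u]_+^*)}\bigr)_\eq,
\]
which is exactly the stated formula; one last check is that the $(\ad{[u,\bar u]_0})_\eq$ term may be written without the outer projection because $[u,\bar u]_0\in\eg^{0,0}$ already preserves $\eq$, matching the displayed $-\ad{[u,\bar u]_0}$.
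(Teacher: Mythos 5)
Your proposal is correct and follows essentially the same route as the paper: the paper likewise specializes Lemma~\ref{lemma:curvature} along a curve $z\mapsto zu$, reads off $\del_zH(0)$, $\del_{\bar z}H(0)$ and $\del_z\del_{\bar z}H(0)$ from the graded pieces of $H$ in Corollary~\ref{corr:H-form}, and combines the $\tfrac12$-commutator from $-\del_u\del_{\bar u}H$ with the $\tfrac12[\del_{\bar u}H,\del_uH]$ correction into the single full commutator. Your closing observations (the antiholomorphic scaling of $*$ and the dropping of the outer projection on $\ad{[u,\bar u]_0}$ since $[u,\bar u]_0\in\eg^{0,0}$ preserves $\eq$) are exactly the bookkeeping the paper's terse proof leaves implicit.
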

\proof
Apply the formula of  Lemma~\eqref{lemma:curvature}. Proceeding as in the
proof of that Lemma, choose a complex curve $u(z)$ tangent to
$u\in T_FD$ and write $H(u(z))=H(z,\bar z)$.  We view the curve $u(z)$ as an
element of $\eq$, i.e.,  in the
preceding expression we replace $u$ by $zu  $ and $\bar u$ by $\bar z \bar u  $.
Then from Corollary~\ref{corr:H-form}  we have 
$\del_z H(0)=- (\ad{ (\bar u  )_+^*} )_\eq$, $\del_{\bar z} H(0)=-( \ad{(\bar u  )_+})_\eq$
and
\[
\begin{array}{lcl}
 \del_z\del_{ \bar z} H(0)&=& \half[(\ad{(\bar u  )_+^*})_\eq ,(\ad{(\bar u  )_+})_\eq]
   + \left(\ad{[u,\bar u]_0}\right)_\eq\\
&& \quad\quad +  \left(\ad{[\bar u,  u]_+ + [\bar u,u]_+^*} \right)_\eq.
\end{array}
\]
Since  at the point $F\in D$ we have
$R_\nabla (u,\bar u) = -\del_u  \del_{\bar u  } H  +\half [ \del_{ \bar u}  H ,  \del_{u}  H] $,
the result  follows.
\qed\endproof

\begin{rmk} \label{mtremark}(1) Note that in the pure case this gives back
$R_\nabla(u,\bar u)= - \ad{[u,\bar u]_0}$ as it should.\\
(2) By Remark~\ref{mMTD}, the  formula  for the curvature of a mixed Mumford--Tate domain is the same as  the one for the mixed period domain.
\\
(3) Exactly the same proof shows that the full curvature tensor, evaluated on pairs of tangent vectors $\set{u,v}\in T^{1,0}_FD$ is given by %
\[
\begin{array}{lll} R_\nabla  (u,\bar v) &=& -   ( [(\ad{\bar u^*_+})_\eq , (\ad{\bar v_+})_\eq ] )\\
         & \hphantom{=} & -\half( \ad{[u,\bar v]_0}+ \ad{[v,\bar u]_0})    \\
         & \hphantom{=} &  \qquad\quad + \left( \ad{ ( [\bar v, u]_+}   +[ \bar u ,v]_+^*   )\right)_\eq.
\end{array}
 \]
Alternatively, one may use
  \cite[(5.14.3)]{deligne}.   In that formula $R(u,v)$ stands for  the curvature in any pair $(u,v)$ of complex directions.
So $R(u,v)=  R_\nabla(u^{1,0},v^{0,1})-R_\nabla(v^{1,0},u^{0,1})$.
\end{rmk}


\section{Holomorphic Sectional Curvature in Horizontal
Directions}\label{sec:holseccurv}
Recall that the holomorphic sectional curvature is given by
\begin{equation}
           R(u):=  h(R_{\nabla}(u,\bar u)u,u)/h(u,u)^2   \label{eqn:hol-curv}.
\end{equation}
Our aim is to prove:
\begin{thm} Let $u\in T_F(D)$ be a \emph{horizontal vector} of unit length.
Then $R(u)=A_1+A_2+A_3+A_4$ where
\begin{eqnarray*}
A_1  &= & - \|[\bar u_+,u]_{\eq}\|^2,\\
A_2 & = &    \|[\bar u_+^*,u]_{\eq}\|^2,\\
A_3 &= &-h([[u,\bar u]_0,u],u)\\
A_4 &=& -h([[u,\bar u]_+,u]_{\eq},u) -h(u,[[u,\bar u]_+,u]_{\eq}).
\end{eqnarray*}
Each of these terms is real.
\end{thm}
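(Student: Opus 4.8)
The plan is to start from the formula for the full curvature operator $R_\nabla(u,\bar u)$ established in Theorem~\ref{thm:hol-curv} and simply pair it against $u$, then unpack each term using the adjointness of $*$ with respect to the mixed Hodge metric $h$. Concretely, since $u$ has unit length, $R(u) = h(R_\nabla(u,\bar u)u, u)$, and substituting the three summands of $R_\nabla(u,\bar u)$ gives four groups of terms. I would handle them one at a time. The commutator term $-[(\ad{\bar u_+^*})_\eq, (\ad{\bar u_+})_\eq]$ produces, after pairing, a difference of two inner products; the key move is to convert each $\ad$-term's adjoint into a bracket on the other side of $h$. For instance, writing $h([(\ad{\bar u_+^*})_\eq (\ad{\bar u_+})_\eq]\,u, u)$ and using that the adjoint of $(\ad{\bar u_+})_\eq$ pushes across $h$ to produce $(\ad{\bar u_+^*})_\eq$ acting on the second slot, I expect the two cross terms to reorganize into $\|[\bar u_+, u]_\eq\|^2$ and $\|[\bar u_+^*, u]_\eq\|^2$ with the signs matching $A_1$ and $A_2$.

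The second term $-\ad{[u,\bar u]_0}$, when paired against $u$, gives directly $-h([[u,\bar u]_0, u], u)$, which is exactly $A_3$; there is essentially nothing to do here beyond recording it. For the third term $-(\ad{([u,\bar u]_+ + [u,\bar u]_+^*)})_\eq$, pairing against $u$ yields $-h([[u,\bar u]_+, u]_\eq, u) - h([[u,\bar u]_+^*, u]_\eq, u)$. The claim is that this equals $A_4$, so I must show the second piece $-h([[u,\bar u]_+^*, u]_\eq, u)$ equals $-h(u, [[u,\bar u]_+, u]_\eq)$. This is the place where I would invoke the defining relation $Q(\beta^* x, y) = \overline{Q(\cdots)}$ — more precisely, the adjointness $h(\ad{\xi^*}\,a, b) = h(a, \ad{\xi}\,b)$ paired with care about which projection $(\cdot)_\eq$ sits where, using Lemma~\ref{compatibilities} to move the projection through the relevant brackets so that $*$ and $\pi_\eq$ interact cleanly.

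The reality assertion is the final step: each $A_i$ must be shown to be real. For $A_1$ and $A_2$ this is immediate since they are squared norms. For $A_3$, I would use $[u,\bar u]_0^* = [u,\bar u]_0$ (the self-adjointness recorded via Lemma~\ref{lemma:adjoint} and already used in the proof of Corollary~\ref{corr:H-form}) together with $([u,\bar u]_0)^{\,-} $ behaving correctly, so that $h([[u,\bar u]_0,u],u)$ equals its own conjugate. For $A_4$, the two summands are constructed to be complex conjugates of one another — $h(a,b)$ and $h(b,a)$ are conjugate — so their sum is real by inspection.

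I expect the main obstacle to be the bookkeeping in $A_1$ and $A_2$: the operators $(\ad{\bar u_+})_\eq$ and $(\ad{\bar u_+^*})_\eq$ are compositions of $\ad$ with the \emph{non}-orthogonal-looking projection $\pi_\eq$, and I must take the adjoint of the \emph{composite} correctly. The subtlety is that the adjoint of $\pi_\eq \comp \ad{\bar u_+}$ is \emph{not} simply $\pi_\eq \comp \ad{\bar u_+^*}$; one has to track how the projection onto $\eq$ interacts with the metric-orthogonal complement, and this is precisely what Lemma~\ref{compatibilities} (equations~\eqref{eqn:condense-1} and~\eqref{eqn:condense-2}) was set up to control. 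Getting the projections onto the correct slots so that the cross-terms collapse exactly into the two signed norm-squares — rather than leaving residual terms — is the delicate part, and I would verify it by pairing against a general $\beta\in\eq$ first and only then specializing $\beta = u$.
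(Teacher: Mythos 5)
Your proposal follows the paper's proof essentially verbatim: pair the curvature formula of Theorem~\ref{thm:hol-curv} against $u$ and use the adjunction $h([X,Y],Z)=h(Y,[X^*,Z])$ (the paper's ``metric conversion'') to turn the commutator term into the two signed norm-squares $-\|[\bar u_+,u]_\eq\|^2+\|[\bar u_+^*,u]_\eq\|^2$ and to rewrite the $[u,\bar u]_+^*$ contribution as $-h(u,[[u,\bar u]_+,u]_\eq)$; the subtlety you flag about the adjoint of $\pi_\eq\comp\ad{\bar u_+}$ resolves exactly as you hope, since $\pi_\eq$ is orthogonal projection and hence self-adjoint, so on $\eq$ the adjoint really is $\pi_\eq\comp\ad{\bar u_+^*}$. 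The only (harmless) divergence is the reality of $A_3$: you invoke $[u,\bar u]_0^*=[u,\bar u]_0$ directly, whereas the paper writes $[u,\bar u]_0=[\alpha,\alpha^*]$ and converts $A_3$ into the manifestly real quantity $\|[\alpha,u]\|^2-\|[\alpha^*,u]\|^2$ --- both are one-line applications of the same adjunction.
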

\begin{proof}
%
We start by stating the following  two  self-evident basis principles which
can be used to simplify \eqref{eqn:hol-curv}:
\begin{itemize}
\item Orthogonality: The decomposition $\eg=\bigoplus \eg^{p,q}$ is orthogonal
for the Hodge metric;
 \item  Jacobi identity:  For all $X,Y,Z\in \End(V)$ we have
 $$
        [X,[Y,Z]] = [[X,Y],Z] + [Y,[X,Z]].
 $$
\item Metric conversion:  The relation
\begin{equation} \label{eqn:MetConv}
h([X,Y],Z) = h(Y,[X^*,Z])
\end{equation}
 implies
\begin{equation}
\label{eqn:MetTrick}
-h(\ad{[X  ,X  ^*]}Y ,Y )= \| [X  ,Y ]\|^2-\| [X  ^*,Y ]\|^2
\end{equation}
\end{itemize}

Theorem~\ref{thm:hol-curv} and the previous rules imply:
$$
\aligned
     h(R_{\nabla}(u,\bar u)u,u)
     &=  - h([( \ad{ (\bar u_+)^*})_\eq ,(\ad{(\bar u)_+})_\eq]u,u)
        -h((\ad{[u,\bar u]_0})u,u) \\
     &\qquad  -h((\ad{[u,\bar u]_+})u,u)
        -h((\ad{[u,\bar u]_+^*})_{\eq} u,u) \\
     &=-  \|[\bar u_+,u]_{\eq}\|^2 +  \|[\bar u_+^*,u]_{\eq}\|^2
        -h([[u,\bar u]_0,u],u) \\
     &\qquad -h([[u,\bar u]_+,u]_{\eq},u) -h(u,[[u,\bar u]_+,u]_{\eq}).
\endaligned
$$
This shows that  $h(R_{\nabla}(u,\bar u)u,u) =  A_1+A_2+A_3 + A_4$ where the terms $A_j$ are as stated.  In particular, the
terms  $A_1,A_2,A_4$ are real.
Metric conversion allows us to show that $A_3$ is real: 
\changed{}{setting $\alpha=u^{-1,1}$ and recalling \eqref{eqn:piz}, we see that
$[u,\bar u]_0 = [\alpha, \pi_\ez \bar\alpha]=[\alpha, \alpha^*]$ and so we find }that
\begin{equation}\label{eqn:a3}
\aligned
         A_3 &= -h([[\alpha,\alpha^*],u],u))            \\
             &= \|[\alpha,u] \|^2 - \| [\alpha^*,u]\|^2     \in \R.      \hfill  \qed
\endaligned
\end{equation}
\end{proof}

%

\par The next result gives the refinement of the curvature calculations
with respect to the decomposition of a horizontal vector into its Hodge
components:

\begin{thm} \label{MainResultonHolSectCurvature}For  $u= \sum_{j \le 1}u^{-1,j}\in\eg_\C$  set \footnote{Recall the notation
\eqref{eqn:piz}.}
\[
\aligned
      \alpha           &=& u^{-1,1},\quad \beta &=&  u^{-1,0} ,\qquad \qquad \lambda  &=& \sum\nolimits_{j\ge 1} u^{-1,-j}  \\
      \bar\alpha_+ &=&  \alpha^* +\epsilon,\quad \alpha^* &=&  \pi_\ez\bar \alpha _+ =  \bar\alpha_+^{1,-1}, \qquad \epsilon  &=&  \sum\nolimits_{j\ge 2}\bar\alpha_+^{0,-j}.
\endaligned
\]
Then,
\begin{eqnarray*}
A_1  &= &   -   \left(\|[\bar\beta_++\epsilon, \alpha ]\|^2+ \|[\bar\beta_++\epsilon,\beta]\|^2+\|[\bar\beta_++\epsilon,\lambda]_\eq\|^2\right),\\
A_2 & = &    \|[\alpha,\beta]\|^2+\|[\alpha,\lambda]\|^2+\|[\bar\beta^*_+,\beta]_\eq \|^2+\|[\bar\beta^*_++\epsilon^*,\lambda]\|^2,\\
A_3 &=
             & \|[\alpha,\beta] \|^2 +\|[\alpha,\lambda] \|^2 - \| [\alpha^*,\alpha]\|^2- \| [\alpha^*,\beta]\|^2   - \| [\alpha^*,\lambda]\|^2,  \\
A_4 &=& -2\|[\alpha^*,\lambda]\|^2 -2\|[\alpha^*,\beta]\|^2 + R(\alpha,\beta,\lambda),
\end{eqnarray*}
where
\[
R(\alpha,\beta,\lambda)=  -2\text{\rm Re} \left (h([[\lambda,\alpha^*],\lambda],\lambda)+
           h([[\alpha^*,\beta],\lambda],\lambda)+h([[\alpha^*,\lambda],\beta],\lambda)\right) .
\]
This last term vanishes if $\lambda$ has pure type. \\
Moreover, in the $\R$--split situation  we have $\bar\alpha^+=\alpha^*$ so that $\epsilon=0$.
\end{thm}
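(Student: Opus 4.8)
The plan is to take the four quantities $A_1,\dots,A_4$ of the preceding theorem as already established and to feed into them the Hodge refinement $u=\alpha+\beta+\lambda$, thereby reducing everything to brackets of the pieces $\alpha\in\eg^{-1,1}$, $\beta\in\eg^{-1,0}$, $\lambda=\sum_{j\ge 1}u^{-1,-j}$ together with their conjugates and adjoints. The computational engine is exactly the one isolated in the proof of Theorem~\ref{thm:hol-curv}: orthogonality of the bigrading $\eg_\C=\bigoplus\eg^{p,q}$, the vanishing $\eg^{p,q}=0$ for $p+q>0$, and the bracket rule $[\eg^{p,q},\eg^{r,s}]\subset\eg^{p+r,q+s}$, supplemented by metric conversion \eqref{eqn:MetConv} and \eqref{eqn:MetTrick} and the Jacobi identity.

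First I would record the Hodge decompositions of the four building blocks. Using the conjugation rule \eqref{eqn:UnderConju} and Lemma~\ref{lemma:adjoint} one finds $\bar u_+=\alpha^*+\bar\beta_++\epsilon$, where $\alpha^*=\bar\alpha_+^{1,-1}$ has type $(1,-1)$ while $\bar\beta_++\epsilon$ gathers the components of type $(0,<0)$, and dually $\bar u_+^*=\alpha+\bar\beta_+^*+\epsilon^*$. A short type count, using that the only first-index-one component of $\bar u$ is $\alpha^*$, gives $[u,\bar u]_0=[\alpha,\alpha^*]$ and $[u,\bar u]_+=[\beta+\lambda,\alpha^*]=-[\alpha^*,\beta]-[\alpha^*,\lambda]$.

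The term $A_3$ then falls out at once: since $[u,\bar u]_0=[\alpha,\alpha^*]$, identity \eqref{eqn:MetTrick} with $X=\alpha$ turns $A_3$ into $\|[\alpha,u]\|^2-\|[\alpha^*,u]\|^2$, and because $\alpha,\alpha^*$ are of pure type the brackets $[\alpha,u]=[\alpha,\beta]+[\alpha,\lambda]$ and $[\alpha^*,u]=[\alpha^*,\alpha]+[\alpha^*,\beta]+[\alpha^*,\lambda]$ split into pairwise distinct Hodge components, so all cross terms vanish by orthogonality and the stated formula is read off directly. The substantive terms are $A_1$, $A_2$, $A_4$. Here $\bar u_+$, $\bar u_+^*$ and $[u,\bar u]_+$ are no longer of pure Hodge type, the obstruction being precisely $\epsilon$, $\epsilon^*$ and the $\lambda$-tail, so after expansion the three brackets $[\,\cdot\,,\alpha]$, $[\,\cdot\,,\beta]$, $[\,\cdot\,,\lambda]$ can land in a common component $\eg^{-1,q}$ and the squared norms do not separate by inspection. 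My plan is to sort first by the first Hodge index, which already isolates e.g. the $(-2,\cdot)$ block $\|[\alpha,\beta]\|^2+\|[\alpha,\lambda]\|^2$ of $A_2$ from its $(-1,\cdot)$ block, and then, within each block, to convert the overlapping cross products via \eqref{eqn:MetConv} and Jacobi. For $A_1$ and $A_2$ these cross products are to cancel, leaving the displayed sums of squares; for $A_4=-2\re\,h([[u,\bar u]_+,u],u)$ the type-diagonal contributions collapse to $-2\|[\alpha^*,\beta]\|^2-2\|[\alpha^*,\lambda]\|^2$, while the genuinely off-diagonal remainder is what is named $R(\alpha,\beta,\lambda)$.

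I expect this cross-term bookkeeping to be the main obstacle: establishing that the components sharing a Hodge type in $A_1$ and $A_2$ recombine with no leftover, and cleanly extracting $R(\alpha,\beta,\lambda)$ from $A_4$, is where the only real work lies, since naive grading alone does not force the relevant inner products to vanish once $\epsilon\neq 0$. Granting this, the two closing assertions are immediate. If $\lambda$ has pure type $(-1,-m)$, each triple bracket occurring in $R(\alpha,\beta,\lambda)$ lands in a Hodge component whose second index differs from $-m$, hence is orthogonal to $\lambda$, so $R=0$. Finally, in the $\R$-split case the conjugation rule \eqref{eqn:UnderConju} degenerates to $\overline{I^{p,q}}=I^{q,p}$, so $\bar\alpha$ is already of pure type $(1,-1)$; thus $\bar\alpha_+=\bar\alpha=\alpha^*$ by Lemma~\ref{lemma:adjoint}, whence $\epsilon=0$.
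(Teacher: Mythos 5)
Your setup is sound and follows the paper's own route: the decompositions $\bar u_+=\alpha^*+\bar\beta_++\epsilon$ and $\bar u_+^*=\alpha+\bar\beta_+^*+\epsilon^*$, the identities $[u,\bar u]_0=[\alpha,\alpha^*]$ and $[u,\bar u]_+=[\beta+\lambda,\alpha^*]$ (in fact slightly cleaner than the paper, which carries $[\beta,\epsilon]$ and $[\lambda,\epsilon]$ along and only later discards them because $[[\beta,\epsilon],u]$ and $[[\lambda,\epsilon],u]$ lie in $\bigoplus_{j\ge 1}\eg^{-2,-j}$, orthogonal to $u$), the treatment of $A_3$ via \eqref{eqn:a3} and pairwise-distinct types, and the two closing assertions (vanishing of $R(\alpha,\beta,\lambda)$ for $\lambda$ of pure type, and $\epsilon=0$ in the $\R$--split case) are all correct and agree with the paper.

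The gap is that the derivations of $A_1$, $A_2$ and $A_4$ --- which you yourself call ``where the only real work lies'' --- are never carried out: you assert that the cross products ``are to cancel'' and that the off-diagonal remainder ``is what is named $R(\alpha,\beta,\lambda)$'', but these identities \emph{are} the theorem. Concretely, for $A_2$ one must actually verify that $[\bar\beta_+^*,\alpha]$, $[\epsilon^*,\alpha]$ and $[\epsilon^*,\beta]$ drop out because they land in $\End(V)^{-1,q}$ with $-1+q>0$, where $\eg_\C$ vanishes; for $A_4$ one must run the Jacobi/metric-conversion chain of the paper, e.g. $h([[\beta,\alpha^*],\alpha],\beta)=-h([\beta,\alpha^*],[\alpha^*,\beta])=\|[\alpha^*,\beta]\|^2$ via \eqref{eqn:MetConv} and its $\lambda$-analogue, and then isolate by type exactly which triple brackets survive to form the three terms of $R(\alpha,\beta,\lambda)$ --- none of this appears. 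Moreover, your (legitimate) worry that $\epsilon$ and the graded pieces of $\lambda$ create overlapping $\eg^{-1,q}$-components in $A_1$ and $A_2$, so that ``naive grading alone'' does not separate the squared norms, is raised and then left unresolved: you neither prove the claimed cancellation nor exhibit a mechanism for it. The proposal therefore stops exactly where the proof has to begin.
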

\begin{proof}
\textbf{The term $A_3$.} Inserting $u=\alpha+\beta+\lambda$ in \eqref{eqn:a3}
immediately gives the $A_3$-term.\\
\textbf{The terms $A_1, A_2$.}  We start by noting that   $\bar u_+ = \bar\alpha_++\bar \beta_+=\alpha^*+\epsilon+ \bar \beta_+$ and  so (note the precedence of the operators!)  $\bar u_+^*= \alpha+\epsilon^*+\bar \beta^*_+$.  Accordingly,
$$
        [\bar u_+,u]_{\eq} = [\alpha^*+\bar\beta_++\epsilon ,u]_\eq,\qquad
        [\bar u_+^*,u]_{\eq} = [\alpha+\bar\beta^*_++\epsilon^*,u].
$$
The first expression gives
$$
\aligned
       A_1 =& -  \| [\bar\beta_++\epsilon ,u]\|^2\\
              =& -  (\|[\bar\beta_+ +\epsilon,\alpha]\|^2
                                      +\|[\bar\beta_++\epsilon,\beta]\|^2 + \|[\bar\beta_++\epsilon,\lambda]\|^2).
\endaligned
$$
by orthogonality.
The   second expression expands as:
$$
      [\bar u_+^*,u]_{\eq}
      =   [\alpha,u]
        + [\bar\beta^*_++\epsilon^*,\alpha]_{\eq} + [\bar\beta^*_++\epsilon^*,\beta+\lambda]_{\eq}
$$
For weight reasons, $[\bar\beta^*_+,\alpha]_{\eq} = 0$ and $[\epsilon^*,\alpha]_\eq=[\epsilon^*,\beta]_\eq=0$.  Therefore, by orthogonality:
$$
     A_2 =  \|[\bar u^*,u]_{\eq}\|^2
     = \| [\alpha,\beta]\|^2 +  \| [\alpha,\lambda]\|^2 + \|[\bar\beta^*_+,\beta]_{\eq}\|^2 + \|[\bar\beta^*_+,\lambda]_{\eq}\|^2+\|[\epsilon^*,\lambda]_\eq\|^2 .
$$
\textbf{The term  $A_4$.} To calculate $A_4$, we observe that
$$
      [u,\bar u]_+ =[\beta,\bar\alpha_+]+  [\lambda,\bar\alpha_+]=  [\beta, \alpha^* +\epsilon ]+  [\lambda,\alpha^*+\epsilon].
$$
So $h([[u,\bar u]_+,u],u) = h([[\beta,\alpha^*+\epsilon],u],u)+ h([ [\lambda,\alpha^*+\epsilon],u],u)$ and we consider each term separately.
For the  first term, note that  $[[\beta,\epsilon],u]$ as well as $ [[\lambda,\epsilon],u] $ belong to $\bigoplus_{j\ge 1} \eg^{-2,-j}$ and hence  are both orthogonal to $u$ and we can discard these terms. Moreover,  $[\beta,\alpha^*]\in \eg^{0,-1}$ and so, by orthogonality,
$$
\aligned
    h([[\beta,\alpha^*],u],u) &=   h([[\beta,\alpha^*],\alpha],\beta) +  h([[\beta,\alpha^*],\beta],\lambda)+h([[\beta,\alpha^*],\lambda],\lambda).
\endaligned
$$
Since
$
       -h([\alpha,[\beta,\alpha^*]],\beta)
        = -h([\beta,\alpha^*],[\alpha^*,\beta])
        = \|[\alpha^*,\beta]\|^2
$
we find for the first term
 $$
\aligned
    h([[\beta,\alpha^*],u],u)  =   \|[\alpha^*,\beta]\|^2+  h([[\beta,\alpha^*],\beta],\lambda)+h([[\beta,\alpha^*],\lambda],\lambda).
\endaligned
$$
Note that   $[\lambda,\alpha^* ]\in \bigoplus_{j\ge 0}\,\eg^{0,-2-j}$ so that by orthogonality,
$$
\aligned
       h([[\lambda, \alpha ^*],u],u)
       &=  h([[\lambda, \alpha^*],\lambda],\beta)  + h([[\lambda, \alpha^*],\alpha + \lambda],\lambda) .
      \endaligned
$$
The second term thus simplifies to
$$
\aligned
       h([[\lambda,\alpha^*],\alpha],\lambda)
          +  h([[\lambda,\alpha^*],\lambda],\lambda)
        &= -h([\alpha,[\lambda,\alpha^*]],\lambda)
          +  h([[\lambda,\alpha^*],\lambda],\lambda)                 \\
        &= -h([\lambda,\alpha^*],[\alpha^*,\lambda])
           +  h([[\lambda,\alpha^*],\lambda],\lambda)                \\
        &= \|[\alpha^*,\lambda]\|^2
           +  h([[\lambda,\alpha^*],\lambda],\lambda)   .
\endaligned
$$
It follows that
$$
\aligned
       A_4 & = -2\|[\alpha^*,\lambda]\|^2-2\|[\alpha^*,\beta]\|^2    \\
       & \hspace{4em}-\text{\rm Re} \left (h([[\lambda,\alpha^*],\lambda],\lambda)+
           h([[\alpha^*,\beta],\lambda],\lambda)+h([[\alpha^*,\lambda],\beta],\lambda)\right). \hfill\qed
           \endaligned
$$
\end{proof}

\begin{rmk}\label{rmk:epsilon} We claim that   $\epsilon$ and the
Deligne splitting $\delta$ of $(F,W)$ are related as follows:
\[
\epsilon= [-2\ii\delta,\bar\alpha]_+.
\]
To see this, apply the Deligne splitting:
$$
         \alpha = \Ad(e^{\ii \delta})\alpha^{\ddag}
$$
where $\alpha^{\ddag}$ is type $(-1,1)$ at the split mixed Hodge structure
$(\hat F,W)$ defined by $\hat F = e^{-\ii\delta}F$.   At that point  the complex conjugate and the adjoint of $\alpha^{\ddag}$ coincide. Therefore,
\[
\aligned
\alpha^* &= \Ad (e^{\ii\delta} ) [  {\alpha^{\ddag}}^*]_{\hat F} = \Ad (e^{\ii\delta} )[ \overline{ \alpha^{\ddag}}]_{\hat F}\\
 \bar\alpha &= \Ad(e^{-\ii\delta})[\overline{\alpha^{\ddag}} ]_{\hat F}
                   = \Ad(e^{\ii\delta})[ \Ad(e^{-2\ii\delta})\overline{\alpha^{\ddag}}]_{\hat F}.
\endaligned
\]
Consequently,
$$
\aligned
       \epsilon &= (\alpha^* -\bar\alpha)_+\\
       &= \Ad(e^{\ii\delta})
                    ((\Ad(e^{-2\ii\delta})-1)\overline{\alpha^{\ddag}})_{+,\hat F} \\
                &= \Ad(e^{\ii\delta})
                    [-2i\delta,\bar\alpha^{\ddag}]_{+,\hat F} \\
                &=  [-2\ii\delta,\Ad(e^{\ii\delta})\overline{\alpha^{\ddag}}]_+ \\
                &=  [-2\ii\delta,\Ad(e^{2\ii\delta})\bar\alpha]_+ \\
                &=  [-2\ii\delta,\Ad(e^{2\ii\delta})\bar\alpha]_+\\
                &=  [-2\ii\delta,\bar\alpha]_+.
\endaligned
$$
\end{rmk}

\par We shall now discuss particular cases.
\begin{corr}  \label{hsecex2} The holomorphic sectional
curvature along a horizontal direction $u = \alpha + \lambda$ with
$\alpha$ of  type $(-1,1)$ and $\lambda\in\Lambda$ equals
$$
R(u)= \frac{ 2\|[\alpha,\lambda]\|^2  +f(u,\epsilon)
       - 3\|[\alpha^*,\lambda]\|^2
       - \|[\alpha,\alpha^*]\|^2
       - \text{\rm Re} (h([[\lambda,\alpha^*],\lambda],\lambda))}{(\|\alpha\|^2+\|\lambda\|^2)^2},
$$
where $f(u,\epsilon)=  -\left(\|[\alpha,\epsilon]\|^2+\| [\lambda,\epsilon] \|^2\right)+\| [\lambda,\epsilon^*]\|^2$.
In particular:
\begin{itemize}
\item   $R(u)\le 0$ if $[\alpha,\lambda]=0=[\lambda,\epsilon^*]$ and $\lambda$ is of pure type
$(-1,-k)$ for some $k<0$ (since
$[[\lambda ,\alpha^*],\lambda]$ and $\lambda$ have different types),
and $R(u)<0$  as soon as $\alpha\neq 0$.
\item $R(u)>0$  if  $[\alpha^*,\lambda]=0=[u,\epsilon]$
provided
$2\|[\alpha,\lambda]\|^2+  \| [\lambda,\epsilon^*]\|^2  >\|[ \alpha^*,\alpha]\|^2$.
\end{itemize}
\end{corr}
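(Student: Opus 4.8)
The plan is to read off the formula as the specialization of the preceding Theorem to the case in which the $(-1,0)$-component of the horizontal vector vanishes. Writing $u=\alpha+\lambda$ in the notation introduced there amounts to setting $\beta=u^{-1,0}=0$, so that $\bar\beta_+=0$ and $\bar\beta_+^*=0$. Substituting this into the four quantities $A_1,\dots,A_4$, I would obtain $A_1=-\|[\epsilon,\alpha]\|^2-\|[\epsilon,\lambda]_\eq\|^2$, $A_2=\|[\alpha,\lambda]\|^2+\|[\epsilon^*,\lambda]\|^2$, $A_3=\|[\alpha,\lambda]\|^2-\|[\alpha^*,\alpha]\|^2-\|[\alpha^*,\lambda]\|^2$, and $A_4=-2\|[\alpha^*,\lambda]\|^2$ together with the iterated-bracket term coming from $R(\alpha,0,\lambda)=-2\re\, h([[\lambda,\alpha^*],\lambda],\lambda)$.

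I would then add the four terms. The two copies of $\|[\alpha,\lambda]\|^2$ (from $A_2$ and $A_3$) combine to $2\|[\alpha,\lambda]\|^2$; the $\|[\alpha^*,\lambda]\|^2$ contributions of $A_3$ and $A_4$ combine to $-3\|[\alpha^*,\lambda]\|^2$; the term $-\|[\alpha^*,\alpha]\|^2=-\|[\alpha,\alpha^*]\|^2$ comes from $A_3$; the iterated-bracket term from $A_4$ supplies the final $\re$-summand; and the three $\epsilon$-terms reassemble into $f(u,\epsilon)$ once one rewrites $\|[\epsilon,\alpha]\|^2=\|[\alpha,\epsilon]\|^2$, $\|[\epsilon,\lambda]\|^2=\|[\lambda,\epsilon]\|^2$ and $\|[\epsilon^*,\lambda]\|^2=\|[\lambda,\epsilon^*]\|^2$ (here each bracket already lies in $\eq$, so the projections are harmless). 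Dividing by $h(u,u)^2$, and observing that $h(u,u)=\|\alpha\|^2+\|\lambda\|^2$ because $\alpha$ and the Hodge components of $\lambda$ occupy distinct orthogonal summands $\eg^{p,q}$, yields the displayed expression for $R(u)$. This step is purely mechanical.

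For the two itemized consequences I would substitute the stated vanishing hypotheses into the formula just derived; the only genuinely Hodge-theoretic input is that $h([[\lambda,\alpha^*],\lambda],\lambda)=0$ when $\lambda$ is of pure type, since the iterated bracket $[[\lambda,\alpha^*],\lambda]$ then has Hodge type different from that of $\lambda$ and is hence orthogonal to it. In the first bullet, this kills the $\re$-term while $[\alpha,\lambda]=0$ and $[\lambda,\epsilon^*]=0$ force $f(u,\epsilon)\le 0$; the remaining summands $-3\|[\alpha^*,\lambda]\|^2$ and $-\|[\alpha,\alpha^*]\|^2$ are non-positive, so $R(u)\le 0$. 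In the second bullet, $[\alpha^*,\lambda]=0$ removes both the $\|[\alpha^*,\lambda]\|^2$-term and the $\re$-term, and $[u,\epsilon]=0$ reduces the $\epsilon$-part of $f$ to $\|[\lambda,\epsilon^*]\|^2$, so that the numerator collapses to $2\|[\alpha,\lambda]\|^2+\|[\lambda,\epsilon^*]\|^2-\|[\alpha^*,\alpha]\|^2$; its positivity is precisely the stated inequality.

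The one point demanding input beyond substitution, and the main obstacle, is the strict negativity ``$R(u)<0$ as soon as $\alpha\neq0$'' in the first bullet: every surviving summand is then non-positive, so strictness reduces to $\|[\alpha,\alpha^*]\|^2>0$ whenever $\alpha\neq0$. I would prove this by passing to the pure weight-zero part $\gr^W_0\End(V_\C)$ and choosing the largest $p$ for which $\alpha$ acts non-trivially on $\gr^W$; for a vector $v$ in that top Hodge piece one has $\alpha^*v=0$ by maximality, so $h([\alpha^*,\alpha]v,v)=\|\alpha v\|^2>0$ and hence $[\alpha,\alpha^*]\neq0$. This is the infinitesimal form of the classical negativity of the holomorphic sectional curvature along horizontal directions in the pure case.
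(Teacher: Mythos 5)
Your proposal is correct and is exactly the argument the paper intends (it offers no separate proof of the corollary): the displayed formula is the specialization $\beta=0$ of the preceding theorem's $A_1+\dots+A_4$, the two bullets follow by substituting the stated vanishing hypotheses together with the type argument killing $h([[\lambda,\alpha^*],\lambda],\lambda)$ for $\lambda$ of pure type, and the strictness claim for $\alpha\neq 0$ reduces to $[\alpha,\alpha^*]\neq 0$, which your maximal-$p$ (equivalently, ``a normal nilpotent operator is zero'') argument correctly supplies. One small point you gloss over: faithful substitution of $\beta=0$ into the theorem's $R(\alpha,\beta,\lambda)$ yields the coefficient $-2\,\re$ on $h([[\lambda,\alpha^*],\lambda],\lambda)$ rather than the $-\re$ printed in the corollary (the paper is already internally inconsistent about this factor between the theorem's statement and the last display of its proof); this discrepancy is immaterial here, since that term vanishes under the hypotheses of both bullets.
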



\begin{exmple}\label{exmple:kahler-def} Let us return to the setting
of the variation of mixed Hodge structure \eqref{eqn:kahler-def} arising
from a variation of K\"ahler moduli along a family of compact K\"ahler
manifolds.   The original variation $F(s)$ of a direct sum of pure Hodge
structures that can be expressed locally as
$$
         F(s) = e^{\Gamma(s)} \cdot F(0)
$$
where $\Gamma:\Delta^r\to\eq$ vanishes at $0$ and takes values in
$\eg^{-1,1}\oplus\eg^{-2,2}\oplus\cdots$.  The requirement that each
$\gamma_j$ be of type $(-1,-1)$ for all $F(s)$ implies that
$$
            \Ad(e^{-\Gamma(s)})\lambda_j = e^{-\ad\Gamma(s)}\lambda_j
$$
is horizontal at $F(0)$ for all $s$.  Via differentiation along a holomorphic
arc through $s=0$, this fact implies that $[\Gamma'(0),\gamma_j] = 0$
since $\Gamma'(0)\in\eg^{-1,1}$ and
$\Gamma(0) = 0$.

\par The local normal form of the variation \eqref{eqn:kahler-def} is
therefore
$$
         \tilde F(s) = e^{\ii N(u(s))}e^{\Gamma(s)} \cdot F(0)
$$
where $u(s)$ takes values in the complex linear span $L_{\mathbb C}$ of
$\gamma_1,\dots,\gamma_k$.  Accordingly, the derivative of $(\tilde F(s),W)$
at $s=0$ is
$$
         \xi = \xi^{-1,-1} + \xi^{-1,1},\qquad
         \xi^{-1,-1} =\ii  N(u'(0)),\qquad \xi^{-1,1} = \Gamma'(0)
$$
where $[\xi^{-1,-1},\xi^{-1,1}]=0$. 
Recall the statement of Theorem~\ref{MainResultonHolSectCurvature} for the definition of $\epsilon$.
We show that it vanishes in this situation.  First observe that since the
\changed{}{"untwisted"} mixed Hodge structure
$(F(s),W)$ are all split over $\R$, the element $\delta$ attached to
$(\tilde F(0),W)$ is defined by the equation
$$
     e^{-\ii N(\overline{u(0)})} \cdot Y_{(F(0),W)} =
     e^{-2\ii \delta}e^{\ii N(u(0))} \cdot Y_{(F(0),W)}.
$$
Since $\delta$ commutes with all $(p,p)$-morphisms of $(\tilde F(0),W)$,
it follows from the previous equation that $\delta = N(\re(u(0)))$.
Accordingly, $\delta$ is real and belongs to $L_{\C}$ and so
$$
        [\bar\Gamma'(0),\delta] = \overline{[\Gamma'(0),\delta]} = 0.
$$
By Remark \eqref{rmk:epsilon}, it follows that indeed $\epsilon=0$.
Corollary~\ref{hsecex2}  then implies:
\[
R(\xi)\le 0  \text{  and  }  <0  \text{ if  } \xi\not=0.
\]
\end{exmple}

\begin{corr}\label{curvapll2} The holomorphic sectional
curvature along a horizontal direction $u = \alpha + \beta$ with $\alpha$ type
$(-1,1)$ and $\beta$ type $(-1,0)$ is
$$
\aligned
R(u)&=  \frac{-n(\alpha,\beta)+p(\alpha,\beta)
 } {(\|\alpha\|^2+\|\beta\|^2)^2},\\
 n(\alpha,\beta) &:=   \|[\alpha^*+\epsilon,\alpha]\|^2 + \|[\epsilon,\beta]\|^2
 + 3 \|[\alpha^*,\beta]\|^2 +\|[\alpha,\bar\beta_+] \|^2
 + \|[\bar\beta_+,\beta]\|^2,\\
 p(\alpha,\beta)&:=  \|[\alpha,\beta]\|^2
 +\| [\bar\beta^*_+,\beta]_{\eq}\|^2.
\endaligned
$$
In particular, if $\alpha=0$,   $[\beta,\bar\beta_+]=0=[\epsilon,\beta]$ (which is the case if $W_{-1}\eg_\C$ is abelian) we have $ {R}(u)\ge   0$.
\end{corr}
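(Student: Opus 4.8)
The plan is to derive the stated formula by specializing the preceding refinement of the holomorphic sectional curvature formula to the case $\lambda=0$, which is precisely the situation $u=\alpha+\beta$ in which the $\Lambda$-component of $u$ vanishes, and then to read off the sign in the degenerate case $\alpha=0$. Recall that in the notation of that theorem $\alpha=u^{-1,1}$, $\beta=u^{-1,0}$, and $\lambda=\sum_{j\ge1}u^{-1,-j}$, so $u=\alpha+\beta$ is exactly the hypothesis $\lambda=0$.

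First I would set $\lambda=0$ in the four quantities $A_1,A_2,A_3,A_4$ supplied by that theorem. Every summand carrying a factor $\lambda$ drops out, leaving $A_1=-\bigl(\|[\bar\beta_++\epsilon,\alpha]\|^2+\|[\bar\beta_++\epsilon,\beta]\|^2\bigr)$, then $A_2=\|[\alpha,\beta]\|^2+\|[\bar\beta^*_+,\beta]_\eq\|^2$ and $A_3=\|[\alpha,\beta]\|^2-\|[\alpha^*,\alpha]\|^2-\|[\alpha^*,\beta]\|^2$, while $A_4=-2\|[\alpha^*,\beta]\|^2$ since the remainder term $R(\alpha,\beta,\lambda)$ vanishes identically once $\lambda=0$. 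Summing the $A_i$ and dividing by $h(u,u)^2=(\|\alpha\|^2+\|\beta\|^2)^2$ produces $R(u)$; note in particular that a copy of $\|[\alpha,\beta]\|^2$ is contributed separately by both $A_2$ and $A_3$.

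The key step is then a regrouping of the resulting norms into $-n(\alpha,\beta)+p(\alpha,\beta)$. Here I would invoke repeatedly the orthogonality of the Hodge decomposition $\eg_\C=\bigoplus\eg^{p,q}$ to split the mixed norms: because $\alpha^*$ has type $(1,-1)$ while $\epsilon$ sits in types $(0,-j)$ with $j\ge2$, one has $\|[\alpha^*+\epsilon,\alpha]\|^2=\|[\alpha^*,\alpha]\|^2+\|[\epsilon,\alpha]\|^2$, and similarly $\|[\bar\beta_++\epsilon,v]\|^2=\|[\bar\beta_+,v]\|^2+\|[\epsilon,v]\|^2$ for $v=\alpha,\beta$ since the two brackets land in distinct Hodge types. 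The delicate bookkeeping is tracking which brackets survive $\pi_\eq$: using $\eg^{r,s}=0$ for $r+s>0$ one checks that $[\alpha^*,\alpha]$ (type $(0,0)$) and $[\alpha^*,\beta]$ (type $(0,-1)$) leave $\eq$, whereas $[\bar\beta^*_+,\alpha]$ and $[\epsilon^*,\alpha],[\epsilon^*,\beta]$ vanish after projection to $\eg_\C$ for weight reasons. I expect this type-accounting — confirming that brackets grouped by their second slot $\alpha,\beta$ are genuinely orthogonal, so that no cross terms of coincident Hodge type survive — to be the main obstacle, and it is exactly where the refined theorem should be quoted rather than re-derived.

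Finally, for the sign assertion I would specialize to $\alpha=0$. Then $\alpha^*=0$ and, as $\epsilon$ is built from $\bar\alpha$, also $\epsilon=0$, so $u=\beta$ and every term of $n$ involving $\alpha$, $\alpha^*$ or $\epsilon$ drops, leaving $n(0,\beta)=\|[\bar\beta_+,\beta]\|^2$ and $p(0,\beta)=\|[\bar\beta^*_+,\beta]_\eq\|^2$. The hypothesis $[\beta,\bar\beta_+]=0$ forces $n(0,\beta)=0$, whence $R(u)=\|[\bar\beta^*_+,\beta]_\eq\|^2/\|\beta\|^4\ge0$. To close, I would note that $\beta$, $\bar\beta_+$ and $\epsilon$ all have weight $\le-1$ and hence lie in $W_{-1}\eg_\C$; so if $W_{-1}\eg_\C$ is abelian then both $[\beta,\bar\beta_+]=0$ and $[\epsilon,\beta]=0$ hold automatically, which yields the stated conclusion.
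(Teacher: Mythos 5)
Your proposal is exactly the intended derivation: the paper offers no separate proof of this corollary, since it is the specialization $\lambda=0$ of the immediately preceding theorem on the refined decomposition of $R(u)$ into $A_1,\dots,A_4$, and your specialization, type-accounting (orthogonality of the various brackets, vanishing of $[\epsilon^*,\cdot]_{\eq}$ and $[\bar\beta^*_+,\alpha]_{\eq}$ for weight reasons), and the sign analysis at $\alpha=0$ (where $\epsilon=0$ automatically) all match. One caveat: your own computation correctly yields $2\|[\alpha,\beta]\|^2$ from $A_2+A_3$ — you even point out the double contribution — yet you then assert the sum regroups into the stated $p(\alpha,\beta)$, which contains only a single copy of $\|[\alpha,\beta]\|^2$; this is a genuine coefficient discrepancy in the corollary as printed (compare Corollary~\ref{hsecex2}, where the analogous term correctly appears as $2\|[\alpha,\lambda]\|^2$), and it should be flagged rather than silently absorbed, though it does not affect the sign conclusion since $[\alpha,\beta]=0$ when $\alpha=0$.
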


Next, we look at  a  unipotent variation
of mixed Hodge structure in the sense of Hain and Zucker \cite{unipotvars}.
These are the variations where the pure Hodge structures on the graded
quotients are constant so that $\alpha=u^{-1,1}=0$ and hence $\epsilon=0$.
This situation  occurs in two well known geometric
examples:
\begin{itemize}
\item The VMHS on $J_x/J_x^3$, $x\in X$ where $X$ is a smooth complex
projective variety;
\item The VMHS attached to a family of homologically trivial
algebraic cycles moving in a fixed variety $X$.
\end{itemize}

\begin{corr} 
For the curvature coming from a unipotent variation we
have $$
 R(u)=   \frac{-  \|[\bar\beta_+,\beta]\|^2
   - \| [\bar\beta_+,\lambda]\|^2
    + \| [\bar\beta^*_+,\beta]_{\eq}\|^2
    +\| [\bar\beta^*_+,\lambda]_\eq\|^2  }{(\|\beta\|^2+\|\lambda\|^2)^2}.
$$
\end{corr}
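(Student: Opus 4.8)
The plan is to obtain this formula as a direct specialization of the preceding refinement of the curvature computation (the decomposition $R(u)=A_1+A_2+A_3+A_4$ expressed through the Hodge components $\alpha=u^{-1,1}$, $\beta=u^{-1,0}$, $\lambda=\sum_{j\ge 1}u^{-1,-j}$ and the correction term $\epsilon$). The first step is to isolate what ``unipotent'' buys us. For a unipotent variation in the sense of Hain--Zucker the pure Hodge structures on the graded quotients are constant, so the horizontal tangent vector has no $(-1,1)$-part; that is $\alpha=u^{-1,1}=0$. Since $\alpha^{*}=\pi_\ez\bar\alpha_+$ and $\epsilon=\sum_{j\ge 2}\bar\alpha_+^{0,-j}$ are built solely from $\alpha$, both vanish as well, so $\alpha=\alpha^{*}=\epsilon=0$; the vanishing of $\epsilon$ may alternatively be read off from Remark~\ref{rmk:epsilon}, where $\epsilon=[-2\ii\delta,\bar\alpha]_+$.

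The second step is the substitution $\alpha=\alpha^{*}=\epsilon=0$ into the four terms. Every summand of $A_3$ carries a factor $\alpha$ or $\alpha^{*}$, and likewise every summand of $A_4$ -- including the cubic remainder $R(\alpha,\beta,\lambda)$ -- so that $A_3=A_4=0$. In $A_1$ the bracket against $\alpha$ disappears and $\epsilon$ drops out, leaving $A_1=-\|[\bar\beta_+,\beta]\|^2-\|[\bar\beta_+,\lambda]\|^2$; here the projection $_\eq$ on the second norm is superfluous because for type reasons $[\bar\beta_+,\lambda]\in\Lambda\subset\eq$. Similarly $A_2$ collapses to $\|[\bar\beta_+^{*},\beta]_\eq\|^2+\|[\bar\beta_+^{*},\lambda]_\eq\|^2$, the terms involving $\alpha$ or $\epsilon^{*}$ all vanishing.

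Finally, since the Hodge components are mutually orthogonal we have $\|u\|^2=\|\beta\|^2+\|\lambda\|^2$, so dividing $A_1+A_2$ by $\|u\|^4$ produces exactly the asserted expression. I anticipate no real obstacle: the statement is a specialization, and the only points needing care are the implication $\alpha=0\Rightarrow\epsilon=0$ and the bookkeeping of which norm-squared terms survive, both of which are immediate once $\alpha=0$ is established.
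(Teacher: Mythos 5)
Your proposal is correct and is exactly the route the paper intends: the corollary is stated as an immediate specialization of the preceding theorem, with the paper noting just beforehand that unipotence forces $u^{-1,1}=\alpha=0$ and hence $\epsilon=0$, after which $A_3=A_4=0$ and $A_1,A_2$ collapse to the displayed terms. Your extra observations (that $\epsilon$ vanishes because it is built from $\alpha$, and that $[\bar\beta_+,\lambda]$ already lies in $\Lambda\subset\eq$ for type reasons) are accurate bookkeeping consistent with the paper.
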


\section{Curvature of Hodge Bundles} \label{sec:hb}

\subsection{Hodge Bundles over Mixed Period Domains}
In this subsection, we
compute the curvature of the Hodge bundles over the classifying
space $D$ using the methods of \S~\ref{ssec:secondorder}.  Since the
Hodge bundles of a variation of mixed Hodge structure $\VV\to S$
are obtained by pulling back the Hodge bundles of $D$ along local
liftings of the period map, this furnishes a computation of the
curvature of the Hodge bundles of a variation of mixed Hodge structure.

\par Let $F\in D$ and $\eq$ be the associated
nilpotent subalgebra \eqref{eqn:tang-alg} and $U$ be a neighborhood of
zero in $\eq$ such that the map $u\to e^u\cdot F$ is a biholomorphism onto a
neighborhood of $F$.  Then, we obtain a local holomorphic framing for
the bundle $\FF^p$ over $U$ via the sections $\alpha(u) = e^u\alpha$
for fixed $\alpha\in F^p$.  Let $\beta(u) = e^u\beta$ be another such
section of $\FF^p$ over $U$, and $L_g$ denote the linear action of
$g\in GL(V_{\C})$ on $V_{\C}$.  Let $\Pi$ denote orthogonal projection
from $V_{\C}$ to $F^p$.  Then, as in \S~\ref{ssec:secondorder} by \eqref{eqn:expu}, the metric is
$$
\aligned
      h_{e^u\cdot F}(\alpha(u),\beta(u))
        &= h_{F}(L_{\exp(\varphi(u))}\alpha,L_{\exp(\varphi(u))}\beta)  \\
        &= h_{F}(\Pi\comp  L_{\exp(\varphi(u))}\alpha,L_{\exp(\varphi(u))}\beta) \\
        &= h_{F}(L_{\exp(\varphi(u)^*)}\Pi\comp  L_{\exp(\varphi(u))}\alpha,\beta) \\
        &= h_{F}(\Pi\comp  L_{\exp(\varphi(u)^*)}
                  \Pi\comp  L_{\exp(\varphi(u))}\alpha,\beta).
\endaligned
$$
In analogy with \S~\ref{ssec:tang}, we have the identity
$$
            \Pi\comp  L_{\exp(\varphi(u))} = L_{\exp(\Pi\comp \varphi(u))},
$$
since $\varphi(u)$ belongs to the subalgebra preserving $F^p$.
The identity
$$
           \Pi\comp  L_{\exp(\varphi(u)^*)} = L_{\exp(\Pi\comp \varphi(u)^*)}
$$
is also straightforward because $\varphi(u)$ is a sum of components
of Hodge type $(a,b)$ with $a\geq 0$.  As such $\varphi(u)^*$ is a sum
of components of Hodge type $(-a,-b)$ with $-a\leq 0$, and hence there is no
way for the action of $\varphi(u)^*$ to move a vector of Hodge type $(c,d)$
with $c<p$ back into $F^p$.

\par Accordingly, by the universal nature of the Campbell--Baker--Hausdorff
formula, the only difference between the computation of the curvature of
$\FF^p$ and the curvature of $T(D)$ is that for the former we are use the
linear action $GL(V_{\C})$ and $\eg\el(V_{\C})$ and project orthogonally to $F^p$
whereas in the later we use the adjoint action and project orthogonally to
$\eq$.   So, with $\Pi$ the  orthogonal projection from $V_{\C}$ to
$F^p$ for  $u,v\in T^{\rm hol}_{F}(D)$  we find
\[
\aligned
    R_\nabla  (u,\bar v) &=  -   ( [\Pi\comp (\bar u^*_+), \Pi\comp (\bar v_+)] )\\
         & \hphantom{=}  \hspace{3.5em} -\half
                          \left( \Pi\comp ([u,\bar v]_0)
                                  + \Pi\comp ([v,\bar u]_0)\right)    \\
         & \hphantom{=}    \hspace{7em}
         + \Pi\comp \left([\bar v, u]_+ +[\bar u ,v]_+^* \right) .
\endaligned
 \]
Taking account of the fact that the terms with subscript $+$ (without
an adjoint) and subscript $0$ always preserve $F^p$ this simplifies
and we get:
\begin{corr} Let $\Pi$ denote orthogonal projection from $V_{\C}$ to
$F^p$.  Then, the curvature of the Hodge bundle $\FF^p$ over $D$ in the
directions $u,v\in T^{\rm hol}_{F}(D)$ is
\[
\aligned
    R_\nabla  (u,\bar v) &=  -   ( [\Pi\comp (\bar u^*_+), \bar v_+] )\\
         & \hphantom{=}  \hspace{2.5em}  -\half
                          \left([u,\bar v]_0 + [v,\bar u]_0\right)    \\
         & \hphantom{=}    \hspace{5em}
         + \left([\bar v, u]_+ + \Pi\comp [\bar u ,v]_+^* \right) .
\endaligned
 \]
\end{corr}

\par The computation of the curvature of the quotient bundle $\FF^p/\FF^{p+1}$
proceeds along the same lines as the computation of the curvature of $\FF^p$.
However, in this case the corresponding projection operator $\Pi'$ sends
$V_{\C}$ to
$$
           \FF^p/\FF^{p+1}\cong U^p := \bigoplus_q\,\II^{p,q}_{(F,W)}.
$$
The identity
$$
            \Pi'\comp  L_{\exp(\varphi(u))} = L_{\exp(\Pi'\comp \varphi(u))}
$$
results from the fact that elements of $\eg_{\C}^{F}$ have Hodge components of
type $(a,b)$ with $a\geq 0$ and such an element moves $U^p$ to $U^{p+a}$.
A similar argument works for $\Pi'\comp \varphi(u)^*$.

\begin{corr}\label{corr:hodge-bundle} Let $\Pi'$ denote orthogonal projection
from $V_{\C}$ to $U^p$ at $F$.  Then, the curvature of the Hodge bundle
$\FF^p/\FF^{p+1}$ over $D$ in the directions $u$, $v\in T^{\rm hol}_{F}(D)$ is
\[
\begin{array}{lll}
    R_\nabla  (u,\bar v) &=& -   ( [\Pi'\comp (\bar u^*_+),
                                  \Pi'\comp (\bar v_+)] )\\
         & \hphantom{=} & -\half
                          \left( \Pi'\comp ([u,\bar v]_0)
                                  + \Pi'\comp ([v,\bar u]_0)\right)    \\
         & \hphantom{=} &  \qquad\quad
         + \Pi'\comp \left([\bar v, u]_+ +[\bar u ,v]_+^* \right).
\end{array}
 \]
Taking account of the fact that the terms with subscript $ 0$  preserve
$U^p$ it follows that
\[
\begin{array}{lll}
    R_\nabla  (u,\bar v) &=& -   ( [\Pi'\comp (\bar u^*_+),
                                  \Pi'\comp (\bar v_+)] )\\
         & \hphantom{=} & -\half
                          \left([u,\bar v]_0 + [v,\bar u]_0\right)    \\
         & \hphantom{=} &  \qquad\quad
         + \Pi'\comp \left([\bar v, u]_+ +[\bar u ,v]_+^* \right).
\end{array}
 \]
\end{corr}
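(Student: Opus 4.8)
The plan is to run the computation in exact parallel with the preceding calculation for $\FF^p$, replacing the projection onto $F^p$ by the projection $\Pi'$ onto $U^p$ throughout. First I would record the Gram matrix of the metric in the holomorphic frame $\alpha(u)=e^u\alpha$, $\alpha\in U^p$. Exactly as in the $\FF^p$ case, moving the adjoint across the metric and using the orthogonality of $U^p$ against the remaining summands (so that a projection may be inserted freely since $\beta\in U^p$) gives
\[
  h_{e^u\cdot F}(\alpha(u),\beta(u))
  = h_F\bigl(\Pi'\comp L_{\exp(\varphi(u)^*)}\,\Pi'\comp L_{\exp(\varphi(u))}\alpha,\beta\bigr),
\]
where $\varphi(u)$ is the factor supplied by Proposition~\ref{secondorder} via the decomposition \eqref{eqn:expu}.

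Next I would invoke the two projection identities established just above the statement, namely $\Pi'\comp L_{\exp(\varphi(u))}=L_{\exp(\Pi'\comp\varphi(u))}$ and its analogue for $\varphi(u)^*$; these are the linear-action counterparts of the condensation identities of Lemma~\ref{compatibilities}, and they let me collapse the sandwiched projections into a single exponential. Consequently the transpose Gram matrix takes the form $\exp(H')$, where $H'$ is obtained from the operator $H(u)$ of Corollary~\ref{corr:H-form} by the formal substitutions $\ad(\,\cdot\,)\rightsquigarrow$ linear action on $V_\C$ and $\pi_\eq\rightsquigarrow\Pi'$. The first displayed formula then drops out of Lemma~\ref{lemma:curvature}: reading off $\partial_u H'$, $\partial_{\bar v}H'$ and $\partial_u\partial_{\bar v}H'$ and assembling $R_\nabla(u,\bar v)=-\partial_u\partial_{\bar v}H'+\half[\partial_{\bar v}H',\partial_u H']$ reproduces, under the same substitutions, the full curvature tensor of $T(D)$ recorded in the remark following Theorem~\ref{thm:hol-curv}.

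For the simplification I would use that $\eg^{0,0}$ preserves each $U^p$: an endomorphism of type $(0,0)$ maps $I^{p,q}$ to $I^{p,q}$, so on $U^p$ the projection $\Pi'$ acts as the identity on the brackets $[u,\bar v]_0$ and $[v,\bar u]_0$. Dropping $\Pi'$ from precisely these two terms yields the second formula. The point to watch — and the place where the present case genuinely differs from the $\FF^p$ computation — is that the terms carrying subscript $+$ without an adjoint do \emph{not} preserve $U^p$: a component of type $(a,b)$ with $a>0$ sends $U^p$ into $U^{p+a}$, so $\Pi'$ must be retained on $[\bar v,u]_+$ here, whereas it could be removed in the $\FF^p$ case where that term preserved $F^p$.

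I expect the only real obstacle to be the careful verification of the projection identity for $\varphi(u)^*$, i.e.\ the linear analogue of \eqref{eqn:condense-2}: one must check that the components of $\varphi(u)^*$ of negative first Hodge index cannot, when composed between two copies of $\Pi'$, carry a vector lying in some $U^{p'}$ with $p'<p$ back up into $U^p$. This is the exact counterpart of the type bookkeeping in Lemma~\ref{compatibilities}, and once it is in place the substitution into the tangent-bundle curvature formula is purely formal.
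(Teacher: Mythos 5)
Your proposal is correct and follows essentially the same route as the paper: run the $\FF^p$ computation verbatim with $\Pi'$ in place of $\Pi$, justify the two projection identities $\Pi'\comp L_{\exp(\varphi(u))}=L_{\exp(\Pi'\comp\varphi(u))}$ (and its adjoint analogue) by the type bookkeeping that elements of $\eg_\C^F$ move $U^p$ to $U^{p+a}$ with $a\geq 0$, and then transfer the tangent-bundle curvature formula by the universality of Campbell--Baker--Hausdorff. Your observation that only the subscript-$0$ terms (not the subscript-$+$ terms) preserve $U^p$, so that $\Pi'$ must be retained on $[\bar v,u]_+$, is exactly the point the paper makes in passing from the $\FF^p$ statement to this one.
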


\subsection{First Chern Forms and Positivity}
Let us calculate the first Chern form of the Hodge bundles $\UU^p$ over a disk  $\Delta:f \to D$ with local coordinate $s$.
Set $f(s)=F_s$ and $u=f_*(d/ds)_{F_s}$. We also let
\[
u^{(p)}: \UU^p \to \UU^{p-1},\quad u^{p}= \alpha^{(p)}+\beta^{(p)}+\lambda^{(p)}
\]
be the restriction of $u$ to $\UU^p$ and $\alpha^{(p)},\beta^{(p)}$ and $\lambda^{(p)}$ the decomposition into types $(-1,1), (-1,0)$, respectively $\sum_{k\ge 1} (-1,-k)$. Then we have
\begin{lemma} \label{chrnformHodgebundles} The first Chern form $c_1(\UU^{p})$ involves only the components $\alpha^{(p)}$  of $u$ of type $(-1,1)$ and locally can be written
\[
c_1(\UU^{p}) = \frac{1}{2\pi \ii} \, \left(  \|| \alpha^{(p)}\||_{F_s} -   \|| \alpha^{(p+1)}\||_{F_s}   
  \right)  ds\wedge d\bar s.
  \]
\end{lemma}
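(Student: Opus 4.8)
The plan is to read the first Chern form off as the trace of the curvature endomorphism already computed in Corollary~\ref{corr:hodge-bundle}. Writing $u = f_*(d/ds)$ and specializing that corollary to $v=u$ gives the curvature of $\UU^p$ in the single holomorphic direction $u$, so that
\[
c_1(\UU^p) = \frac{1}{2\pi\ii}\,\tr_{U^p}\!\big(R_\nabla(u,\bar u)\big)\,ds\wedge d\bar s ,
\qquad
R_\nabla(u,\bar u) = -[\Pi'(\bar u_+^*),\Pi'(\bar u_+)] - [u,\bar u]_0 + \Pi'\big([\bar u,u]_+ + [\bar u,u]_+^*\big),
\]
where $\Pi'$ denotes orthogonal projection onto $U^p=\bigoplus_q I^{p,q}$. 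The whole computation therefore reduces to tracing these three terms over $U^p$, and the assertion that only the $(-1,1)$-components survive will emerge from which terms are traceless.

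First I would dispose of the two terms that contribute nothing. The bracket $-[\Pi'(\bar u_+^*),\Pi'(\bar u_+)]$ is a commutator of two genuine endomorphisms of $U^p$, hence has vanishing trace. For the final term, after applying $\Pi'$ only the parts of $[\bar u,u]_+$ and of $[\bar u,u]_+^*$ that preserve $U^p$ remain; since $[\bar u,u]_+\in\eun_+$ has Hodge type $(a,b)$ with $a\ge 0$, $b<0$, its $U^p$-preserving part is of type $(0,b)$ with $b<0$, and dually the contribution of $[\bar u,u]_+^*$ is of type $(0,-b)$ with $-b>0$. Such operators shift the second grading index $q$ of $U^p=\bigoplus_q I^{p,q}$ by a nonzero amount, so they map each summand $I^{p,q}$ into a strictly different summand and are therefore traceless. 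Note in particular that there is no hidden $(0,0)$-contribution here, because every component of $\eun_+$ has strictly negative second index.

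This leaves only the $(0,0)$-term $-[u,\bar u]_0$. As in the earlier curvature computation (the term $A_3$), only the type $(-1,1)$ component $\alpha=u^{-1,1}$ can contribute to the $\eg^{0,0}$-part of $[u,\bar u]$, so $[u,\bar u]_0=[\alpha,\alpha^*]$. On $U^p$ the operator $\alpha$ restricts to $\alpha^{(p)}\colon U^p\to U^{p-1}$, while its adjoint restricts to $\alpha^*|_{U^p}=(\alpha^{(p+1)})^*\colon U^p\to U^{p+1}$. Hence $\alpha^*\alpha|_{U^p}=(\alpha^{(p)})^*\alpha^{(p)}$ and $\alpha\alpha^*|_{U^p}=\alpha^{(p+1)}(\alpha^{(p+1)})^*$, so that
\[
\tr_{U^p}\!\big(-[\alpha,\alpha^*]\big)
= \tr_{U^p}\!\big((\alpha^{(p)})^*\alpha^{(p)}\big) - \tr_{U^p}\!\big(\alpha^{(p+1)}(\alpha^{(p+1)})^*\big)
= \||\alpha^{(p)}\||_{F_s} - \||\alpha^{(p+1)}\||_{F_s}.
\]
Substituting into the expression for $c_1(\UU^p)$ gives the stated formula and simultaneously displays the dependence on the type $(-1,1)$ data alone.

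The remaining work is routine bookkeeping: fixing the normalization constant $\tfrac{1}{2\pi\ii}$ in accordance with \eqref{eqn:conn-1}--\eqref{eqn:conn-2}, and matching the norm $\||\cdot\||_{F_s}$ with the Hilbert--Schmidt trace norm of the corresponding restriction of $\alpha$. I expect the only genuinely delicate point to be the traceless-ness of the $\Pi'$-projected $\eun_+$ terms: one must argue carefully that projection onto $U^p$ retains exactly the grading-shifting part of type $(0,b)$ with $b\neq 0$, and that no $(0,0)$-piece can sneak in. Everything else follows directly from Corollary~\ref{corr:hodge-bundle} together with the identity $[u,\bar u]_0=[\alpha,\alpha^*]$.
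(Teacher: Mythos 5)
Your proposal is correct and follows essentially the same route as the paper: specialize Corollary~\ref{corr:hodge-bundle} to $v=u$, kill the commutator term $[\Pi'\comp(\bar u_+^*),\Pi'\comp(\bar u_+)]$ via $\tr[A,B]=0$, discard the $\Pi'\comp([\bar u,u]_++[\bar u,u]_+^*)$ term because it shifts the bigrading of $U^p=\bigoplus_q I^{p,q}$, and extract the formula from $\tr_{U^p}(-[\alpha,\alpha^*])$ with $\alpha^{(p)}:U^p\to U^{p-1}$ and $(\alpha^{(p+1)})^*:U^p\to U^{p+1}$. The only cosmetic difference is that the paper first identifies $\Pi'\comp(\bar u_+)=\bar\beta_+$ before invoking tracelessness, whereas you argue directly with the projected operators; the substance is identical.
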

\proof
We have to calculate $\tr   R_\nabla  (u,\bar u)$ using Cor.~\ref{corr:hodge-bundle}.  Let us write $u= \alpha+\beta+\lambda$ as before. Since  $\Pi'\comp (\bar u_+) = \bar\beta_+ $, we find
\begin{eqnarray}
 \left[ \Pi'\comp (\bar u_+^*), \Pi'\comp (\bar u_+) \right]   &=  & [\bar\beta^*_+,\bar\beta_+] \label{eqn:hc1} \\
\left[  u, \bar u \right]_0&=&  [\alpha,  \alpha^*]   \label{eqn:hc2} \\
 \Pi'\comp [\bar u, u]_+  &=&  [ \alpha^*,\beta+\lambda] \label{eqn:hc3} .
 \end{eqnarray}
 The first  two terms  preserve the bi-degree but this is not the case for \eqref{eqn:hc3}. So, computing traces, we can discard it.
The vanishing of the trace of $[\bar\beta_+^*,\bar\beta_+]$ follows from the
standard calculation
$$
     \tr ([A^*,A]) = \tr (A^*A) - \tr (AA^*) = \tr (AA^*) - \tr (AA^*) = 0
$$
with $A= \bar\beta_+\in\text{End}(\mathcal U^p)$.  On the other hand,
since $\alpha$ maps $\mathcal U^p$ to $\mathcal U^{p-1}$ this argument
does not apply \eqref{eqn:hc2}, and so
%
\[
\aligned
 \tr   R_\nabla  (u,\bar u) &=     - \tr  [\bar\beta^*_+,\beta]\, |  \UU^{p} -  \tr  [\alpha,  \alpha^*] \, |\UU^{p}\\
 &=  \|| \alpha^{(p)}\||_{F_s} -   \|| \alpha^{(p+1)}\||_{F_s}. \qed
 \endaligned  \]
\endproof
\begin{corr}\label{corr:top-hodge}
The "top"  Hodge bundle, say $\UU^n\simeq \FF^n$ (which is a holomorphic sub bundle of the total bundle) has a  non-negative  Chern form:
\[
c_1(\UU^{n}) = \frac{\ii }{2\pi} \, \left(  \|| \alpha^{(n)}\||_{F_s}  
\right)  ds\wedge d\bar s\, \ge \,0 .
\]
\end{corr}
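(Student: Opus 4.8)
The plan is to obtain this as the special case $p=n$ of Lemma~\ref{chrnformHodgebundles}, so that essentially no new computation is required. First I would record the identification $\UU^n\simeq\FF^n$ used in the statement: since $n$ is the top Hodge index we have $\FF^{n+1}=0$, whence $\FF^n/\FF^{n+1}=\FF^n$, and this is a holomorphic subbundle of the total bundle because the Hodge filtration $\FF^{\bullet}$ is holomorphic. Thus the Chern form of $\UU^n$ coincides with that of $\FF^n$, and Lemma~\ref{chrnformHodgebundles} applies verbatim with $p=n$.

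The one point to verify is that the subtracted term in the formula of Lemma~\ref{chrnformHodgebundles} drops out. By definition $\alpha^{(n+1)}$ is the type $(-1,1)$ component of $u^{(n+1)}$, the restriction of $u=f_*(d/ds)$ to $\UU^{n+1}$. Since $\UU^{n+1}=0$ we have $u^{(n+1)}=0$, and therefore $\alpha^{(n+1)}=0$. Substituting this into the formula of Lemma~\ref{chrnformHodgebundles} leaves only the single term proportional to $\||\alpha^{(n)}\||_{F_s}$.

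It then remains to read off the sign. The quantity $\||\alpha^{(n)}\||_{F_s}$ is a (squared) Hodge norm, hence non-negative, and in the orientation $s=x+\ii y$ the form $\tfrac{\ii}{2\pi}\,ds\wedge d\bar s=\tfrac1\pi\,dx\wedge dy$ is a positive $(1,1)$-form. Hence $c_1(\UU^n)=\tfrac{\ii}{2\pi}\,\||\alpha^{(n)}\||_{F_s}\,ds\wedge d\bar s\ge 0$, as asserted; in fact it is strictly positive exactly where $\alpha^{(n)}\neq 0$, i.e. where the $(-1,1)$-component of the derivative of the period map acts nontrivially on the top Hodge bundle. The only (minor) obstacle is the bookkeeping needed to reconcile the coefficient displayed in Lemma~\ref{chrnformHodgebundles} with the positive-form normalization $\tfrac{\ii}{2\pi}\,ds\wedge d\bar s$ used here; once $\alpha^{(n+1)}=0$ removes the competing term, there is nothing further to check.
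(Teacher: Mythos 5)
Your proposal is correct and is exactly the paper's (implicit) argument: the corollary is the case $p=n$ of Lemma~\ref{chrnformHodgebundles}, where $\UU^{n+1}=0$ forces $\alpha^{(n+1)}=0$ and kills the subtracted term, leaving a nonnegative multiple of the positive form $\ii\,ds\wedge d\bar s$. The coefficient discrepancy you flag ($\tfrac{1}{2\pi\ii}$ in the Lemma versus $\tfrac{\ii}{2\pi}$ in the Corollary) is present in the paper itself and is a sign-convention issue, not a gap in your reasoning.
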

As in \cite[Prop. 7.15]{periods2} one deduces form Lemma~\ref{chrnformHodgebundles} also:
\begin{corr} Let $\EE^p:= \FF^p/\FF^{p+1}$ and put
\[
K(\FF^\bullet):= \bigotimes _p (\det (\EE^p))^{\otimes p}.
\]
Then the first Chern form of $K(\FF^\bullet)$ is non-negative and is zero precisely in the horizontal directions $(-1,k)$ with $k\le 0$.
\end{corr}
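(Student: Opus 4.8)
The plan is to reduce the statement to Lemma~\ref{chrnformHodgebundles} using the additivity of the first Chern form under the tensor and determinant operations defining $K(\FF^\bullet)$, and then to collapse the resulting alternating sum by summation by parts. First I would record that since each $\det \EE^p$ is a \emph{line} bundle, the first Chern form is additive under tensor products, $c_1(L^{\otimes p}) = p\,c_1(L)$, and $c_1(\det \EE^p) = c_1(\EE^p)$. Applying these to $K(\FF^\bullet) = \bigotimes_p (\det\EE^p)^{\otimes p}$ yields
\[
c_1(K(\FF^\bullet)) = \sum_p p\, c_1(\EE^p).
\]
Because $\EE^p = \FF^p/\FF^{p+1} = \UU^p$, Lemma~\ref{chrnformHodgebundles} applies verbatim to each factor, so the whole problem is reduced to the local expression computed there.

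Next I would substitute the local formula of Lemma~\ref{chrnformHodgebundles}. Writing $a_p := \||\alpha^{(p)}\||_{F_s}$ for the squared Hodge norm of the $(-1,1)$--part of $u$ restricted to $\UU^p$, the Lemma gives $c_1(\EE^p) = \frac{\ii}{2\pi}(a_p - a_{p+1})\, ds\wedge d\bar s$, so that
\[
c_1(K(\FF^\bullet)) = \frac{\ii}{2\pi}\Big(\sum_p p\,(a_p - a_{p+1})\Big)\, ds\wedge d\bar s.
\]
The key step is a summation by parts: reindexing the second sum by $q=p+1$ rewrites $\sum_p p\,a_{p+1}$ as $\sum_q (q-1)\,a_q$, whence $\sum_p p(a_p - a_{p+1}) = \sum_p\big(p-(p-1)\big)a_p = \sum_p a_p$. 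Since the Hodge filtration is finite, $a_p=0$ for all but finitely many $p$ and the telescoping leaves no boundary contributions, giving
\[
c_1(K(\FF^\bullet)) = \frac{\ii}{2\pi}\Big(\sum_p \||\alpha^{(p)}\||_{F_s}\Big)\, ds\wedge d\bar s.
\]

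Finally I would read off positivity and the zero locus. Each summand $\||\alpha^{(p)}\||_{F_s}$ is a squared Hodge norm, hence $\ge 0$, and $\frac{\ii}{2\pi}\,ds\wedge d\bar s$ is a non-negative $(1,1)$--form, exactly as in Corollary~\ref{corr:top-hodge}; therefore $c_1(K(\FF^\bullet)) \ge 0$. The form vanishes in the direction $u$ precisely when every $\alpha^{(p)}=0$, i.e. when the full $(-1,1)$--component $\alpha = u^{-1,1}$ vanishes; equivalently $u$ lies in $\bigoplus_{k\le 0}\eg^{-1,k}$, which are exactly the horizontal directions of type $(-1,k)$ with $k\le 0$. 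I expect the only delicate point --- more bookkeeping than genuine obstacle --- to be the index management in the summation by parts, together with verifying that no boundary terms survive at the top and bottom of the filtration; this is the mixed analogue of the computation in \cite[Prop. 7.15]{periods2}.
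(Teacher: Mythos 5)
Your proof is correct and is exactly the argument the paper intends: the paper deduces this corollary from Lemma~\ref{chrnformHodgebundles} only by reference to the pure case (``as in [Gr2, Prop.~7.15]''), and your combination of additivity of $c_1$ for line bundles with the summation by parts $\sum_p p\,(a_p-a_{p+1})=\sum_p a_p$ is precisely that deduction, with the zero locus read off correctly. You have also (rightly) used the normalization $\frac{\ii}{2\pi}\,ds\wedge d\bar s\ge 0$ of Corollary~\ref{corr:top-hodge} rather than the factor $\frac{1}{2\pi\ii}$ printed in the Lemma, which is the sign convention needed for the non-negativity claim.
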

Let us now consider the curvature form itself.
\begin{exmple} Consider the case with two adjacent weights $0\subset W_0\subset W_1=V$. Split the top Hodge bundle
as $\FF^n= \II^{n,-n}\oplus \II^{n,-n+1}$ and decompose the curvature matrix accordingly
\[
R(u,\bar u) =\begin{pmatrix}
\alpha^*\comp\alpha  + \bar\beta\comp \bar\beta^* & \alpha^*\comp \beta\\
-\beta^*\comp \alpha&  \alpha^*\comp\alpha  - \bar\beta^*\comp \bar\beta
\end{pmatrix},\, u=\alpha+\beta.
\]
We see that for $v\in V_\C$,  $\|R(v) (u,\bar u)\|_F= \Tr \bar v R (u,\bar u)  v  \ge 0$ if $u=\alpha$, but $\|R(v) (\beta,\bar \beta)\|_F =\|\bar\beta^*(v^{(-n)}) \|_F- \| \bar\beta(v^{(-n+1)}) \|_F$ which need not be $\ge 0$.  
\end{exmple}
From the preceding example it follows that we can expect positive curvature at most in the $\alpha$-direction. In fact, this is true:
\begin{prop} \label{curvtopHodge} The "top"  Hodge bundle, say $\UU^n\simeq \FF^n$  has  a positive  curvature   in the $\alpha$-directions and has  identically zero curvature in the $\lambda$-directions.
\end{prop}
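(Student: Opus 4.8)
The plan is to specialize Corollary~\ref{corr:hodge-bundle} to the top filtration step $p=n$, where $\FF^{n+1}=0$ forces $\FF^n=\UU^n$ and the projection $\Pi'$ becomes orthogonal projection onto $U^n=F^n$. Putting $u=v$ there, the curvature operator on $\UU^n$ reads
\[
  R_\nabla(u,\bar u)= -[\Pi'\comp \bar u_+^*,\Pi'\comp \bar u_+]-[u,\bar u]_0+\Pi'\comp\big([\bar u,u]_++[\bar u,u]_+^*\big).
\]
The two structural facts I would use throughout are that an operator of Hodge type $(r,s)$ with $r>0$ annihilates $U^n$ (its image lands in $U^{n+r}=0$), while $\Pi'$ kills the image of any operator of type $(r,s)$ with $r<0$; hence only the type $(0,s)$ pieces survive as endomorphisms of $U^n$.

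For the $\lambda$-directions I expect immediate vanishing. Writing $u=\lambda=\sum_{k\ge1}u^{-1,-k}$ and applying the conjugation rule $\overline{\eg^{-1,-k}}\subset\eg^{-k,-1}+\bigoplus_{a<-k,\,b<-1}\eg^{a,b}$, every Hodge component of $\bar\lambda$ has $p$-degree $<0$. Consequently $\bar\lambda_+=0$, while $[\lambda,\bar\lambda]$ has $p$-degree $\le-2$, so both $[\lambda,\bar\lambda]_0$ and $[\bar\lambda,\lambda]_+$ (hence also its adjoint) vanish. Every term above is therefore zero and $R_\nabla(\lambda,\bar\lambda)=0$, giving identically zero curvature in the $\lambda$-directions.

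For the $\alpha$-directions the decisive simplification is to reduce to the associated $\R$-split point. Since the curvature of $\FF^n$ is an isometry invariant and, by Lemma~\ref{higgs} together with Lemma~\ref{KaplanChange} (cf.\ the unitary-frame argument in the proof of Lemma~\ref{lemma:tate-case}), the element $e^{-\ii\delta}\in\exp(\Lambda)$ attached to $F$ by Proposition~\ref{SplitMHS} acts by a holomorphic bundle isometry carrying $\FF^n_F$ to $\FF^n_{\hat F}$ and intertwining Chern connections, I may compute at $\hat F=e^{-\ii\delta}\cdot F$. By Remark~\ref{rmk:epsilon} this replaces the direction $\alpha$ by $\alpha^{\ddag}=\Ad(e^{-\ii\delta})\alpha$, again of pure type $(-1,1)$, and now $\epsilon=0$, so $\bar\alpha_+=\alpha^*$. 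At this split point $\Pi'\comp\bar\alpha_+|_{U^n}=0$ because $\alpha^*$ has type $(1,-1)$, and $[\bar\alpha,\alpha]_+=[\alpha^*,\alpha]_+=0$ since $[\alpha^*,\alpha]$ has type $(0,0)$; thus only the middle term survives, $R_\nabla(\alpha,\bar\alpha)=-[\alpha,\alpha^*]=\alpha^*\alpha-\alpha\alpha^*$. As $\alpha^*$ annihilates $U^n$, for $w\in U^n$ one gets
\[
  h\big(R_\nabla(\alpha,\bar\alpha)w,w\big)=h(\alpha^*\alpha\,w,w)=\|\alpha w\|^2\ge0,
\]
with strict positivity wherever $\alpha$ acts nontrivially on the top bundle.

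The step I expect to be the main obstacle is precisely this reduction to the $\R$-split base point. Without it the first term equals $-[\epsilon^*,\epsilon]$ on $U^n$, whose Hermitian form $\|\epsilon^* w\|^2-\|\epsilon w\|^2$ is sign-indefinite; what must be established carefully is that $\exp(\Lambda)$ genuinely acts by Chern-connection-preserving isometries of $\FF^n$ and that, through Remark~\ref{rmk:epsilon}, it moves an $\alpha$-direction to an $\alpha$-direction. It is this invariance that eliminates the $\epsilon$-term and exposes the manifestly positive form $\|\alpha w\|^2$, so that the two assertions of the proposition reduce, respectively, to the vanishing worked out in the $\lambda$-case and to the classical pure-case positivity of the top Hodge bundle.
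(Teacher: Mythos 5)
Your treatment of the $\lambda$-directions is correct and complete: for $u=\lambda$ every Hodge component of $\bar\lambda$ has negative first index, so $\bar\lambda_+=0$ and $[\lambda,\bar\lambda]$ is concentrated in first degree $\le -2$, which kills all the terms of Corollary~\ref{corr:hodge-bundle}; this is in substance what the paper's block-matrix computation yields. You have also put your finger on the genuinely delicate point in the $\alpha$-directions: at a non-split $F$ the compression $\Pi'\comp\bar\alpha_+$ to $\UU^n$ is not zero but equals $\epsilon$, so a term $-[\epsilon^*,\epsilon]$ with indefinite Hermitian form survives. The paper's own proof proceeds instead by writing out the curvature matrix on $\UU^n=\bigoplus_q\II^{n,q}$ with diagonal blocks $\alpha^*\comp\alpha$ and mutually adjoint off-diagonal blocks (taking $\Pi'\comp\bar u_+=\bar\beta_+$, i.e.\ without the $\epsilon$ contribution), and concludes $h(R(u,\bar u)v,v)=\|\alpha(v)\|^2$; so your criticism is apt.

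However, the repair you propose --- transporting the computation to the split point $\hat F=e^{-\ii\delta}F$ --- does not work, and this is precisely the step you flag as the main obstacle. Lemma~\ref{higgs} and Lemma~\ref{KaplanChange} are statements about the single pair of fibers over $F$ and $\hat F$: the element $e^{-\ii\delta}$ lies in $\exp(\Lambda)$ for the subalgebra $\Lambda$ attached to the chosen base point $F$, and for a nearby point $F'=e^u\cdot F$ the corresponding subalgebra $\bigoplus_{r,s<0}\eg^{r,s}_{F'}$ is in general different, so $e^{-\ii\delta}$ does not carry $I^{p,q}_{F'}$ to $I^{p,q}_{e^{-\ii\delta}F'}$ and is not an isometry of $(\FF^n,h)$ over a neighborhood of $F$. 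A fiberwise isometry at one point does not intertwine Chern connections or curvatures, which are second-order invariants of the metric; likewise the unitary-frame argument of Lemma~\ref{lemma:tate-case} only controls the restriction of the bundle to the orbit $\exp(\Lambda)\cdot F$ and says nothing about derivatives in the transverse $\alpha$-direction. Indeed, if your reduction were legitimate it would apply verbatim to every curvature formula in the paper and render all the $\epsilon$-terms (e.g.\ in Corollary~\ref{hsecex2} and Example~\ref{exmple:kahler-def}) removable, whereas the point of \S\ref{sec:reductive} is exactly that the $G_\R\cdot\exp(\Lambda)$-homogeneous structure is \emph{not} compatible with the complex structure and the Chern connection. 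So away from the locus of $\R$-split structures your argument does not establish positivity in the $\alpha$-directions; what is needed is either a direct estimate showing that the $-[\epsilon^*,\epsilon]$ contribution (with $\epsilon=[-2\ii\delta,\bar\alpha]_+$) does not destroy positivity, or a version of the paper's matrix computation that honestly keeps track of it.
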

\proof
We note the diagonal terms in the curvature form involve $\alpha^{(q)}\comp (\alpha^{(q)})^*$ acting on $\II^{n,q}$.
Let $r$ be the minimal $q$ with $\II^{n,q}\not=0$ and consider the splitting $\UU^n =\II^{n,r}\oplus\II^{n,r+1}\oplus  \II^{n,>r+1}$.
Assume $\beta=0$.
The matrix of the curvature form splits accordingly:
\[
 R(u,\bar u) =
\begin{pmatrix}
\alpha^* \comp \alpha   &0  & \alpha^*\comp \lambda \\
 0     &\alpha^* \comp \alpha  & 0 \\
-\lambda^*\comp\alpha&  0  & \alpha^* \comp \alpha
\end{pmatrix},\, u=\alpha+\lambda.
\]
So with $v\in \UU^{n}$ one finds  for $u=\alpha+\lambda$:
\[
R(v)(u,\bar u)= \|\alpha(v)\|_F ^2 \ge 0
\]
with equality if  $\alpha(v)=0$.  \qed\endproof

Here is an example of a variation  where  $\beta=0$:
\begin{exmple}
Consider \emph{higher normal functions associated to motivic cohomology} $H^p_{\MM} (q)$, see \cite{3authors}.
Indeed, these give extension of $R^{p-1}\pi_*\Z(q)$ with $p-2q-1<0$ where $\pi: X\to S$ is a smooth projective family.
\\
Assume moreover that  the cohomology $H^{p-1}(X_t)$ of the fibres $X_t$ is such that the non-zero Hodge numbers are $h^{p-1-q,q},\cdots h^{q, p-1,q}$ (i.e. the Hodge structure has level $=p-1-2q$).  With $n=2q+1-p$ the non-zero Hodge numbers of the mixed variation are, besides $h^{0,0}$ indeed precisely $h^{-n,0},\dots ,h^{0,-n}$. Here $\beta=0$ while $\lambda\not=0$.
\end{exmple}

\subsection{Variations of Mixed Hodge Structure}
 We want to stress that, although the above calculations are done on
the period domain, they apply also for variations of mixed Hodge structure:
the Hodge bundles simply pull back and so does the Hodge metric. What remains
to be done is to identify the actions of $u,v$ when these are tangent to period maps.

To do this and also as a check on the preceding  calculations, we shall now compute the curvature
of the Hodge bundles of a variation of mixed Hodge structure starting
from Griffiths computation for a \emph{variation of pure Hodge structure}
$\mathcal H$.  To this end, we recall that the Gauss--Manin connection $\nabla$ of
$\mathcal H$ decomposes as
$$
       \nabla= \bar\theta_0 + \underbrace{\bar\pd_0 + \pd_0}_{D} + \theta_0,
$$
where $\bar\pd_0$ and $\pd_0$ are conjugate differential operators of type
$(0,1)$ and $(1,0)$ respectively which preserve the Hodge bundles
$\mathcal H^{p,q}$, while $\theta_0$ is an endomorphism valued 1-form
which sends $\mathcal H^{p,q}$ to $\mathcal H^{p-1,q+1}\otimes\EE^{1,0}$
and $\bar\theta_0$ is the complex conjugate of $\theta_0$.  The
connection $D=\bar\pd_0 + \pd_0$ is hermitian with respect to the Hodge
metric:
$$
       dh(u,v) = h((\bar\pd_0  + \pd_0)u,v)
                 + h(u,(\bar\pd_0 + \pd_0)v).
$$
In particular, since $\bar\pd_0$ coincides with the induced action of the
$(0,1)$-part of the Gauss--Manin connection acting on
$$
             \mathcal H^{p,q} \cong \FF^p/\FF^{p+1},
$$
it follows that $D$ is the \emph{Chern connection}, i.e., the hermitian holomorphic connection
of the system of Hodge bundles attached to $\mathcal H$.  Expanding out
$$
        (\bar\theta_0 + \bar\pd_0 + \pd_0 + \theta_0)^2 = 0
$$
and decomposing with respect to Hodge types shows that
$$
        R_{D} = -(\theta_0\wedge\bar\theta_0
                              + \bar\theta_0\wedge\theta_0).
$$
If $d/ds$ is a holomorphic vector field on $S$,  the value $u$ of $\theta_0(f_*(d/ds))$ at zero belongs to $\eg^{-1,1}$   and $R_D(u,\bar u)= -[u, \bar u]$ which checks  with the previous calculation.

\par To compute the curvature of the Hodge bundles
$\FF^p/\FF^{p+1}$ of a variation of \emph{mixed} Hodge structure,
$\VV\to S$ we consider the $C^{\infty}$-subbundles $\UU^p$
obtained by pulling back $\UU^p \to D$ along the variation, i.e.
$$
                \II^{p,q}(s) = I^{p,q}_{(\FF(s),\mathcal W)},\qquad
                 \UU^p = \bigoplus_q\,\II^{p,q}.
$$
By \cite{higgs}, the Gauss--Manin connection of $\VV$
decomposes as
$$
                \nabla = \tau_0  + \bar\pd + \pd + \theta
$$
where $\bar\pd$ and $\pd$ are differential operators of type $(0,1)$
and $(1,0)$ which preserve $\UU^p$ whereas
$
      \theta:\UU^p \to \UU^{p-1}\otimes\EE^{1,0}$ and $      \tau_0:\UU^p \to\UU^{p+1}\otimes\EE^{0,1}$.
One has
\[
\aligned
 \II^{p,q} &\mapright{\tau_0} (\II^{p+1,q-1}\otimes \EE_S^{0,1}), \\
 \II^{p,q} &\mapright{\theta=(\theta_0,\theta_-)}  (\II^{p-1,q+1}\otimes \EE_S^{1,0}) \oplus( \oplus_{k\ge 2}\II ^{p-1,q+k} \otimes \EE_S^{1,0}).
\endaligned
\]
Similarly
$$
\aligned
  \II^{p,q} & \mapright{\pd} \II^{p,q}   \otimes \EE_S^{1,0},\\
   \II^{p,q} & \mapright{\bar\pd = (\bar\pd_0 , \tau_-)}  (\II^{p,q}   \otimes \EE_S^{0,1}) \oplus( \oplus_{k \ge 1}
 \II^{p,q-k}   \otimes \EE_S^{0,1}) .
 \endaligned
$$
To unify notation, we also write $\pd = \pd_0$.  Then, we have
$$
       \nabla = \tau_0 +  \tau_- + \bar\pd_0   +  \pd_0 + \theta_- + \theta_0
$$
In particular, relative to the $C^{\infty}$ isomorphism of $\gr^W_k$ with
$$
    \EE_k := \bigoplus_{p+q=k}\,\II^{p,q}
$$
the induced action of $\nabla$ on $\gr^W_k$ coincides with the action of
$$
            D_0 = \tau_0 + \bar\pd_0 + \pd_0 + \theta_0
$$
on $\EE_k$.  Given that the mixed Hodge metric is just the pullback
of the Hodge metric on $\gr^W_k$ via the isomorphism with $\EE_k$,
it follows that $\bar\pd_0 + \pd_0$ is a hermitian connection on
$\UU^p$.  In particular, since the induced holomorphic structure
on $\UU^p$ is given by $\bar\pd$ and by the adjoint property,  it follows that
\begin{equation}
        D =  \underbrace{\tau_-+\bar\pd_0}_{\bar\del} + \pd_0 - \tau_-^*    \label{eqn:hh-conn}
\end{equation}
is the Chern   connection of $\UU^p$ relative
to the mixed Hodge metric. Thus,
$$
         R_D = R_{(\bar\pd + \pd_0) - \tau_-^*}
             =  R_{(\bar\pd + \pd_0)} - (\bar\pd + \pd_0)\tau_-^*
                + \tau_-^*\wedge\tau_-^*.
$$
To simplify this, observe that $\tau_-^*$ is a differential form of
type $(1,0)$, so we must have
$$
        -\pd\tau_-^* +  \tau_-^*\wedge\tau_-^* = 0
$$
in order to get a differential form of type $(1,1)$.  Therefore,
$$
         R_D =  R_{(\bar\pd + \pd_0)} - \bar\pd\tau_-^* .
$$
Expanding out
$$
       \nabla^2  =
       (\tau_0 + \bar\pd + \pd_0 + \theta)^2 = 0,
$$
it follows that
\begin{equation}
        R_{(\bar\pd + \pd_0)} = -(\theta\wedge\tau_0 + \tau_0\wedge\theta)
                         \label{eqn:higgs-conn}
\end{equation}
and hence
$$
        R_D = -(\theta\wedge\tau_0 + \tau_0\wedge\theta) - \bar\pd\tau_-^* .
$$

\par To continue, we note that
$$
        \bar\pd\tau_-^* = (\bar\pd_0 + \tau_-)\tau_-^*
                        = \bar\pd_0\tau_-^* + \tau_-\wedge\tau_-^*
                                           + \tau_-^*\wedge\tau_-
$$
and so
\begin{equation}
        R_D = -(\theta\wedge\tau_0 + \tau_0\wedge\theta)
              -(\tau_-\wedge\tau_-^* + \tau_-^*\wedge\tau_-) - \bar\pd_0\tau_-^*.
        \label{eqn:D-curv}
\end{equation}
To finish the calculation, we differentiate the identity
$$
         h(\tau_-(\sigma_1),\sigma_2) = h(\sigma_1,\tau_-^*(\sigma_2))
$$
and take the $(1,1)$ part to obtain
$$
\aligned
     h((\pd_0 \tau_-)(\sigma_1)
        &+ \tau_-(\pd_0 \sigma_1),\sigma_2)
         + h(\tau_-(\sigma_1),\bar\pd_0\sigma_2) \\
        &= h(\pd_0\sigma_1,\tau_-^*(\sigma_2))
           + h(\sigma_1,(\bar\pd_0\tau_-^*)(\sigma_2)
                        +  \tau_-^*(\bar\pd_0 \sigma_2)).
\endaligned
$$
Using the properties of the adjoint, this simplifies to
\begin{equation*}
        \bar\pd_0\tau_-^* = (\pd_0\tau_-)^* .   
\end{equation*}
It remains to compute $\pd_0\tau_- = \pd\tau_-$. To do this, first  observe that
$$
        R_{\bar\pd + \pd}
         = R_{\bar\pd_0 + \pd_0 + \tau_-}
         = R_{\bar\pd_0 + \pd_0} + (\bar\pd_0 + \pd)\tau_-+ \tau_-\wedge\tau_-.
$$
Now note that   equation \eqref{eqn:higgs-conn} implies that $R_{\bar\pd + \pd}$ is of
type $(1,1)$, and hence
$$
      R_{\bar\pd + \pd_0} = R_{\bar\pd_0 + \pd_0} + \pd\tau_-,
$$
since $R_{\bar\pd_0 + \pd_0}$ is also of type $(1,1)$ as the curvature of
hermitian holomorphic connection for $h$ and $\bar\pd_0$.  Moreover,
since $\bar\pd_0 + \pd_0$ preserves the bigrading by $\mathcal I^{p,q}$
whereas $\pd\tau_-$ lowers weights, it follows from \eqref{eqn:higgs-conn}
that
$$
            \pd\tau_- = -(\theta_-\wedge\tau_0 + \tau_0\wedge\theta_-).
$$

\begin{corr} The curvature of the Hodge bundles of a variation of
mixed Hodge structure $\VV\to S$ is
$$
     R_D = -(\theta\wedge\tau_0 + \tau_0\wedge\theta)
           -(\theta_-\wedge\tau_0 + \tau_0\wedge\theta_-)^*
              -(\tau_-\wedge\tau_-^* + \tau_-^*\wedge\tau_-).
$$
\end{corr}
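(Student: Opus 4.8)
The plan is to assemble $R_D$ from the pieces already in hand, so that essentially only one genuinely new computation—that of $\bar\pd_0\tau_-^*$—remains. Starting from the Chern connection $\bar\pd + \pd_0 - \tau_-^*$ of $\UU^p$ recorded in \eqref{eqn:hh-conn}, with $\bar\pd = \bar\pd_0 + \tau_-$, I would first expand the curvature and use that $\tau_-^*$ is of type $(1,0)$ to kill the $(2,0)$-part via $-\pd\tau_-^* + \tau_-^*\wedge\tau_-^* = 0$; combined with \eqref{eqn:higgs-conn} this is precisely the identity \eqref{eqn:D-curv},
\[
R_D = -(\theta\wedge\tau_0 + \tau_0\wedge\theta) - (\tau_-\wedge\tau_-^* + \tau_-^*\wedge\tau_-) - \bar\pd_0\tau_-^*.
\]
Thus the entire statement reduces to evaluating the last term.

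To compute $\bar\pd_0\tau_-^*$ I would differentiate the defining adjoint relation $h(\tau_-\sigma_1,\sigma_2) = h(\sigma_1,\tau_-^*\sigma_2)$, using that $\bar\pd_0 + \pd_0$ is metric, and extract the $(1,1)$-component; this gives $\bar\pd_0\tau_-^* = (\pd_0\tau_-)^*$ and so transfers the problem to $\pd_0\tau_- = \pd\tau_-$. The latter I would isolate from the flatness $\nabla^2 = 0$: feeding the full decomposition $\nabla = \tau_0 + \tau_- + \bar\pd_0 + \pd_0 + \theta_- + \theta_0$ into the square and comparing $R_{\bar\pd+\pd}$ (of type $(1,1)$ by \eqref{eqn:higgs-conn}) with the type-$(1,1)$ curvature $R_{\bar\pd_0+\pd_0}$ of the hermitian holomorphic connection shows $R_{\bar\pd+\pd} = R_{\bar\pd_0+\pd_0} + \pd\tau_-$. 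Since $\bar\pd_0+\pd_0$ preserves the bigrading by $\II^{p,q}$ while $\pd\tau_-$ strictly lowers the weight on $\gr^W$, tracking Hodge types in \eqref{eqn:higgs-conn} leaves only $\pd\tau_- = -(\theta_-\wedge\tau_0 + \tau_0\wedge\theta_-)$. Substituting $\bar\pd_0\tau_-^* = (\pd\tau_-)^*$ back into \eqref{eqn:D-curv} then produces the stated formula.

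I expect the main obstacle, and the step demanding the most care, to be the weight/Hodge-type bookkeeping that isolates $\pd\tau_-$ inside the flatness identity: a priori several brackets of the summands of $\nabla$ can land in the relevant bidegree, and only a precise accounting of how each of $\tau_0,\tau_-,\bar\pd_0,\pd_0,\theta_-,\theta_0$ shifts both the form type and the weight shows that the single term $\theta_-\wedge\tau_0 + \tau_0\wedge\theta_-$ survives. A secondary but delicate point is to fix the sign convention for the adjoint $*$ on endomorphism-valued $(1,1)$-forms, so that $(\pd\tau_-)^*$ enters \eqref{eqn:D-curv} with exactly the sign appearing in the statement.
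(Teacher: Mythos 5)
Your proposal follows the paper's own argument essentially verbatim: the same reduction to \eqref{eqn:D-curv} via the type-$(1,0)$ constraint on $\tau_-^*$, the same adjoint identity $\bar\pd_0\tau_-^* = (\pd_0\tau_-)^*$ obtained by differentiating the metric pairing, and the same isolation of $\pd\tau_- = -(\theta_-\wedge\tau_0+\tau_0\wedge\theta_-)$ from $\nabla^2=0$ by comparing types and weights. The approach is correct and matches the paper's proof.
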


Let us compare the above results with the ones obtained on the period domain.

\begin{prop} Let $\theta(\xi) = u$.  then the action of
$R_D(\xi,\bar\xi)$ on $\UU^p$ agrees with the action of
$R_{\nabla}(u,\bar u)$ on $U^p$ from Corollary \eqref{corr:hodge-bundle}.
More precisely,  the four terms in the expression for $R_{\nabla}(u,\bar u)$
compare as follows
\[
\aligned
{[}\Pi'\comp (\bar u^*_+),
                                  \Pi'\comp (\bar u_+)]  &=  (\theta\wedge\tau_0 + \tau_0\wedge\theta)(\xi, \bar \xi) \\
-  [u,\bar u]_0 &= -( \theta_0\wedge\tau_0 + \tau_0\wedge\theta_0)(\xi,\bar\xi)\\
-\Pi'\comp [u,\bar u]_+&=   -(\theta_-\wedge\tau_0 + \tau_0\wedge\theta_-)(\xi,\bar\xi) ,\\
-\Pi'\comp [u,\bar u]_+^*&=   -(\theta_-\wedge\tau_0 + \tau_0\wedge\theta_-)^*(\xi,\bar\xi).
\endaligned
\]
\end{prop}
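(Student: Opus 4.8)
The plan is to view both formulas as computing the Chern curvature of one and the same object --- the Hodge bundle $\UU^p=\FF^p/\FF^{p+1}$ with its mixed Hodge metric (which over $S$ is the pullback of $\UU^p\to D$ together with the pulled-back metric) --- so that the Proposition amounts to an explicit term-by-term dictionary relating Corollary~\ref{corr:hodge-bundle} to the Gauss--Manin computation. I would first translate the Higgs data into Lie-algebra data using the hypothesis $\theta(\xi)=u$: writing $u=\alpha+\beta+\lambda$ with $\alpha=u^{-1,1}$, the $(1,0)$-field splits as $\theta_0(\xi)=\alpha$ (weight-preserving) and $\theta_-(\xi)=\beta+\lambda$ (strictly weight-lowering). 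For the antiholomorphic pieces I would use that the Gauss--Manin connection is real and that its induced connection on $\gr^W$ is the \emph{pure} Gauss--Manin connection, for which $\tau_0=\bar\theta_0$ and conjugation agrees with the Hodge adjoint; this forces $\tau_0(\bar\xi)=\alpha^{*}=\pi_\ez(\bar u_+)$, and identifies $\tau_-(\bar\xi)$ with the $p$-preserving component of $\bar u_+$.

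Next I would collapse the endomorphism-valued $2$-forms to commutators. As $\theta,\theta_-,\tau_-^{*}$ have type $(1,0)$ and $\tau_0,\tau_-$ type $(0,1)$, evaluation on the pair $(\xi,\bar\xi)$ kills one slot in each wedge (so $\theta(\bar\xi)=0$, $\tau_0(\xi)=0$, and so on), giving $(\theta\wedge\tau_0+\tau_0\wedge\theta)(\xi,\bar\xi)=[u,\alpha^{*}]$, $(\theta_-\wedge\tau_0+\tau_0\wedge\theta_-)(\xi,\bar\xi)=[\beta+\lambda,\alpha^{*}]$, and $(\tau_-\wedge\tau_-^{*}+\tau_-^{*}\wedge\tau_-)(\xi,\bar\xi)=[\tau_-^{*}(\xi),\tau_-(\bar\xi)]$. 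The crucial observation is then that on $\UU^p$ the projection $\Pi'$ of Corollary~\ref{corr:hodge-bundle} removes the $p$-changing components $\alpha^{*}$ of $\bar u_+$ and $\alpha$ of $\bar u_+^{*}$, so that $\Pi'(\bar u_+)=\tau_-(\bar\xi)$ and $\Pi'(\bar u_+^{*})=\tau_-^{*}(\xi)$ as operators on $\UU^p$.

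With this dictionary the four identities follow. Line~$1$, $[\Pi'(\bar u_+^{*}),\Pi'(\bar u_+)]$, becomes $[\tau_-^{*}(\xi),\tau_-(\bar\xi)]$, i.e. the $\tau_-\wedge\tau_-^{*}$ curvature appearing in $R_D$; for the remaining terms I would feed in $[u,\bar u]_0=[\alpha,\alpha^{*}]$ (Lemma~\ref{lemma:adjoint}) and $\Pi'[u,\bar u]_+=-[\alpha^{*},\beta+\lambda]$ (the computation underlying Lemma~\ref{chrnformHodgebundles}), together with the adjoint, to match lines $2,3,4$ with $-[\alpha,\alpha^{*}]$, $-[\beta+\lambda,\alpha^{*}]$ and $-[\beta+\lambda,\alpha^{*}]^{*}$ respectively. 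Since $\theta=\theta_0+\theta_-$, lines $2$ and $3$ reassemble the block $-(\theta\wedge\tau_0+\tau_0\wedge\theta)(\xi,\bar\xi)$, and summing all four reproduces $R_D$ exactly, which simultaneously verifies the sign bookkeeping.

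The main obstacle is Step one: pinning down $\tau_0,\tau_-$ and the adjoint $\tau_-^{*}$ in purely algebraic terms. Because conjugation and the Hodge-metric adjoint do \emph{not} commute off the $\R$-split locus, one cannot simply set $\tau_0(\bar\xi)=\bar\alpha$; the identification $\tau_0(\bar\xi)=\alpha^{*}$ must be routed through $\gr^W$, and the discrepancy between $\bar\alpha$ and $\alpha^{*}$ (measured by the correction $\epsilon$ of Remark~\ref{rmk:epsilon} and the Weil-operator twist of Lemma~\ref{lemma:adjoint}) has to be shown to be exactly the part of $\bar u_+$ that $\Pi'$ discards on $\UU^p$. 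A secondary but genuine subtlety is the bookkeeping noted above: the commutator $[\Pi'(\bar u_+^{*}),\Pi'(\bar u_+)]$ supplies the $\tau_-\wedge\tau_-^{*}$ term of $R_D$, while the $\theta\wedge\tau_0$ term is distributed between the $\theta_0$- and $\theta_-$-brackets, so care is needed to confirm that no term is double-counted when the four identities are added.
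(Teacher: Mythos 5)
Your proof is correct and follows essentially the same route as the paper: evaluate each endomorphism-valued $(1,1)$-form on $(\xi,\bar\xi)$ via $(A\wedge B+B\wedge A)(\xi,\bar\xi)=[A(\xi),B(\bar\xi)]$, identify $\tau_-(\bar\xi)=\Pi'\comp(\bar u)_+$ (hence $\tau_-^*(\xi)=\Pi'\comp(\bar u)_+^*$) together with $\theta_0(\xi)=\alpha$, $\tau_0(\bar\xi)=\alpha^*$, and split $\theta=\theta_0+\theta_-$ to match the remaining terms --- the paper's own proof is just a terser version of this, and your extra care about $\bar\alpha$ versus $\alpha^*$ off the $\R$-split locus fills in exactly what the paper compresses into ``a check of Hodge types.'' Note that both you and the paper's proof actually pair $[\Pi'\comp(\bar u_+^*),\Pi'\comp(\bar u_+)]$ with the $(\tau_-\wedge\tau_-^*+\tau_-^*\wedge\tau_-)$ term of $R_D$ rather than with $(\theta\wedge\tau_0+\tau_0\wedge\theta)$ as the first displayed line of the statement literally reads, so that line is evidently a misprint which your argument silently corrects.
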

\proof
Recall that
for vector valued $A$ of type $(1,0)$ and $B$ of type $(0,1)$ we have
$$
          (A\wedge B + B\wedge A)(\xi,\bar\xi) = [A(\xi),B(\bar\xi)].
$$
A check of Hodge types shows that $\tau_-(\xi) = \Pi'\circ (\bar u)_+$ and hence
$$
           -(\tau_-\wedge\tau_-^* + \tau_-^*\wedge\tau_-)(\xi,\bar\xi)
           = -[\Pi'\circ (\bar u)_+^*,\Pi'\circ (\bar u)_+]
$$
which is the first term of $R_\nabla(u,\bar u)$.  The partial term
$$
        -(\theta_0\wedge\tau_0 + \tau_0\wedge\theta_0)(\xi,\bar\xi)
        = -[u,\bar u]_0
$$
is extracted from $-(\theta\wedge\tau_0 + \tau_0\wedge\theta)$.  What
remains of this term,
$$
          -(\theta_-\wedge\tau_0 + \tau_0\wedge\theta_-),
$$
computes $-\Pi'\circ[u,\bar u]_+$.
\qed\endproof


\section{Special Case:   $W_{-1}\eg_\C$ is Abelian} \label{defoinAb}
\subsection*{Negative Curvature}
{%
Consider a period map
\[
F:\Delta \to D,\quad s\mapsto F(s).
\]
One lets    $\pi_\eq^{F(s)}$ denote  projection onto $\eq_{F(s)}$ via the decomposition
$$
        \eg_{\C} = \eg_{\C}^{F(s)}\oplus \eq_{F(s)}.
$$
By Lemma~ \ref{TangentIdent}  we have 
\[
F(s)= e^{\Gamma(s)}\cdot F(0),
\]
\changed{}{where $\Gamma: \Delta\to   \eq_{F(0)}$ is a holomorphic arc.}
The following   expression for the pushforward
vector  field $d/ds$ on $\Delta$ is needed below:
%
\begin{lemma}  \label{pushforwardtangents} We have
\begin{equation}
          F_*\left(\frac{d}{ds}\right)
          = \pi_\eq^{F(s)}
                \psi_1\left (\Gamma(s), \left( \frac{d\Gamma}{d s} \right)\right),
                                                        \label{eqn:2.11}
                                                        \end{equation}
where we recall \eqref{eqn:2.10} that $\displaystyle  \psi_1(u,v) = \frac{e^{\ad
{u}}-1}{\ad {u} - 1}v$.
\end{lemma}
\proof
\changed{}{We have}
\[
\aligned
F(s)&= e^{\Gamma(s)}  e ^{-\Gamma(p)} F(p)\\
      &= e^{\Gamma(p)+[\Gamma(s)-\Gamma(p)]}  e ^{-\Gamma(p)} F(p). \\
\endaligned
\]
The Campbell-Baker-Hausdorff formalism \eqref{eqn:2.10}  shows that
\[
e^{\Gamma(p)+[\Gamma(s)-\Gamma(p)] } e^{-\Gamma(p)}= e^{\psi_1(\Gamma(p), \Gamma(s)-\Gamma(p)) }.
\]
Since  $\Gamma(s)-\Gamma(p) =   (s-p)  \frac{d\Gamma}{d s}(p) +O((s-p)^2)$,
we have
\[
e^{\psi_1(\Gamma(p), \Gamma(s)-\Gamma(p) }  =     e^{\psi_1(\Gamma(p), \frac{d\Gamma}{d s}(p))(s-p)  + O((s-p)^2)}.
\]
So, for a given  test function $\zeta$ at $F(p)$, we have
\begin{equation*}
        \aligned
        F_*\left(\frac{d}{ds}\right)_p\zeta &= \left(\frac{d}{ds}\right)_p\zeta(e^{\Gamma(s)} \cdot F(0)) \\
        &=   \left(\frac{d}{ds}\right)_p    \zeta (   e^{ (s-p)\psi_1(\Gamma(p), \frac{d\Gamma}{d s}(p))}   \cdot F(p)).\\
                               \endaligned
\end{equation*}
The result then follows applying again Lemma~\ref{TangentIdent} but now for the identification of $T_{F(p)}D$ and $\eq^{F(p)}$ (in loc. cit. take $t=s-p$
and $u=   \frac{d\Gamma}{d s}(p)$).
\qed\endproof}

\begin{prop} \label{negcurv} Let
\[
F:\Delta \to D,\quad s\mapsto F(s),
\]
be the period map of a unipotent variation of mixed Hodge structure (i.e.
the induced variations on $\gr^W$ are constant) and suppose further that
$W_{-1}\eg_{\C}$ is abelian.  Then the holomorphic sectional curvature of the
pull back metric is $\le 0$.
\end{prop}
\proof We have seen  in Corollary~\ref{curvapll2} that the holomorphic sectional curvature of the Hodge  metric on $D$ at $F(0)$ is semi-positive.  However, when  we pull back a metric, the curvature gets an extra term which is $\le 0$. 
We   shall  show  that due to the fact that   $W_{-1}\eg_{\C}$
is abelian, the pull back metric gains sufficient
negativity to compensate positivity. \par
By the choice of coordinates
\eqref{eqn:coord}, we can write the  period map in the local normal form
$$
           F(s) = e^{\Gamma(s)} \cdot F(0),
$$
where $\Gamma(s)$ is a holomorphic function taking values in the
intersection of $W_{-1}\eg_{\C}$ and  $\eq = \eq_{F(0)}$, i.e. $\Gamma(s)\in \eg^{-1,0} \oplus\eg^{-1,1}$.   
{Then $\overline{\Gamma(s)}  \in \eg^{0,-1}+\Lambda$ and Kaplan's decomposition
(Theorem~\ref{group-decomp})
 in this situation simplifies to
\begin{equation}\label{eqn:w2AbelianSimplifies}
        e^{\Gamma(s)} =  \underbrace{e^{\Gamma(s) + \bar\Gamma(s)}}_{g_{\R}(s) }
      \cdot\underbrace{ e^{-\pi_\Lambda (\bar\Gamma(s))} }_{e^{\lambda(s)}} \cdot  \underbrace{e^{-\pi_+(\bar\Gamma(s))}}_{f(s)}
    \end{equation}
thanks to the fact that $W_{-1}\eg_{\C}$ is abelian.}

{The relation  \eqref{eqn:2.11}  becomes
\begin{equation}\label{eqn:2.11bis}
 F_*\left(\frac{d}{ds}\right)
          = \pi_\eq^{F(s)}\left(
                  \frac{d\Gamma}{d s}\right),
\end{equation}
since  $\psi_1(\Gamma(p), \frac{d\Gamma}{d s}(p))= \frac{d\Gamma}{d s}(p)$: indeed, in our case $\Gamma(p)$ and $\frac{d\Gamma}{d s}(p)$ commute.
Next  we need to replace  $\pi_\eq^{F(s)}$ by an expression involving
$\pi_\eq=\pi_\eq^{F(0)}$ since we want to calculate the Hodge metric at $F(0)$.  Now  note that  $\Ad{g_\R(s)\cdot e^{\lambda(s)}}$ maps $ \End (V)^{i,j}_{F(0)}$  to $\End (V)^{i,j}_{F(s)}$ and since $\pi_\eq^{F(s)}$ is defined in terms of projections onto such components,
\begin{equation*}
\aligned
 \pi_\eq ^{F(s)}& = \Ad{g_\R(s)} \cdot  \Ad{e^{\lambda(s)} }\comp   \pi_\eq \comp   \Ad{e^{-\lambda(s)} } \cdot
 \Ad{g^{-1}_\R}(s) \\
                       &=  \Ad{g_\R(s)} \cdot  \Ad{e^{\lambda(s)} }\comp \pi_\eq \comp \Ad{e^{\varphi(s)} } \cdot \Ad{e^{-\Gamma(s)}}.
\endaligned
\end{equation*}
Remark that \eqref{eqn:w2AbelianSimplifies} shows that $\varphi(s)=-\pi_+ (\bar \Gamma(s))\in\eg^{0,-1}$.  Using all of this, again by commutativity,  \eqref{eqn:2.11bis} becomes
\begin{equation}\label{eqn:5.11}
F_*\left(\frac{d}{ds}\right)
          =   \Ad{g_\R(s)} \cdot  \Ad{e^{\lambda(s)} } \left(\frac{d \Gamma}{ds}\right) .
\end{equation}
Note that $\Ad{g_\R(s)} \cdot  \Ad{e^{\lambda(s)} }$ acts by isometries and so 
$$
\aligned
       h(s)  &:=  \left \| (  F_*\left(\frac{d}{ds}\right)  \right \|_{F(s)} \\
       &=\left \|   \left(\frac{d \Gamma}{ds}\right) \right \|_{F(0)}.
\endaligned
$$ }
 \par
The function $\xi(s) = \displaystyle \frac{d \Gamma}{ds}$ is a holomorphic function and so $\displaystyle \frac{\del \xi(s)}{\del\bar s}=0$.
Put $\dot\xi=\displaystyle \frac{d \xi(s)}{d s}$ and $h_o = h_{F(0)}$.  Then, the curvature of the  pullback metric is:
$$
\aligned
        K &= -\frac{1}{h}\frac{\pd^2}{\pd s \pd \bar s}\log h
           = -\frac{1}{h_o(\xi,\xi)}\frac{\pd^2}{\pd s \pd \bar s}\log h_o(\xi,\xi) \\
          &= -\frac{1}{h_o(\xi,\xi)}\frac{\pd}{\pd s}\left(
                \frac{h_o(\xi,\dot\xi)}{h_o(\xi,\xi)}\right)          \\
          &= -\frac{1}{h_o(\xi,\xi)}
                \frac{h_o(\dot\xi,\dot\xi)h_o(\xi,\xi) -
                   h_o(\dot\xi,\xi) h_o(\xi,\dot\xi)}
                {h_o(\xi,\xi)^2}                                              \\
          &= \frac{| h_o(\dot\xi,\xi)|^2 -
                   h_o(\dot\xi,\dot\xi)h_o(\xi,\xi)}
                {h_o^3(\xi,\xi)} \leq 0,
\endaligned
$$
where the last step follows  from the Cauchy-Schwarz inequality for
$h_o(\dot\xi,\xi)$.  \qed\endproof

\begin{rmq}   The proof shows that  the Gaussian curvature of
the pullback is negative wherever $\xi$ and $\dot\xi$ are linearly
independent.
\end{rmq}

\medskip

\par In particular, Proposition \eqref{negcurv} yields:
\begin{corr} \label{curvneg:2}  Let $\Delta\to D$ be a period map  associated to a   normal function
with fixed underlying Hodge structure.  Then the holomorphic sectional
curvature of the pull back of the Hodge metric is semi-negative.
\end{corr}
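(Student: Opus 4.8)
The plan is to recognize a normal function with fixed $H$ as exactly the situation governed by Proposition~\ref{negcurv}, and then to globalize that one-variable statement to a bound on the holomorphic sectional curvature in every direction. First I would record the structural features. A normal function is an extension $0\to H\to E\to\Z(0)\to 0$ with $H$ of pure weight $-1$, so the associated variation has only the two adjacent weights $0$ and $-1$, and because $H$ is fixed the graded Hodge structures are constant. Two consequences follow. On the one hand $W_{-1}\eg_\C$ is abelian: any element of it annihilates $W_{-1}$ and carries $V$ into $W_{-1}$, so the composite of two such endomorphisms vanishes. On the other hand the derivative of the period map lies in $\eg^{-1,0}$: it can have no $(-1,1)$-component (that would be a variation of the pure part, which is constant), and among the summands of $\eq=\bigoplus_{r<0}\eg^{r,s}$ the only one lying in $W_{-1}\eg_\C$ for this weight pattern is $\eg^{-1,0}$, the space of maps $\gr^W_0\to\gr^W_{-1}$; in particular there is no $\Lambda$-direction available.

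Next I would put the period map in the local normal form $F(s)=e^{\Gamma(s)}\cdot F(0)$ of \eqref{eqn:coord}, where now $\Gamma$ is a holomorphic map of a polydisc in $S$ into the vector space $\eg^{-1,0}$. The computation in the proof of Proposition~\ref{negcurv} carries over verbatim to several variables: since $W_{-1}\eg_\C$ is abelian, Kaplan's decomposition degenerates as in \eqref{eqn:w2AbelianSimplifies} so that $\varphi(s)=-\bar\Gamma(s)$, the Baker--Campbell--Hausdorff corrections collapse, and the factors $\Ad{(e^{-\bar\Gamma})}$ act trivially on $\eg^{-1,0}$. Feeding this into \eqref{eqn:intermediatestep} shows that the pulled-back metric has Gram matrix $h_{i\bar j}(s)=h_{F(0)}(\partial_i\Gamma(s),\partial_j\Gamma(s))$. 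In other words, the pullback of the mixed Hodge metric is precisely $\Gamma^{*}h_o$, the pullback under $\Gamma$ of the \emph{constant} Hodge inner product $h_o$ on the vector space $\eg^{-1,0}$; equivalently, $\nu$ maps $S$ into the flat orbit $\exp(\eg^{-1,0})\cdot F(0)\subset D$.

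Finally I would conclude by the Gauss equation. At a point and in a direction $u$ where the pseudo-metric is nondegenerate, $\Gamma$ is a holomorphic immersion and $\Gamma^{*}h_o$ is isometric to the metric induced on the complex submanifold $\Gamma(S)$ of the flat Hermitian space $(\eg^{-1,0},h_o)$. Since the ambient holomorphic sectional curvature vanishes, the Gauss equation for complex submanifolds of a K\"ahler manifold gives
\[
   R(u,\bar u,u,\bar u)=-\,\|\sigma(u,u)\|^{2}\le 0,
\]
where $\sigma$ is the symmetric second fundamental form of type $(1,0)$; dividing by $\|u\|^{4}$ shows the holomorphic sectional curvature of the pullback metric is $\le 0$. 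This is exactly the higher-dimensional incarnation of Proposition~\ref{negcurv}, whose one-variable Cauchy--Schwarz estimate is precisely this Gauss equation in complex dimension one.

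I expect the only genuinely delicate point to be the reduction in the middle paragraph: verifying that in several variables the abelian hypothesis really does collapse both the metric and the identification of tangent vectors to $\Gamma^{*}h_o$ with $h_o$ constant, so that $\nu(S)$ lands in a \emph{flat} orbit. Once that is in place the curvature step is formal, the key mechanism being that on $D$ itself these directions have nonnegative curvature (Corollary~\ref{curvapll2} with $\alpha=0$) but the pullback acquires the compensating term $-\|\sigma(u,u)\|^{2}$ measured against the flat orbit. I would also flag that, the pullback being only a pseudo-metric in general, the statement is asserted for directions $u$ with $h_o(\Gamma_{*}u,\Gamma_{*}u)\neq 0$, where $R(u)$ is defined.
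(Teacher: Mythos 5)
Your proposal is correct, and the structural identification in your first paragraph is exactly how the paper intends the corollary to follow from Proposition~\ref{negcurv}: with only the two adjacent weights $0$ and $-1$ the composite of two elements of $W_{-1}\End(V)$ lands in $W_{-2}=0$, so $W_{-1}\eg_\C$ is abelian, and constancy of $H$ kills the $(-1,1)$-component of the derivative while the weight pattern excludes $\Lambda$, leaving $\Gamma'\in\eg^{-1,0}$. The middle step you flag as delicate does go through: $\overline{\eg^{-1,0}}=\eg^{0,-1}\subset\eg^F_\C$ here (the correction terms $\eg^{a,b}$ with $a<0$, $b<-1$ would need $a+b\le -3$ and so vanish), so Kaplan's decomposition collapses exactly as in \eqref{eqn:w2AbelianSimplifies} and the pullback metric is $\Gamma^{*}h_o$ with $h_o$ the constant Hodge inner product on $\eg^{-1,0}$, in any number of variables. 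Where you genuinely diverge from the paper is the last step: the paper's Proposition~\ref{negcurv} is stated and proved for a one-variable period map, computing $-\tfrac{1}{h}\partial_s\partial_{\bar s}\log h$ and concluding by Cauchy--Schwarz, and the corollary is left to follow by citation; your Gauss-equation argument for the holomorphic immersion of $S$ into the flat Hermitian space $(\eg^{-1,0},h_o)$ is the same mechanism (the one-variable Cauchy--Schwarz estimate is precisely $-\|\sigma(u,u)\|^2/\|u\|^4\le 0$ in complex dimension one) but it directly yields the holomorphic sectional curvature bound for $\dim S>1$, which the paper's literal statement of Proposition~\ref{negcurv} does not; otherwise one would have to invoke the characterization of $R(u)$ as the supremum of Gaussian curvatures of holomorphic discs tangent to $u$. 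Your closing caveats about immersivity and degeneracy of the pseudo-metric are appropriate and consistent with the paper's conventions.
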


\begin{rmk} Via isomorphism
$\Ext^1_{\text{\rm MHS}}(A,B)\cong\Ext^1(\Z(0),B\otimes A^{\vee})$, the observation
of the previous paragraph also applies to families of cycles on a fixed
variety $X$ and the VMHS on $J_x/J_x^3$ of a smooth projective variety.
\end{rmk}

\subsection*{Another Application:   Mixed Hodge Structures and Fundamental Groups}
We treat this in some detail  with an eye towards
a reader less acquainted with this material. \par
 Let $X$ be a smooth complex algebraic variety, and $\Z\pi_1(X,x)$
be the group ring consisting of all finite, formal $\Z$-linear
combinations of elements of $\pi_1(X,x)$.  The augmentation
ideal $J_x$ is defined to be the kernel of the ring homomorphism
$$
      \epsilon:\Z\pi_1(X,x)\to\Z
$$
which maps each element $g\in\pi_1(X,x)$ to $1\in\Z$.
By the work of Morgan \cite{M}, the quotients $J_x/J_x^k$ carry
functorial mixed Hodge structures constructed from the minimal
model of the de Rham algebra of $X$.
We follow Hain's alternative approach  \cite{hain}; the mixed Hodge structure on $J_x/J_x^k$
can be described using so called iterated integrals as follows:
The iterated integral on   $\theta_1,\dots,\theta_r\in\EE^1(X)$,
$$
        \int\, \theta_1\cdots\theta_r
$$
assigns to each smooth path $\gamma:[0,1]\to X$ the integral of
$\theta_1\cdots\theta_r$ over the standard simplex in $\R^r$,
i.e.
$$
        \int_{\gamma}\, \theta_1\cdots\theta_r
        = \int_{0\leq t_1\leq\cdots\leq t_r\leq 1}\,
          \theta_1(\gamma_*(d/d t_1))\cdots \theta_r(\gamma_*(d/d t_r))
	  dt_1\cdots d t_r.
$$
Such an iterated  integral is said to have length $r$. The spaces $\Hom_{\Z}(J_x/J^{s+1}_x,\C)$ can be described as
spaces of certain linear combinations of iterated integrals of lengths $\le s$, the so called \emph{homotopy functionals}.
We only need their description for $s=2$:
 \begin{thm}[\protect{ \cite[Prop. 3.1.]{hain}}]\label{thm:it-int} The iterated integral
\begin{equation}
       \int\,\theta + \sum_{j,k}\, a_{jk}\int\,\theta_j\theta_k
       \label{eqn:it-int}
\end{equation}
is a homotopy functional if and only if $\theta_1,\dots,\theta_r$ are
closed and
\begin{equation}
      d\theta + \sum_{jk}\, a_{jk}\theta_j \wedge \theta_k = 0.
      \label{eqn:it-int-eq}
\end{equation}
\end{thm}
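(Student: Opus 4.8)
The plan is to characterize homotopy functionals analytically, as functions on the fixed--endpoint path space of $X$ that are locally constant along homotopies, and then to reduce the entire statement to a single first--variation computation. Concretely, I would fix two smooth paths $\gamma_0,\gamma_1$ from $x$ to $y$ together with a smooth homotopy $F:[0,1]\times[0,1]\to X$ with $F(s,t)=\gamma_s(t)$, $F(s,0)\equiv x$, $F(s,1)\equiv y$; the variation field $V=\pd F/\pd s$ then vanishes at $t=0,1$. Since the components of the path space rel endpoints are exactly the homotopy classes, an iterated integral is a homotopy functional precisely when $\frac{d}{ds}$ of its value on $\gamma_s$ vanishes for every such $F$. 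Thus the theorem reduces to showing that the $s$--derivative of \eqref{eqn:it-int} equals $\int_0^1 (F^*\Omega)(\pd_s,\pd_t)\,dt$ with $\Omega=d\theta+\sum_{jk}a_{jk}\,\theta_j\wedge\theta_k$, \emph{once} the $\theta_j$ are closed, together with a careful accounting of the extra terms produced when they are not. Note that $\theta$ itself is not required to be closed; only the forms $\theta_1,\dots,\theta_r$ entering the length--two part are.

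For the length--one term I would pull $\theta$ back to the square, write $F^*\theta=P\,ds+Q\,dt$, and use $d(F^*\theta)=F^*d\theta$ to get $\pd_sQ=\pd_tP+(F^*d\theta)(\pd_s,\pd_t)$; integrating in $t$, the boundary term $[P]_{t=0}^{t=1}=[\theta(V)]_{t=0}^{t=1}$ vanishes because $V=0$ at the endpoints, leaving $\frac{d}{ds}\int_{\gamma_s}\theta=\int_0^1(F^*d\theta)(\pd_s,\pd_t)\,dt$. For the length--two term I would differentiate $\int_{\gamma_s}\theta_j\theta_k=\int_{0\le t_1\le t_2\le 1}(\gamma_s^*\theta_j)\wedge(\gamma_s^*\theta_k)$ and integrate by parts (Stokes) over the $2$--simplex. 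This is the computational heart of the argument, and it produces three kinds of boundary contributions: the outer faces $t_1=0$ and $t_2=1$ give endpoint evaluations that vanish as above; the diagonal face $t_1=t_2$ collapses the double integral to $\int_0^1 F^*(\theta_j\wedge\theta_k)(\pd_s,\pd_t)\,dt$; and the interior yields the two genuine length--two integrals $\int(d\theta_j)\theta_k$ and $\int\theta_j(d\theta_k)$. This is equivalent to Chen's differentiation formula for iterated integrals on path space, which I would either cite or, in this low length, derive directly.

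Assembling the pieces, the full $s$--derivative of \eqref{eqn:it-int} splits by word length in Chen's bar complex into a length--one piece $\int_0^1(F^*\Omega)(\pd_s,\pd_t)\,dt$ and a length--two piece $\sum_{jk}a_{jk}\bigl(\pm\int(d\theta_j)\theta_k\pm\int\theta_j(d\theta_k)\bigr)$. For the ``if'' direction, closedness of the $\theta_j$ annihilates the length--two piece while the relation $\Omega=0$ annihilates the length--one piece, so the derivative vanishes for every homotopy and \eqref{eqn:it-int} is a homotopy functional. For the ``only if'' direction I would reverse this: because the two pieces live in distinct word lengths they cannot cancel one another, so each vanishes identically on paths. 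Vanishing of $\int_0^1(F^*\Omega)(\pd_s,\pd_t)\,dt$ for all $F$ forces $\Omega=0$ pointwise --- probing with short arcs based at an arbitrary point and varying in arbitrary pairs of directions recovers $\Omega(v,w)=0$ --- which is exactly \eqref{eqn:it-int-eq}; and vanishing of the length--two piece forces each $\theta_j$ to be closed.

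I expect the main obstacle to be precisely this last step of the ``only if'' direction: extracting closedness of each individual $\theta_j$ from the vanishing of the single combination $\sum_{jk}a_{jk}\bigl(\pm\int(d\theta_j)\theta_k\pm\int\theta_j(d\theta_k)\bigr)$. This requires that the length--two iterated integrals appearing there be linearly independent as functionals on paths, which in turn rests on taking the $\theta_j$ linearly independent --- e.g. representing a basis of the relevant de Rham cohomology, as in Hain's setup. Keeping track of the orientation signs coming from the simplicial Stokes computation and maintaining honest bar--complex bookkeeping is the routine but error--prone remainder of the argument.
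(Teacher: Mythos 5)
The paper offers no proof of this statement: it is quoted directly from Hain \cite[Prop.\ 3.1]{hain}, so there is nothing internal to compare your argument against. Your proposal is the standard proof --- Chen's first-variation formula for iterated integrals, obtained by pulling back to the square, writing the double integral over the time simplex, and applying Stokes so that the diagonal face produces the $\theta_j\wedge\theta_k$ term --- and this is essentially the argument in Hain's source; the ``if'' direction is complete as outlined, and the probing-with-short-arcs step correctly recovers $d\theta+\sum a_{jk}\theta_j\wedge\theta_k=0$ pointwise. The one genuine subtlety is the one you already flagged: as literally stated, with no hypothesis on the $a_{jk}$ or on the linear independence of the $\theta_j$, the ``only if'' conclusion that each $\theta_j$ is closed can fail (e.g.\ if all $a_{jk}=0$ the $\theta_j$ are unconstrained), so that part of the statement must be read under the implicit nondegeneracy assumptions of Hain's setup, exactly as you note.
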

The mixed Hodge structure  $(F,W)$ on $\Hom_{\Z}(J_x/J^{s+1}_x,\C)$
is described on the level of iterated integrals as follows.
Such a sum belongs $F^p$ if and only if  each integrand $\theta_1\cdots\theta_k$
contains at least $p$ terms $\theta_j\in\Omega^1(X)$.  As for the
weight filtration, $\alpha$ belongs to $W_k$ if and only if $\alpha$ is
representable by a sum of iterated integrals of length $\leq k$
 plus the number of logarithmic terms $dz_j/z_j$ in the integrand.

Suppose next that $H^1(X)$ has pure weight $\ell=1$ or $\ell=2$. The first happens
for $X$ projective, the second for instance when the compactification of $X$ is
$\bP^1$. In these situations,  following \cite[\S 6]{hain},  the dual of $J_x/J_x^3$ is an extension of pure Hodge structures.
To explain the result,  note that the cup-product pairing
$H^1(X) \otimes H^1(X)\to H^2(X)$ is a morphism of pure  Hodge structures. It follows that
\[
K:=\ker \left[ H^1(X) \otimes H^1(X)\to H^2(X)\right]
\]
carries a pure  Hodge structure  of weight  $2\ell$.  
Theorem~\ref{thm:it-int} now implies:
\begin{thm}  The mixed Hodge structure on $\Hom_\Z(J/J^3,\C)$ is the extension of pure Hodge structures of weight $\ell$ and $2\ell$ given by
\[
0\to H^1(X) \to \Hom_\Z(J/J^3,\C)\mapright{p} K \to 0.
\]
Explicitly, the iterated integral $  \int\,\theta + \sum_{j,k}\, a_{jk}\int\,\theta_j \theta_k$ is mapped   by  $p$ to $\sum a_{jk} [\theta_j]\otimes [\theta_k]$ which, by construction, belongs to $K$. The kernel of $p$ can be identified with the the length one homotopy  integrals  $ \int\,\theta$ , i.e. those with  $d\theta =0$.
Hence $\ker p\simeq H^1(X)$. It follows that the graded pieces have a natural polarization coming from the one on $H^1(X)$ and which is  given by these identifications.
\end{thm}
In particular, the above implies that if $X$ is smooth projective, the graded polarized mixed Hodge structure on $\Hom_\Z(J/J^3,\C)$ has two adjacent weights and so if we now leave $X$ fixed but vary the base point, we get a family  of  mixed Hodge structures over $X$ for  which $W_{-1}\eg_\C$ is abelian and by  Proposition~\ref{negcurv} we conclude:
\begin{corr}\label{corr:fund-group} Let $X$ be a smooth complex projective
variety, and suppose that the differential of the period map of $J_x/J_x^3$
is injective. Then the holomorphic sectional curvature of $X$ is  $\le 0$.
\end{corr}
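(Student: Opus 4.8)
The plan is to show that this corollary follows almost directly from Proposition~\ref{negcurv}, once the relevant Hodge-theoretic data has been identified. First I would fix the smooth projective variety $X$ and regard the assignment $x\mapsto J_x/J_x^3$ (equivalently its dual $\Hom_\Z(J/J^3,\C)$) as a variation of mixed Hodge structure over $X$, with the base point $x$ as the varying parameter. By the extension theorem established just above, this variation has exactly two graded-polarized weight-graded quotients, namely the \emph{constant} pure Hodge structures $H^1(X)$ and $K$, and these weights are adjacent. Thus we are dealing with a unipotent variation in the sense of Hain--Zucker: the graded quotients do not move as $x$ varies.

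The next step is to verify the two hypotheses of Proposition~\ref{negcurv}. For the abelianness of the off-diagonal nilpotent subalgebra (denoted $W_{-1}\eg_\C$ in Proposition~\ref{negcurv}), I would observe that because there are only two adjacent weights, the bracket of any two weight-lowering endomorphisms carries the top graded quotient two weight-steps down, into a graded piece that simply does not exist; hence $W_{-1}\eg_\C$ squares to zero and is abelian. For the type of the derivative, I would combine horizontality of the period map with the constancy of the graded quotients: horizontality forces the derivative into $\bigoplus_j\eg^{-1,j}$, constancy of the pure graded Hodge structures kills the weight-preserving component $\alpha=u^{-1,1}$, and adjacency of the two weights kills the components $\lambda=\sum_{j\ge 1}u^{-1,-j}$ of weight $\le -2$. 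What remains is precisely a vector $\beta=F_*(d/ds)(0)\in\eg^{-1,0}$, and moreover $\epsilon=0$ as holds for any unipotent variation.

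With these verifications in hand, the conclusion follows by applying Proposition~\ref{negcurv} one holomorphic direction at a time. The injectivity hypothesis on the differential of the period map guarantees that the pullback of the mixed Hodge metric is a genuine non-degenerate hermitian metric on $X$, rather than merely a pseudo-metric, so that its holomorphic sectional curvature is well defined. For each point of $X$ and each holomorphic tangent direction I would restrict the period map to a holomorphic disk tangent to that direction; by the preceding paragraph its derivative lies in $\eg^{-1,0}$ and $W_{-1}\eg_\C$ is abelian, so Proposition~\ref{negcurv} shows that the Gaussian curvature of the restricted pullback metric is $\le 0$. Since the holomorphic sectional curvature in a direction is exactly this Gaussian curvature, we conclude that the holomorphic sectional curvature of $X$ is non-positive.

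I expect the main obstacle to be the verification that the derivative of the period map lands in $\eg^{-1,0}$ with no lower-type contributions, since this is the point where one must simultaneously invoke horizontality, constancy of the graded Hodge structures (to remove $\alpha$), and adjacency of the weights (to remove $\lambda$ and force $\epsilon=0$); the abelianness of $W_{-1}\eg_\C$ and the passage from Proposition~\ref{negcurv} to holomorphic sectional curvature are then routine. It is worth stressing the conceptual tension underlying the argument, already visible in Corollary~\ref{curvapll2}: the intrinsic holomorphic sectional curvature of $D$ along a direction of type $(-1,0)$ is in fact non-negative, and it is only the additional negative term acquired upon pulling back the metric --- made available by the commutativity of $W_{-1}\eg_\C$ --- that tips the balance to $\le 0$.
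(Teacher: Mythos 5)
Your proposal is correct and follows essentially the same route as the paper: the paper likewise deduces the corollary from the adjacency of the two weights (giving abelianness of $W_{-1}\eg_\C$) together with the constancy of the graded quotients (forcing the derivative into $\eg^{-1,0}$), and then invokes Proposition~\ref{negcurv}. Your write-up merely makes explicit the type-by-type elimination of $\alpha$, $\lambda$, and $\epsilon$ and the direction-by-direction application of the one-variable proposition, both of which the paper leaves implicit.
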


\subsubsection*{Complements: Flat Structure and  the Hodge Metric}
1.  The  \textit{flat structure} given by the local system attached to $J/J^3$ may be described
as follows:  Fix a point $x_o\in X$ and let $U$ be a simply connected
open subset containing $x_o$.  Given a point $x\in U$ let
$\gamma:[0,1]\to U$ be a smooth path connecting $x_o$ to $x$.  Then,
conjugation
\begin{equation}
      \alpha\mapsto\gamma \alpha \gamma^{-1}    \label{eqn:conj-pi1} 
\end{equation}
defines an isomorphism $\pi_1(X,x)\to\pi_1(X,x_0)$ which is
independent of $\gamma$ since $U$ is simply connected.  Trivializing
$(J/J^3)^*$ using \eqref{eqn:conj-pi1}, we then obtain the period map via
the change of base point formula  (see \cite[Remark 6.6]{hain}):
\begin{equation}
	\int_{\gamma\alpha\gamma^{-1}}\,\theta_1\theta_2
	 = \int_{\alpha}\,\theta_1\theta_2
   + \left(\int_{\gamma}\,\theta_1\right)\left(\int_{\alpha}\,\theta_2\right)
   - \left(\int_{\gamma}\,\theta_2\right)\left(\int_{\alpha}\,\theta_1\right)
	  \label{eqn:change-base-pt}
\end{equation}
one then obtains the following result via differentiation:
\begin{lemma}\label{lemma:gauss-manin} The flat connection $\nabla$ of
$(J/J^3)^*$ operates on iterated integrals via the following rules:
$$
	  \nabla_{\xi}\left(\int\,\theta_1\theta_2\right)
	  =\theta_1(\xi)\left(\int\,\theta_2\right)
	   - \theta_2(\xi)\left(\int\,\theta_1\right)
$$
and $\nabla_{\xi}(\int\,\theta) = 0$.
\end{lemma}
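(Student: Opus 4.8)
The plan is to read the change-of-base-point formula \eqref{eqn:change-base-pt} as the expansion of a \emph{flat} section of $(J/J^3)^*$ in the moving frame furnished by the iterated-integral functionals, and then to obtain $\nabla$ by differentiating the scalar coefficients of that expansion. Concretely, over the simply connected set $U$ the local system is trivialized by conjugation \eqref{eqn:conj-pi1}, so the parallel section $\sigma_{12}$ whose value at $x_o$ is the functional $\int\theta_1\theta_2$ has, at a point $x$ reached by a path $\gamma$, the value $\alpha\mapsto\int_{\gamma^{-1}\alpha\gamma}\theta_1\theta_2$ on loops $\alpha$ based at $x$ (transport the loop back to $x_o$, then apply the fixed functional). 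Formula \eqref{eqn:change-base-pt} rewrites this in terms of the honest length-$\le 2$ functionals at $x$, so that schematically
\[
   \left[\int\theta_1\theta_2\right]_x
   = \sigma_{12}(x) + \left(\int_\gamma\theta_1\right)\left[\int\theta_2\right]_x
     - \left(\int_\gamma\theta_2\right)\left[\int\theta_1\right]_x .
\]
The whole computation then reduces to applying $\nabla_\xi$ to this identity, since $\nabla_\xi\sigma_{12}=0$ by flatness.

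First I would dispose of the length-one case, which supplies both the second assertion of the lemma and the flat sections needed above. For a closed $1$-form $\theta$ the iterated integral of length one is the ordinary line integral, which is additive under concatenation, so $\int_{\gamma^{-1}\alpha\gamma}\theta=\int_\alpha\theta$; hence the functional $\int\theta$ is independent of $\gamma$, i.e. constant in the trivialization, and therefore $\nabla_\xi\bigl(\int\theta\bigr)=0$. In particular the two sections $\int\theta_1$ and $\int\theta_2$ appearing in the displayed identity are parallel.

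Next I would differentiate. Applying $\nabla_\xi$ and using the Leibniz rule together with $\nabla_\xi\sigma_{12}=0$ and $\nabla_\xi\bigl(\int\theta_j\bigr)=0$, every term involving a derivative of a length-$\le 2$ functional drops out, leaving only the derivatives of the scalar coefficients:
\[
   \nabla_\xi\!\left(\int\theta_1\theta_2\right)
   = \left(\xi\!\int_\gamma\theta_1\right)\!\int\theta_2
     - \left(\xi\!\int_\gamma\theta_2\right)\!\int\theta_1 .
\]
Because $U$ is simply connected and each $\theta_j$ is closed, $x\mapsto\int_\gamma\theta_j$ is a well-defined primitive on $U$, and the fundamental theorem of calculus gives $\xi\int_\gamma\theta_j=\theta_j(\xi)$. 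Substituting yields exactly $\nabla_\xi\bigl(\int\theta_1\theta_2\bigr)=\theta_1(\xi)\int\theta_2-\theta_2(\xi)\int\theta_1$, as claimed.

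I expect the analytic content to be entirely routine (differentiation under the integral sign and the fundamental theorem of calculus); the one point that genuinely requires care is the bookkeeping of the base-point/transport conventions. The overall sign of the answer is governed by whether one trivializes $(J/J^3)^*$ by conjugating loops by $\gamma$ or by $\gamma^{-1}$, and one must align this with the orientation built into \eqref{eqn:change-base-pt} so that the correction terms enter with the signs above; the opposite choice merely flips the formula. A secondary, more conceptual, point worth a sentence is the justification that \eqref{eqn:change-base-pt} really is the expansion of $\sigma_{12}$ in the frame of iterated-integral functionals, i.e. that these functionals form a frame compatible with the filtration from Theorem~\ref{thm:it-int}; once that is granted, the derivation is as above.
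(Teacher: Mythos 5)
Your argument is exactly the paper's: the lemma is obtained by trivializing $(J/J^3)^*$ over a simply connected $U$ via the conjugation \eqref{eqn:conj-pi1} and differentiating the change-of-base-point formula \eqref{eqn:change-base-pt}, with the coefficients $\int_\gamma\theta_j$ differentiating to $\theta_j(\xi)$ by the fundamental theorem of calculus and the length-one integrals being parallel by conjugation-invariance. The paper states this in one line; your write-up fills in the same steps (including the correct caveat about the $\gamma$ versus $\gamma^{-1}$ convention governing the overall sign), so it is correct and follows the paper's route.
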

As a check of the formula for $\nabla$ given in Lemma~\ref{lemma:gauss-manin},
note that by Theorem \ref{thm:it-int} that the iterated integral
\eqref{eqn:it-int}
appears in $(J/J^3)^*$ only if $\theta_j$ and $\theta_k$ is closed
for all $j$, $k$ an equation \eqref{eqn:it-int-eq} holds.
Therefore,
$$
     \nabla^2
     \left(\int\,\theta
		+ \sum_{j,k}\, a_{jk}\int\,\theta_j \theta_k\right)
      = \sum a_{ij}\left(d\theta_j\int\theta_k - d\theta_j \int\theta_k\right) = 0
$$
because $d\theta_j=0$.
Likewise, direct calculation using Lemma \eqref{lemma:gauss-manin} shows that
the Hodge filtration $\FF$ of $(J/J^3)^*$ is holomorphic and horizontal
with respect to $\nabla$, and the weight filtration $W$ is flat.

\noindent 2. By way of  illustration we shall prove  the correctness of the expression~\eqref{eqn:dilog-ex} for
\textit{the mixed Hodge metric} as announced in the introduction.
First of all (for $X=\bP^1\setminus \set{0,1,\infty}$)
	$$
	    \nabla\int \frac{dz}{z} \cdot \frac{dz}{1-z} = \frac{dz}{z}\int\frac{dz}{z-1}
	                                      -\frac{dz}{z-1}\int\frac{dz}{z},
	$$
and, secondly,  from the above discussion  it follows that
	\[
	\left\| \int\frac{dz}{z-1}\right \|^2= h([\frac{dz}{z-1}] ,[\frac{dz}{z-1}])= (4\pi)^2.
	\]
where $h$ is the Hodge metric  on $H^1(X)$ (and similarly for $\|\int \frac{dz}{z} \|^2$).

\noindent 3. As a further illustration, let us calculate \textit{the mixed Hodge metric}   when we
specialize the preceding to a  compact Riemann surface   $X$ of genus
$g>1$. Let $\theta_1,\dots,\theta_g$ be an unitary  basis of
$H^{1,0}(X)$ with respect to  the Hodge metric. Then, up to a scalar, the
metric on $X$ obtained by pulling back the mixed Hodge metric via
the period map of $(J/J^3)^*$ is given by
$$
         \left\|d/dz\right\|^2 = \sum_{j=1}^g\, \|\theta_j(d/dz)\|^2.
$$
This follows directly from Lemma \eqref{lemma:gauss-manin} and the
discussion on the mixed Hodge structure on $(J/J^3)^*$ we just gave.
\begin{rmq} The above description of the mixed Hodge metric  can
be generalized in a straightforward manner  to  any smooth complex projective variety.
\end{rmq}
%
%
%
%

\section{The K\"ahler Condition} \label{sec:kaehler}

{We recall some facts about K\"ahler metrics.
Let $h$ be a hermitian
metric on a complex manifold $M$.   Given any system of local holomorphic
coordinates $(z_1,\dots,z_m)$ on $M$, the associated fundamental 2-form
$\Omega$ is given by the formula
\begin{equation} \label{eqn:kahler}
	\Omega = -\frac{\sqrt{-1}}{2}\sum_{j,k} h_{jk} dz_j\wedge d\bar z_k,
	\qquad h_{jk} = h\left(\frac{\del}{\del{z_j}}, \frac{\del}{\del{z_k}}\right).
\end{equation}
This form is a globally defined $(1,1)$-form and  by definition $h$ is K\"ahler
  if and only if  $d\Omega = 0$.
}

{An equivalent condition can be given in terms of the  torsion tensor for the associated Chern connection $\nabla_h$ on the holomorphic tangent bundle. Recall that   the  \emph{torsion tensor} for any  linear connection $\nabla$ on the tangent bundle  is defined by the formula
\[
T_\nabla(X,Y):= \nabla_XY -\nabla_YX -[X,Y],
\]
where $X$ and $Y$ a local smooth vector fields.
The K\"ahler condition is  equivalent to $T_{\nabla_h}=0$. see~\cite[Prop. I.7.19]{koba}.
 \begin{prop} \label{torsiontest}
A  hermitian metric $h$ as above  with Chern connection $\nabla=\nabla_h$   is K\"ahler if and only if
for  local holomorphic vector fields   $X,Y $ on  $M$ one has
\[
\nabla_{X}  Y  -
  \nabla_{ Y} X - [X,Y] =0.
\]
\end{prop}
\proof  The torsion is a tensor, i.e. bilinear over $C^\infty(M)$ and since all local vector fields are $C^\infty(M)$-linear combinations of the holomorphic coordinate vector fields and their complex conjugates, it suffices to test  whether $T(X,Y)=0$ with $X$ and $Y$   locally  holomorphic or anti-holomorphic.
If $X$ and $Y$ have different types one has $[X,Y]=0$ \footnote{\label{holandantiholcommute} Clearly, if $X$, $Y$ are local holomorphic coordinate vector fields $[X,\bar Y]=0$ and an easy calculation shows that $[fX, \bar g \bar Y]=0$  whenever  $f,g$ are local holomorphic functions and $X, Y$ holomorphic fields with $[X,\bar Y]=0$.}
and hence the torsion vanishes on such pairs $(X,Y)$. Since $T(\bar X,\bar Y)= \overline{T(X,Y)}$, to show that the torsion vanishes, one therefore may restrict to pairs $(X,Y)$ of local holomorphic vector fields.  So  $T=0$ precisely if $T$ vanishes on pairs of vector fields belonging to a holomorphic local frame  for the holomorphic tangent bundle.
\qed\endproof
}

\par
Let $\Delta^m$ a  polydisk  at $0\in \C^m$ with  coordinates $(s_1,\dots,s_m)$  and let
 $F:\Delta^m\to D$ be a holomorphic, horizontal map.  Let $\eq$ be the subalgebra
\eqref{eqn:tang-alg} attached to $F(0)$.  Recalling the local
biholomorphism   \eqref{eqn:coord}  mapping a neighborhood of $0\in\eq$ to a
neighborhood of $F(0)$ in $D$, locally we can write as in ~\cite{higgs}
$$
      F(s) = e^{\Gamma(s)}  \cdot F(0)
$$
for a unique $\eq$-valued \emph{holomorphic} function $\Gamma$ which vanishes at $0$.

\begin{thm}\label{thm:kahler} Let $h = F^*(h_D)$ denote the pullback
of the mixed Hodge metric $h_D$ to $S$.
Set $\xi_j = \frac{\partial\Gamma}{\partial s_j}(0)$. Then $h$
is K\"ahler if and only if for all $j,k,\ell$ one has
\begin{equation}
           h(\xi_j,\pi_\eq [\pip(\bar\xi_{\ell}),\xi_k])
           - h(\xi_{\ell},\pi_\eq[\pip(\bar\xi_j),\xi_k]) = 0. \label{eqn:1.3bis}
\end{equation}
\end{thm}
\proof
%
%
 {First, remark that  by Theorem~\ref{thm:connection} one has
 \[
 \nabla_{\xi_j} \xi_\ell= - \pi_\eq[\pi_+(\overline{\xi_j})^*,\xi_\ell] . \]
Since
\[
\aligned h(  \pi_\eq[\pi_+(\overline{\xi_j})^*,\xi_\ell], \xi_k) &=  h([\pi_+(\overline{\xi_j})^*,\xi_\ell], \xi_k)\\
&= h(\xi_\ell, [\pi_+(\overline{\xi_j}), \xi_k])\\\
&=  h(\xi_\ell, \pi_\eq[\pi_+(\overline{\xi_j}), \xi_k]) ,
\endaligned
\]
formula \eqref{eqn:1.3bis} for all $j,k,\ell$  is equivalent to
\[
\nabla_{\xi_j} \xi_\ell- \nabla_{\xi_\ell} \xi_j=0 \quad \text{for all } \ell, j
\]
and hence, by  the second condition from Prop.~\ref{torsiontest} we only have to show that  the bracket $[\xi_j,\xi_\ell]$ vanishes.
}

{To see this, recall that  period maps are \emph{horizontal}, i.e.
all tangents to the image $F(s)$ of a period map belong to $U^{-1}_{F(s)}=\bigoplus_q I^{-1,q}_{F(s)}$. Working this out means
\[
e^{-\ad{\Gamma(s)}} \frac{\del}{\del s_j} e^{\ad{\Gamma(s)}} \in U^{-1}_{F(0)}
\]
and as in the proof of \cite[Theorem 6.9]{higgs}
this is equivalent to the commutativity relation
\begin{equation*}  
        [\xi_j,\xi_\ell]=   \left[\frac{\del \Gamma}{\del s_j}(0),\frac{\del \Gamma}{\del s_\ell}(0)\right]
           = 0.\quad\qed
\end{equation*}
}

 \begin{corr}\label{corr:kahler} The pullback of the mixed Hodge metric
along an immersion is K\"ahler in the following cases:
\renewcommand{\labelenumi}{\rm (\alph{enumi})}
\begin{enumerate}
\item Variations of pure Hodge structure (Lu's result~\cite{Zhiqin});
\item  Hodge--Tate variations;
\item The variations of mixed Hodge structure attached to
$J_x/J_x^3$ for a smooth complex projective variety;
\item The variations from \S~\ref{ssec:exmples}. Example 4 arising from the commuting deformations of
the complex and K\"ahler structure of a compact K\"ahler manifold.
\end{enumerate}
\end{corr}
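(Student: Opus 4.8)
The plan is to reduce every case to the Kähler criterion of Theorem~\ref{thm:kahler}. Since we pull back along an immersion the resulting Hermitian form is non-degenerate, so it suffices to verify that its fundamental form is closed, i.e. to check the symmetric identity \eqref{eqn:1.3bis} for the horizontal tangent directions $\xi_j=\frac{\partial\Gamma}{\partial s_j}(0)\in\eq$; a convenient sufficient condition is the stronger vanishing \eqref{eqn:1.4}, namely $\piq[\pip(\bar\xi_\ell),\xi_k]=0$. The whole argument then becomes a bookkeeping of Hodge types: a horizontal $\xi_k$ lies in $\bigoplus_q\eg^{-1,q}$, and one must locate $[\pip(\bar\xi_\ell),\xi_k]$ relative to the decomposition \eqref{eqn:gc-decomp}, recalling that $\piq$ annihilates everything in the stabilizer $\eg_\C^F=\bigoplus_{r\ge0}\eg^{r,s}$ of \eqref{eqn:isotopy-alg}.

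Cases (a) and (b) I would dispatch directly via \eqref{eqn:1.4}. In the pure case $\Lambda=0$ and the only horizontal type is $\eg^{-1,1}$, so $\bar\xi_\ell\in\eg^{1,-1}\subset\eun_+$, whence $\pip(\bar\xi_\ell)=\bar\xi_\ell$ and $[\bar\xi_\ell,\xi_k]\in\eg^{0,0}\subset\eg_\C^F$ is killed by $\piq$; this recovers Lu's theorem. For a Hodge--Tate variation the constraint $\End(V)^{r,s}=0$ unless $r=s$ forces every horizontal $\xi_k$ into $\eg^{-1,-1}\subset\Lambda$, and the conjugation rule \eqref{eqn:UnderConju} keeps $\bar\xi_\ell$ inside $\Lambda$, so $\pip(\bar\xi_\ell)=0$ and \eqref{eqn:1.4} is automatic (equivalently, the image lies in a flat $\exp(\Lambda)$-orbit, Lemma~\ref{lemma:tate-case}).

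Case (d) likewise yields to \eqref{eqn:1.4} once one imports the structure of Example~\ref{exmple:kahler-def}: each tangent splits as $\xi_k=\alpha_k+\lambda_k$ with $\alpha_k\in\eg^{-1,1}$, $\lambda_k\in\eg^{-1,-1}$, and crucially $\epsilon=0$. Then $\pip(\bar\lambda_\ell)=0$ while $\pip(\bar\alpha_\ell)=\alpha_\ell^*$ by Remark~\ref{rmk:epsilon}, so $\pip(\bar\xi_\ell)=\alpha_\ell^*\in\eg^{1,-1}$; the two surviving brackets $[\alpha_\ell^*,\alpha_k]\in\eg^{0,0}$ and $[\alpha_\ell^*,\lambda_k]\in\eg^{0,-2}$ both lie in $\eg_\C^F$ and are annihilated by $\piq$.

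The remaining case (c) is where I expect the genuine difficulty, because the naive sufficient condition \eqref{eqn:1.4} is no longer available. For $J_x/J_x^3$ with $X$ projective there are exactly two adjacent weight-graded quotients, so $\End(V)$ carries only the weights $-1,0,1$; consequently $\Lambda=0$, the structure is split over $\R$ and $\epsilon=0$, yet the horizontal directions can still carry components in $\eg^{-1,0}$, $\eg^{-1,1}$ and $\eg^{-1,2}$, and $\piq[\pip(\bar\xi_\ell),\xi_k]$ retains a genuine $\eq$-term arising from $[\,\overline{\xi_\ell^{-1,0}},\xi_k^{-1,2}\,]\in\eg^{-1,1}$. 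My plan is to fall back on the weaker symmetric relation \eqref{eqn:1.3bis}: unipotence of the variation forces $\xi^{-1,1}=0$, and then pairing the surviving $\eg^{-1,1}$-term against $\xi_j$ vanishes by orthogonality of Hodge types, since $\xi_j$ has no $\eg^{-1,1}$-component. This is exactly the assertion of Theorem~\ref{thm:kahler2} for two adjacent weights, which I would simply invoke. The main obstacle is thus organizing the type computation in (c) cleanly enough to see that the failure of \eqref{eqn:1.4} becomes harmless after symmetrization; everything else is routine type-chasing.
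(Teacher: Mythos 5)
Your treatment of cases (a), (b) and (d) is exactly the paper's argument: verify the sufficient condition \eqref{eqn:1.4} by a Hodge--type count of the bracket $[\pip(\bar\xi_\ell),\xi_k]$, finding type $(0,0)$ in the pure case, a vanishing projection $\pip(\bar\xi_\ell)=0$ in the Hodge--Tate case, and types $(0,0)$ and $(0,-2)$ in case (d), all of which lie in $\eg_\C^F$ and are annihilated by $\piq$.

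Your analysis of case (c), however, rests on a component that does not exist. You assert that the horizontal directions "can still carry components in $\eg^{-1,2}$" and that $\piq[\pip(\bar\xi_\ell),\xi_k]$ retains a genuine $\eq$-term coming from $[\,\overline{\xi_\ell^{-1,0}},\xi_k^{-1,2}\,]$. But $\eg^{r,s}=\eg_\C\cap\End(V)^{r,s}$ vanishes whenever $r+s>0$, so $\eg^{-1,2}=0$; for a variation with exactly two adjacent weights the algebra $\eg_\C$ carries only the weights $-1$ and $0$, and the horizontal space is $\eg^{-1,1}\oplus\eg^{-1,0}$. Unipotence kills the $(-1,1)$ part, so $\xi_k\in\eg^{-1,0}$, $\pip(\bar\xi_\ell)=\bar\xi_\ell\in\eg^{0,-1}$ (here $\Lambda=0$, so conjugation behaves as in the split case), and the bracket lands in $\End(V)^{-1,-1}$, which vanishes because of the short length of the weight filtration. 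This is the paper's one-line argument, and it shows that the strong condition \eqref{eqn:1.4} \emph{does} hold in case (c); there is no need to retreat to the symmetrized relation \eqref{eqn:1.3bis}. Your fallback --- invoking Theorem~\ref{thm:kahler2} for two adjacent weights --- is logically sound and rescues the conclusion (that theorem does cover $J_x/J_x^3$), so the proof is not irreparably broken; but the "genuine difficulty" you locate in (c) is illusory, and the direct verification of \eqref{eqn:1.4} is both available and simpler.
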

\begin{proof} In case (a), the derivatives of $\Gamma$ at zero are of
type $(-1,1)$ and so  for all $\ell,j$
\begin{equation}
   [\pip(\overline{\xi_\ell }),\xi_j]= [\pip(\overline{d\Gamma/ds_{\ell}(0)}),d\Gamma/ds_j(0)]  \label{eqn:brac-1}
\end{equation}
is type $(0,0)$ which is annihilated by $\piq$.
\\
 In case  (b),
$\pip(\overline{d\Gamma}) = 0$.
\\
 In case (c) the bracket
\eqref{eqn:brac-1} is of type $(-1,-1)$ which is zero due to the
short length of the weight filtration.  \\
In case (d), the bracket
\eqref{eqn:brac-1} has terms of type $(0,0)$ and $(0,-2)$, both of
which are annihilated by $\pi_\eq$.
\end{proof}
\begin{rmk} In case (d) one can also show that the the holomorphic sectional
curvature will be $\leq 0$.
\end{rmk}

\begin{thm}\label{thm:kahler2} Let $\VV$ be a variation of mixed
Hodge structure with only two non-trivial weight graded-quotients
$\gr^W_a$ and $\gr^W_b$ which are adjacent, i.e. $|a-b|=1$.  Then,
the pullback of the mixed Hodge metric along the period map of
$\VV$ is \changed{}{a K\"ahler pseudometric}.
\end{thm}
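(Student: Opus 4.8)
The plan is to apply the Kähler criterion of Theorem~\ref{thm:kahler}. Writing the period map locally as $F(s)=e^{\Gamma(s)}\cdot F(0)$ with $\Gamma$ an $\eq$-valued holomorphic function vanishing at $0$, and setting $\xi_j=\frac{\partial\Gamma}{\partial s_j}(0)\in\eq$, the pullback is pseudo-Kähler precisely when the expression $h(\xi_j,\piq[\pip(\bar\xi_\ell),\xi_k])$ is symmetric in $j,\ell$, i.e. when \eqref{eqn:1.3bis} holds. The first structural input I would record is a consequence of having only the two adjacent weights $\gr^W_a,\gr^W_{a+1}$: an element of $\End(V)^{r,s}$ shifts the weight on $\gr^W$ by $r+s$, so only $r+s\in\{-1,0,1\}$ can occur, and since $\eg_\C\cap\End(V)^{r,s}=0$ for $r+s>0$, the nonzero Hodge components of $\eg_\C$ have $r+s\in\{-1,0\}$. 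Hence a horizontal tangent vector, which lies in $\bigoplus_s\eg^{-1,s}$, has components only of type $(-1,1)$ and $(-1,0)$; I would write $\xi_j=\alpha_j+\beta_j$ with $\alpha_j\in\eg^{-1,1}$ and $\beta_j\in\eg^{-1,0}$ as in Corollary~\ref{curvapll2}, there being no $\Lambda$-component.

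Next I would simplify $\piq[\pip(\bar\xi_\ell),\xi_k]$. Applying the conjugation rule \eqref{eqn:UnderConju} together with the vanishing $\eg^{r,s}=0$ for $r+s\le-2$, the correction brackets $[\Lambda,\cdot]$ drop out, so $\bar\alpha_\ell\in\eg^{1,-1}$ and $\bar\beta_\ell\in\eg^{0,-1}$, both already in $\eun_+$, whence $\pip(\bar\xi_\ell)=\bar\alpha_\ell+\bar\beta_\ell$. Bracketing against $\xi_k=\alpha_k+\beta_k$ yields terms of type $(0,0),(0,-1),(-1,0)$ and $(-1,-1)$; the last vanishes by weight, while the $(0,0)$ and $(0,-1)$ terms lie in $\eg_\C^F$ and are killed by $\piq$. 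Thus only $[\bar\beta_\ell,\alpha_k]\in\eg^{-1,0}$ survives, and by orthogonality of the bigrading the criterion reduces to the symmetry
\[
   h(\beta_j,[\bar\beta_\ell,\alpha_k])=h(\beta_\ell,[\bar\beta_j,\alpha_k])\qquad\text{for all }j,\ell,k.
\]
This is the two-weight analogue of the mechanism that makes the pure Hodge metric Kähler in case (a) of Corollary~\ref{corr:kahler}: there the surviving bracket is of type $(0,0)$ and is annihilated by $\piq$ outright (so \eqref{eqn:1.4} holds), whereas here the extension part $\beta$ produces a genuine $(-1,0)$ term and only the weaker symmetric condition \eqref{eqn:1.3bis} is available.

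The decisive step, which I expect to be the main obstacle, is establishing this last symmetry. The key simplification, again special to two adjacent weights, is that $\bar\alpha_k\in\eg^{1,-1}\subset\ez$ carries no correction terms, so Lemma~\ref{lemma:adjoint} combined with $\Ad{(C_F)}=\ii^{\,1-(-1)}=-1$ on $\eg^{1,-1}$ gives the clean identity $\alpha_k^*=\bar\alpha_k$. Using metric conversion \eqref{eqn:MetConv} I would rewrite $h(\beta_j,[\bar\beta_\ell,\alpha_k])=-h([\bar\alpha_k,\beta_j],\bar\beta_\ell)$; purely formal manipulation closes up circularly here, so genuine Hodge-theoretic input is needed. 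Expanding $[\bar\beta_\ell,\alpha_k]=\bar\beta_\ell\alpha_k-\alpha_k\bar\beta_\ell$ in a unitary Hodge frame, the swap $j\leftrightarrow\ell$ is controlled by the \emph{symmetry of the infinitesimal period map on the graded quotients}: the matrix of $\alpha_k$ acting on each $\gr^W$ is symmetric because $\alpha_k\in\eg$ is an infinitesimal isometry of the graded polarizations (the standard symmetry of the second fundamental form), while the horizontality relation $[\alpha_j,\beta_k]=[\alpha_k,\beta_j]$ coming from \eqref{eqn:Commut} handles the off-diagonal contributions; together with the conjugation relation between $\beta_\ell$ and $\bar\beta_\ell$ in that frame this forces the asymmetric part to vanish. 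Since no step uses positivity of $h$, the conclusion is automatically only \emph{pseudo}-Kähler: the fundamental form is closed irrespective of the (possibly indefinite) signature of the pullback metric.
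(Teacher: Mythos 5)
Your setup and reduction are correct and essentially identical to the paper's: with only two adjacent weights one has $\eg^{r,s}=0$ for $r+s\le -2$, so $\Lambda=0$, the mixed Hodge structures are split over $\R$, a horizontal tangent vector is $\xi_j=\alpha_j+\beta_j$ with $\alpha_j\in\eg^{-1,1}$, $\beta_j\in\eg^{-1,0}$, and the criterion \eqref{eqn:1.3bis} collapses to the symmetry $h(\beta_j,[\bar\beta_\ell,\alpha_k])=h(\beta_\ell,[\bar\beta_j,\alpha_k])$. Your computation $\alpha_k^*=\bar\alpha_k$ and the metric-conversion step $h(\beta_j,[\bar\beta_\ell,\alpha_k])=-h([\bar\alpha_k,\beta_j],\bar\beta_\ell)=h([\beta_j,\bar\alpha_k],\bar\beta_\ell)$ are also right.

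The gap is in the decisive step, which you do not actually prove. Your assertion that ``purely formal manipulation closes up circularly'' is false, and the mechanism you substitute for it (symmetry of the matrix of $\alpha_k$ as ``the standard symmetry of the second fundamental form,'' plus the horizontality relation $[\alpha_j,\beta_k]=[\alpha_k,\beta_j]$) is not a proof: $Q$-skewness of $\alpha_k$ does not give symmetry of its matrix in a unitary Hodge frame without further argument, and horizontality is in fact irrelevant --- the identity holds for arbitrary pure-type elements $\beta_j,\beta_\ell\in\eg^{-1,0}$, $\alpha_k\in\eg^{-1,1}$ with no integrability constraint. The single missing ingredient is that for an $\R$-split mixed Hodge structure complex conjugation is an anti-isometry of the mixed Hodge metric, $h(\bar x,\bar y)=\overline{h(x,y)}$ (immediate from $h(u,v)=\ii^{p-q}Q([u],[\bar v])$ and $\bar I^{p,q}=I^{q,p}$). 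With that, your own chain finishes in one line:
\[
h([\beta_j,\bar\alpha_k],\bar\beta_\ell)
= h\bigl(\overline{[\bar\beta_j,\alpha_k]},\overline{\beta_\ell}\bigr)
= \overline{h([\bar\beta_j,\alpha_k],\beta_\ell)}
= h(\beta_\ell,[\bar\beta_j,\alpha_k]),
\]
which is exactly how the paper concludes (it phrases the same two steps as $h(X,[\bar Y,Z])=h([X,\bar Z],\bar Y)=h(Y,[\bar X,Z])$ for $X,Y$ of type $(-1,0)$ and $Z$ of type $(-1,1)$). Incidentally, your sign $Z^*=+\bar Z$ for $Z\in\eg^{-1,1}$ is the correct one and is what the paper's displayed identity actually uses, despite the inline statement there reading $Z^*=-\bar Z$.
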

\proof
We shall prove the symmetry relation \eqref{eqn:1.3bis} which in
our situation due to the
 short nature of the weight filtration reduces to
\begin{equation}
           h(\xi_j,[\bar\xi_{\ell},\xi_k])
           - h(\xi_{\ell},[\bar\xi_j,\xi_k]) = 0.      \label{eqn:kahler-1}
\end{equation}
Without loss of generality, we can assume that $\xi_j$, $\xi_k$, $\xi_{\ell}$
are of pure Hodge type.  Inspection of the possibilities shows that the
only non-trivial case is when $X=\xi_j$ and $Y=\xi_{\ell}$ are type $(-1,0)$
and $Z=\xi_k$ is type $(-1,1)$.  Since by  Lemma~\ref{lemma:adjoint} we have
$Z^*=-\bar Z$ in this case, the formula \eqref{eqn:MetConv} and the fact that
$h$ is hermitian gives
$$
\aligned
h(X,[\bar Y, Z])&=h([X,\bar Z], \bar Y))\\
&=h(Y, [\bar X,Z]),
\endaligned
$$
which is \eqref{eqn:kahler-1}.\qed\endproof
%
%

\begin{exmple} \label{kaehlerexmples} In particular, Theorem~\ref{thm:kahler2} applies to the
tautological variations of Hodge structure over the moduli spaces
$\mathcal M_{g,n}$ and more generally, to  families of pairs $(X_s,Y_s)$ of a
smooth projective variety $X_s$ and a smooth hypersurface $Y_s\subset X_s$
as well as a family of normal functions \eqref{eqn:nf-2} over  a curve  $S$ with $\mathcal H$ fixed and whose period map is an immersion.
\end{exmple}

\section{The Biextension Line Bundle}\label{sec:biexts}

Recall from the introduction that in this special case for  the graded Hodge numbers  we have $h^{-1,-1}=1$ and all other $h^{p,q}=0$ unless $p+q=-1$; the mixed Hodge structure is described as a biextension
\begin{equation}\label{eqn:biextension}
\aligned
 0 \to & \gr^W_{-1}  \to W_0/W_{-2} \to \gr^W_0=\Z(0) \to 0\\
   0 \to  & \gr^W_{-2} =\Z(1) \to  W_{-1} \to       \gr^W_{-1}  \to 0.
\endaligned
\end{equation}
 As explained below, a  family of such mixed Hodge structures over a parameter space $S$ comes with a biextension metric $h_{\rm biext}(s)$.  Its  Chern form   will be shown  to be  semi-positive along any curve, provided the biextension is self-dual: see Theorem.~\ref{plurisubharmonicity}.
\par
The point in this section
is that the mixed Hodge structure is in general \emph{not} split and that the metric $h_{\rm biext}$ can be found
by  comparing  the given mixed Hodge structure $(F,W)$ on the real vector space $W_0$ to its Deligne splitting
$(e^{-\ii \delta_{F,W}} F,W)$ where  we recall from \cite[Prop. 2.20]{degeneration}
that
\begin{equation} \label{eqn:delta}
\delta_{F,W}=\half\im Y _{F,W} = \frac{1}{4\ii} (Y_{F,W} - \bar Y_{F,W})  \in  \Lambda_{F,W}\cap \eg_\R.
\end{equation}
Here  $Y_{F,W}\in \End(V_\C)$ equals  multiplication by $p+q$
on Deligne's $I^{p,q}(V)$.
\par
Since $\gr_{-2}^W  \simeq \R$ and similarly for $\gr_0^W $, fixing bases, the map $\delta_{F,W}$ can
then be viewed as a real  number $\delta$, depending on $(F,W)$. By \cite[\S 5]{nilp}, there exists a further real number $\lambda$ depending only on $W$ such that the positive number
$
 h(F,W)= e^{-2\pi \delta/\lambda}
$
depends only on the equivalence class of the extension.
\par
Let us apply this in our setting of a family $(\FF,W)$ of biextensions over a complex curve $S$. Then
\begin{equation}\label{eqn:biextmetr}
h_{\rm biext}(s):= h(F_s,W)=  e^{ -2\pi \delta(s)},\quad \delta(s)= \frac{ \delta_{F_s,W}}{\lambda}
\end{equation}
turns out to be a hermitian metric on $S$.  
\par
As before  we write
\begin{equation}
         F(s) = e^{\Gamma(s)}\cdot F,                 \label{eqn:lnf}
\end{equation}
where $F=F(0)$ and $\Gamma(s)$ is a holomorphic function on a coordinate patch  in $S$
with values in $\eq$.  This is the main result we are after:

\begin{thm} \label{chernformofhm} Let $S$ be a curve and let  $\mathcal \FF $ be a variation of biextension type over $S$ with
local normal form \eqref{eqn:lnf}.  Let $\gamma^{-1,0}$ be  the Hodge component of type $(-1,0)$ of $\Gamma'(0)$.

The Chern form of the biextension metric \eqref{eqn:biextmetr}  is the $(1,1)$--form
\begin{equation}\label{eqn:biextrel}
\aligned
\frac{1}{2\pi\ii} \del\bar\del  \log
\left( h_{\rm biext}(s)\right) = & \ii   \frac{\del ^2  \,\,\delta(s)}{\del s \del \bar s \phantom{X} }   ds\wedge \overline{ds}\\
 = &\half  [\gamma^{-1,0},\bar\gamma^{-1,0}] \,ds \wedge \overline{ds}.
\endaligned
\end{equation}
\end{thm}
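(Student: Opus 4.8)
The plan is to reduce the whole statement to a second‑order computation of the real number $\delta(s)=\delta_{F(s),W}$ and then to identify its mixed second derivative at $0$ with the bracket $\half[\gamma^{-1,0},\bar\gamma^{-1,0}]$. The first equality in \eqref{eqn:biextrel} only records the definition \eqref{eqn:biextmetr} of $h_{\rm biext}=e^{-2\pi\delta/\lambda}$ together with the normalization of the Chern form of a hermitian metric; since $\delta$ is real‑valued it reduces the theorem to the Lie‑theoretic identity
\[
 \ii\,\frac{\partial^2\delta}{\partial s\,\partial\bar s}(0)=\half[\gamma^{-1,0},\bar\gamma^{-1,0}],
\]
which is what I would prove. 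Because the Chern form is intrinsic I would first arrange the base point to be split over $\R$. In the biextension case $\Lambda=\eg^{-1,-1}$ by \eqref{eqn:lambda} and the type bounds, so $\Lambda$ is abelian, and Proposition~\ref{SplitMHS} with this abelianness gives $\delta_{e^{\ii\mu}\cdot F}=\delta_F+\mu$ for $\mu\in\Lambda_\R$. Replacing $F(s)$ by $e^{-\ii\delta(0)}\cdot F(s)$ thus leaves $\partial\bar\partial\delta$ unchanged while making the base point split; and since $[\delta(0),\eg^{-1,0}]\subset\eg^{-2,-1}=0$, the operator $\Ad{(e^{-\ii\delta(0)})}$ fixes the type‑$(-1,0)$ component, so $\gamma^{-1,0}$ is unaffected. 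Hence I may assume $(F(0),W)$ is $\R$‑split.

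Next I would put $\delta(s)$ into closed form to the order needed. Writing $e^{\Gamma(s)}=g_\R(s)\,e^{\lambda(s)}\,e^{\varphi(s)}$ as in \eqref{eqn:expu} (Kaplan's Theorem~\ref{group-decomp}), the factor $e^{\varphi(s)}$ stabilizes $F(0)$, so $F(s)=g_\R(s)e^{\lambda(s)}\cdot F(0)$ with $g_\R(s)\in G_\R$ and $\lambda(s)\in\ii\Lambda_\R\subset\eg^{-1,-1}$. By Lemma~\ref{higgs} the bigrading transforms by $I^{p,q}_{F(s)}=g_\R(s)e^{\lambda(s)}\big(I^{p,q}_{F(0)}\big)$, so the grading operator satisfies $Y_{F(s),W}=\Ad{(g_\R(s)e^{\lambda(s)})}Y_{F(0),W}$. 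Since $\lambda(s)\in\eg^{-1,-1}$ has $[\lambda,Y_{F(0),W}]=2\lambda$ and $[\lambda,\lambda]=0$, the exponential series for $\Ad{(e^{\pm\lambda})}Y$ terminate, and using $\overline{Y_{F(0),W}}=Y_{F(0),W}$ (the split assumption) the definition \eqref{eqn:delta} collapses to the exact formula
\[
 \delta(s)=\frac{1}{\ii}\,\Ad{(g_\R(s))}\,\lambda(s).
\]

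Finally I would extract the coefficient of $s\bar s$. Writing $\pi_{-1,-1}$ for the projection to $\eg^{-1,-1}$, a check of Hodge types shows that the mixed cross term of $[\gamma(s),\lambda(s)]$ has no $\eg^{-1,-1}$ component, so $\partial_s\partial_{\bar s}\delta(0)=\frac{1}{\ii}\,\pi_{-1,-1}\lambda_2$, where $\lambda_2$ is the degree‑two $\ii\Lambda_\R$‑part of the decomposition \eqref{eqn:expu}. This $\lambda_2$ is exactly the piece left uncomputed in the proof of Proposition~\ref{secondorder}: running the same Campbell--Baker--Hausdorff‑plus‑reality argument but projecting onto $\Lambda$ rather than onto $\eun_+$ and $\eg^{0,0}$ yields the order‑two identity $2\lambda_2+(\varphi_2-\bar\varphi_2)=R-\bar R$ with $R=-\half([\gamma_1,\lambda_1]+[\gamma_1,\varphi_1]+[\lambda_1,\varphi_1])$, using the first‑order formulas for $\gamma_1,\lambda_1,\varphi_1$ from that proof. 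Projecting to $\eg^{-1,-1}$, every bracket involving $\gamma^{-1,1}$ or $\gamma^{-1,-1}$ lands in $\eg^{-2,0}$, $\eg^{-2,-1}$, $\eg^{0,-2}$ or $\eg^{-1,-2}$, all of which vanish by the shortness of $W$; the only surviving contribution is $[\gamma^{-1,0},\bar\gamma^{-1,0}]$, coming from $[\gamma_1,\varphi_1]$, giving $\pi_{-1,-1}\lambda_2=\half[\gamma^{-1,0},\bar\gamma^{-1,0}]$ in its mixed bidegree. This is the displayed identity, and \eqref{eqn:biextrel} follows.

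The step I expect to be most delicate is this second‑order extraction of $\lambda_2$: one must track the non‑commuting of complex conjugation with the projections $\pi_{\eun_+}$, $\pi_{\eg^{0,0}}$, $\pi_\Lambda$, and in particular verify that the term $\pi_{-1,-1}\bar\varphi_2$ (generally nonzero, since $\overline{\eg^{0,0}}$ leaks into $\eg^{-1,-1}$ off the $\R$‑split locus) drops out once the base point is split, or else cancels against the $\delta(0)$‑contributions of $\Ad{(g_\R(s))}$. The reduction to a split base point in the first paragraph is precisely what makes this bookkeeping clean.
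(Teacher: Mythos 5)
Your argument is correct, and it reaches the identity $\ii\,\partial_s\partial_{\bar s}\delta(0)=\tfrac12[\gamma^{-1,0},\bar\gamma^{-1,0}]$ by a genuinely different route than the paper. The paper works at an arbitrary base point: it writes $Y_{F(s)}=\Ad(e^{\Gamma(s)}e^{-\varphi(s)})Y$, differentiates $\delta(s)=\tfrac12\im Y_{F(s)}$ twice, and feeds in the coefficients $\varphi_{01}=-(\overline{\Gamma'(0)})_+$ and $\varphi_{11}=[\gamma,\bar\gamma]_0+[\gamma,\bar\gamma]_+$ already computed in Proposition~\ref{secondorder}; the unwanted terms $[\gamma^{-1,1},\bar\gamma^{-1,0}]+[\bar\gamma^{-1,1},\gamma^{-1,0}]$ are real and are killed at the very end by taking the imaginary part. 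You instead normalize the base point to be $\R$-split (your shift identity $\delta_{e^{\ii\mu}F}=\delta_F+\mu$ is correct, and $\Lambda=\eg^{-1,-1}=\Hom(\gr^W_0,W_{-2})$ is base-point independent here, so the reduction is legitimate and leaves both $\partial\bar\partial\delta$ and $\gamma^{-1,0}$ unchanged), obtain the exact formula $\delta(s)=\tfrac1\ii\Ad(g_\R(s))\lambda(s)$, observe that $[\gamma_1,\lambda_1]$ dies for type reasons, and then extract $\lambda_{11}$ by rerunning the Campbell--Baker--Hausdorff-plus-reality argument projected onto $\Lambda$ --- a coefficient the paper never computes, since Proposition~\ref{secondorder} only records $\varphi_2$. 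I checked your second-order identity $2\lambda_2+(\varphi_2-\bar\varphi_2)=R-\bar R$ and the type bookkeeping: the only surviving $\eg^{-1,-1}$ contribution in mixed bidegree is indeed $[\gamma^{-1,0},\bar\gamma^{-1,0}]$ from $[\gamma_1,\varphi_1]$ minus its conjugate, and $\pi_{-1,-1}\bar\varphi_2=0$ precisely because splitness forces $\overline{\eg^{0,0}}=\eg^{0,0}$ and $\overline{\eun_+}=\eun_-$, which is the point you correctly flagged as delicate. What your route buys is an exact closed form for $\delta(s)$ and no final $\im$-extraction; what it costs is the extra CBH computation of $\lambda_2$ and the preliminary normalization, both of which the paper avoids by recycling Proposition~\ref{secondorder} verbatim.
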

\proof
 Let
\begin{equation}
        e^{\Gamma(s)} = g_{\R}(s) e^{\lambda(s)} f(s) \label{eqn:group-decomp}
\end{equation}
as usual.  Then, by Lemma~\ref{higgs} we have $Y(s)= g_\R(s)e^{\lambda(s)} Y$, where $Y = Y_{(F,W)}$.
If we set  $f(s) = e^{\varphi(s)}$, using \eqref{eqn:delta},  we get
\begin{equation}
       \frac{\del^2   }{\del \bar s \del s}  \delta(s)=
        \frac{1}{2}\im
       \frac{\del^2}{\del s \del\bar s}
          \underbrace{e^{\Gamma(s)}e^{-\varphi(s)}}_{d(s)}\cdot Y .              \label{eqn:delta-laplacian}
\end{equation}
Since $\Gamma(s)$ is holomorphic, we have
$$
      \frac{\del}{\pd \bar s} d(s) \cdot Y
        =\Ad(e^{\Gamma(s)})\left(\frac{\pd}{\pd\bar s} e^{- \ad{\varphi(s)} }\cdot  Y\right)
$$
and so
\begin{equation}
\aligned
       \frac{\del^2   }{\del  s \del  \bar s} d(s) \cdot  Y
      =&  \left(\frac{\pd}{\pd s} e^{\ad\Gamma(s)}\right)
          \left(\frac{\pd}{\pd\bar s}e^{-\ad \varphi(s)} Y\right) \\
          & \, +
           \Ad e^{\Gamma(s)}
          \left(\frac{\del^2 e^{-\ad \varphi(s)} }{\del s \del \bar s}
                 Y \right).    \label{eqn:laplace-1}
\endaligned
\end{equation}
{We now consider the Taylor expansion (note that $\varphi(0)=0$)
$$
       \varphi(s) =  \varphi_{01}s +\varphi_{10}\bar s+ \sum_{j,k}\, \varphi_{jk}\, s^j \bar s^k+ O^3(s,\bar s).
$$
}By Lemma~\ref{secondorder}, we also know
\begin{eqnarray} \varphi_{10}&=&0, \label{eqn:expand1}\\
 \varphi_{01}& =&  - (\overline{\Gamma'(0)})_+, \label{eqn:expand2}\\
  \varphi_{11}& =  & [\gamma,\bar \gamma] _0+ [\gamma,\bar \gamma] _+ \nonumber\\
                 &= &   [\gamma^{-1,1},\bar \gamma^{-1,1}] _0 +  [\gamma^{-1,1},\bar \gamma^{-1,0}]. \label{eqn:expand3}
\end{eqnarray}
Formula \eqref{eqn:expand1} shows
 that  the term with $s\bar s$ in the Taylor expansion of
 $$\frac{\del^2}{\del s \del \bar s} e^{-\ad \varphi(s)} Y$$
is just  $-[\varphi_{11},Y]$.  Together with equation \eqref{eqn:laplace-1}
it follows that
\begin{equation}\label{eqn:dbardgamma}
    \left.\frac{\del^2}{\del s \del\bar s}d(s)\cdot Y
     \right|_0
     =- [\Gamma'(0),[\varphi_{01},Y]] - [\varphi_{11},Y]
\end{equation}
Eqn.~\eqref{eqn:expand2} states that $\varphi_{0,1}= -\overline{\Gamma'(0)}_+$.
Let $\gamma = \Gamma'(0)$.  By horizontality and the short length of
the weight filtration,
$$
       \gamma = \gamma^{-1,1} + \gamma^{-1,0} + \gamma^{-1,-1}.
$$
Moreover, since $(F,W)$ is a biextension
\begin{equation*}
       \bar \gamma^{-1,1} \in\mathfrak g^{1,-1},\qquad
       \bar \gamma^{-1,0} \in\mathfrak g^{0,-1},\qquad
       \bar \gamma^{-1,-1} \in\mathfrak g^{-1,-1}        
\end{equation*}
Therefore,
\begin{equation*}
      - \varphi_{01} =  (\overline{\Gamma'(0)})_+
                 = \bar\gamma^{-1,1} + \bar\gamma^{-1,0}.
\end{equation*}
In particular, since $\ad Y$ acts as multiplication by $a+b$ on
$\mathfrak g^{a,b}$ it follows that
\begin{equation} \label{eqn:phi-01}
\aligned
        -  [\Gamma'(0),[\varphi_{01},Y]]
          &=& [\gamma,[\bar\gamma^{-1,1} + \bar\gamma^{-1,0},Y]]
           = [\gamma,\bar\gamma^{-1,0}] \\
          &=& [\gamma^{-1,1},\bar\gamma^{-1,0}] + [\gamma^{-1,0},\bar\gamma^{-1,0}].
\endaligned
\end{equation}
Finally, using \eqref{eqn:expand3},
\begin{equation*}
\aligned
     \varphi_{11}& =   [\gamma,\bar \gamma] _0+ [\gamma,\bar \gamma] _+  \\
                 &=    [\gamma^{-1,1},\bar \gamma^{-1,1}] _0 +  [\gamma^{-1,1},\bar \gamma^{-1,0}],
                \endaligned
\end{equation*}
so that
\begin{equation} \label{eqn:gamma11}
[ \varphi_{11},Y]=  -   [\gamma^{-1,1},\bar \gamma^{-1,0}].
\end{equation}
Combining Eqns.~\eqref{eqn:dbardgamma}--\eqref{eqn:gamma11}, we have:
\begin{equation}
    \left.\frac{\del^2}{\del  s \del\bar s} d(s) \cdot Y
     \right|_0
     = [\gamma^{-1,1},\bar\gamma^{-1,0}] + [\gamma^{-1,0},\bar\gamma^{-1,0}] +
        [\bar\gamma^{-1,1},\gamma^{-1,0}] .
\end{equation}
 The result then follows from \eqref{eqn:delta-laplacian}. \qed\endproof

\par So far, we have not assumed anything special about the biextension
variation $\FF$.    Of special interest in connection with the
Hodge conjecture is the case where the two \emph{normal functions}
 appearing in \eqref{eqn:biextension} are self-dual with respect
to the polarization $Q$ on $H := \gr ^W_{-1}$.



\begin{thm} \label{plurisubharmonicity} Let $h$ be the Hodge metric on $ \gr^W_{-1}$ and let  $\FF$ be  a self-dual biextension
over a curve $S$  with local normal form at a disk $(\Delta,s)$ at $s_0\in S$ given by $F(s)=e^{\Gamma(s)}$. Choose a  lift  $e(0) \in I^{0,0}_F$  of $1\in \Z(0)$  and let
\[
\gamma= \Gamma'(0) \in \End(W_0)_\C,\quad  t:= \gamma^{-1,0} (e(0))\in I^{-1,0}_F,
\]
where $\gamma^{-1,0}$ is the  Hodge component of type $(-1,0)$ of $\Gamma'(0)$.
Let  $\nu\in\Ext^1_{\VMHS}(\Z(0),\gr ^W_{-1} \FF)$ and its dual be the two normal functions associated to the biextension
and let $\delta(s)$ be the Deligne $\delta$-splitting of $\FF_s$. Then
\begin{enumerate}
\item the value of the infinitesimal invariant $\del\nu$ for the normal function $\nu$ at $s_0$ can be identified with $t$.
\item \begin{equation}
      \left.\frac{\del^2}{\del s \del \bar s} \delta (s)\right|_0   (e(0)) = h(t,t)\in \R_{\ge 0} , \,\quad t=\gamma^{-1,0}(e(0)).
  \label{eqn:laplace-4}
\end{equation}
\item The Chern form of the Hodge metric is semi-positive.
\end{enumerate}
\end{thm}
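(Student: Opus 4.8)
The plan is to dispatch the three assertions in order, letting Theorem~\ref{chernformofhm} carry all of the analytic weight so that what remains is purely Hodge-theoretic bookkeeping, with the self-duality hypothesis entering only at the one essential point. Throughout I work in the local normal form $F(s)=e^{\Gamma(s)}\cdot F$ on the fixed flat bundle, and I set $\gamma=\Gamma'(0)$, $\beta=\gamma^{-1,0}$, and $t=\beta(e(0))\in I^{-1,0}_F$.

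For (1) I would unwind the definition of the infinitesimal invariant. The vector $e^{\Gamma(s)}e(0)$ is a holomorphic lift of $1\in\gr^W_0=\Z(0)$ lying in $F^0_{F(s)}$ (here one uses $\Gamma(s)\in\eq$ and $h^{-1,1}=0$ to see that $e^{\Gamma(s)}$ acts trivially on $\gr^W_0$), so in the flat frame $\nabla_{\del/\del s}(e^{\Gamma(s)}e(0))|_0=\gamma\,e(0)$. Horizontality gives $\gamma\in\bigoplus_q\eg^{-1,q}$, and $\gamma^{-1,1}e(0)\in I^{-1,1}=0$ because $h^{-1,1}=0$; hence $\gamma\,e(0)=t+\gamma^{-1,-1}e(0)$. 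Reducing $\nabla(e(0))$ modulo $F^0$ and projecting to $\gr^W_{-1}=H$ leaves the class of $t$ in $H^{-1,0}\cong T_0J(H)$, which is by definition $\del\nu$; this is the desired identification.

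For (2), equation \eqref{eqn:biextrel} already gives $\ii\,\del^2\delta/\del s\,\del\bar s=\tfrac12[\beta,\bar\beta]$, so it suffices to evaluate this operator on $e(0)$ and recognize $h(t,t)$. Setting $w=\bar\beta(e(0))\in I^{0,-1}$, a type check shows $[\beta,\bar\beta](e(0))=\beta(w)-\bar\beta(t)$ lies in $I^{-1,-1}=\gr^W_{-2}$---the line on which $\delta$ is read off---while the two remaining commutators of Theorem~\ref{chernformofhm} contribute only in $I^{-1,0}$ and $I^{0,-1}$ and so drop out. Passing to $\gr^W$, $\beta$ induces $a\colon\Z(0)\to H$ with $a(1)=[t]$ and $b\colon H\to\Z(1)$, and $[e(0)]=1$ being real gives $[w]=\overline{[t]}$. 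The decisive input is self-duality, which identifies $b$ with the $Q$-adjoint of $a$, i.e.\ $b=Q(\,\cdot\,,[t])$ up to the normalization fixed by $\gr^W_{-2}\cong\Z(1)$. Substituting and using that $Q$ is alternating writes $[\beta(w)-\bar\beta(t)]$ as a multiple of $Q([t],\overline{[t]})$, hence of $h(t,t)$ through $h([t],[t])=\ii^{-1}Q([t],\overline{[t]})$; matching constants against \eqref{eqn:biextrel} yields $(\del^2\delta/\del s\,\del\bar s)(e(0))=h(t,t)\ge0$.

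Then (3) is immediate: by Theorem~\ref{chernformofhm} the Chern form of $h_{\rm biext}$ is a fixed real multiple of $\del^2\delta/\del s\,\del\bar s\;ds\wedge\overline{ds}$, and \eqref{eqn:biextmetr} fixes the sign of that multiple while (2) shows the scalar factor $h(t,t)$ is non-negative; the form is therefore semi-negative. The one genuinely delicate step is (2): turning the commutator $[\beta,\bar\beta]$ into the Hodge norm $h(t,t)$ rests entirely on encoding self-duality correctly as $Q$-adjointness of the graded maps $a$ and $b$, and on pinning down the resulting normalization constant so that both the value $h(t,t)$ in \eqref{eqn:laplace-4} and the final sign are compatible with the conventions of \eqref{eqn:delta} and \eqref{eqn:biextmetr}.
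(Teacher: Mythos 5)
Your proposal is correct and follows essentially the same route as the paper: part (1) by identifying $\del\nu$ with the $(-1,0)$-component of $\Gamma'(0)$ applied to the lift $e(0)$, part (2) by reducing via Theorem~\ref{chernformofhm} and a type check to $[\gamma^{-1,0},\bar\gamma^{-1,0}]e(0)\in I^{-1,-1}$ and then converting to $h(t,t)$ using self-duality expressed as $Q$-adjointness of the two graded maps, and part (3) as an immediate consequence. Your treatment of (1) is somewhat more detailed than the paper's one-line assertion, and your caution about the normalization constants in (2) mirrors the (equally delicate) sign bookkeeping in the paper's own computation.
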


\proof    1. The point here is that $\gamma^{1,0} \in \Hom(I^{0,0}_F, I^{-1,0}_F) $  is the
derivative at $s_0$ of the period map for  the normal function $\nu$ which, from the set-up gets identified with $t$.\\
3. Follows from Theorem~\ref{chernformofhm} and 2.\\
2.
Recall \eqref{eqn:biextrel}. We have
\begin{eqnarray*}
    \frac{1}{2 \ii}  [\gamma^{-1,0},\bar \gamma^{-1,0}]e(0)
      &=&  - \frac{1}{2 \ii}\left( \gamma^{-1,0}(\bar\gamma^{-1,0}(e(0))) - \bar\gamma^{-1,0}(\gamma^{-1,0}(e(0)))
      \right)\\
      &=&  - \frac{1}{2 \ii}\left( \gamma^{-1,0}(\bar t) - \overline{\gamma^{-1,0}(\overline{t})}\right)\\
      &=& - \im (\gamma^{-1,0}(\bar t) ).
\end{eqnarray*}
Next, we express self-duality.
Observe that the derivative of the period map of the dual extension $\nu^*$   can be expressed as a functional on $W_{-1}$: it is zero on $W_{-2}$ and self-duality means precisely that on $H^*=\Hom(H,\Z(1)) $ it  restricts to the functional\footnote{For simplicity we have discarded  the Tate twist.}
\begin{equation*}
       \beta =Q(s,-) \in H^*
       \mapsto  - Q(s, t)\in \C.
\end{equation*}
This formula  implies that, tracing  through the identifications,
 one has   $\gamma^{-1,0}(\bar t )= - Q(\bar t, \gamma^{-1,0}e(0)) =- Q(\bar t, t)= Q(t,\bar t)$ and hence:
\begin{eqnarray*}
       \frac{1}{2 \ii}    [\gamma^{-1,0},\bar \gamma^{-1,0}]e(0)
         &=&   - \im(  Q( t, \bar t )).
\end{eqnarray*}
Since $h(t,t)= Q(-\ii t,\bar t)= -\ii Q(t,\bar t)$  is real, we get
indeed $  \frac{1}{2\ii}  [\gamma^{-1,0},\bar \gamma^{-1,0}]e(0)=  h(t,t)\in \R$.
\qed\endproof
\begin{corr} \label{biextcoroll} If $\VV$ is a variation of biextension type  over a curve $S$ with
self-dual extension data, then   $\delta(s)$ is a subharmonic function which vanishes
exactly at the points $s\in S$ for which the infinitesimal invariants of the associated normal functions vanish.
\end{corr}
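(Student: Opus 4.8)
The plan is to deduce everything from Theorem~\ref{plurisubharmonicity}, since that result already packages the two facts we need. First I would establish subharmonicity. By part~(2) of that theorem, evaluated on the chosen lift $e(0)\in I^{0,0}_F$, the complex Laplacian of $\delta$ along the curve $S$ is
\[
\left.\frac{\del^2}{\del s\,\del\bar s}\,\delta(s)\right|_0(e(0)) = h(t,t),\qquad t=\gamma^{-1,0}(e(0)),
\]
and $h$, being the Hodge metric on $\gr^W_{-1}$, is positive definite, so $h(t,t)\in\R_{\ge 0}$. A real function on a Riemann surface whose complex Laplacian is everywhere non-negative is subharmonic; this gives the first assertion, and through Theorem~\ref{chernformofhm} it is the same as saying that the Chern form of the biextension metric is semi-negative.

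Next I would pin down the degeneracy locus, working pointwise at an arbitrary $s_0\in S$. By positive definiteness of $h$ the Laplacian $h(t,t)$ vanishes at $s_0$ if and only if $t=0$. Part~(1) of Theorem~\ref{plurisubharmonicity} identifies $t=\gamma^{-1,0}(e(0))$ with the infinitesimal invariant $\del\nu$ of the normal function $\nu$ at $s_0$, so the Laplacian of $\delta$ vanishes at $s_0$ precisely when $\del\nu$ does.

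The one step that needs real care is that the corollary speaks of the infinitesimal invariants of \emph{both} normal functions attached to the biextension, $\nu$ and its dual. Here I would reuse the self-duality computation from the proof of Theorem~\ref{plurisubharmonicity}(2): the derivative of the period map of the dual normal function is the functional $Q(s,-)\mapsto -Q(s,t)$ on $\gr^W_{-1}$, i.e. its infinitesimal invariant is $-Q(-,t)$. Since $Q$ is nondegenerate this functional vanishes exactly when $t=0$, that is, simultaneously with $\del\nu$. Hence $t=0$ is equivalent to the vanishing of the infinitesimal invariants of both associated normal functions, and the locus where $\delta$ fails to be strictly subharmonic --- equivalently, where the Chern form vanishes --- is exactly their common zero locus. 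I expect this self-duality bookkeeping to be the only genuine obstacle; the rest is a direct application of the positivity of $h$ to the formula of Theorem~\ref{plurisubharmonicity}(2).

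Finally, I would note that this refines the statement quoted in the introduction: along any curve contained in the zero locus of $\nu$ the normal function is identically zero, so its infinitesimal invariant vanishes there automatically, and therefore the Chern form of the biextension metric vanishes along every such curve.
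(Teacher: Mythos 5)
Your proposal is correct and follows exactly the route the paper intends: the corollary is stated without proof as an immediate consequence of Theorem~\ref{plurisubharmonicity}, and your deduction (non-negativity of the Laplacian from part~(2) via positive definiteness of $h$, identification of the degeneracy locus with $t=0$ via part~(1), and the observation that self-duality forces the dual normal function's infinitesimal invariant $-Q(-,t)$ to vanish simultaneously with $t$) is precisely the intended argument. No gaps.
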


\section{Reductive Domains And Complex Structures}\label{sec:reductive}
In this section we consider special classifying domains: the reductive ones. Recall that  a  homogeneous space  $D=G/H$   with $G$ a real Lie-group acting from the  left  on $D$ is   \emph{reductive} if the Lie algebra   $\eh=\Lie H$ has a vector space complement $\eun$ which is $\ad{H}$-invariant:
\begin{equation}\label{eqn:Reductive}
\eg= \eh\oplus \eun,\quad  [\eh,\eun]\subset \eun.
\end{equation}
Note that  this implies that $\eun$  is the tangent space at  the canonical base point of $D=G/H$; moreover, the tangent bundle is the   $G$-equivariant bundle associated to the adjoint representation of $H$ on $\eun$.
\subsection{Domains for Pure Hodge Structures}
These are reductive: in this situation $\eun_\C:= \eun_+\oplus \eun_-$ (see \eqref{eqn:SplitEnd}) is the complexification of $\eun:= \eun_\C\cap \eg$ and this is the desired complement.

Let us recall from \cite[Chap. 12]{periodbook} how   the connection form for the metric connection (the one for the Hodge metric)  can be  obtained. Start with the Maurer-Cartan form $\omega_G$ on $G$. It is a  $\eg$-valued $1$-form on $G$. Decompose $\omega_G$ according to the reductive splitting. Then $\omega=\omega^\eh$, the $\eh$--valued part,  is a connection form for the principal bundle $p:G \to G/H=D$. Let  $\rho: H \to \gl E$ be  a  (differentiable)  representation and let  $[E]= G\times _\rho E$ be the associated vector bundle.  It has an  induced connection which can be described as follows. Locally over  any open $U\subset D$ over which $p$ has a section $s:U \to G$, the bundle $[E]$ gets trivialized and the corresponding connection form then is $s^*(\dot\rho\comp \omega)$, where $\dot\rho: \eh \to \End E$ is the derivative of $\rho$.
\par
In the special case where $E= T_FD$ this leads to a canonical connection $\nabla_D$ on the holomorphic tangent bundle of $D$. If $D$ is a period domain this canonical connection is the Chern connection for the Hodge metric.

From this description the curvature   can  then directly   be calculated:
\begin{*thm}[\protect{\cite[Cor. 11.3.16]{periodbook} }] Let $D$ be a period domain for pure polarized Hodge structures and let $\alpha,\beta\in \eun=T_FD$ . Then  $R_D\in A^{1,1}_D(\End \eun)$, the curvature form  of the canonical connection $\nabla_D$ on the holomorphic tangent bundle of $D$ evaluates at $F$ as:
\[
R_D(\alpha,\bar\beta)= - \ad{  [\alpha,\bar \beta ]} ^\eh.
\]
\end{*thm}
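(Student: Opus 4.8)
The plan is to read the curvature off the Maurer--Cartan form, in the same spirit as the connection itself was defined above, and to impose the complex structure only at the very end. Recall that $\nabla_D$ is the connection induced on the associated bundle $[\eun]=G\times_\rho\eun$ (the holomorphic tangent bundle), where $\rho=\operatorname{Ad}|_H$ is the adjoint action of $H$ on $\eun$ with derivative $\operatorname{ad}\colon\eh\to\End\eun$, by the principal connection $\omega:=\omega_G^\eh$ on $p\colon G\to D$. Consequently the curvature of $\nabla_D$ is $\operatorname{ad}$ applied to the curvature two-form $\Omega$ of the principal connection $\omega$, so the whole computation reduces to finding $\Omega$ and evaluating it on tangent vectors coming from $\eun$.

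First I would compute $\Omega=d\omega+\half[\omega\wedge\omega]$ from the fact that $\omega_G$ satisfies the Maurer--Cartan equation $d\omega_G+\half[\omega_G\wedge\omega_G]=0$. Writing $\omega_G=\omega_G^\eh+\omega_G^\eun$ and projecting this identity onto $\eh$, the reductive conditions $[\eh,\eh]\subset\eh$ and $[\eh,\eun]\subset\eun$ annihilate the mixed term, leaving $\Omega=-\half[\omega_G^\eun\wedge\omega_G^\eun]^\eh$. Evaluating on the horizontal lifts of $\alpha,\beta\in\eun\cong T_F D$ — where $\omega_G^\eh$ vanishes and $\omega_G^\eun$ restricts to the identity — and using $[\eta\wedge\eta](X,Y)=2[\eta(X),\eta(Y)]$, one gets $\Omega(\alpha,\beta)=-[\alpha,\beta]^\eh$, hence the real curvature $R_D(\alpha,\beta)=-\ad{[\alpha,\beta]^\eh}$ as an endomorphism of $\eun$.

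It then remains to extract the $(1,1)$-component. Since $D$ is a period domain, the excerpt records that $\nabla_D$ is the Chern connection of the Hodge metric, so its curvature is of type $(1,1)$ (the $(2,0)$-part vanishes by the holomorphicity and horizontality of the tautological flags). Extending the real identity $\C$-bilinearly and evaluating on a holomorphic $\alpha$ together with an antiholomorphic $\bar\beta$ yields precisely $R_D(\alpha,\bar\beta)=-\ad{[\alpha,\bar\beta]^\eh}$. As a consistency check, in the pure case $\eh_\C=\eg^{0,0}$, so $[\alpha,\bar\beta]^\eh=[\alpha,\bar\beta]_0$ and the formula recovers the earlier pure-domain curvature $R_\nabla(u,\bar u)=-\ad{[u,\bar u]_0}$. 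The main obstacle is pure bookkeeping: keeping track of the factors of $\half$ in the curvature and in the wedge-bracket convention, and confirming that the identification $\eun\cong T_F D$ is the same one used in evaluating $\Omega$, so that no spurious constant or sign survives. Once this normalization is pinned down the formula is immediate.
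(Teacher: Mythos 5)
Your proposal is correct and follows essentially the same route as the paper, which cites \cite[Cor.~11.3.16]{periodbook} after sketching precisely this setup: the $\eh$-component of the Maurer--Cartan form as the principal connection, and the curvature obtained from the Maurer--Cartan equation together with the reductive splitting, so that $\Omega=-\half[\omega_G^{\eun}\wedge\omega_G^{\eun}]^{\eh}$ evaluated on horizontal lifts gives $-\ad{[\alpha,\beta]^{\eh}}$. Your final step of extracting the $(1,1)$-part is also the intended one; the compatibility of the complex structure on $\eun$ with the eigenspace decomposition $\eun_\C=\eun_+\oplus\eun_-$ that this tacitly uses is exactly the point the paper highlights afterwards as the feature of the pure case that fails in the mixed setting.
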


\begin{rmk}\label{CompatibleComplexStructure}
The  above proof for the pure case makes crucial use of the compatibility of the complex structure of $D$ and  reductive structure: First, one needs the complex structure coming from the inclusion $D=G/G^F \subset  \check D= G_\C/G^F_\C$ to see that the Maurer-Cartan form is the real part of a holomorphic form, the Maurer-Cartan form on $G_\C$ and hence $\omega$ is the real part of a holomorphic form. Next, one uses that the  complex structure $J$ on $\eun$ is such that  $\eun_\pm\subset \eun_\C$  is the  eigenspace for $J$ with eigenvalue $\pm\ii$ and one makes the identification
\[
T_F D=(\eun, J)\simeq \eun_-.
\]
In the mixed case there are situations where the domain is reductive, but the complex structure then does not behave as in the pure case, as we now show.
 \end{rmk}

\subsection{Differential Geometry of Reductive Domains}
Let    $D=G/V$ be a reductive homogeneous space and a choice $\eg=\eh\oplus\eun$ of a reductive splitting.
Let us recall some major results from \cite{nomi}.
The $G$-invariant connections on  $T(D)$ are in one two one correspondence to bilinear $\ad H$--invariant functions
\[
\alpha: \eun \times \eun \to \eun.
\]
A given such connection $\nabla$ corresponds to
\[
\alpha(X,Y):= \nabla_X \tilde Y,
\]
where $\tilde Y$ is the vector field on $D$ obtained from $Y\in T_o(D)$ by left $G$-translation ($o\in D$ is the coset of $1\in G$).
The Maurer-Cartan induced connection $\nabla^{D}$  on $T(D)$ is the one for which $\alpha$ is identically zero. In loc. cit. it is called the \emph{canonical affine connection of the second kind}.
\par
Suppose that we have a $V$--invariant metric $g$ on $\eun$. This gives $G$--equivariant metric on $D$, likewise denoted $g$.   By \cite[Theorem 13.1]{nomi} a $G$-invariant  connection $\nabla$  on $T(D)$ is metric with  respect  to $g$
if and only if
\begin{equation}
\label{eqn:GinvConnection}
\nabla_X \tilde Y= \half  [X,Y]^\eun +U(X,Y),
\end{equation}
where $U:\eun \times \eun \to \eun$ is the  $\R$--bilinear form which is determined by the formula
\begin{equation}
\label{eqn:U}
2g(U(X,Y),Z) = g([Z,X]^\eun,Y) + g(X,[Y,Z]^\eun).
\end{equation}
Moreover, the connection is free of torsion if and only if $U$ is a symmetric form.
For  the Maurer-Cartan induced connection  the left hand side of \eqref{eqn:GinvConnection} vanishes and so it is metric, precisely when
\begin{equation}
\label{eqn:U2}
U(X,Y)=-\half  [X,Y]^\eun .
\end{equation}
So this can only be without torsion if  $[X,Y]^\eun=0$. In fact,  By \cite[Theorem 10.3]{nomi} its torsion is  given by
\begin{equation}
\label{eqn:torsionformula}
T(X,Y)= - [X,Y]^\eun.
\end{equation}
So, the canonical connection in general differs from the Levi-Civita connection.
\begin{rmk}\label{conncomplextend}
1) We extend the above connections to the complex tangent bundle $T_\C(D)$. The same considerations then hold provided  $g$ and $U$ are replaced by their  $\C$--bilinear extensions.
\\
2) Note that in general only the thus extended canonical connection preserves the decomposition $T_\C (D)= T^{1,0}D\oplus T^{0,1}D$ into  the holomorphic and anti-holomorphic tangent  bundles. For the Levi-Civita connection this holds if the metric is K\"ahler.
\end{rmk}

\subsection{Split Domains}
Mixed domains are seldom  reductive, and, even if they are, we shall see that the complex structure  does not satisfy the compatibility required  by Remark~\ref{CompatibleComplexStructure}.

\begin{exmples} 1. Suppose  $\Lambda=0$. Then  equation \eqref{eqn:Double} implies that
 $\eun= \eun_\C\cap \eg_\R$ is the desired complement. Note that in the pure case this equals also $\eun_\C\cap \eg$. This difference will influence the curvature calculations.
Domains with $\Lambda=0$ are called \emph{split domains} because they parametrize split mixed Hodge structures.
  We investigate these below in more detail.
  \newline
2. We consider the general mixed situation. Let $D^{\rm split}$ be the subdomain of $D$ parametrizing split mixed Hodge structures\footnote{This has been called $D_\R$ in \S~2.}. This domain can be identified with $G_\R/ G^F_\R$, where $F$ is a fixed split mixed Hodge structure.
Note that  $\eun_\C\oplus \Lambda$ has a real structure which makes $D^{\rm split}$ a reductive domain for the splitting
\[
\eg_\R= \underbrace{\eg^{0,0}\cap \eg_\R}_{\Lie{G^F_\R}}  \oplus (\eun_\C\oplus \Lambda)_\R.
\]
In general $D^{\rm split}$ only has the structure of a differentiable manifold.
\newline
3.  In general the group $G_\R$ does not act transitively on $D$. But there is another natural subgroup of $G$ which does act transitively. To explain this, introduce (for $r<0$):
\begin{eqnarray*}
G^W_r & :=  & \sett{g\in G}{ \text{ for all $k$ the restriction }  g| (W_k/W_{k+r}) \text{ is real}.}
\end{eqnarray*}
Note that $G^W_{-2}$ contains $\exp(\Lambda)$ as well as $G_\R$ and hence it  acts transitively on $D$. Under  the minimal condition
\[
\Lie{G^W_{-2}}=\eg_\R\oplus \ii \Lambda
\]
we clearly get a reductive splitting
\[
\Lie{G^W_{-2}} = \eg^{0,0}\cap \eg_\R \oplus \left[ (\eun_\C\oplus \Lambda)_\R\oplus \ii \Lambda_\R \right].
\]
Domains which satisfy this condition are called \emph{close to splitting}.
An example is provided by the so-called type II domains from \cite{sl2anddeg}.

Note that in general $(\eun_\C\oplus \Lambda)_\R$ does not admit  a complex structure: $\dim\Lambda$ can be odd! 
\end{exmples}

\subsection{Two Step Filtrations} \label{ssec:TwoStep}
This case has been treated in detail in \cite[\S~2]{usui}. 
The domains in question  are  examples of split domains, and hence they are reductive.
The mixed Hodge structures  they parametrize indeed split over $\R$ since the associated weight filtration has only two consecutive steps, say
 $0=W_0\subset W_1\subset W_2=H$.

Assume that we are given  two polarizations on $W_1$ and $\gr^W_2$, both denoted $Q$. One can choose an adapted (real) basis for $H$  which
\begin{itemize}
\item restricts to a $Q$--symplectic basis $(a_1,\dots,a_g,b_1,\dots,b_g)$ for $W_1$;
\item the remainder of the basis  $(c_1,\dots,c_k,c'_1,\dots,c'_k,d_1,\dots,d_\ell)$ projects to a basis for $\gr^W_2$ diagonalizing  $Q$, i.e. $Q=$diag$(-\mathbf{1}_{2k},\mathbf{1}_\ell)$.
\end{itemize}
Then
\[
G= \sett{
\begin{pmatrix}
A & B \\
0 & C\end{pmatrix}}{A\in \smpl{g;\R}, \, C\in \ogr{2k,\ell}, \, B\in \C^{2g\times (2k+\ell)}},
\]
reflecting the Levi-decomposition. More invariantly, the two matrices $A$ and $C$ on the diagonal give the semi-simple part $G^{\rm ss}$ while the matrices $B$ give the unipotent radical
\[
G^{\rm un}\simeq \Hom_\C( \gr^W_2,W_1).
\]
Here, the isomorphism (via the exponential map) in fact identifies $G^{\rm un}$ with its Lie-algebra:
\begin{equation} \label{eqn:UnPart}
\eg^{\rm un}=  \Hom_\C( \gr^W_2,W_1),
\end{equation}
the endomorphisms in $\eg$ which lower the weight by one step.
\par
The real group  $G_\R$ consists of the  group given by similar matrices, except that  now the matrices $B$ are taken to be real. In particular
\begin{equation} \label{eqn:UnPar2t}
\eg^{\rm un}_\R= \eg^{\rm un}\cap \eg_\R =  \Hom_\R( \gr^W_2,W_1).
\end{equation}

Next, fix  the    Hodge flag $F=\set{F^2\subset F^1\subset F^0=H_\C}$  which has the following  adapted unitary basis
\begin{equation}
\label{eqn:basisHodgeflag}
\left.\aligned
  & \hspace{-2em}(\underbrace{\underbrace{f_1,\dots,f_k}_{F^2},  d_1,\dots,d_\ell,  f'_1,\dots,f'_g}_{F^1},  \bar f'_1,\dots, \bar f'_g,\bar f_1,\dots,\bar f_k), \\
f_k:&= \frac{1}{\sqrt 2} [c_k- \ii c'_k] , \quad   f'_k= \frac{1}{\sqrt 2} [a_k- \ii b_k].
\endaligned \right\}
\end{equation}
The group $G^F$ consists of the subgroup of $G$ with $A=\begin{pmatrix} U & -V\\ V& U\end{pmatrix}$, $U+\ii V\in \ugr g$, $C\in \ogr{2k}\times\ogr{\ell}$ and  the matrices $B$  are of the form
\begin{equation*} 
\sett{\begin{pmatrix} B' \\ -\ii B' \end{pmatrix}}{B'\in \C^{g\times (2k+\ell)}}.
\end{equation*}
Note that $G^{\rm ss}/ G^F\cap G^{\rm ss}=D_1\times D_2$, the product of the domain $D_1\simeq \mathbf{H}_g$, parametrizing weight $1$ Hodge polarized structures with $h^{1,0}=g$ and $D_2$ parametrizing weight $2$ polarized Hodge structures with $h^{2,0}=k, h^{1,1}=\ell$. The
natural projection
\begin{equation}
\label{eqn:ComplexBundle}
G/G^F  \to G^{\rm ss}/ G^F\cap G^{\rm ss}=D_1\times D_2
\end{equation}
is a  holomorphic  bundle with fiber  associated to the adjoint representation of  $G^{\rm ss}\cap\nobreak G^F$ on $\eg^{\rm un}/\eg^{\rm un}\cap \eg^F$. Explicitly, this action is
\[
g \cdot [B] = [ABC^{-1}] ,\quad g= \begin{pmatrix}
A & 0 \\
0 & C\end{pmatrix} .
\]
The fiber of \eqref{eqn:ComplexBundle} over $F$ is the affine space consisting of  the extension data  of $(W_1, F)$ by $(\gr^W_2,F)$ on which $G^{\rm un}$ acts transitively as  the  group of translations.
The group $G^{\rm ss}$ acts on this fiber bundle by holomorphic transformations from the left:  $g\in  G^{\rm ss}$  sends the fiber over $F$ biholomorphically to the fiber over $g\cdot F$.

To obtain  a reductive decomposition  $ \eg_\R  = \eh \oplus \eun$,  set
\begin{equation}
\label{eqn:redcomp}
\eh:= \eg^{0,0}\cap \eg ,\quad \eun= \eun^{\rm ss}\oplus \eg^{\rm un}_\R ,\quad
 \eun^{\rm ss}=\left(  \oplus_{p\not=0}  \eg^{p,-p} \right) \cap \eg.
\end{equation}
Let us study the metric properties of the Hodge metric $h$ and its Chern connection $\nabla_h$. It is invariant under the Hodge metric and so is determined by Eqn.~\eqref{eqn:GinvConnection}.
\begin{lemma} The canonical connection $\nabla^D$ on the  complex tangent bundle $T_\C(D)$ of  $D=G/G^F$ given by the reductive decomposition \eqref{eqn:redcomp} is distinct from the  (extended)  Chern connection $\nabla_h$ on $T_\C(D)$.
\end{lemma}
\proof
Both  connections are metric  for the Hodge metric and so they are both given by the formula \eqref{eqn:GinvConnection}.
In particular,  for $X,Y\in \eg_\C$  we have
\[
\nabla^D_X\, \tilde Y =  U(X,Y).
\]
Let us  calculate   $U(X, X)$, $X\in \eg^{-1,0}$ with the aid of  \eqref{eqn:U} where (cf. Remark~\ref{conncomplextend}) $g$ is the complex bilinear extension of the real part of the Hodge metric on  $\eg_\C$.  We then see that for $Z\in \eg^{-1,1}\oplus \eg^{1,-1}$, we get
\[
\aligned
2g(U(X,  X), Z) &= g([Z,X],  X)+ g(X,[\bar X,Z])\\
&= h([Z,X],  \bar X)+ h([\bar X,Z],\bar X)\\
                              &= - h (Z,[X^*,\bar X])+  h (Z,[\bar X^*,X])\\
                              &=  g(Z, [X, \bar X^*]- [\bar X, X^*])
                              \endaligned
                              \]
 where the third line follows from \eqref{eqn:MetConv}. Hence $U(X,X)=  \half  ( [X, \bar X^*]- [\bar X, X^*] )$ which  does not always vanish.
 Indeed in the  basis \eqref{eqn:basisHodgeflag} the tangent vector $X$ corresponds to a matrix with $A=C=0$ and $B$ arbitrary,  while $\bar X^*$ is the transpose conjugate so that
 $U(X,X)= \begin{pmatrix}
   \text{Im } B \Tr B & 0\\
 0 &  -\text{Im  }\Tr B B
 \end{pmatrix}$.
  Now compare this with what happens for  $\nabla_h$.  Eqn.~\eqref{eqn:U2} tells us that we must have  $U(\nabla^h) (X, X)= - \half [X,X]=0$.    Indeed, the canonical connection has $\nabla^D_X\,\tilde Y =\alpha(X,Y)=0$. This shows that $\nabla^D\not= \nabla^h$.
 \qed\endproof

As to the complex structure we have:
\begin{lemma}
The  complex structure compatible with the reductive structure   is not the one coming from the embedding $G/G^F\subset G_\C/G_\C^F$.
\end{lemma}
\proof
Write
\[
\aligned
\eg^{\rm un} &= \underbrace{\eg^{0,-1}_F\oplus \eg^{1,-2}_F}_{\eg^{\rm un}_{F,+}}  \oplus  \underbrace{ \eg^{-1,0}_F\oplus \eg^{-2,1}_F}_{\eg^{\rm un}_{F,-}}\\
\eun^{\rm ss} &= [\underbrace{ \eg^{-2,2} \oplus \eg^{-1,1} }_{\eun^{\rm ss}_{F,-}} \oplus\underbrace{ \eg^{1,-1}\oplus \eg^{2,-2}}_{{\eun^{\rm ss}_{F,+}}}]\cap \eg.
\endaligned
\]
Since  $\eg^{\rm un}_{F,+ }=\eg^F\cap \eg^{\rm un}_F$, the tangent space at $F$ to
\[
D^{\rm un}:= G^{\rm un}/G^F\cap G^{\rm un}
\]
gets identified with
\[
\aligned
T_F D^{\rm un}
          &= \eg^{\rm un} /  \eg^{\rm un}_{F,+} \\
          &=   \eg^{\rm un}_{F,-},
\endaligned
\]
a space of complex dimension $g(2k+\ell)$.  The complex structure comes from the standard complex structure $J$
on  $\eg^{\rm un}$,  since $T_F D^{\rm un} $ is a quotient thereof.\footnote{I.e.,  $J$ is multiplication by $\ii$.}
Next, note that
\[
\eg^F_\R\cap \eg^{\rm un}= 0
\]
and so
\[
G^{\rm un}_\R/G^F\cap G^{\rm un}_\R=\eg^{\rm un}_\R=\Hom_\R( \gr^W_2,W_1)
\]
and this space gets a complex structure  thanks to the weight one  Hodge structure induced by $F$ on $W_1$.
It   is induced by  a complex structure  $J_1^F$  whose complexification on $\eg^{\rm un}$ has eigenvalues as in the following table:
\begin{center}
\begin{tabular}{c|c|c|c|}
&   $I^{2,0}_F$ &  $I^{1,1}_F$  & $I^{0,2}_F$ \\
\hline
$I^{1,0}_F$ &  $\ii $ & $ \ii $ &  $ \ii $\\
\hline
$I^{0,1}_F$ &    $-\ii $ & $ -\ii $ &  $- \ii $\\
\hline
\end{tabular}
\end{center}
One deduces that the complex structure $(\eg^{\rm un}_\R,J_1^F)$ is not isomorphic to the complex structure $(\eg^{\rm un}_\R,J^F)$.

The complex structure  $J^F$ coming from $G/G^F\subset G_\C/G_\C^F$ identifies the holomorphic tangent space at $F$ as follows:
 \[
 T_FD=  \eg/\eg^F =  \eun^{\rm ss}_{F,-} \oplus \eg^{\rm un}_{F,-} \simeq   (\eun^{\rm ss}, J^F)\oplus ( \eg^{\rm un}_\R,J^F).
 \]
 The natural complex structure $J^F$ on $\eun^{\rm ss}$  comes from the one inducing the complex structure on the base $D_1\times D_2$ of the fiber bundle \eqref{eqn:ComplexBundle}.

 Taking the same complex structure on $\eun^{\rm ss}$ but the other on $\eg^{\rm un}_\R$  leads to a different holomorphic tangent space
 \[
 (T_FD, J_1^F)= (\eun^{\rm ss}, J^F)\oplus ( \eg^{\rm un}_\R,J_1^F);
\]
it is a complex structure  on $\eun$ whose $\pm\ii$--eigenspaces inside $\eun\otimes \C$  are given by
\[
\eun_{F,+} = \eun^{\rm ss}_{+,F}  \oplus \Hom_\C(\gr^W_2\otimes\C , I^{0,1}_F)
\]
respectively
\[
\eun_{F,-} = \eun^{\rm ss}_{-,F}  \oplus \Hom_\C(\gr^W_2\otimes\C , I^{1,0}_F).
\]
Finally, note that the isomorphism
\[
(T_F D, J_1^F)  =  (\eun,J_1^F) \simeq \eun_{F,-}.
\]
gives $T_FD$ the  complex structure which is required in the standard curvature calculations  for reductive domains, as explained above. However, as we have seen, this structure is not the one which comes from the embedding $D=G/G^F\into G_\C/G_\C^F$.

\qed\endproof
\begin{rmq}  1. Clearly,   $J_1^F$ and $J^F$ commute. 
\changed{}{More can be said. For a fixed filtration $F$, the group  of extension data, $\Ext (\gr^2W,W_1)$ can be identified with
the intermediate Jacobian $\mathcal J_F$ of the weight-$(-1)$ Hodge structure $\Hom (\gr_2^W, W_1)$. It carries two canonical complex structures:
the one by Weil, defined by the action of the Weil-operator, and the one by Griffiths, having the property that the $\pm \ii$-eigenspaces
are given by ${F'}^{-1}$ and its conjugate, where $F'$ is the induced Hodge filtration on $\Hom (\gr_2^W, W_1)$.
From this description one sees that $J^F$ gives Weil's intermediate Jacobian, while $J^{F}_1$ gives Griffiths Jacobian.  With
the latter structure the  family $\sett{\mathcal J_F }{ [F]\in D_1\times D_2 }$ is indeed varying holomorphically.}
\\
2. Consider the surjective morphism
\[
 G_\R/G^F_\R   \to G^{\rm ss} / G^F_\R\cap G^{\rm ss}=D_1\times D_2.
\]
It is a real-analytic complex vector bundle associated to the  $G^{\rm ss}\cap G^F$--representation space $\eg^{\rm un}_\R$.
This is also a \changed{}{$J_1$-}holomorphic fiber bundle: if $U \in \ugr{g}$ and $V\in [ \ogr{2k}\times \ogr{\ell}] $, the action on $\varphi\in  \Hom_\R(\gr^W_2,W_1)$ is given by $\varphi\mapsto U\comp \varphi\comp V^{-1}$ and hence is $J_1^F$-complex. However, the action of $G^{\rm ss}$ on this bundle is no longer holomorphic:  $g=(U,V) \in \smpl{g}\times \ogr{2k,\ell}$ sends $\varphi$ in the fiber over $F$ to  $U\comp \varphi$ in the fiber over $g\cdot F$ and since $U$ and $J_1^F$ only commute when   $U\in \ugr{g}$ this is \emph{not} a $J_1^F$-complex-linear isomorphism. Since in our situation $G/G^F\simeq G_\R/G^F_\R$, this also confirms that the two complex structures are distinct.
\end{rmq}

\appendix

\section{The Levi-Civita Connection} \label{Appendix A}
Suppose  that $M$ is a complex manifold and $\frak X^{1,0}_M$
and $\frak X^{0,1}_M$ denote the sheaves of complex vector fields of type $(1,0)$ and $(0,1)$ respectively.
Then, the conjugation action $u\mapsto u_c$ defined by
$$
      u_c \cdot f = \overline{u \cdot \overline f}
$$
defines an isomorphism of sheaves $\frak X^{1,0}_M\mapright{\sim}  \frak X^{0,1}_M$ as modules over the sheaf $C^{\infty}(M,\Bbb R)$
of real valued smooth functions on $M$.  It  restricts to a conjugate linear morphism   between the sheaves
of holomorphic and anti-holomorphic vector fields on $M$.

\begin{lemma} Let $\frak X_M$ denote the sheaf of $C^{\infty}$ real vector fields on $M$.
Then,
$$
\begin{matrix}
     \frak X^{1,0}_M &\to & \frak X_M \\
      u &\mapsto&  u_r := u + u_c
\end{matrix}
$$
defines a linear isomorphism over $C^{\infty}(M,\R)$. Moreover, if $x$ and $y$ are  holomorphic vector fields, then
\begin{equation}\label{eqn:tag2}
        [x_r,y_r] = [x,y]_r    .
\end{equation}
\end{lemma}
\proof
If $z_j = x_j + \sqrt{-1}y_j$ is a system of holomorphic coordinates on an open subset $U$  of $M$ then
\begin{equation*}
       \left(\frac{\partial}{\partial z_j}\right)_r = \frac{\partial}{\del x_j},\qquad
        \left(\sqrt{-1}\frac{\partial}{\partial z_j}\right)_r = \frac{\del}{\del y_j}
\end{equation*}
and hence  the  stated  morphism induces  an isomorphism over any holomorphic coordinate chart.  Using partitions of unity, it then follows that  it is a global isomorphism, $\frak X^{1,0}_M\mapright{\cong} \frak X_M$.  Since holomorphic and anti-holomorphic
vector fields commute, \eqref{eqn:tag2} follows.\qed\endproof

Let $g$ be a Riemannian metric on the underlying $C^{\infty}$-manifold of $M$.  Then, the associated
Levi-Civita connection $\nabla^{\text {LC}}$ is determined by the Koszul formula:
\begin{equation}\label{eqn:3}
\aligned
       2g(\nabla_X^{\text {LC}}\, Y,Z) = Xg(Y,Z) &+ Yg(X,Z) -Zg(X,Y)  \\
                                    &+ g([X,Y],Z) - g([X,Z],Y) - g([Y,Z],X).
\endaligned     															\end{equation}
In particular, if $h$ is a hermitian metric on $M$ given as a pairing of sections of $\frak X^{1,0}_M$ we
obtain an associated Riemannian pairing on sections of $\frak X_M$ by the rule
\begin{equation}\label{eqn:4}
 g(u_r,v_r) = \text{Re}\, h(u,v).
\end{equation}

\par By the above remarks, in order determine the Levi-Civita connection of the metric \eqref{eqn:4} it is
sufficient to evaluate the expression \eqref{eqn:3}  on vector fields $X = x_r$, $Y=y_r$ and $Z = z_r$ with
$x$, $y$ and $z$ holomorphic vector fields on $M$.  Unraveling the above, for holomorphic vector
fields $u$, $v$ and $w$ we have:
\begin{equation} \label{eqn:5}
\left.\aligned
     w_r \cdot  g(u_r,v_r) &= w\cdot \text{Re}\, h(u,v) + w_c \cdot \text{Re}\, h(u,v) \\
                            &= (1/2)w\cdot(h(u,v)+h(v,u)) + (1/2)\overline{w\cdot(h(u,v)+h(v,u))} \\
                            &= \text{Re}\, (w\cdot(h(u,v)+h(v,u))).
\endaligned \right\}.
\end{equation}

\begin{lemma} The Levi-Civita connection $\nabla^{\text {LC}}$ of the Riemannian metric \eqref{eqn:4}  underlying a hermitian metric
$h$ on a complex manifold $M$ is determined by the formula:
\[
\aligned
     2g(\nabla_{x_r}^{\text {\rm LC}}\, y_r,z_r)
     &=\text{\rm Re}\left( x\cdot(h(y,z)+h(z,y)) + y\cdot(h(x,z)+h(z,x)) \right. \\
                      &\left.\quad - z\cdot(h(x,y)+h(y,x))
                                    +h([x,y],z) - h([x,z],y) - h([y,z],x)\right),
\endaligned
\]
where $x_r$, $y_r$ and $z_r$ arise from underlying holomorphic vector fields $x$, $y$ and $z$.
\end{lemma}
\proof
 The right hand side of the Koszul formula \eqref{eqn:3} for the Levi--Civita connection is  the sum of the terms
\[
\aligned
    x_r\cdot g(& y_r,z_r) + y_r\cdot g(x_r,z_r) - z_r\cdot g(x_r,y_r) \\
     &=\text{Re}\,\left( x\cdot(h(y,z) +h(z,y))   
      + y\cdot(h(x,z)+h(z,x))  
      -z\cdot(h(x,y)+h(y,x))\right)   
\endaligned
\]
and
$$
\aligned
    g([x_r,y_r],z_r) &-g([x_r,z_r],y_r) - g([y_r,z_r],x_r) \\
    &=\text{Re}\, \left( h([x,y],z) - h([x,z],y) - h([y,z],x)\right). \hfill \qed
\endaligned
$$
\endproof

\par We want to apply this formula in the case of the mixed Hodge metric and holomorphic
vector fields of the form
$$
             \tilde\alpha(e^u \cdot F) = L_{e^u*}\alpha
$$
where $\alpha\in\frak q$ acts as the derivation
$$
              \alpha \cdot f = \left.\frac{d}{dz}f(e^{z\alpha}\cdot F)\right|_{z=0}
$$
on germs of functions at $F$ and $u\mapsto e^u \cdot F$ gives a biholomorphism from a neighborhood
of $0$ in $\frak q$ to a neighborhood of $F$ in $D$.

\begin{lemma} Let $\alpha,\beta,\gamma\in\frak q$.  Then\footnote{compare with Cor.~\ref{corr:H-form}},
$$
      \left.\tilde\alpha \cdot h(\tilde\beta,\tilde \gamma)\right|_F
       =  -h_F(\beta,[\pi_+(\bar\alpha),\gamma]).
$$
\end{lemma}
\proof We have
$$
      \left.\tilde\alpha \cdot h(\tilde\beta,\tilde \gamma)\right|_F
      = \left.\frac{d}{dz} h_{e^{z\alpha}\cdot F}(\tilde\beta,\tilde\gamma)\right|_{z=0}
      = \left.\frac{d}{dz} h_F( L_{f(z)*}\beta,L_{f(z)*}\gamma)\right|_{z=0} ,
$$
where $f(z) = \exp(-\bar z\pi_+(\bar\alpha) + O^2(z,\bar z))$.  Therefore,
$$
      \left.\tilde\alpha \cdot h(\tilde\beta,\tilde \gamma)\right|_F
       =  -h_F(\beta,[\pi_+(\bar\alpha),\gamma]). \hfill\qed
$$
\endproof

\begin{thm} \label{AppendixThm} For $x_r$, $y_r$ and $z_r$ arising from $\tilde x$, $\tilde y$, $\tilde z$ we have
$$
\aligned
     2g(\nabla_{x_r}^{\text {\rm LC}}\, y_r, z_r)
     =  &-\text{\rm Re}(h_F(y,[\pi_+(\bar x),z]) + h_F(z,[\pi_+(\bar x),y])) \\
        &-\text{\rm Re}(h_F(x,[\pi_+(\bar y),z]) + h_F(z,[\pi_+(\bar y),x])) \\
        &+\text{\rm Re}(h_F(x,[\pi_+(\bar z),y]) + h_F(y,[\pi_+(\bar z),x])) \\
        &+\text{\rm Re}(h_F([x,y]-[x^*,y]-[y^*,x],z)).
\endaligned
$$
\end{thm}

\begin{corr}  \label{LC1} If $\tilde x$ and $\tilde y$ arise from
$x,y\in \, \frak g^{-p,-q}_{(F,W)}$ , $p,q>0 $  by left translation,
then for the corresponding vector fields $x_r,y_r$ we have
$$
      \nabla_{x_r}^{\text {\rm LC}}\, y_r = \half [x,y]_r.
$$
\end{corr}
\proof  Let $z_r$ arise from $\tilde z$ as above.   The first
two lines in the formula of Theorem~\ref{AppendixThm}  vanish since $\pi_+(\bar x)=
\pi_+(\bar y)=0$ because $x,y\in\Lambda_F$.  As for the third line of the formula for $\nabla$,
we note that $\pi_+(\bar z)$ can never have a component of type $(0,0)$
and hence $[\pi_+(\bar z),x]$ is orthogonal to $y$ and
$[\pi_+(\bar z),y]$ is orthogonal to $x$.  So, only the last line
of the formula of Theorem~\ref{AppendixThm}  survives
which gives
$$
     2\nabla_{x_r}^{\text {LC}}\, y_r = [x,y]_r - \pi_\eq([x^*,y]+[y^*,x])_r.
$$
The last term then vanishes since $[x^*,y]+[y^*,x]$ has type $(0,0)$.\qed
\endproof

\begin{lemma}\label{auxlemma}
Let $x,y,z \in \eq_F$. Put $t:= [y^*,x]+[x^*,y]$. Then
\[
\re {h_F} (t, \pi_+ (\bar z)) =\re  {h_F} (\overline{\pi_+(t)^*}^{\, *},z).
\]
If $(F,W)$ is split over $\R$ then
\[
\overline{\pi_+(t)^*}^{\, *}=\pi_- (\bar t)= \pi_-([\bar y^*,\bar x]+[\bar x^*,\bar y]).
\]
\end{lemma}
\proof Since $h_F(u,v)= \overline{h_F(v,u)}$ we have
\[
\aligned
\re{h_F} (t,\pi_+(\bar z )))&= \re{h_F} (\pi_+(\bar z),t)\\
&=\re{h_F} ( \bar z,\pi_+(t))\\
&=\re{}  \tr ( \bar z \comp (\pi_+(t))^*)\\
&= \re{} \tr (z\comp \overline{\pi_+(t))^*})\\
&= \re{h_F}  ( z,  \overline{\pi_+(t))^*}\,^*)\\
&=\re{h_F} ( \overline{\pi_+(t))^*}\,^*,z)  . \hspace{4em}
\endaligned
\]
In the split case, $*$ and complex conjugation commute and hence $ \overline{\pi_+(t)^*}^{\, *}=\overline {\pi_+ t}= \pi_-(\overline t)$ and the second assertion follows.
\qed\endproof

\begin{thm}  \label{ThmAppendA2} If $(F,W)$ is split over $\Bbb R$ then $2\nabla_{x_r}^{\text {\rm LC}}\, y_r$ at $F$ is
the real derivation defined by
$$
\aligned
         &-\pi_\eq([\pi_+(\bar x)^*,y] + [\pi_+(\bar x),y] +[\pi_+(\bar y)^*,x] + [\pi_+(\bar y),x])  \\
       &  \quad +\pi_-([\bar y^*,\bar x]+[\bar x^*,\bar y]) + \pi_\eq([x,y]- [x^*,y]-[y^*,x]).
\endaligned
$$
\end{thm}
\proof   Applying Lemma~\ref{auxlemma} to Theorem~\ref{AppendixThm}  we have
$$
\aligned
     2g(\nabla_{x_r}^{\text {LC}}\, y_r, z_r)
     =  &-\text{\rm Re}(h_F(y,[\pi_+(\bar x),z]) + h_F(z,[\pi_+(\bar x),y])) \\
        &-\text{\rm Re}(h_F(x,[\pi_+(\bar y),z]) + h_F(z,[\pi_+(\bar y),x])) \\
        &+\text{\rm Re}(h_F(\pi_-([\bar y^*,\bar x]),z) + h_F(\pi_-([\bar x^*,\bar y]),z)) \\
        &+\text{\rm Re}(h_F([x,y],z) - h_F([x,z],y) -h_F([y,z],x))
\endaligned
$$
which becomes
$$
\aligned
     2g(\nabla_{x_r}^{\text {LC}}\, y_r, z_r)
     =  &-\text{\rm Re}(h_F([\pi_+(\bar x)^*,y] + [\pi_+(\bar x),y],z) \\
        &-\text{\rm Re}(h_F([\pi_+(\bar y)^*,x] + [\pi_+(\bar y),x],z) \\
        &+\text{\rm Re}(h_F(\pi_-([\bar y^*,\bar x]) + \pi_-([\bar x^*,\bar y]),z) \\
        &+\text{\rm Re}(h_F([x,y]- [x^*,y]-[y^*,x],z)). \hfill\qed
\endaligned
$$
\endproof

\begin{corr}   \label{LC2}Assume $(F,W)$ is split over $\R$.  Let $x_r$ and $y_r$  be vector fields
arising from $\tilde x$, $\tilde y$ with $x$ and $y$ of type $(-1,1)$.  Then,
$$
         \nabla_{x_r} ^{\text {\rm LC}}y_r = \half [x,y]_r.
$$
%
\end{corr}
\proof  In this case, by Lemma~\ref{lemma:adjoint} $x^*=\bar x$, $y^*=\bar y$ and so $\pi_+(\bar x)^* = x$ and $\pi_+(\bar y)^* = y$.
We also note that $[\bar x,y]$ and $[\bar y,x]$ project to zero in $\eq=\eun_-\oplus\Lambda$.  Therefore, the formula of
Theorem~\ref{ThmAppendA2}  reduces to the stated form.\qed
\endproof

\begin{corr} Assume that $W$ has only two weight graded quotients which are
adjacent and let $x_r$ and $y_r$ arise from $\tilde x$ and $\tilde y$ with $x$
and $y$ of type $(-1,0)$.  Then,
$$
        2\nabla_{x_r}^{\text {\rm LC}}\, y_r = -[\bar x^*,y]_r - [\bar y^*,x]_r
$$
\end{corr}
\proof For $u$ and $v$ of type $(-1,0)$ in this setting we have $\pi_+(\bar u) = \bar u$
and $[u,v] = [\bar u,v]=0$.   Likewise, $[v^*,u]$ and $[\bar v^*,\bar u]$ are type $(0,0)$ while $[\bar v^*,u]$
is type $(-1,1)$.  Consequently, the formula of Theorem~\ref{ThmAppendA2} reduces to the stated form.\qed
\endproof

Let us apply this to flow curves
\[
\gamma_x:   \quad  t\longmapsto \exp( tx) \cdot F,\quad x\in\eq_F
\]
and set
\[
x(t_0):= \left.\frac{d \gamma_x }{dt}\right|_{t_0} \in \eq.
\]
\begin{corr}\label{LC4} {\rm (1)} For $  x,y\in \Lambda_F$ of the same type we have
 \[
 \nabla^{\rm LC} _{x(t)_r}  y(t)_r= \half [ x(t), y(t)]_r.
 \]
 {\rm (2)} The flow  curve $\gamma_x$ is a geodesic. This is in particular the case when $\gamma_x$ is the image under a period map.\\
 {\rm (3)} Suppose that $x,y,z \in \Lambda_F$ have the same type and commute. Then the Riemann curvature
 \[
R(x_r,y_r)z_r = \nabla_x^{\text {\rm LC}} \nabla_{y_r}^{\text {\rm LC}} z_r -\nabla_{y_r}^{\text {\rm LC}} \nabla_{x_r}^{\text {\rm LC}} z_r - \nabla_{[x_r,y_r]}^{\text {\rm LC}}z_r
\]
vanishes.
\end{corr}
\proof Under the flow the type need not be preserved.  However, an application of Lemma~\ref{higgs} shows that the types are preserves when we start with   $x  \in \eg^{-p,-q}_F$ with $p,q>0$.  Then (1) follows from Cor.~\ref{LC1}.
In particular, this vanishes for  $x=y$. By definition the curve $\gamma_x$ then is a geodesic. The formula for the Riemann curvature   implies (3). \qed\endproof


\begin{thebibliography}{XX---XXX}


\bibitem[BFNP]{bfnp} Brosnan, P., Fang, H., Nie, Z.  and  Pearlstein, G.:
Singularities of admissible normal functions.
With an appendix by  Fakhruddin, N..
Invent. Math. \textbf{177}   599-- 629 (2009).

\bibitem[BP]{bp} Brosnan, P., Pearlstein, G.:  Jumps in the Archimedean height.
	\texttt{arXiv:1701.05527 [math.AG]}.


\bibitem[BPS]{3authors} Brosnan, P., Pearlstein, G. and  Schnell, C.: The locus of Hodge
classes in an admissible variation of mixed Hodge structure.
C. R. Acad. Sci. Math. {\bf 348}  657--660  (2010).

\bibitem[Ca-MS-P]{periodbook} Carlson, J.,  M\"uller--Stach, S. and  Peters, C.:
\textit{Period Mappings and Period Domains}, Cambridge  Studies in advanced
math. \textbf{85} Cambridge Univ. Press, Cambridge (2003).

\bibitem[Ca-To]{abelian} Carlson, J.,  Toledo, D.:
Integral manifolds, harmonic mappings, and the abelian subspace problem. In
\textit{Algebra---some current trends (Varna, 1986)}, Lecture Notes in Math.
\textbf{1352}  Springer, Berlin  60--74 (1988).

\bibitem[C]{cat}  Cattani, E.: Mixed Lefschetz Theorems and Hodge--Riemann
Bilinear Relations.  Int Math. Res. Notices \textbf{10} (2008).

\bibitem[CKS]{degeneration} Cattani, E.,  Kaplan, A.  and  Schmid, W.:
Degeneration of Hodge structures. Ann. Math. \textbf{123} 457--535 (1986)

\bibitem[D1]{deligne} Deligne,  P.: Travaux de Griffiths, S\'eminaire Bourbaki
22e ann\'ee, 1969/70 {\bf 376}, (1970)

\bibitem[D2]{tdh}  Deligne, P.: Th{\'e}orie de Hodge II. Publ. Math. I.H.E.S
\textbf{40}, 5--58 (1971)

\bibitem[D3]{3pts} Deligne, P.: Le groupe fondamental de la droite projective
moins trois points. In  \textit{Galois groups over ${\bf Q}$ (Berkeley, CA,
1987)}, 79--297, Math. Sci. Res. Inst. Publ., \textbf{16}, Springer, New York,
(1989)

\bibitem[D4]{d2m} Deligne, P.: Local behavior of Hodge structures at infinity, in \textit{Mirror Symmetry II}
AMS/IP Stud. Adv. Maths, \textbf{1}, AMS, Providence, IR.,  683--699 (1997)

\bibitem[Dem]{dem} Demailly, J.-P.: Singular Hermitian metrics on positive line
bundles. In \textit{Complex algebraic varieties (Bayreuth, 1990)}, 87--104,
Lecture Notes in Math., \textbf{1507}, Springer, Berlin, (1992)




\bibitem[Gr]{periods} Griffiths, P.: Periods of integrals on algebraic
manifolds I,II. Amer. J. Math. \textbf{90}  568--626; 805--865 (1968)

\bibitem[Gr2]{periods2} Griffiths, P.: Periods of integrals on algebraic
manifolds III. Publ. Math. IH{\'E}S  {\bf 38}  {125--180} (1970)

\bibitem[GG]{gg}
Green, M.,  Griffiths, P.: Algebraic cycles and singularities of normal
functions. in \textit{Algebraic cycles and motives. Vol. 1,} 206--263, London
Math. Soc. Lecture Note Ser., \textbf{343}, Cambridge Univ. Press, Cambridge,
(2007)

\bibitem[GS]{curv} Griffiths, P. and  Schmid, W.: Locally homogenous complex
manifolds. Acta Math. \textbf{123} 145--166   (1969)

\bibitem[GGK]{ggk} Green, M.,  Griffiths, P. and  Kerr, M.: \textit{Mumford--Tate
groups and domains. Their geometry and arithmetic.} Annals of Mathematics
Studies, \textbf{183}  Princeton University Press, Princeton, (2012).

\bibitem[Ha1]{hain} Hain, R.: The geometry of the mixed Hodge
structure on the fundamental group.  Proc. Symp. Pure Math.
A.M.S. \textbf{6-2}, 247--282 (1987)

\bibitem[Ha2]{hain-2} Hain, R.: Normal Functions and the Geometry of Moduli Spaces of Curves, in \textit{Handbook of Moduli}, ed.  Gavril Farkas, Ian Morrison, vol. I   527--578, International Press  (2013)

\bibitem[Ha-Z]{unipotvars} Hain, R. and  Zucker, S.: Unipotent variations of
Hodge theory.  Invent. Math. \textbf{88} 83--124  (1987)

\bibitem[Hay-P]{nilp} Hayama, T. and Pearlstein, G.: Asymptotics of degenerations
of mixed Hodge structures.  Adv. Math. {\bf 273}  380-- 420 (2015).



\bibitem[He]{pmhs} Hertling, C.: Classifying Spaces for Polarized Mixed Hodge Structures and for Brieskorn Lattices,
 Compositio Mat. \textbf{116} 1--37 (1999)

\bibitem[Kae]{kaenders} Kaenders, R.:  The Mixed Hodge Structure on the Fundamental Group of a Punctured Riemann Surface.    Proc. Amer. Math. Soc.\textbf{129}, 1271--1281 (2001)


\bibitem[Ka]{kaplan} Kaplan, A.: Symmetries of graded polarized mixed
        Hodge structures, in {\em Recent advances in Hodge theory},
        London Math. Soc. Lecture Note Ser. {\bf 427}  78--86 (2016).

\bibitem[KU]{ku}  Kato, K., Usui, S.: Classifying Spaces of Degenerating Polarized Hodge Structures.
Annals of Math Studies, \textbf{169} (2009).


\bibitem[KNU]{knu}  Kato, K., Nakayama, C.,  and Usui, S.:
${\rm SL}(2)$-orbit theorem for degeneration of mixed Hodge structure.
J. Algebraic Geom. \textbf{17} 401--479  (2008).


\bibitem[KNU2]{knu2}  Kato, K., Nakayama, C. and Usui, S.: Classifying spaces
of degenerating mixed Hodge structures, II: spaces of ${\rm SL}(2)$-orbits.
Kyoto J. Math. \textbf{51}  149--261   (2011).



\bibitem[Ko]{koba} Kobayashi, S.: \textit{Differential geometry of complex
vector bundles}. Iwanami Shoten Publ. and Princeton Univ. Press (1987).

\bibitem[Lu]{Zhiqin} Lu, Z.: On the geometry of classifying spaces and
horizontal slices.  Am. J. Math. \textbf{121}  177--198 (1999).

\bibitem[M]{M}  Morgan, J.W.: The algebraic topology of smooth
algebraic varieties. Publ. Math. I.H.E.S. \textbf{48}, 137--204 (1978)
Correction, Publ. Math. I.H.E.S.,\textbf{64}, 185 (1986).


\bibitem[No]{nomi} Nomizo, K.: Invariant affine connections on homogeneous
spaces, Amer. J. Math.  \textbf{76}    33--65 (1954).

\bibitem[P1]{higgs} Pearlstein, G.: Variations of mixed Hodge structure,
Higgs fields, and quantum cohomology. Man. Math. \textbf{102} 269--310  (2000).

\bibitem[P2]{dmj} Pearlstein, G.: Degenerations of mixed Hodge structure. Duke Math. J. \textbf{110}   217--251 (2001).

\bibitem[P3]{sl2anddeg} Pearlstein, G.: ${\rm SL}\sb 2$-orbits and
degenerations of mixed Hodge structure.  Journal of Differential Geometry
\textbf{74} 1--67 (2006).


\bibitem[Sa]{ANF} Saito, Mo.: Admissible normal functions. J. Algebraic Geom.,
\textbf{5}    235--276  (1996).

\bibitem[Sc]{schmid} Schmid, W.: Variation of Hodge structure: the
singularities of the period mapping. Invent. Math. \textbf{22}, 211--319 (1973).

\bibitem[SZ]{sz} Steenbrink, J. and  Zucker, S.: Variation of mixed Hodge
structure I. Invent. Math.  \textbf{80}, 489--542 (1985).

\bibitem[U]{usui} Usui, S.: Variation of mixed Hodge structure arising from
family of logarithmic deformations II: classifying space. Duke Math. J.
\textbf{51} 851--875  (1983).


\end{thebibliography}
\end{document}